\documentclass{amsart}
\usepackage{a4wide}
\usepackage[utf8]{inputenc}
\usepackage{amssymb}
\usepackage{amsmath}
\usepackage{mathrsfs}
\usepackage{amsthm}
\usepackage{mathabx}
\usepackage{mathtools}
\usepackage{leftidx}
\usepackage{color}
\usepackage{enumitem}

\usepackage{caption}
\usepackage{tikz}
\usepackage{xcolor}
\usepackage{upgreek}
\usepackage{hyperref}
\usepackage{bbm}

\newcommand{\N}{\mathbb{N}}
\newcommand{\Z}{\mathbb{Z}}
\newcommand{\R}{\mathbb{R}}
\newcommand{\C}{\mathbb{C}}
\newcommand{\K}{\mathbb{K}}
\newcommand{\T}{\mathbb{T}}
\renewcommand{\P}{\mathbb{P}}
\newcommand{\E}{\mathbb{E}}
\newcommand{\PP}{\mathbb{P}}
\newcommand{\ce}{\coloneqq}
\newcommand{\ec}{\eqqcolon}
\newcommand{\1}{\mathbf{1}}

\newcommand{\calB}{\mathcal{B}}

\newcommand{\calF}{\mathcal{F}}

\newcommand{\calL}{\mathcal{L}}

\newcommand{\calO}{\mathcal{O}}
\newcommand{\calP}{\mathcal{P}}

\newcommand{\filtrF}{\mathscr{F}}

\newcommand{\dx}{~\mathrm{d}x}

\newcommand{\rmd}{\mathrm{d}}

\newcommand{\hra}{\hookrightarrow}

\newcommand{\seq}{\subseteq}

\newcommand{\ve}{\varepsilon}
\newcommand{\vp}{\varphi}

\newcommand{\ee}{\mathrm{e}}

\newcommand{\wt}{\widetilde}

\newcommand{\nn}{|\!|\!|}

\newcommand{\F}{\mathcal{F}}
\newcommand{\G}{\mathcal{G}}

\newcommand{\gHX}{{\mathcal{L}_2(H,X)}}
\newcommand{\CufgX}{C_{u_0,f,g,X}}

\newcommand{\CUffggY}{C_{U_0,\mathbf{f},\mathbf{g},Y}}
\newcommand{\CufgY}{C_{u_0,f,g,Y}}

\newcommand{\KUffggY}{K_{U_0,\mathbf{f},\mathbf{g},Y}}
\newcommand{\KufgY}{K_{u_0,f,g,Y}}

\newcommand{\bfE}{\mathbf{E}}
\newcommand{\bfF}{\mathbf{F}}
\newcommand{\bfG}{\mathbf{G}}
\newcommand{\bfH}{\mathbf{H}}

\newcommand{\CFX}{C_{F,X}}

\newcommand{\CGX}{C_{G,X}}

\newcommand{\Cstab}{C_{\text{stab}}}
\newcommand{\Cbdd}{C_{\text{bdd}}}
\newcommand{\CY}{C_Y}

\newcommand{\ds}{\,\mathrm{d}s}
\newcommand{\dWHs}{\,\mathrm{d}W_H(s)}

\newcommand{\LFX}{L_{F,X}}
\newcommand{\LFY}{L_{F,Y}}
\newcommand{\LGX}{L_{G,X}}
\newcommand{\LGY}{L_{G,Y}}

\newcommand{\iu}{\mathrm{i}}

\newcommand{\gHY}{{\mathcal{L}_2(H,Y)}}
\newcommand{\rmS}{\mathrm{S}}
\newcommand{\vareps}{\varepsilon}

\DeclareMathOperator{\dist}{dist}

\DeclareMathOperator{\diag}{diag}

\DeclareMathOperator{\curl}{curl}

\newtheorem{Satz}{Satz}[section]
\newtheorem{definition}[Satz]{Definition}
\newtheorem{theorem}[Satz]{Theorem}
\newtheorem{lemma}[Satz]{Lemma}	
\newtheorem{proposition}[Satz]{Proposition}
\newtheorem{corollary}[Satz]{Corollary}
\newtheorem{remark}[Satz]{Remark}
\newtheorem{example}[Satz]{Example}
\newtheorem{assumption}[Satz]{Assumption}

\allowdisplaybreaks
\numberwithin{equation}{section}

\begin{document}
\title[Pathwise Uniform Convergence of Discretisation Schemes]{Pathwise Uniform Convergence of Time Discretisation Schemes for SPDEs}
\author{Katharina Klioba}\address{
Hamburg University of Technology,
Institute of Mathematics,  D-21073 Hamburg, Germany}
\email{Katharina.Klioba@tuhh.de}

\author{Mark Veraar}
\address{Delft Institute of Applied Mathematics\\
Delft University of Technology \\ P.O. Box 5031\\ 2600 GA Delft\\The
Netherlands} \email{M.C.Veraar@tudelft.nl}

\date{\today}

\thanks{The second author is supported by the VICI subsidy VI.C.212.027 of the Netherlands Organisation for Scientific Research (NWO)}

\keywords{time discretisation schemes, pathwise uniform convergence, SPDEs, optimal convergence rates, stochastic convolutions, stochastic wave equation}

\subjclass[2020]{Primary: 65C30; Secondary: 47D06, 60H15, 60H35, 65J08, 65M12}

\begin{abstract}
In this paper, we prove convergence rates for time discretisation schemes for semi-linear stochastic evolution equations with additive or multiplicative Gaussian noise, where the leading operator $A$ is the generator of a strongly continuous semigroup $S$ on a Hilbert space $X$, and the focus is on non-parabolic problems. The main results are optimal bounds for the {\em uniform strong error}
\[{\rm E}_{k}^{\infty} \coloneqq \Big(\E \sup_{j\in \{0, \ldots, N_k\}} \|U(t_j) - U^j\|^p\Big)^{1/p},\]
where $p \in [2,\infty)$, $U$ is the mild solution, $U^j$ is obtained from a time discretisation scheme, $k$ is the step size, and $N_k = T/k$. The usual schemes such as the exponential Euler, the implicit Euler, and the Crank--Nicolson method, etc.\ are included as special cases. Under conditions on the nonlinearity and the noise, we show
\begin{itemize}
\item ${\rm E}_{k}^{\infty}\lesssim k \sqrt{\log(T/k)}$ (linear equation, additive noise, general $S$);
\item ${\rm E}_{k}^{\infty}\lesssim \sqrt{k} \sqrt{\log(T/k)}$ (nonlinear equation, multiplicative noise, contractive $S$);
\item ${\rm E}_{k}^{\infty}\lesssim k \sqrt{\log(T/k)}$ (nonlinear wave equation, multiplicative noise)
\end{itemize}
for a large class of time discretisation schemes.
The logarithmic factor can be removed if the exponential Euler method is used with a (quasi)-contractive $S$. The obtained bounds coincide with the optimal bounds for SDEs. Most of the existing literature is concerned with bounds for the simpler {\em pointwise strong error}
\[{\rm E}_k \coloneqq \bigg(\sup_{j\in \{0,\ldots,N_k\}}\E \|U(t_j) - U^{j}\|^p\bigg)^{1/p}.\]
Applications to Maxwell equations, Schr\"odinger equations, and wave equations are included. For these equations, our results improve and reprove several existing results with a unified method and provide the first results known for the implicit Euler and the Crank--Nicolson method.
\end{abstract}

\maketitle

\section{Introduction}
In this paper, we consider stochastic PDEs driven by an additive or multiplicative Gaussian noise. The equations we consider can be written as abstract stochastic evolution equations on a Hilbert space $X$ of the form
\begin{align}
\label{eq:stEvolEqnintro}
   \Bigg\{\begin{split} \rmd U &=(A U + F(U))\,\rmd t  + G(U) \,\rmd W_H~~~\text{ on } [0,T],\\ U(0) &= u_0 \in L^p(\Omega;X).
   \end{split}
\end{align}
Here, $A$ is the generator of a $C_0$-semigroup $(S(t))_{t\geq 0}$, $W_H$ is a cylindrical Brownian motion, $F$ and $G$ are globally Lipschitz, $u_0$ is the initial data, and $p \in [2,\infty)$.

Our aim is to obtain strong convergence rates for temporal discretisation schemes that cover the hyperbolic setting. The hyperbolic setting has been extensively studied in recent years (see \cite{AC18, ACLW16, BLM21, berg2020exponential, BC23, CCHS20, CL22, CLS13, CQS16, cox2019weak, Cui21, CuiHong, HM19, HHS22, JdNJW21, KLP20, KLL12, KLL13, KLS10, Wang15, WGT14} and references therein). In the parabolic setting, (i.e., $(S(t))_{t\geq 0}$ being an analytic semigroup) regularisation phenomena occur, which make it possible to prove very different convergence results. In the non-parabolic case, new methods to show convergence rates are needed and related to a way to obtain regularity. Kato's setting for the hyperbolic case from his seminal work \cite{Kato75} creates a way to obtain this regularity, which has proven to be very useful in the analysis of quasilinear equations as well as their numerical treatment \cite{DorHoch, HochbruckPazur, HochPaSch, LubichKovacsKatoSetting,   schnaubelt2023error}.

The main idea in Kato's setting is to consider two spaces $X$ and $Y$ with $Y\hookrightarrow X$ (or sometimes even three spaces) on which the operator $A$ and the nonlinearities $F$ and $G$ can be analysed. In this way, one can create  regularity of $U$, and obtain better mapping properties of the nonlinearities. In numerical approximations, the obtained regularity can be used to obtain convergence rates, as illustrated for the deterministic case in the references above.

The above setting often also applies to the parabolic case, in which, however, the required mapping properties of $F$ on $Y$ can often be avoided due to the regularising effect of the convolution with the analytic semigroup $S$. For these equations, it does not seem necessary to work with the Kato setting, as regularisation phenomena can be exploited. For details on the parabolic case, the reader is referred to \cite{ACQ20, BL13, BeJe19, BHRR, CoxNee13,  DHW, GyMi09, JK09, JK11, Milstein, KamBlo, KLL15, KLL18, Kruse, LPS14, MilsteinNonCommutative} and references therein, as well as Remark \ref{rem:Milstein}. Consequently, our focus lies on the hyperbolic setting.

\subsection{Setting}

In the above-mentioned literature on the hyperbolic case (and often in the parabolic case), the error considered is the {\em pointwise strong error}
\begin{align}\label{eq:pointwiseerrorestintro}
\sup_{j\in \{0,\ldots, N_k\}}\E\|U(t_j) - U^j\|^p,
\end{align}
where $U$ is the mild solution to \eqref{eq:stEvolEqnintro}, and $(U^j)_{j=0}^{N_k}$ is an approximation of the solution given by a temporal discretisation scheme of the form $U^0 = u_0$,
\begin{equation}
\label{eq:Ujschementro}
    U^{j} = R_k U^{j-1} + k R_k F(U^{j-1})+ R_k G(U^{j-1}) \Delta W_{j}, \ \ j=1, \ldots, N_k.
\end{equation}
Here, $N_k = T/k$ is the number of points, $k = t_j-t_{j-1}$ is the uniform step size, $t_j=jk$, and $\Delta W_{j} = W_H(t_j) - W_H(t_{j-1})$. The operator $R_k$ is an approximation of the semigroup $S$ at time $k$.

When performing numerical simulations to approximate the solution of a stochastic equation, one naturally wants the simulation to be close to the solution of \eqref{eq:stEvolEqnintro}. However, \eqref{eq:pointwiseerrorestintro} being small does not provide enough information to conclude this, see Example \ref{ex:exampleErrorDifference}. Also, from a probabilistic point of view, \eqref{eq:pointwiseerrorestintro} contains no information on the convergence of the path. Instead, it is a more meaningful question to find convergence rates for the {\em uniform strong error}
\begin{align}\label{eq:uniformerrorestintro}
\E \sup_{j\in \{0, \ldots, N_k\}} \|U(t_j) - U^j\|^p,
\end{align}
where now the supremum over $j$ is inside the expectation. In the deterministic setting, there is no difference between \eqref{eq:pointwiseerrorestintro} and \eqref{eq:uniformerrorestintro}.
It is a widely known open problem in the field to find optimal estimates for \eqref{eq:uniformerrorestintro}. Such estimates where the supremum is inside the expectation are usually called maximal estimates, and there is an enormous literature on maximal estimates for general stochastic processes \cite{Talagrand}. However, for processes that do not have any Gaussian or martingale structure, it can be quite complicated to prove (sharp) maximal estimates. Even maximal estimates for the mild solution $U$ to \eqref{eq:stEvolEqnintro} with $F=0$ and $G(u)$ replaced by a progressively measurable $g\in L^2(\Omega\times(0,T);X)$, are unknown in general (see the survey \cite[Section 4]{vNV20} for details).
The difference between the errors \eqref{eq:pointwiseerrorestintro} and \eqref{eq:uniformerrorestintro} is illustrated in the following simple example.
\begin{example}\label{ex:exampleErrorDifference}
Let $\Omega = [0,1]$ and let $\P$ denote the Lebesgue measure. For $\gamma\in (0,1]$, let $v_N:\Omega \times[0,1]\to \R$ be given by $v_N(\omega,t) = 1$ if $|t-\omega|<1/(2N^{\gamma})$, and zero otherwise. Then one can check that the following error estimates hold:
\[\sup_{t\in [0,1]}\E|v_N(t)|^p\leq \frac{1}{N^{\gamma}} \ \ \ \text{and} \ \ \ \E \sup_{t\in [0,1]}|v_N(t)|^p = 1. \]
One even has $\sup_{t\in [0,1]}|v_N(\omega,t)| = 1$ for any $\omega\in \Omega$.
This shows the discrepancy between having the supremum inside the expectation or not. Continuity of $v_N$ plays no role here. Indeed, one can easily replace the indicator function by a continuous piecewise constant function without influencing the above error estimates.
\end{example}

In the case where $S$ generates a $C_0$-{\em group}, it is known how to estimate the uniform strong error \eqref{eq:uniformerrorestintro} for the {\em exponential Euler method} (i.e., $R_k = S(k)$). In this case, one can use the group structure in the following way
\[\int_{0}^{t} S(t - s)g(s) d W_H(s) = S(t) \int_0^{t} S(-s) g(s) d W_H(s),\]
and, similarly, for the discrete approximation. This makes it possible to avoid maximal estimates for stochastic convolutions and use martingale techniques instead. This technique was first applied in \cite{Wang15} to obtain optimal convergence rates for the uniform strong error of the exponential Euler method for abstract wave equations. Later, this technique was extended to other settings (see \cite{AC18, berg2020exponential, CCHS20, CHLZ19}), and, in particular, applied to stochastic Schr\"odinger and Maxwell equations. However, if $S$ is not a group, this technique is no longer applicable.  Equations in which $S$ is not a group include transport equations, equations with dissipation (e.g.\ damped wave equations), parabolic equations, etc. Of course, there are also many important systems where groups are unavailable (e.g.\ if a parabolic equation is coupled to a wave or transport equation).
Even more importantly, for schemes involving rational approximations (e.g.\ implicit Euler, Crank--Nicolson), it is unclear how to use the $C_0$-group structure to estimate the uniform strong error, since the group does not appear in the scheme.

On the other hand, for other discretisation schemes estimates for the simpler pointwise strong error \eqref{eq:pointwiseerrorestintro} are available (see e.g.\ the above-mentioned papers in the hyperbolic case). Moreover, simulations suggest that optimal rates of convergence for the uniform strong error \eqref{eq:uniformerrorestintro} hold as well. The main goal of our work is to prove such optimal bounds for \eqref{eq:uniformerrorestintro} for more general semigroups and more general schemes. In particular, we prove such bounds under the condition that $S$ and $R$ are contractive. This solves the open problem on optimal rates for \eqref{eq:uniformerrorestintro} for this class of semigroups and numerical schemes up to a logarithmic factor.

\subsection{Some of the main results for multiplicative noise}
As in Kato's setting for the hyperbolic case, let $X$ and $Y$ be Hilbert spaces with $Y\hookrightarrow X$. For $\alpha\in (0,1]$ we say that $R$  approximates $S$ to order $\alpha$ on $Y$ if there is a constant $C_{\alpha} \ge 0$ such that
for all $x \in Y$, $k>0$, and $j\in \{0,\ldots,N_k\}$
\begin{equation*}
    \|(S(t_j)-R_k^j)x\|_X \le C_\alpha k^\alpha\|x\|_Y,
\end{equation*}
where $R_k^j=(R_k)^j$ denotes the $j$-th power of the scheme at time step $k$. Our main result on convergence rates for \eqref{eq:uniformerrorestintro} is as follows.
\begin{theorem}\label{thm:intromain}
    Let $X$ and $Y$ be Hilbert spaces such that $Y\hookrightarrow X$. Let $A$ be the generator of a $C_0$-contraction semigroup $(S(t))_{t\geq0}$ on $X$  and $Y$. Suppose that $(R_k)_{k>0}$ is a time discretisation scheme which is contractive on both $X$ and $Y$, that $R$ approximates $S$ to order $\alpha\in (0, 1/2]$ on $Y$, and that $Y \hra D((-A)^{\alpha})$.
    Suppose that $F:X\to X$ and $G:X\to \calL_2(H,X)$ are Lipschitz continuous, and that $F:Y\to Y$ and $G:Y\to \calL_2(H,Y)$ are of linear growth.
    Let $p \in [2,\infty)$, $u_0 \in L^p(\Omega;Y)$, and $U$ be the mild solution to \eqref{eq:stEvolEqnintro}. Let $k\in (0,T/2]$ and let $(U^j)_{j=0}^{N_k}$ be given by \eqref{eq:Ujschementro}.
    Then there is a constant $C_T>0$ not depending on $u_0$ and $k$ such that
    \begin{equation}
    \label{eq:convRateintro}
        \bigg\lVert\max_{0 \le j \le N_k}\|U(t_j)-U^j\|_X\bigg\rVert_{L^p(\Omega)}
        \le C_T(1+\|u_0\|_{L^p(\Omega;Y)}) k^{\alpha} \sqrt{\log(T/k)}.
           \end{equation}
    In particular, the approximations $(U^j)_j$ converge at rate $\alpha$ as $k \to 0$ up to a logarithmic factor.
\end{theorem}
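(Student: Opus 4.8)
The plan is to reduce everything to a discrete error recursion and close it with a discrete Gronwall argument, isolating the one genuinely delicate ingredient: a maximal inequality for discrete stochastic convolutions that cannot exploit a group factorisation.

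\emph{A priori regularity.} First I would establish, uniformly in $k$, a two-scale a priori bound: $U \in L^p(\Omega;C([0,T];Y))$ for the mild solution and $\sup_{0\le j\le N_k}\|U^j\|_{L^p(\Omega;Y)}<\infty$ for the scheme, both dominated by $1+\|u_0\|_{L^p(\Omega;Y)}$. These follow from contractivity of $S$ and $R_k$ on $Y$ together with the linear growth of $F$ and $G$ on $Y$, via a standard Burkholder--Davis--Gundy/Gronwall estimate. I would also record the temporal regularity $\|U(t)-U(s)\|_{L^p(\Omega;X)}\lesssim |t-s|^{\alpha}(1+\|u_0\|_{L^p(\Omega;Y)})$, which uses $Y\hookrightarrow D((-A)^{\alpha})$ for the drift/semigroup part and the It\^o isometry (here $\alpha\le 1/2$ is exactly what is needed) for the stochastic part. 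All of this I would want in every moment, since the maximal step will consume high moments.

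\emph{Error recursion and splitting.} Writing $e_j:=U(t_j)-U^j$ and using the mild formula over one step,
\[
U(t_j)=S(k)U(t_{j-1})+\int_{t_{j-1}}^{t_j}S(t_j-s)F(U(s))\ds+\int_{t_{j-1}}^{t_j}S(t_j-s)G(U(s))\dWHs,
\]
I would subtract the scheme to get $e_j=R_k e_{j-1}+\tau_j$, where $\tau_j$ collects (i) the semigroup-approximation defect $(S(k)-R_k)U(t_{j-1})$, (ii) local consistency defects with $U(t_{j-1})$ frozen, and (iii) the Lipschitz feedback defects $kR_k[F(U(t_{j-1}))-F(U^{j-1})]$ and $R_k[G(U(t_{j-1}))-G(U^{j-1})]\Delta W_j$. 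Unrolling gives $e_j=\sum_{i=1}^j R_k^{j-i}\tau_i$. Since $R_k$ is contractive, the deterministic pieces can be bounded pathwise, uniformly in $j$: the approximation defect is summed against the global bound $\|(S(t_m)-R_k^m)x\|_X\le C_\alpha k^\alpha\|x\|_Y$, and the consistency defects against the $Y$-bounds and the $\alpha$-H\"older regularity, each producing $k^\alpha$ up to a $T$-dependent constant.

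\emph{The maximal estimate (the crux).} The hard terms are the stochastic ones, where $\sup_j$ sits inside the expectation and no group cancellation is available. The key is a maximal inequality for $\Phi_j=\sum_{i=1}^j R_k^{j-i}\Gamma_{i-1}\Delta W_i$ with contractive $R_k$. For the Lipschitz feedback term ($\Gamma_{i-1}\sim e_{i-1}$, which couples back) it is essential to have a constant \emph{independent of $k$ and $N_k$}, so that the ensuing Gronwall step does not blow up; I would obtain this by dilating the contraction $R_k$ to a unitary $\mathcal{U}$ on a larger Hilbert space (Sz.-Nagy), so that $\mathcal{U}^{-i}\Gamma_{i-1}\Delta W_i$ become martingale differences and Doob's and Burkholder--Davis--Gundy inequalities apply, yielding
\[
\E\max_{0\le j\le m}\|\Phi_j\|_X^p\lesssim_p\E\Big(k\sum_{i=1}^m\|\Gamma_{i-1}\|_{\calL_2(H,X)}^2\Big)^{p/2}.
\]
For the \emph{consistency/data} stochastic term the rate $k^\alpha$ is produced the same way, but squeezing the maximum over the $N_k$ grid points out of a family of stochastic integrals whose operator weights are \emph{differences} $S(t_m)-R_k^m$ (so that the dilation/martingale trick no longer preserves the $k^\alpha$ smallness) is done by an elementary subgaussian estimate: the $L^q$ moments of each $\Phi_j$ grow like $\sqrt{q}$, and optimising at $q\sim\log N_k$ converts a union bound $N_k^{1/q}$ into the factor $\sqrt{\log N_k}=\sqrt{\log(T/k)}$. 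I expect this to be the principal difficulty: obtaining a $k$-independent constant for the feedback term while still extracting $k^\alpha$ with only a logarithmic loss for the data term; the logarithm is precisely the price of replacing the exact group factorisation by a mere contraction, and this is why it disappears for the exponential Euler method with a group.

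\emph{Closing the Gronwall.} Combining the pieces I would arrive at
\[
\phi_m:=\E\max_{0\le j\le m}\|e_j\|_X^p\le C_T\Big[\big(k^{\alpha}\sqrt{\log(T/k)}\big)^p\big(1+\|u_0\|_{L^p(\Omega;Y)}\big)^p+k\sum_{i=1}^m\phi_{i-1}\Big],
\]
where the decisive point is that the coefficient of the sum is independent of $k$ (thanks to the fixed-constant maximal inequality for the feedback term and to the elementary bound $(k\sum a_i)^{p/2}\le T^{p/2-1}k\sum a_i^{p/2}$ valid for $p\ge 2$), so that the logarithm remains confined to the additive data term. The discrete Gronwall lemma then gives $\phi_{N_k}\le C_T(1+\|u_0\|_{L^p(\Omega;Y)})^p\big(k^{\alpha}\sqrt{\log(T/k)}\big)^p$, which is exactly \eqref{eq:convRateintro} after taking $p$-th roots.
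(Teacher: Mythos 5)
Your plan reproduces the architecture of the paper's proof of Theorem \ref{thm:convergenceRate} in all its main ingredients: the $Y$-stability of the scheme via Sz.-Nagy dilation is exactly \eqref{eq:stabProof3rdTerm} in Proposition \ref{prop:stab}; your ``subgaussian moments $\sim\sqrt q$, optimise at $q\sim\log N_k$'' estimate is precisely Proposition \ref{prop:PropLogMainPaper}; the $\alpha$-H\"older regularity of $U$ in $L^p(\Omega;X)$ is Lemma \ref{lem:mildsolest}; and the closing step is the discrete Gronwall Lemma \ref{lem:KruseGronwall}. Your treatment of the Lipschitz feedback term (discrete convolution with $R_k$, dilated to a unitary, then Doob plus Burkholder--Davis--Gundy, with a $k$-independent constant) is a legitimate minor variant of the paper, which instead keeps the feedback under the continuous semigroup and applies Theorem \ref{thm:maxIneqQuasiContractive}. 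However, there is one genuine gap in your error bookkeeping.

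Your one-step recursion $e_j=R_ke_{j-1}+\tau_j$ with $\tau_j\ni(S(k)-R_k)U(t_{j-1})$, unrolled to $e_j=\sum_{i=1}^jR_k^{j-i}\tau_i$, produces defect summands that carry only the \emph{one-step} difference $S(k)-R_k$. The order-$\alpha$ hypothesis gives for a single step only $\|(S(k)-R_k)x\|_X\le C_\alpha k^\alpha\|x\|_Y$ (the case $j=1$ of Definition \ref{def:orderScheme}), so termwise estimation of this sum yields $N_k\cdot k^\alpha=Tk^{\alpha-1}$, which is useless. Your remedy --- ``summed against the global bound $\|(S(t_m)-R_k^m)x\|_X\le C_\alpha k^\alpha\|x\|_Y$'' --- is not available as stated: the telescoping identity
\begin{equation*}
S(t_j)-R_k^j=\sum_{i=1}^{j}R_k^{j-i}(S(k)-R_k)S(t_{i-1})
\end{equation*}
requires interleaved powers of $S$, i.e.\ it applies to $\sum_iR_k^{j-i}(S(k)-R_k)S(t_{i-1})u_0$ but not to $\sum_iR_k^{j-i}(S(k)-R_k)U(t_{i-1})$, and nothing in the hypotheses controls the bare partial sums $\sum_lR_k^l(S(k)-R_k)$; Abel summation does not rescue this either, since you only have $X$-H\"older (not $Y$-H\"older) increments of $U$. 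The missing idea is to expand each $U(t_{i-1})$ by the mild formula and resum, which reconstitutes the paper's direct comparison of \eqref{eq:mildSoldefU} with the discrete variation-of-constants formula \eqref{eq:VoCU}: the defect weights then become the \emph{global} differences $S(t_j-t_i)-R_k^{j-i}$ applied to data at time $t_i$, with the deterministic parts picking up a factor $k$ from the length-$k$ integrals (terms $M_{2,4}$, summing to $Tk^\alpha$) and the stochastic part becoming $M_{3,5}$ --- which is exactly, and only, where the $j$-dependent weights force Proposition \ref{prop:PropLogMainPaper} and the $\sqrt{\log(T/k)}$. Note your proposal even misassigns the log: in your recursion as written, global differences $S(t_m)-R_k^m$ never appear in a stochastic term; they are created precisely by the resummation you omit.

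A secondary gap: you assert that $U\in L^p(\Omega;C([0,T];Y))$ with bound $\lesssim 1+\|u_0\|_{L^p(\Omega;Y)}$ follows by ``a standard Burkholder--Davis--Gundy/Gronwall estimate''. Since $F$ and $G$ are only of linear growth, not Lipschitz, on $Y$, the fixed-point argument does not run in $Y$, and Gronwall cannot be applied to $\sup_t\|U(t)\|_Y$ before one knows this quantity is finite. The paper handles this in Theorem \ref{thm:wellposedY} by bounding the Picard iterates in $L^p_\calP(\Omega;L^2(0,\delta;Y))$ and extracting a weak limit identified with $U$; some such compactness or approximation argument is needed. For the discrete iterates your Gronwall argument is fine, since each $U^j\in L^p(\Omega;Y)$ by induction.
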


Theorem \ref{thm:intromain} applies to, among others,
\begin{itemize}
    \item exponential Euler (EE): $R_k = S(k)$;
    \item implicit Euler (IE): $R_k = (1-kA)^{-1}$;
    \item Crank--Nicolson (CN): $R_k = (2+kA)(2-kA)^{-1}$.
\end{itemize}
Higher-order implicit Runge-Kutta methods such as Radau methods, BDF(2), Lobatto IIA, IIB, and IIC, and some DIRK schemes are covered as well. The contractivity of the scheme $R$ in the case of (EE) and (IE) follows from the contractivity of the semigroup $S$. For other rational schemes, the contractivity of $R_k=r(kA)$ follows from the holomorphy of the corresponding rational function $r:\C_{-}\to\C$ and $|r(z)|\le 1$ for all $z \in \C_{-}$, which, in particular, is satisfied for $A$-acceptable or $A$-stable schemes. These assertions follow from functional calculus (see Proposition \ref{prop:functionalcalculus}).

In the above, one usually takes $Y$ to be a suitable intermediate space between $X$ and $D(A)$. In the special and important case that $Y = D(A)$, one can take $\alpha = \frac12$ for all of the aforementioned schemes. More general convergence rates can be found in Table \ref{tab:convrate}.

\begin{table}[ht]
\begin{tabular}{|c|c|c|c|}
  \hline
  & Exponential Euler & Implicit Euler & Crank--Nicolson \\ \hline
  $\alpha$ & $\beta\wedge \frac12$ & $\frac{\beta}{2}\wedge \frac12$ & $\frac{2\beta}{3}\wedge \frac12$ \\
  \hline
\end{tabular}
\caption{Convergence rates $\alpha$ in case $Y = D((-A)^{\beta})$ in Theorem \ref{thm:intromain}}\label{tab:convrate}
\end{table}

Up to the logarithmic factor, the estimate \eqref{eq:convRateintro} is optimal in the sense that the rate is the same as the rate for the initial value term on its own (i.e.\ with $F= 0$ and $G=0$). Theorem \ref{thm:intromain} follows from Theorem \ref{thm:convergenceRate}. In the case of the exponential Euler method, we show that the logarithmic factor can be omitted, see Corollary \ref{cor:expEulerMultiplicative}. In the case of additive noise, a similar result is obtained in Theorem \ref{thm:convRateAdditiveGeneral} for the range $\alpha\in (0,1]$ for semigroups and schemes which are not necessarily contractive.

The error estimate \eqref{eq:convRateintro} can be extended from the grid points to the full time interval $[0,T]$ assuming higher integrability of the initial values. Provided that $u_0 \in L^{p_0}(\Omega;Y)$ holds for some $p_0 \in (2,\infty)$ in addition to the assumptions of Theorem \ref{thm:intromain}, the pathwise uniform error on the full time interval can be estimated as (see Theorem \ref{thm:generalschemeuniform0T} below)
\begin{equation}\label{eq:introfulltime}
    \bigg\lVert\sup_{t \in [0,T]}\|U(t)-\tilde{U}(t)\|_X\bigg\rVert_{L^p(\Omega)}
        \le C_T(1+\|u_0\|_{L^{p_0}(\Omega;Y)}) k^{\alpha} \sqrt{\log(T/k)}
\end{equation}
for all $p \in [2,p_0)$ and the piecewise constant extension $\tilde{U}$ of $(U_j)_{j=0,\ldots,N_k}$ to $[0,T]$. This rate of convergence is known to be optimal already for scalar SDEs. In practice, this implies that the rate of convergence in the grid points is maintained already for a piecewise constant interpolation to other times. The error estimate relies on new optimal path regularity estimates of stochastic convolutions in suitable log-Hölder spaces, which will be presented in Proposition \ref{prop:pathRegMildSol}.

Applications to Schr\"odinger  and Maxwell equations are included in the main text (see Subsections \ref{subsec:SchroedingerAdd}, \ref{subsec:SchroedingerMult}, and \ref{subsec:Maxwell}).
Our results improve several results from the literature to more general schemes and general rates $\alpha$. In Section \ref{sec:rateOfConvergenceWave}, we include a setting for abstract wave equations, which was considered in \cite{Wang15} only for the exponential Euler method. We prove similar higher-order convergence rates for more general schemes and, in particular, recover \cite{Wang15} as a special case.

Let us emphasise that schemes involving rational approximations, such as the implicit Euler or the Crank--Nicolson method, are in the focus of our work. While we improve existing results for the exponential Euler method, the main novelty of our work lies in the possibility to treat other schemes with a semigroup approach. To the best of the authors' knowledge, the present work is the first contribution to pathwise uniform convergence rates for hyperbolic problems from a theoretical standpoint, both in the generality and for the concrete examples listed above. The main innovations are:
\begin{itemize}
	\item first optimal pathwise uniform convergence rates for the implicit Euler method, the Crank--Nicolson method, and any other contractive time discretisation scheme for hyperbolic SPDEs
	\item first use of Kato's framework for SPDEs to systematically treat hyperbolic problems
	\item maximal estimates for the convergence rate rather than pointwise estimates
    \item path regularity results allowing to consider the error on the full time interval
	\item novel pathwise uniform stability estimates
	\item convergence up to order $1$ for abstract wave equations for any contractive scheme
\end{itemize}

To make the above results applicable to implementable numerical schemes for SPDEs, one would additionally need a space discretisation. Since the main novelty of our work lies in the treatment of temporal discretisations, we will only consider the latter. Space discretisation is usually performed by means of spectral Galerkin methods \cite{JdNJW21, Milstein, KamBlo, WGT14}, finite differences \cite{ACQ20, CQS16,  GyMi09} or finite elements \cite{ACLW16, CLS13, KLP20, KLL12, KLL13, KLS10, Kruse}, sometimes combined with a discontinuous Galerkin approach \cite{BLM21, HHS22}, or other methods in space or space-time \cite{BL13, CuiHong, CHLZ19, DHW, HM19, le2023class}. 

A detailed understanding of the global Lipschitz setting is a quintessential step towards the treatment of local Lipschitz nonlinearities, which occur more frequently in practice. Our result should be seen as a first step, and we plan to continue our work on uniform strong errors in a local Lipschitz setting in the near future.

It was recently shown in \cite{CHJNW} that one can transfer \eqref{eq:pointwiseerrorestintro} to \eqref{eq:uniformerrorestintro} using some of the H\"older continuity in the $p$-th moment at the price of decreasing the convergence rate via the Kolmogorov-Chentsov theorem.
The strength of this lies in the generality of possible applications. However, to get practically useful bounds in concrete cases, there are limitations. A more detailed comparison is made in Remark \ref{rem:Kolmogorovapproach}. 

\subsection{Method of proof}

For the proof of the convergence rate, we need several ingredients. First of all, we need to prove that the mild solution actually is continuous with values in the subspace $Y$. This can be seen as the replacement of the usual regularisation one has for parabolic equations in spirit of the Kato setting explained before. Surprisingly, we do not need any Lipschitz assumptions on $F$ and $G$ as mappings from $Y$ to $Y$, but linear growth conditions suffice. This is crucial since Lipschitz estimates typically fail for Nemytskij mappings on Sobolev spaces of higher order (see \cite{DahlbergAffineLinear} and Remark \ref{rem:Lipschitz-growthY}).

A key estimate in the proof is a new maximal inequality for discrete convolutions. In particular, this inequality will be used to prove the stability of schemes such as \eqref{eq:Ujschementro}, i.e.,
\[\E \sup_{j\in \{0, \ldots, N_k\}} \|U^j\|^p_{Y} \leq C,\]
where $C$ is independent of the step size $k$. But it also plays a role in further estimates for the convergence.

A second key ingredient is another estimate recently proven in \cite{JanMarkMainPaper}, which allows estimating stochastic integral processes that contain a supremum
\begin{equation}\label{eq:logtermsintro}
\E \sup_{i\in \{1, \ldots, n\}} \sup_{t\geq 0}\Big\|\int_0^t  \Phi_i(s) d W_H(s)\Big\|_X^p
\end{equation}
by certain square functions with a logarithmic dependency on $n$ (see Proposition \ref{prop:PropLogMainPaper} below).

Finally, to prove the desired convergence rate of Theorem \ref{thm:intromain} we need to split the error obtained in \eqref{eq:Ujschementro} into
\[1 \text{ (initial value part)} + 4 \text{ (deterministic terms)} + 5 \text{ (stochastic terms)} = 10 \text{ terms}.\]
To estimate these terms we require precise estimates for $\|S(t_j) - R_k^j\|_{\calL(Y,X)}$, $\E\|U(t)-  U(s)\|^p$, stability estimates, and maximal estimates for continuous and discrete convolutions.

In the end, we derive an estimate for the error in terms of itself, and we apply a standard discrete Gronwall argument to deduce the desired error bound. In the case of the exponential Euler method, some terms disappear since $S(t_j) = R_k^j$, which makes it possible to omit the logarithmic terms originating from terms such as \eqref{eq:logtermsintro}.

\subsection{Overview}

\begin{itemize}
\item Section \ref{sec:prelim} contains the preliminaries for the rest of the paper.
\item Section \ref{sec:rateOfConvergenceAdditive} discusses the case of {\em additive noise} and semigroups that are not necessarily contractive. We prove convergence of rate $\alpha$ up to order one, in case the noise and data are regular enough. This is proved under the assumption that the numerical scheme $R_k$ approximates the semigroup at rate $\alpha$. Results are illustrated for the Schrödinger equation in which case the obtained results improve several bounds from the literature for the exponential Euler method, and provide the first uniform bounds for a large class of other numerical methods including the implicit Euler and the Crank--Nicolson method.
\item In Section \ref{sec:wellposed} we introduce the nonlinear evolution equation with {\em multiplicative noise} that we consider in the rest of the paper. After recalling a standard well-posedness result, we introduce a special case of the Kato setting and prove that the solution has regularity in the subspace $Y$ in case of linear growth in the $Y$-setting (see Theorem \ref{thm:wellposedY}).
\item Section \ref{sec:stability} is concerned with the stability of the discretisation schemes for the nonlinear evolution equation introduced in Section \ref{sec:wellposed}. The main stability result can be found in Proposition \ref{prop:stab} and only requires linear growth. Hence, it is applicable on both $X$ and $Y$.
\item Section \ref{sec:rateOfConvergenceMultNoise} is central in the paper, and here we prove Theorem \ref{thm:intromain} for the nonlinear evolution equation introduced in Section \ref{sec:wellposed} (see Theorem \ref{thm:convergenceRate} for the extended version). Moreover, we prove the error bound \eqref{eq:introfulltime} on the full time interval in Theorem \ref{thm:generalschemeuniform0T}. For this, we first establish a new optimal path regularity result for the solution in Proposition \ref{prop:pathRegMildSol}, which is of independent interest. In Subsections \ref{subsec:SchroedingerMult} and \ref{subsec:Maxwell}, we present applications to the Schrödinger equation as well as the Maxwell equation. A numerical simulation of the Schrödinger equation in Subsection \ref{subsec:numerical} confirms the analytical convergence rates obtained.
\item In Section \ref{sec:rateOfConvergenceWave}, we consider abstract stochastic wave equations, and obtain convergence rates up to order one (see Theorem \ref{thm:convergenceRateWave}). Although we are not in the setting of Section \ref{sec:rateOfConvergenceMultNoise}, an inspection of the proofs given there shows that certain terms behave better for abstract wave equations due to their second-order nature. Again, convergence rates are obtained for a large class of numerical schemes, and versions of \eqref{eq:introfulltime} are obtained. Examples with trace class, space-time white noise, and smooth noise are included and can be found in Subsections \ref{subsec:traceclassnoisewave}, \ref{subsec:exampleWavewhitenoise}, and \ref{subsec:smoothnoisewave}, respectively. All these results are new for schemes different from the exponential Euler method. Most notably, for smooth noise, we can explain the numerical convergence rates one sees in \cite[Figure 6.1]{Wang15} for the implicit Euler and the Crank--Nicolson method.
\end{itemize}

\subsubsection*{Acknowledgements}

The first author wishes to thank the DAAD for the financial support to visit TU Delft for one semester in 2022, and the colleagues in Delft for their hospitality. Both authors thank Jan van Neerven and Christian Seifert for helpful discussion and comments, and Martin Hutzenthaler for suggesting adding error estimates on the full time interval. The authors also thank Sonja Cox for indicating the optimal $\sqrt{\log(N)}$-dependency in Proposition \ref{prop:PropLogMainPaper} and Emiel Lorist for pointing out the simple short-cut for proving it. Further, the authors thank the anonymous referees for their feedback, which has helped to improve the quality and readability of the paper significantly.

\section{Preliminaries}
\label{sec:prelim}

\subsection*{Notation}
Throughout the paper, we fix a probability space $(\Omega, \filtrF, \PP)$ with filtration $(\filtrF_t)_{t \in [0,T]}$. Denote the progressive $\sigma$-algebra on $(\Omega, \filtrF, \PP)$ by $\calP$ and the progressively measurable subspace of a given space by the index $\calP$. Moreover, $H$, $X$, and $Y$ denote Hilbert spaces, where $H$ is used to define the $(\filtrF_t)_{t \in [0,T]}$-cylindrical Brownian motion $W_H$. Subsequently, the space of Hilbert--Schmidt operators from $H$ to $X$ is denoted by $\calL_2(H,X)$ and the Borel $\sigma$-algebra of $X$ by $\calB(X)$. Subsequently, we consider the final time $T>0$ to be fixed and consider a uniform time grid with $t_j = jk$, where $k>0$ is the time step and $j=0, \ldots, N_k$ with $N_k = T/k \in \N$, and define $\lfloor t \rfloor \ce \max\{t_j:\,t_j\le t\}$ for $t \in [0,T]$. By $(S(t))_{t\geq 0}$, we denote a $C_0$-semigroup and by $(R_k)_{k>0}$ a numerical scheme that approximates $S$. For a given evolution equation, $(U(t))_{t\in [0,T]}$ is the exact solution and $U^j$ the numerical solution approximating $U$ at time $t_j$ for $j=0,\ldots,N_k$. For $f$ and $g$ in the respective spaces, let $\|f\|_{p,q,Z} \ce \|f\|_{L^p(\Omega;L^q(0,T;Z))}$ and $\nn g\nn_{p,q,Z} \ce \|g\|_{L^p(\Omega;L^q(0,T;\calL_2(H,Z)))}$. We use the notation $f(x) \lesssim g(x)$ to denote that there is a constant $C \ge 0$ such that for all $x$ in the respective set, $f(x) \le C g(x)$.

\subsection{Stochastic integration}\label{sec:SI}

The space $\calL_2(H,X)$ of Hilbert--Schmidt operators from $H$ to $X$ consists of all bounded operators $R:H \to X$ such that
\begin{equation*}
    \|R\|_{\gHX}^2 \ce \sum_{i \in I}\|Rh_i\|_X^2 <\infty,
\end{equation*}
where $(h_i)_{i \in I}$ is an orthonormal basis of $H$. If $R\in \calL_2(H,X)$, the sum contains at most countably many non-vanishing terms.
For $R\in \gHX$, $(h_i)_{i \in I}$ as before, and $\gamma = (\gamma_n)_{n\geq 1}$ centered i.i.d.\ normally distributed random variables we define
\begin{equation}\label{eq:convradonW}
R \gamma = \sum_{n\geq 1} \gamma_n R h_n,
\end{equation}
where the convergence is in $L^p(\Omega;X)$ for $p < \infty$ and almost surely (see \cite[Corollary 6.4.12]{AnalysisBanachSpacesII}).

In the stochastic integrals appearing in expressions such as \eqref{eq:logtermsintro}, the integrator is an $H$-cylindrical Brownian motion to take $\calL_2(H,X)$-valued integrands into account. An \emph{$H$-cylindrical Brownian motion} is a mapping $W_H:L^2(0,T;H) \to L^2(\Omega)$ such that
\begin{enumerate}[label=(\roman*)]
    \item $W_H b$ is Gaussian for all $b \in L^2(0,T;H)$,
    \item $\E(W_H b_1 \cdot W_H b_2) = \langle b_1,b_2 \rangle_{L^2(0,T;H)}$ for all $b_1, b_2 \in L^2(0,T;H)$,
    \item $W_H b$ is $\F_t$-measurable for all $b\in L^2(0,T;H)$ with support in $[0,t]$,
    \item $W_H b$ is independent of $\F_s$ for all $b\in L^2(0,T;H)$ with support in $[s,T]$,
\end{enumerate}
where we include a complex conjugate on $W_H b_2$ in case we want to use a complex $H$-cylindrical Brownian motion.
For $h \in H$ and $t \in [0,T]$, we use the shorthand notation $W_H(t)h \ce W_H(\1_{(0,t)} \otimes h)$. Consequently, $(W_H(t)h)_{t \in[0,T]}$ is a Brownian motion for each fixed $h \in H$, which is standard if and only if $\|h\|_H=1$. In the special case $H=\R$, this notion coincides with real-valued Brownian motions. We refer to an $H$-valued stochastic process $(W(t))_{t \geq 0}$ as a \emph{$Q$-Wiener process} if $W(0)=0$, $W$ has continuous trajectories and independent increments, and $W(t)-W(s)$ is normally distributed with parameters $0$ and $(t-s)Q$ for $t\geq s \geq 0$. The operator $Q$ is in $\calL(H)$, positive self-adjoint, and of trace class. One can show that $W$ is a $Q$-Wiener process if and only if there exists an $H$-cylindrical Brownian motion $W_H$ such that $Q^{1/2}W_H\ce\sum_{n\geq 1} Q^{1/2} h_n W_H(t) h_n = W(t)$ for an orthonormal basis $(h_n)_{n \ge 1}$ of $H$ (cf. \eqref{eq:convradonW}). To consider an equation such as \eqref{eq:stEvolEqnintro} with a $Q$-Wiener process $W$ instead of a cylindrical Brownian motion, one can replace $G$ by $G Q^{1/2}$ and reduce to the cylindrical case.
For further properties of $H$-cylindrical Brownian motions, $Q$-Wiener processes and the Itô integral, we refer to \cite{DaPratoZabczyk14}.

To estimate Itô integrals w.r.t. such $H$-cylindrical Brownian motions, the Burkholder--Davis--Gundy inequalities are particularly helpful. They imply that
\begin{equation}\label{eq:BDG}
\bigg(\E \sup_{t \in [0,T]} \left\| \int_0^t g(s) \dWHs \right\|_X^p\bigg)^{1/p} \leq B_p \|g\|_{L^p(\Omega;L^2(0,T;\gHX))}.
\end{equation}
In particular, one can take $B_2 = 2$ (by Doob's maximal inequality \cite[Thm.~3.2.2]{AnalysisBanachSpacesI} and the Itô isometry) and $B_p = 4\sqrt{p}$ for $p> 2$. Indeed, this follows by combining the scalar result of  \cite[Theorem A]{CarlenKree} and \cite[Theorem 2]{Ren} with the reduction technique in \cite[Theorem 3.1]{KallenbergSz} and the simple estimate $\|(\xi^2+\eta^2)^{1/2}\|_p\leq (\|\xi\|_p^2 + \|\eta\|_p^2)^{1/2}$ valid for real-valued random variables $\xi$ and $\eta$ and $p\in [2, \infty)$.

\begin{definition}
    A $C_0$-semigroup $(S(t))_{t \ge 0}$ is said to be \textit{quasi-contractive} with parameter $\lambda\ge 0$ if $\|S(t)\| \le e^{\lambda t}$ for all $t \ge 0$.
\end{definition}

The following maximal inequality for stochastic convolutions follows from \cite{HausSei}, where the contractive case is treated. The quasi-contractive case follows from a scaling argument.
\begin{theorem}
\label{thm:maxIneqQuasiContractive}
Let $X$ be a Hilbert space and let $(S(t))_{t \ge 0}$ be a quasi-contractive semigroup on $X$ with parameter $\lambda \ge 0$. Then for $p \in [2,\infty)$
    \begin{equation*}
        \E \sup_{t \in [0,T]} \left\| \int_0^t S(t-s)g(s) \dWHs \right\|_X^p \le \ee^{p\lambda T} B_p^p \|g\|_{L^p(\Omega;L^2(0,T;\gHX))}^p,
    \end{equation*}
    where $B_{p}$ is the constant from \eqref{eq:BDG}. In particular, one can take $B_2 = 2$ and $B_p = 4\sqrt{p}$ for $2 < p <\infty$.
\end{theorem}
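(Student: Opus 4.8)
The plan is to reduce to the contractive case treated in \cite{HausSei} via a multiplicative rescaling of the semigroup. Since $(S(t))_{t\ge 0}$ satisfies $\|S(t)\| \le \ee^{\lambda t}$, I would introduce the rescaled family $\tilde{S}(t) \ce \ee^{-\lambda t} S(t)$. A direct check shows $\tilde{S}(t+s) = \ee^{-\lambda(t+s)}S(t)S(s) = \tilde{S}(t)\tilde{S}(s)$ and $\tilde{S}(0) = I$, and strong continuity is inherited from $S$, so $(\tilde{S}(t))_{t \ge 0}$ is again a $C_0$-semigroup; moreover $\|\tilde{S}(t)\| = \ee^{-\lambda t}\|S(t)\| \le 1$, so $\tilde{S}$ is contractive. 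This is the key structural observation that makes the reduction possible.

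Next, I would rewrite the stochastic convolution in terms of $\tilde{S}$. Writing $S(t-s) = \ee^{\lambda(t-s)}\tilde{S}(t-s)$ and pulling the deterministic factor $\ee^{\lambda t}$ outside the integral gives
\[\int_0^t S(t-s) g(s)\dWHs = \ee^{\lambda t}\int_0^t \tilde{S}(t-s)\,\tilde{g}(s)\dWHs,\]
where $\tilde{g}(s) \ce \ee^{-\lambda s} g(s)$ is again progressively measurable, so the right-hand integral is well defined. Taking norms and using $\ee^{\lambda t} \le \ee^{\lambda T}$ for $t \in [0,T]$ (here $\lambda \ge 0$ enters), I obtain the pointwise-in-$\omega$ bound
\[\sup_{t \in [0,T]}\Big\|\int_0^t S(t-s) g(s)\dWHs\Big\|_X \le \ee^{\lambda T}\sup_{t \in [0,T]}\Big\|\int_0^t \tilde{S}(t-s)\tilde{g}(s)\dWHs\Big\|_X.\]
Raising to the $p$-th power and taking expectations then isolates a maximal quantity for the contractive semigroup $\tilde{S}$ on the right.

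Finally, I would apply the contractive maximal inequality of \cite{HausSei} to $\tilde{S}$ and $\tilde{g}$, which bounds that quantity by $B_p^p\|\tilde{g}\|_{L^p(\Omega;L^2(0,T;\gHX))}^p$. To conclude, I compare $\tilde{g}$ with $g$: since $\ee^{-\lambda s} \le 1$ for $s \ge 0$ and $\lambda \ge 0$, one has $\|\tilde{g}(s)\|_{\gHX} \le \|g(s)\|_{\gHX}$ pointwise, hence $\|\tilde{g}\|_{L^p(\Omega;L^2(0,T;\gHX))} \le \|g\|_{L^p(\Omega;L^2(0,T;\gHX))}$. Combining the three displays yields the claimed estimate with constant $\ee^{p\lambda T}B_p^p$.

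As for difficulty, there is no genuine obstacle once the rescaling is set up: all the analytic content sits in the contractive case imported from \cite{HausSei}. The only point requiring care is the bookkeeping of the exponential weights — ensuring that the factor $\ee^{\lambda t}$ is controlled by $\ee^{\lambda T}$ on the correct range and that the weight $\ee^{-\lambda s}$ attached to $g$ is bounded by $1$ rather than accidentally amplifying the integrand; both steps rely crucially on $\lambda \ge 0$. The stated values $B_2 = 2$ and $B_p = 4\sqrt{p}$ carry over unchanged, since the rescaling leaves the constant in the contractive inequality untouched.
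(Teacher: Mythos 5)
Your proposal is correct and is precisely the argument the paper has in mind: the paper states the result with the remark that ``the quasi-contractive case follows from a scaling argument'' applied to the contractive case of \cite{HausSei}, and your rescaling $\tilde{S}(t) = \ee^{-\lambda t}S(t)$, $\tilde{g}(s) = \ee^{-\lambda s}g(s)$, together with the bounds $\ee^{\lambda t} \le \ee^{\lambda T}$ and $\ee^{-\lambda s}\le 1$, is exactly that scaling argument carried out in full. The bookkeeping of the exponential weights and the observation that the constant $B_p$ is unaffected are both correct.
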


Next, we state a special maximal inequality, which will be needed to estimate stochastic integral terms without semigroups. A similar result with constant of order $\log(N)$ can be found in \cite[Proposition 2.7]{JanMarkMainPaper}. 
\begin{proposition}
\label{prop:PropLogMainPaper}
    Let $X$ be a Hilbert space and let $0 < p < \infty$. Let $\Phi \ce (\Phi^{(j)})_{j=1}^N$ be a finite sequence in $L_\calP^p(\Omega; L^2(0,T;\gHX))$ and set
    \begin{equation*}
        I_N^\Phi(p) \ce \bigg( \E \sup_{t \in [0,T], j \in \{1, \ldots, N\}} \bigg\| \int_0^t \Phi_s^{(j)} \dWHs\bigg\|_X^p\bigg)^{1/p}.
    \end{equation*}
    Then for some $K_p \geq 0$,
    \begin{equation*}
        I_N^\Phi(p) \le K_p \max\big\{\sqrt{\log(N)}, \sqrt{p}\big\} \|\Phi\|_{L^p(\Omega;\ell_N^\infty(L^2(0,T;\gHX)))}\quad \text{ if } N \ge 2.
    \end{equation*}
    If $2 \le p <\infty$, this estimate holds with $K_p=K \ce 4\exp(1+ \frac{1}{2\mathrm{e}})\approx 13.07$, which is $p$-independent.
\end{proposition}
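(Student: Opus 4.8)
The plan is to reduce the joint supremum over $t\in[0,T]$ and $j\in\{1,\dots,N\}$ to a maximum of $N$ \emph{individually sub-Gaussian} quantities, and then to pay only the optimal $\sqrt{\log N}$ for taking that maximum. Throughout, write $M^{(j)}_t=\int_0^t\Phi^{(j)}_s\dWHs$, $S_j=\sup_{t\in[0,T]}\|M^{(j)}_t\|_X$, and $c_j=\|\Phi^{(j)}\|_{L^2(0,T;\gHX)}$, so that $I_N^\Phi(p)=\|\max_j S_j\|_{L^p(\Omega)}$ while the right-hand side equals $\|\max_j c_j\|_{L^p(\Omega)}$. The first ingredient is a \emph{conditional} sub-Gaussian tail bound for each fixed $j$,
\begin{equation*}
\PP\big(S_j>\lambda,\ c_j\le r\big)\le 2\exp\!\Big(-\tfrac{\lambda^2}{2r^2}\Big),\qquad \lambda,r>0 .
\end{equation*}
I would obtain this by stopping $M^{(j)}$ at $\tau_r=\inf\{t:\int_0^t\|\Phi^{(j)}_s\|_{\gHX}^2\ds\ge r^2\}$, on which the quadratic variation is bounded by $r^2$, and invoking an exponential (Pinelis-type) maximal inequality for Hilbert-space-valued continuous martingales; on the event $\{c_j\le r\}$ one has $\tau_r\ge T$, so the stopped and unstopped integrals agree up to time $T$. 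A union bound over the $N$ indices then upgrades this to the joint tail $\PP(\max_j S_j>\lambda,\ \max_j c_j\le r)\le 2N\exp(-\lambda^2/2r^2)$ for all $\lambda,r>0$.

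The second step turns this joint tail into the moment estimate. Splitting the event $\{\max_j S_j>\lambda\}$ according to whether $\max_j c_j$ exceeds a truncation level $r$ gives, after optimising $r$,
\begin{equation*}
\PP\Big(\max_j S_j>\lambda\Big)\le\inf_{r>0}\Big(2N\,e^{-\lambda^2/2r^2}+\PP\big(\max_j c_j>r\big)\Big).
\end{equation*}
Integrating $\int_0^\infty p\lambda^{p-1}(\cdots)\,\rmd\lambda$, the $\PP(\max_j c_j>r)$-piece with $r$ proportional to $\lambda$ reconstructs $\beta^p\,\E(\max_j c_j)^p$ with $\beta\sim\sqrt{2\log(2N)}$, which is the main term; the Gaussian remainder $2N e^{-\lambda^2/2r^2}$ must be checked to contribute only a lower-order amount. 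This tail integration is valid for every $0<p<\infty$ (yielding the general $K_p$), and for $2\le p<\infty$ one tracks the explicit numerical constants: the factor $4$ comes from the Burkholder--Davis--Gundy constant $B_p=4\sqrt p$ in \eqref{eq:BDG}, and the factor $e^{1+1/2e}$ from the Gaussian integral, producing the $p$-independent $K=4\exp(1+\tfrac{1}{2e})$. The maximum $\max\{\sqrt{\log N},\sqrt p\}$ reflects the two regimes: for $p\le\log N$ the $\sqrt{\log N}$ from the union bound dominates, whereas for $p>\log N$ the prefactor $N^{1/p}\le e$ is harmless and the intrinsic $\sqrt p$-growth of $L^p$-moments of a single stochastic integral takes over.

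The hard part is this second step, because the normaliser $\max_j c_j$ is random and correlated with $\max_j S_j$: one cannot simply divide by it, and a crude use of BDG at an exponent $q\approx\log N$ would replace $\|\max_j c_j\|_{L^p}$ on the right by its genuinely larger $L^q$-norm. Obtaining the sharp $\sqrt{\log N}$ — instead of the $\log N$ of \cite[Proposition 2.7]{JanMarkMainPaper} — hinges on exploiting the conditional \emph{Gaussian} tail (not merely $L^q$-moment comparisons) to decouple $\max_j S_j$ from $\max_j c_j$, and on the bookkeeping of the truncation level $r=r(\lambda)$: a single $\lambda$-independent choice leaves a non-integrable Gaussian remainder, so one either splits the $\lambda$-range or lets $r$ depend on $\lambda$, and it is exactly here that the optimal constant and the transition at $p\approx\log N$ are pinned down. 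A cleaner route that bypasses the layer-cake optimisation is to read the whole quantity as a single $\ell_N^\infty(X)$-valued stochastic integral and to invoke the martingale type-$2$ constant of $\ell_N^\infty$, which is itself of order $\sqrt{\log N}$; this is the natural vector-valued-analysis shortcut and is the form I would ultimately prefer for the sharp constant.
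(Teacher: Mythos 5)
Your step 1 is fine: the conditional sub-Gaussian bound for Hilbert-space-valued stochastic integrals via stopping at $\tau_r$ is correct (Pinelis-type exponential inequalities hold in $2$-smooth spaces with clean constants in the Hilbert case), and the union bound over $j$ is harmless. The genuine gap is in your second step, and it is not merely a matter of unfinished bookkeeping: the moment estimate \emph{cannot} be derived from the joint tail bound
\begin{equation*}
\PP\Big(\max_j S_j>\lambda,\ \max_j c_j\le r\Big)\le 2N\exp\big(-\lambda^2/(2r^2)\big),\qquad \lambda,r>0,
\end{equation*}
by any layer-cake argument, however cleverly you let $r$ depend on $\lambda$ or split the $\lambda$-range. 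To see this, forget the martingale origin and take a pair $(Z,W)$ with $W$ uniformly distributed on $K$ atoms $a_1\ll a_2\ll\dots\ll a_K$ (each of probability $1/K$, with ratios $a_{i+1}/a_i$ enormous) and $Z\ce \beta W$ with $\beta\ce\sqrt{2\log(2NK)}$. One checks directly that $\PP(Z>\lambda,\,W\le r)\le 2N\ee^{-\lambda^2/(2r^2)}$ for \emph{all} $\lambda,r>0$ (if an atom lies in $(\lambda/\beta,r]$ then $\lambda<\beta r$, so the right-hand side exceeds $2N\ee^{-\beta^2/2}=1/K$; several atoms force $\lambda/r$ to be minuscule), yet $\|Z\|_p=\sqrt{2\log(2NK)}\,\|W\|_p$ with $K$ unconstrained. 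So the tail hypothesis — which is exactly the content of your displayed $\inf_r$ inequality — is strictly weaker than the conclusion, and any valid proof must re-use the martingale structure a second time, e.g.\ through a good-$\lambda$ inequality obtained by restarting the integrals at the first crossing of level $\lambda$ (note the crucial factor $\PP(Z>\lambda)$ on the right of a good-$\lambda$ bound, which your union-bounded tail lacks), or through a Lenglart-type domination. You flag this as ``the hard part'', but the specific mechanism you commit to is the one that provably fails.

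Your preferred shortcut — reading everything as a single $\ell_N^\infty(X)$-valued integral and invoking the martingale type $2$ constant of $\ell_N^\infty$ — does not rescue the sharp constant either: you pay $\tau_2(\ell_N^\infty)\sim\sqrt{\log N}$ once in the type-$2$/BDG inequality, whose right-hand side then carries $\gamma$-norms of the form $\gamma(H,\ell_N^\infty(X))$, and a second $\sqrt{\log N}$ (a Gaussian maximum over $N$ coordinates) when comparing those with the norm $\ell_N^\infty(L^2(0,T;\gHX))$ appearing in the statement; the product is of order $\log N$, i.e.\ precisely the earlier bound of \cite[Proposition 2.7]{JanMarkMainPaper} that this proposition improves. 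The paper's actual proof is an elementary two-line shortcut that sidesteps all of this: bound the maximum over $j$ by the $\ell^{q}$-sum with $q=p_N=\log(N)$, apply \eqref{eq:BDG} with $B_q\le 4\sqrt{q}$ termwise, and observe that $\sqrt{q}\,N^{1/q}=\ee\sqrt{\log(N)}$; then Lenglart's inequality \cite[Theorem 2.2]{GeiSch} transfers the estimate down to exponents $p<p_N$ \emph{with the right-hand side in $L^p(\Omega)$} — exactly the decoupling ingredient your truncation argument is missing — while Minkowski's inequality handles $p>p_N$ and $2\le N\le 7$ is settled by the triangle inequality. Incidentally, your accounting of the constant is also off: in the actual proof the factor $\exp(1+\frac{1}{2\ee})$ arises from $N^{1/p_N}=\ee$ together with the Lenglart factor $r^{-r/p}\le\exp(\frac{1}{2\ee})$, not from a Gaussian integral.
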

The above result was pointed out to the authors by Sonja Cox. The short proof below was pointed out to us by Emiel Lorist. 
\begin{proof}
To prove the result, by approximation, we may assume that each $\Phi^{(j)}$ is contained in $L^\infty(\Omega;L^2(0,T;\gHX))$. First, consider $p_N=\log(N)$ with $N\geq 8$. Then using $\ell^{p_N} \hookrightarrow \ell^\infty$ contractively, and the Burkholder--Davis--Gundy inequalities with $B_p \leq 4\sqrt{p}$ in $X$ (see \eqref{eq:BDG}), we find
\begin{align*}
I_N^\Phi(p_N) &\leq \bigg( \sum_{j=1}^N \E \sup_{t \in [0,T]} \bigg\| \int_0^t \Phi_s^{(j)} \dWHs\bigg\|_X^{p_N}\bigg)^{1/p_N} \leq 4\sqrt{p_N} \bigg( \sum_{j=1}^N \E\|\Phi^{(j)}\|_{L^2(0,T;\gHX)}^{p_N}\bigg)^{1/p_N}
\\ & \leq 4\sqrt{p_N} N^{1/p_N} \|\Phi\|_{L^{p_N}(\Omega;\ell_N^\infty(L^2(0,T;\gHX)))}.
\end{align*}
Since $\sqrt{p_N} N^{1/p} = \mathrm{e}\sqrt{\log(N)} $, this proves the result for $p = p_N$. To deduce the result for arbitrary $p\in (0,p_N)$ note that by
Lenglart's inequality for increasing functions \cite[Theorem 2.2]{GeiSch} and with $r = p/p_N\in (0,1)$
\begin{align*}
I_N^\Phi(p)^p  = I_N^\Phi(r p_N)^{rp_N }  & \leq r^{-r} \big(4\mathrm{e}\sqrt{\log(N)}\big)^{p} \E \|\Phi\|_{\ell_N^\infty(L^2(0,T;\gHX))}^{rp_N } 
\\ & = r^{-r} \big(4\mathrm{e}\sqrt{\log(N)}\big)^{p} \|\Phi\|_{L^p(\Omega;\ell_N^\infty(L^2(0,T;\gHX)))}^p.
\end{align*}
Taking $1/p$-th powers, the result follows. Moreover, for $p\in [2, p_N)$ the result with the stated constant follows after using $r^{-r/p} = (\frac{p_N}{p})^{1/p_N}\leq (\frac{p_N}{2})^{1/p_N}\leq \exp(\frac{1}{2\mathrm{e}})$.

If $p\in(p_N,\infty)$, then using Minkowski's inequality, we obtain  
\begin{align*}
I_N^\Phi(p)^p &\leq \E \bigg|\sum_{j=1}^N  \sup_{t \in [0,T]} \bigg\| \int_0^t \Phi_s^{(j)} \dWHs\bigg\|_X^{p_N}\bigg|^{p/p_N} \leq \bigg(\sum_{j=1}^N  \bigg|\E \sup_{t \in [0,T]} \bigg\| \int_0^t \Phi_s^{(j)} \dWHs\bigg\|_X^{p}\bigg|^{p_N/p}\bigg)^{p/p_N}
\\ & \leq N^{p/p_N} \sup_{j\in \{1, \ldots N\}}\E \sup_{t \in [0,T]} \bigg\| \int_0^t \Phi_s^{(j)} \dWHs\bigg\|_X^{p} \leq (4\mathrm{e} \sqrt{p})^p  \sup_{j\in \{1, \ldots N\}} \E\|\Phi^{(j)}\|_{L^2(0,T;\gHX)}^{p},
\end{align*}
where we used \eqref{eq:BDG}  once more. Taking $1/p$-th powers and pulling the supremum over $j$ inside the expectation, the required estimate follows.

It remains to comment on the case $2\leq N\leq 7$. Again by Lenglart's inequality, it suffices to consider $p\in [2, \infty)$. In this case, the triangle inequality and \eqref{eq:BDG} give
\begin{align*}
I_N^\Phi &\leq \bigg(\sum_{j=1}^N \E \sup_{t \in [0,T]} \bigg\| \int_0^t \Phi_s^{(j)} \dWHs\bigg\|_X^p\bigg)^{1/p}
\leq B_p \bigg(\sum_{j=1}^N \|\Phi^{(j)}\|_{L^p(\Omega;L^2(0,T;\gHX))}^p\bigg)^{1/p}\\
&\leq 4\sqrt{p} N^{1/p} \|\Phi\|_{L^p(\Omega;\ell_N^\infty(L^2(0,T;\gHX)))}
\\ & \leq 4\exp\Big(1+ \frac{1}{2\mathrm{e}}\Big)\max\{\sqrt{\log(N)}, \sqrt{p}\} \|\Phi\|_{L^p(\Omega;\ell_N^\infty(L^2(0,T;\gHX)))},
\end{align*}
where the last estimate follows from $N^{1/p}\leq \sqrt{7} \leq \exp(1+ \frac{1}{2\mathrm{e}})$ for $2\leq N\leq 7$.
\end{proof}

\subsection{Approximation of semigroups and interpolation}

An integral part of approximating solutions of a stochastic evolution equation concerns the approximation of a semigroup by some scheme. The following definition allows us to quantify the approximation behaviour.

\begin{definition}
\label{def:orderScheme}
Let $X$ be a Hilbert space. An \emph{$\calL(X)$-valued scheme} is a function $R: [0,\infty) \to \calL(X)$. We denote $R_k \ce R(k)$ for $k \ge 0$. Let $Y$ be a Hilbert space which is continuously and densely embedded in $X$. If $A$ generates a $C_0$-semigroup $(S(t))_{t \ge 0}$ on $X$, an $\calL(X)$-valued scheme $R$ is said to \emph{approximate $S$ to order $\alpha>0$ on $Y$} or, equivalently, \emph{$R$ converges of order $\alpha$ on $Y$} if for all $T>0$ there is a constant $C_\alpha \ge 0$ such that
\begin{equation*}
    \|(S(jk)-R_k^j)u\|_X \le C_\alpha k^\alpha\|u\|_Y
\end{equation*}
for all $u \in Y$, $k>0$, and $j \in \N$ such that $jk \in [0,T]$.
An $\calL(X)$-valued scheme $R$ is said to be \emph{contractive} if $\|R_k\|_{\calL(X)} \le 1$ for all $k \ge 0$.
\end{definition}
Subsequently, we will omit the index for norms in the space $X$.
In the absence of nonlinear and noise terms, the following schemes approximate $S$ to different orders:

\begin{itemize}
\item exponential Euler (EE): $R_k = S(k)$, any order $\alpha >0$ on $X$;
\item implicit Euler (IE): $R_k = (1-kA)^{-1}$, order $\alpha \in (0,1]$ on $D((-A)^{2\alpha})$;
\item Crank--Nicolson (CN): $R_k = (2+kA)(2-kA)^{-1}$, order $\alpha \in (0,2]$ on $D((-A)^{3\alpha/2})$ provided that $(S(t))_{t \ge 0}$ is contractive.
\end{itemize}
Contractivity of the semigroup and the approximating scheme play a central role in our theory. While the contractivity of (EE) is immediate from the contractivity of the semigroup, we state a useful sufficient condition to verify the contractivity of rational schemes such as (IE) and (CN) below. One of the standard assumptions in the theory of semigroup approximation is that the scheme $R$ stems from a rational function $r:\C_{-}\to\C$ with $|r(z)|\le 1$ for all $z$ in the negative open halfplane $\C_-$. Under an additional consistency condition, this condition is known as A-acceptability \cite{brenner1979rational}, and it certainly holds for $A$-stable schemes \cite{DahlquistAstable}.

\begin{proposition}
\label{prop:functionalcalculus}
    Let $A$ be the generator of a $C_0$-semigroup of contractions on a Hilbert space $X$. Suppose that $r:\C_-\to \C$ is holomorphic, $|r(z)|\leq 1$ for all $z \in \C_{-}$, and let $R_k = r(kA)$ for $k>0$. Then $R$ is contractive.
\end{proposition}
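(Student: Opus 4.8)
The plan is to reduce the statement to von Neumann's inequality on the unit disc by means of the Cayley transform. First I would reduce to a single step size: since $kA$ generates the rescaled contraction semigroup $(S(kt))_{t\ge 0}$ for every fixed $k>0$, it suffices to show $\|r(B)\|_{\calL(X)}\le 1$ whenever $B$ generates a $C_0$-contraction semigroup on $X$ and $r:\C_{-}\to\C$ is holomorphic with $|r|\le 1$; the claim then follows with $B=kA$. I would then introduce the Möbius transformation $\psi(z)\ce\frac{z+1}{z-1}$, which maps $\C_{-}$ biholomorphically onto the open unit disc $\mathbb{D}$, maps $\mathbb{D}$ back onto $\C_{-}$, and is an involution, $\psi\circ\psi=\mathrm{id}$. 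By the Lumer--Phillips theorem $B$ is dissipative with $1\in\rho(B)$, so the cogenerator $T\ce\psi(B)=(B+I)(B-I)^{-1}=I+2(B-I)^{-1}$ is bounded on all of $X$. It is a contraction: writing $x=(B-I)u$ with $u\in\Dom(B)$, one has $Tx=(B+I)u$ and hence $\|Tx\|^2-\|x\|^2=\|(B+I)u\|^2-\|(B-I)u\|^2=4\,\Re\langle Bu,u\rangle\le 0$ by dissipativity.

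Next I would transport the symbol to the disc. Setting $g\ce r\circ\psi:\mathbb{D}\to\C$, the function $g$ is holomorphic and, since $\psi(\mathbb{D})=\C_{-}$ and $|r|\le 1$ on $\C_{-}$, satisfies $\|g\|_{H^\infty(\mathbb{D})}\le 1$. Von Neumann's inequality for Hilbert space contractions, together with its extension to bounded holomorphic symbols (the Sz.-Nagy--Foias $H^\infty$ functional calculus), then yields $\|g(T)\|_{\calL(X)}\le\|g\|_{H^\infty(\mathbb{D})}\le 1$. Finally, the Cayley transform intertwines the two functional calculi, and by the composition rule and the involution property of $\psi$,
\[
g(T)=(r\circ\psi)(\psi(B))=r\bigl(\psi(\psi(B))\bigr)=r(B),
\]
so that $\|r(B)\|_{\calL(X)}=\|g(T)\|_{\calL(X)}\le 1$, which is the assertion.

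The hard part will be making the two functional calculi and the intertwining identity $g(T)=r(B)$ rigorous for a \emph{general} bounded holomorphic symbol $r$. This needs (i) the half-plane $H^\infty$ calculus for $B$ and the disc $H^\infty$ calculus for $T$, and (ii) the $H^\infty$ von Neumann inequality for a contraction $T$ that need not be completely non-unitary: one splits $X$ into the unitary and completely non-unitary parts of $T$, treating the former by the spectral theorem and the latter by the Sz.-Nagy--Foias dilation. For the rational schemes actually used in the applications (EE, IE, CN and the Runge--Kutta methods) all of this collapses to the elementary case. There $g=r\circ\psi$ is again rational with all poles outside $\overline{\mathbb{D}}$ (a pole $p$ of $r$ lies in $\C_{+}$, so $\psi(p)$ lies outside $\overline{\mathbb{D}}$), hence $g$ is a uniform limit of polynomials on $\overline{\mathbb{D}}$, and the classical polynomial von Neumann inequality together with continuity of the Riesz--Dunford calculus already gives $\|g(T)\|\le\|g\|_{\infty,\overline{\mathbb{D}}}\le 1$. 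The intertwining then reduces to substituting $T=(B+I)(B-I)^{-1}$ into the partial-fraction expansion of $g$ and recognising the resulting resolvents of $B$ as those defining $r(B)$.
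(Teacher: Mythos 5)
Your argument is correct in substance, but it takes a genuinely different route from the paper: the paper disposes of the proposition in a single line by invoking the bounded $H^\infty$-calculus of $-A$ as the negative generator of a contraction semigroup on a Hilbert space (citing Thm.~10.2.24 of \emph{Analysis in Banach Spaces II}), whereas you in effect reprove that cited theorem. Your individual steps are sound: the reduction to a single operator $B=kA$ via rescaling; the contractivity of the cogenerator $T=(B+I)(B-I)^{-1}$ through the dissipativity identity $\|(B+I)u\|^2-\|(B-I)u\|^2=4\Re\langle Bu,u\rangle\le 0$ (with $1\in\rho(B)$ from Lumer--Phillips, so $T$ is everywhere defined and bounded); the transfer of the symbol through the involutive Cayley map $\psi$; and the observation that for the rational stability functions actually used in the paper (IE, CN, the Runge--Kutta methods) the poles of $g=r\circ\psi$ lie off $\overline{\mathbb{D}}$ — note in addition that boundedness of a rational $r$ on $\C_-$ forces the poles into the open right half-plane and forces $r(\infty)$ to exist, so $g$ is analytic also at $w=1$ and hence on a neighbourhood of $\overline{\mathbb{D}}$, making the polynomial von Neumann inequality plus continuity of the Riesz--Dunford calculus sufficient. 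What the citation buys the paper is precisely the material you flag as ``the hard part'': in the reference, the definition of $r(kA)$ for general $r\in H^\infty(\C_-)$ and the constant-one bound come as a package, so no intertwining identity $g(T)=r(B)$ needs to be verified. What your route buys is a self-contained and elementary proof in every case the paper actually applies the proposition to; it is also consonant with the paper's toolkit, which uses Sz.-Nagy dilations elsewhere (Proposition \ref{prop:stab} and Lemma \ref{lem:pathRegStConv}).

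One caveat on your sketch of the general $H^\infty$ case: the remark that the unitary part of $T$ can be treated ``by the spectral theorem'' glosses over a real obstruction — for a unitary whose spectral measure has a singular component, $g(T)$ is not canonically defined for an arbitrary $g\in H^\infty(\mathbb{D})$, since the boundary values of $g$ exist only Lebesgue-a.e.\ on the circle; correspondingly, $r(B)$ is then equally undefined. This is less a gap in your proof than a reflection of the looseness of the statement ``let $R_k=r(kA)$'': the proposition presupposes that $r(kA)$ is meaningfully defined, and in the paper this definedness is part of the cited $H^\infty$-calculus framework. If you restrict to symbols that are rational, or continuous up to $i\R\cup\{\infty\}$, your argument is complete as written.
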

\begin{proof}
    This is a consequence of the properties of the bounded $H^\infty$-calculus of $-A$ as the negative generator of a contraction semigroup, since $R_k=r(kA)=r(-k(-A))$ is defined via $H^\infty$-calculus. The underlying theorem can be found in \cite[Thm.~10.2.24]{AnalysisBanachSpacesII}. 
\end{proof}

As a consequence of this proposition, contractive schemes include (IE), (CN), and some higher-order implicit Runge-Kutta methods such as Radau methods, BDF(2), Lobatto IIA, IIB, and IIC as well as some DIRK schemes.

A common choice for the spaces $Y$ on which a given scheme approximates $S$ are domains of fractional powers of $A$. An important property of these spaces is that they embed into the real interpolation spaces with parameter $\infty$, i.e., for $\alpha>0$
\begin{equation}
\label{eq:DAalphaembeds}
    D(A^{\alpha}) \hookrightarrow D_A(\alpha, \infty).
\end{equation}
Here, $D_A(\alpha,\infty)$ denotes the real interpolation space $(X,D(A))_{\alpha,\infty}$. On later occasions, also the real interpolation spaces $(X,D(A))_{\alpha,2}$ will be used. See \cite{Lun,Tr1} for details on interpolation spaces.

Embeddings of the form \eqref{eq:DAalphaembeds} and properties of $D_A(\alpha,\infty)$ allow us to obtain decay rates for semigroup differences as follows.
Let $(S(t))_{t \ge 0}$ be a $C_0$-semigroup such that $\|S(t)\| \le Me^{\lambda t}$ for some $M \ge 1$ and $\lambda \ge 0$ for all $t \ge 0$. Such $M$ and $\lambda$ exist for every $C_0$-semigroup \cite[Prop.~5.5]{EngelNagel}.
Then $\|S(t)-S(s)\|_{\calL(X)} \le 2M\ee^{\lambda T}$ for $0 \le s \le t \le T$. Since
\begin{equation*}
    \|[S(t)-S(s)]x\|_X = \left\|\int_s^tS(r)Ax\;\mathrm{d}r\right\|_X \le M\ee^{\lambda T}(t-s)\|x\|_{D(A)}
\end{equation*}
for $x \in D(A)$, we have $\|S(t)-S(s)\|_{\calL(D(A),X)}\le 2M\ee^{\lambda T}(t-s)$. By interpolation,
\begin{equation*}
    \|S(t)-S(s)\|_{\calL(D_A(\alpha,\infty),X)} \le 2^{1-\alpha}M\ee^{\lambda T}(t-s)^\alpha \le 2M \ee^{\lambda T}(t-s)^\alpha
\end{equation*}
for $\alpha \in (0,1)$. Let $Y$ be another Hilbert space such that $Y \hra X$. Under the assumption that $Y \hra D_A(\alpha,\infty)$ continuously for some $\alpha \in (0,1)$ or $Y \hra D(A)$ continuously, in which case we set $\alpha=1$, this implies
\begin{equation}
\label{eq:interpolationSgDifferenceGeneral}
    \|S(t)-S(s)\|_{\calL(Y,X)} \le 2 \CY M \ee^{\lambda T} (t-s)^\alpha,
\end{equation}
where $\CY$ denotes the embedding constant of $Y$ into $D_A(\alpha,\infty)$ or $D(A)$.

\subsection{Gronwall type lemmas}

We need the following variants of the classical Gronwall inequality.
\begin{lemma}\label{lem:gronwallvar}
Let $\phi:[0,T]\to [0,\infty)$ be a continuous function and let $\alpha,\beta\in [0,\infty)$ be constants. Suppose that
\[\phi(t) \leq \alpha+\beta\Big(\int_0^t \phi(s)^2 ds\Big)^{1/2}, \ \ t\in [0,T].\]
Then
\[\phi(t) \leq \alpha (1+\beta^2 t)^{1/2}\exp\Big(\frac12+\frac12 \beta^2 t\Big) , \ \ t\in [0,T].\]
\end{lemma}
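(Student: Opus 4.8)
The plan is to reduce this integral (square-function) Gronwall inequality to a differential inequality for an auxiliary function, following the classical method of proof for Gronwall-type lemmas. Starting from the hypothesis $\phi(t) \le \alpha + \beta\bigl(\int_0^t \phi(s)^2\,ds\bigr)^{1/2}$, the natural quantity to track is $\psi(t) \ce \int_0^t \phi(s)^2\,ds$, which is nonnegative, differentiable (as $\phi$ is continuous), and satisfies $\psi'(t) = \phi(t)^2$. The point of squaring the hypothesis is precisely that it linearises the $L^2$-norm on the right-hand side: squaring gives $\phi(t)^2 \le \bigl(\alpha + \beta\sqrt{\psi(t)}\bigr)^2 \le 2\alpha^2 + 2\beta^2 \psi(t)$, using $(a+b)^2 \le 2a^2 + 2b^2$. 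Hence $\psi'(t) \le 2\alpha^2 + 2\beta^2\psi(t)$.

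Having obtained this linear differential inequality for $\psi$, I would integrate it via the standard integrating-factor trick. Writing $\chi(t) \ce \ee^{-2\beta^2 t}\psi(t)$, one computes $\chi'(t) = \ee^{-2\beta^2 t}\bigl(\psi'(t) - 2\beta^2\psi(t)\bigr) \le 2\alpha^2 \ee^{-2\beta^2 t}$. Integrating from $0$ to $t$ and using $\psi(0)=0$ yields
\[
\ee^{-2\beta^2 t}\psi(t) \le 2\alpha^2\int_0^t \ee^{-2\beta^2 s}\,ds \le 2\alpha^2 t,
\]
so that $\psi(t) \le 2\alpha^2 t\, \ee^{2\beta^2 t}$. (If $\beta = 0$ the claimed bound is immediate from the hypothesis, so one may assume $\beta > 0$; the crude bound $\int_0^t \ee^{-2\beta^2 s}\,ds \le t$ avoids casework and suffices.) Feeding this back into the original squared hypothesis gives
\[
\phi(t)^2 \le 2\alpha^2 + 2\beta^2\psi(t) \le 2\alpha^2 + 4\alpha^2\beta^2 t\,\ee^{2\beta^2 t} \le 2\alpha^2\bigl(1 + 2\beta^2 t\bigr)\ee^{2\beta^2 t},
\]
and taking square roots produces a bound of the form $\phi(t) \le \alpha\sqrt{2}\,(1+2\beta^2 t)^{1/2}\ee^{\beta^2 t}$, which is of the same shape as the asserted inequality.

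The one genuinely delicate point is matching the \emph{constants} in the target bound $\phi(t)\le \alpha(1+\beta^2 t)^{1/2}\exp(\tfrac12 + \tfrac12\beta^2 t)$. My crude application of $(a+b)^2\le 2a^2+2b^2$ loses factors; to recover the stated sharper constants I expect to need a more careful estimate, not squaring immediately but instead differentiating the quantity $\bigl(\alpha + \beta\sqrt{\psi(t)}\bigr)$ or working with $\psi$ more tightly. Concretely, a cleaner route keeping the exact constants is to bound $\tfrac{d}{dt}\sqrt{\psi(t)} = \frac{\phi(t)^2}{2\sqrt{\psi(t)}} \le \frac{\phi(t)}{2}\cdot\frac{\phi(t)}{\sqrt{\psi(t)}}$ and relate $\phi/\sqrt{\psi}$ back to $\beta$ via the hypothesis; the $\exp(\tfrac12 + \tfrac12\beta^2 t)$ factor strongly suggests the intended argument keeps the constant $\tfrac12$ visible through such a manipulation. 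The structural reduction to a linear ODE inequality is routine; reproducing the author's exact numerical constants is where the real bookkeeping lies, and I would regard that as the main obstacle.
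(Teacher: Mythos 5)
Your structural reduction to a linear differential inequality for $\psi(t)=\int_0^t\phi(s)^2\,ds$ is sound, but the proof as written establishes only $\phi(t)\le \sqrt{2}\,\alpha(1+2\beta^2 t)^{1/2}\ee^{\beta^2 t}$, which is genuinely weaker than the claimed bound: at $\beta^2 t=1$ your constant is $\sqrt{6}\,\ee\approx 6.66$ against the target $\sqrt{2}\,\ee\approx 3.84$, and asymptotically your exponent $\ee^{\beta^2 t}$ grows at twice the rate of the target's $\ee^{\beta^2 t/2}$. Since you explicitly stop short of recovering the stated constants, the lemma as stated is not proved — and the constants are not cosmetic here: they are propagated verbatim into $\Cbdd^X$, $\Cbdd^Y$, $\Cstab$, and $C_\ee$ in Theorems \ref{thm:wellposed} and \ref{thm:convergenceRate} and Proposition \ref{prop:stab}, where the authors quote explicit numerical values such as $\sqrt{10}\,\ee^{5}\le 470$ that depend on the precise form $(1+\beta^2 t)^{1/2}\exp(\frac12+\frac12\beta^2 t)$.

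The missing idea is not a finer differential-inequality manipulation but a parameterized version of your squaring step. The paper uses $(a+b)^2\le (1+\theta)a^2+(1+\theta^{-1})b^2$ for arbitrary $\theta>0$, giving $\phi(t)^2\le (1+\theta)\alpha^2+(1+\theta^{-1})\beta^2\int_0^t\phi(s)^2\,ds$; the classical integral-form Gronwall inequality applied to $\phi^2$ then yields $\phi(t)^2\le(1+\theta)\alpha^2\exp\big((1+\theta^{-1})\beta^2 t\big)$, and optimizing $\theta=\beta^2 t$ for each fixed $t$ gives exactly $\phi(t)^2\le(1+\beta^2 t)\alpha^2\exp(1+\beta^2 t)$, whose square root is the assertion. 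Your choice amounts to freezing $\theta=1$, which is precisely what loses the factors, and your proposed repair via $\frac{d}{dt}\sqrt{\psi(t)}$ would not obviously recover them: the $\frac12$ in the exponent does not survive as a visible $\frac12$ through a derivative computation but simply arises from taking the square root of $\exp(1+\beta^2 t)$ at the end. With the $\theta$-trick inserted, your $\psi$-based integrating-factor argument goes through verbatim and becomes essentially equivalent to the paper's proof.
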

\begin{proof}
Using $(a+b)^2\leq (1+\theta) a^2 + (1+\theta^{-1}) b^2$ for $a,b\geq 0$ and $\theta>0$, we can write
\[\phi(t)^2 \leq (1+\theta) \alpha^2 +\beta^2 (1+\theta^{-1})\int_0^t \phi(s)^2 ds, \ \ t\in [0,T].\]
Therefore, applying Gronwall's inequality we see that
\[\phi(t)^2  \leq (1+\theta) \alpha^2 \exp(\beta^2 (1+\theta^{-1}) t).\]
Taking $\theta = \beta^2 t$ we obtain
\[\phi(t)^2  \leq (1+\beta^2 t) \alpha^2 \exp(\beta^2 t +1),\]
which gives the desired estimate.
\end{proof}

In the same way, one can prove the following discrete analogue by using the discrete version of Gronwall's lemma instead (see \cite[Proposition 5]{holteGronwall}).
\begin{lemma}
\label{lem:KruseGronwall}
    Let $\alpha,\beta \ge 0$ and $(\varphi_j)_{j \ge 0}$ be a non-negative sequence. If
    \begin{equation*}
        \varphi_j \le \alpha + \beta\left(\sum_{i=0}^{j-1} \varphi_i^2\right)^{1/2}~\text{ for } j \ge 0,
    \end{equation*}
    then
    \begin{equation*}
        \varphi_j \le \alpha (1+\beta^2 j)^{1/2} \exp\left(\frac12+\frac12\beta^2 j\right)~\text{ for }j \ge 0.
    \end{equation*}
\end{lemma}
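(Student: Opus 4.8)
The plan is to follow the proof of Lemma~\ref{lem:gronwallvar} essentially line by line, replacing the integral $\int_0^t\phi(s)^2\,ds$ by the finite sum $\sum_{i=0}^{j-1}\varphi_i^2$ and the classical Gronwall inequality by its discrete counterpart from \cite[Proposition~5]{holteGronwall}. As in the continuous case, the whole argument rests on squaring the hypothesis against a free parameter $\theta>0$ and optimising $\theta$ only at the very end.

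First I would apply the elementary inequality $(a+b)^2\le(1+\theta)a^2+(1+\theta^{-1})b^2$, valid for $a,b\ge0$ and $\theta>0$, with $a=\alpha$ and $b=\beta\big(\sum_{i=0}^{j-1}\varphi_i^2\big)^{1/2}$, to the assumed bound. This gives
\[
\varphi_j^2 \le (1+\theta)\alpha^2 + \beta^2(1+\theta^{-1})\sum_{i=0}^{j-1}\varphi_i^2,\qquad j\ge0.
\]
Writing $u_j\ce\varphi_j^2$, this is a discrete Gronwall inequality with constant coefficients $c\ce(1+\theta)\alpha^2$ and $d\ce\beta^2(1+\theta^{-1})$, i.e.\ $u_j\le c+d\sum_{i=0}^{j-1}u_i$. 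The discrete Gronwall lemma then yields $u_j\le c\,(1+d)^j\le c\,\ee^{dj}$, that is,
\[
\varphi_j^2 \le (1+\theta)\alpha^2\exp\!\big(\beta^2(1+\theta^{-1})\,j\big).
\]

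Finally, for $j\ge1$ I would optimise by taking $\theta=\beta^2 j>0$, so that $1+\theta=1+\beta^2 j$ and $\beta^2(1+\theta^{-1})j=\beta^2 j+1$; taking square roots then produces exactly $\varphi_j\le\alpha(1+\beta^2 j)^{1/2}\exp(\tfrac12+\tfrac12\beta^2 j)$. The choice $\theta=\beta^2 j$ is inadmissible only for $j=0$, but that case is immediate: the hypothesis reduces to $\varphi_0\le\alpha$ (empty sum), while the claimed bound at $j=0$ equals $\alpha\,\ee^{1/2}\ge\alpha$. I do not anticipate any genuine obstacle, since the argument is a direct transcription of the continuous case; the only point to pin down is the precise constant-coefficient form $u_j\le c(1+d)^j$ (equivalently $c\,\ee^{dj}$) of the discrete Gronwall lemma, which is exactly what \cite[Proposition~5]{holteGronwall} supplies.
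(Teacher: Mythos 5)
Your proof is correct and is precisely the argument the paper intends: the paper dispatches this lemma by transcribing the proof of Lemma \ref{lem:gronwallvar} with the discrete Gronwall inequality of \cite[Proposition 5]{holteGronwall} in place of the continuous one, including the same squaring step with the free parameter $\theta$ and the final choice $\theta=\beta^2 j$. Your added care about the $j$-dependent choice of $\theta$ (legitimate, since the Gronwall conclusion holds for every fixed $\theta>0$ and all $j$) and the separate $j=0$ check are both sound.
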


\section{Convergence rates for additive noise}
\label{sec:rateOfConvergenceAdditive}

In this section, we present several results on convergence rates for linear equations with additive noise. The reason to start with this case is twofold. Higher convergence rates can be proved in this case. Moreover, it allows us to explain the new  techniques in a simpler setting, which can help understand the more complicated multiplicative setting of Section \ref{sec:rateOfConvergenceMultNoise}.

Consider the stochastic evolution equation with additive noise of the form
\begin{equation}
\label{eq:StEvolEqnAdditive}
   \rmd U = AU\,\rmd t + g(t)\,\rmd W_H(t) \text{ on }[0,T],~~U(0)=u_0 \in L_{\calF_0}^p(\Omega;X),
\end{equation}
where $A$ generates a $C_0$-semigroup $(S(t))_{t \ge 0}$ on a Hilbert space $X$ with norm $\|\cdot\|$, $W_H$ is an $H$-cylindrical Brownian motion for some Hilbert space $H$, and $p \in [2,\infty)$. For Hölder continuous noise $g\in L_\calP^p(\Omega;C^\alpha([0,T];\gHX))$, $\alpha \in (0,1]$, mapping into a space $Y\hookrightarrow X$, we prove rates of convergence for time discretisation schemes. An improvement of the rate is shown for the exponential Euler method for quasi-contractive semigroups. Results are illustrated for the nonlinear Schrödinger equation in Subsection \ref{subsec:SchroedingerAdd}.

The mild solution to \eqref{eq:StEvolEqnAdditive} for $t\in [0,T]$ is uniquely given by \cite[Chapters~5,6]{DaPratoZabczyk14}.
\begin{equation}
\label{eq:mildSolAdditive}
    U(t)=S(t)u_0+\int_0^{t} S(t-s) g(s)\dWHs.
\end{equation}
To approximate it, we employ a time discretisation scheme $R: [0,\infty) \to \calL(X)$ with time step $k>0$ on a uniform grid $\{t_j=jk:~j=0,\ldots, N_k\}\subseteq [0,T]$ with final time $T=t_{N_k}>0$ and $N_k=\frac{T}{k}\in \N$ being the number of time steps. The discrete solution is given by $U^0 \ce u_0$ and
\begin{align}
\label{eq:defUjAdditive}
    U^j &\ce R_k U^{j-1} +  R_k g(t_{j-1})\Delta W_j= R_k^j u_0 + \sum_{i=0}^{j-1} R_k^{j-i}g(t_i)\Delta W_{i+1} ,~~j=1,\ldots,N_k,
\end{align}
with Wiener increments $\Delta W_j \ce W_H(t_j)-W_H(t_{j-1})$, where we used \eqref{eq:convradonW}.

\subsection{General semigroups}
Our first result concerns general $C_0$-semigroups $S$. A further improvement under further conditions on $S$ is discussed in Subsection \ref{subsec:quasiContractive}.
Below, we denote the Hölder seminorm in $C^\alpha([0,T];\gHX)$ by $[\cdot]_{\alpha,X}$ for $\alpha \in (0,1]$ and let
\begin{equation} \label{eq:shorthandOnlyg}
    \nn g\nn_{p,\infty,Y} \ce \|g\|_{L^p(\Omega;C([0,T];\calL_2(H,Y)))} ,\quad g \in L^p(\Omega;C([0,T];\gHY)).
\end{equation}

\begin{theorem}
\label{thm:convRateAdditiveGeneral}
    Let $X$ and $Y$ be Hilbert spaces such that $Y\hookrightarrow X$. Let $A$ be the generator of a $C_0$-semigroup $(S(t))_{t \ge 0}$ on $X$ with $\|S(t)\| \le Me^{\lambda t}$ for some $M \ge 1$ and $\lambda \ge 0$. Let $(R_{k})_{k>0}$ be a time discretisation scheme and assume that $R$ approximates $S$ to order $\alpha \in (0,1]$ on $Y$. Suppose that $Y \hra D_A(\alpha,\infty)$ continuously if $\alpha \in (0,1)$ or $Y \hra D(A)$ continuously if $\alpha=1$. Let $p \in [2,\infty)$, $u_0 \in L_{\calF_0}^p(\Omega;Y)$, and $g\in L_\calP^p(\Omega;C([0,T];\gHY))$ as well as $g \in L_\calP^p(\Omega;C^\alpha([0,T];\gHX))$. Denote by $U$ the mild solution of \eqref{eq:StEvolEqnAdditive} and by $(U^j)_{j=0,\ldots,N_k}$ the temporal approximations as defined in \eqref{eq:defUjAdditive}. Then for $N_k \ge 2$
    \begin{equation*}
        \left\lVert\max_{0 \le j \le N_k}\|U(t_j)-U^j\|\right\rVert_p
        \le \big(C_1 + C_2 \sqrt{\max\{\log (T/k),p\}}\big) k^{\alpha}
    \end{equation*}
    with constants $C_1\ce C_\alpha \|u_0\|_{L^p(\Omega;Y)}$ and
    \begin{align*}
        C_2 &\ce \frac{K\sqrt{T}}{\sqrt{2\alpha+1}}\left( M \ee^{\lambda T}\big\| [g]_{\alpha,X} \big\|_p
        +\left(2M \ee^{\lambda T}\CY + C_\alpha \right) \nn g\nn_{p,\infty,Y} \right),
    \end{align*}
    where $C_\alpha$ is as in Definition \ref{def:orderScheme}, $K=4\exp(1+\frac{1}{2\mathrm{e}})$, and $\CY$ denotes the embedding constant of $Y$ into $D_A(\alpha,\infty)$ or $D(A)$.
    
    In particular, the approximations $(U^j)_j$ converge at rate $\min\{\alpha,1\}$ up to a logarithmic correction factor as $k \to 0$.
\end{theorem}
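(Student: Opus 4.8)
The plan is to reduce everything to Proposition \ref{prop:PropLogMainPaper} by recasting the maximum over grid points of the stochastic error as a supremum over finitely many stochastic integrals. This is the conceptual heart of the argument: it sidesteps the (hard, generally open) problem of a genuine maximal inequality for the discrete convolution $\sum_i R_k^{j-i}g(t_i)\Delta W_{i+1}$, for which no semigroup or group structure is available. Writing the mild solution \eqref{eq:mildSolAdditive} against the scheme \eqref{eq:defUjAdditive}, I split the error at the grid point $t_j$ into an initial-value part and a stochastic part,
\[
U(t_j) - U^j = [S(t_j) - R_k^j]u_0 + \Big(\int_0^{t_j} S(t_j-s)g(s)\dWHs - \sum_{i=0}^{j-1} R_k^{j-i}g(t_i)\Delta W_{i+1}\Big),
\]
and estimate the two parts separately, combining at the end via Minkowski's inequality in $L^p(\Omega)$.

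For the initial-value part, Definition \ref{def:orderScheme} gives the pathwise bound $\max_{j}\|[S(t_j)-R_k^j]u_0\| \le C_\alpha k^\alpha\|u_0\|_Y$, so taking $L^p(\Omega)$ yields exactly the term $C_1 k^\alpha$. For the stochastic part, I use that each summand is itself a stochastic integral, $R_k^{j-i}g(t_i)\Delta W_{i+1} = \int_{t_i}^{t_{i+1}} R_k^{j-i}g(t_i)\dWHs$, since $g(t_i)$ is $\F_{t_i}$-measurable and $R_k^{j-i}$ is deterministic. Hence the bracket equals $\int_0^{t_j}\Phi_s^{(j)}\dWHs$ with the endpoint-dependent integrand $\Phi_s^{(j)} \ce S(t_j-s)g(s) - R_k^{j-i}g(t_i)$ for $s\in[t_i,t_{i+1})$. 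Extending $\Phi_s^{(j)}\ce 0$ for $s\ge t_j$ leaves the integral unchanged for $t\ge t_j$, so $\max_j\|\int_0^{t_j}\Phi^{(j)}\dWHs\| \le \sup_{t\in[0,T],\,j}\|\int_0^t\Phi_s^{(j)}\dWHs\|$. Applying Proposition \ref{prop:PropLogMainPaper} with $N=N_k$ (so $\log N_k = \log(T/k)$, and $N_k\ge 2$ by hypothesis) produces the factor $K\max\{\sqrt{\log(T/k)},\sqrt p\}$ times $\|\Phi\|_{L^p(\Omega;\ell_{N_k}^\infty(L^2(0,T;\gHX)))}$; this is precisely the source of the logarithmic correction.

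It remains to bound $\|\Phi^{(j)}\|_{L^2(0,T;\gHX)}$ uniformly in $j$, which I do via the telescoping decomposition
\[
\Phi_s^{(j)} = [S(t_j-s)-S(t_j-t_i)]g(s) + S(t_j-t_i)[g(s)-g(t_i)] + [S(t_j-t_i)-R_k^{j-i}]g(t_i).
\]
The first term is controlled by the semigroup-difference bound \eqref{eq:interpolationSgDifferenceGeneral} together with $(s-t_i)^\alpha\le k^\alpha$ and $\|g(s)\|_{\gHY}\le\sup_r\|g(r)\|_{\gHY}$; the second by $\|S(t_j-t_i)\|\le M\ee^{\lambda T}$ and the $X$-Hölder seminorm $[g]_{\alpha,X}$; and the third by the order-$\alpha$ approximation property applied to $S((j-i)k)-R_k^{j-i}$ on $Y$. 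Integrating $(s-t_i)^{2\alpha}$ over each subinterval gives a factor $k^{2\alpha+1}/(2\alpha+1)$, and summing over the at most $N_k=T/k$ subintervals leaves an overall $k^{2\alpha}$; thus each term is $\lesssim k^\alpha$ with the constants appearing in $C_2$. Taking $L^p(\Omega)$ promotes the pathwise quantities $\sup_r\|g(r)\|_{\gHY}$ and $[g]_{\alpha,X}$ to $\nn g\nn_{p,\infty,Y}$ and $\|[g]_{\alpha,X}\|_p$, and assembling the pieces yields the claimed bound.

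I expect the main obstacle to be twofold. Conceptually, the decisive and non-obvious move is recognising that the max-over-$j$ error can be written as a supremum of finitely many stochastic integrals with a common integrand family, so that Proposition \ref{prop:PropLogMainPaper} applies at the cost of only $\sqrt{\log(T/k)}$. Technically, the telescoping decomposition must be engineered so that the scheme $R_k^{j-i}$ is \emph{always} paired with $g(t_i)$ through the approximation difference $S(t_j-t_i)-R_k^{j-i}$ and never estimated by a standalone operator-norm bound on $\|R_k^{j-i}\|$ — this is essential because the theorem assumes no contractivity (indeed no uniform boundedness) of the scheme. Getting exactly these three terms, and no others, is what makes the estimate go through under the stated hypotheses.
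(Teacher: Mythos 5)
Your proof is correct and takes essentially the same route as the paper: the same decomposition of the error into an initial-value term plus three stochastic pieces (Hölder-in-time increment $g(s)-g(\lfloor s\rfloor)$, semigroup difference estimated via \eqref{eq:interpolationSgDifferenceGeneral}, and the approximation difference $S(t_j-t_i)-R_k^{j-i}$ paired with $g(t_i)$), with the maximum over grid points handled by Proposition \ref{prop:PropLogMainPaper} applied to the family of endpoint-indexed integrands $\Phi^{(j)}$, exactly as in the paper's proof. The only cosmetic differences are that you invoke Proposition \ref{prop:PropLogMainPaper} once for the combined integrand and split afterwards by the triangle inequality, whereas the paper applies it separately to the terms $E_2$, $E_3$, $E_4$, and that your semigroup-difference term acts on $g(s)$ rather than $g(\lfloor s\rfloor)$; both variants yield the stated constants.
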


\begin{proof}
    Define $S^k(t)\ce R_k^j$ for $t\in (t_{j-1},t_j] $ and let $\lfloor t \rfloor$ as introduced above. Then the discrete solutions are given by the integral representation
    \begin{equation*}
        U^j = R_k^ju_0 + \int_0^{t_j} S^k(t_j-s)g(\lfloor s \rfloor) \dWHs.
    \end{equation*}
    Combining this representation with the mild solution formula \eqref{eq:mildSolAdditive}, the error can be bounded by
    \begin{align}
    \label{eq:E1234additive}
        E &\ce \Big\lVert\max_{0 \le j \le N_k}\|U(t_j)-U^j\|\Big\rVert_p \le \Big\lVert\max_{0 \le j \le N_k}\|[S(t_j) - R_k^j]u_0 \|\Big\rVert_p\nonumber\\
        &\phantom{=}+\Big\lVert\max_{0 \le j \le N_k}\Big\|\int_0^{t_j} S(t_j-s) [g(s)-g(\lfloor s \rfloor)] \dWHs \Big\|\Big\rVert_p\nonumber\\
        &\phantom{=}+\Big\lVert\max_{0 \le j \le N_k}\Big\|\int_0^{t_j} [S(t_j-\lfloor s \rfloor) -S(t_j-s)]g(\lfloor s \rfloor) \dWHs \Big\|\Big\rVert_p\nonumber\\
        &\phantom{=}+\Big\lVert\max_{0 \le j \le N_k}\Big\|\int_0^{t_j} [S(t_j-\lfloor s\rfloor) -S^k(t_j-s)]g(\lfloor s \rfloor) \dWHs \Big\|\Big\rVert_p\nonumber\\
        &\ec E_1 + E_2+E_3+E_4.
    \end{align}
    We proceed to estimate all four terms individually. Since $R$ approximates $S$ to order $\alpha$ on $Y$,
    \begin{equation}
    \label{eq:E1additive}
        E_1 \le C_\alpha \|u_0\|_{L^p(\Omega;Y)} k^\alpha.
    \end{equation}
    For the second term, we note that for $s \in [t_\ell, t_{\ell+1})$ for some $0 \le \ell \le N_k-1$, the definition of the Hölder seminorm $[\cdot]_\alpha$ implies that $\PP$-almost surely
    \begin{align*}
        \Big\| \sum_{i=0}^{j-1} \1_{[t_i,t_{i+1})}(s) S(t_j-s) [g(s)-g(t_i)]\Big\|_\gHX
        &\le \|S(t_j-s)\|_{\calL(X)} \|g(s)-g(t_\ell)\|_\gHX
\\ &        \le M\ee^{\lambda T} [g]_{\alpha,X} (s-t_\ell)^{\alpha}.
    \end{align*}
    Proposition \ref{prop:PropLogMainPaper} with $\Phi_s^{(j)}= \sum_{i=0}^{j-1} \1_{[t_i,t_{i+1})}(s) S(t_j-s)[g(s)-g(t_i)]$ then yields
    \begin{align}
    \label{eq:E2additive}
        &E_2 = \bigg\lVert\max_{0 \le j \le N_k}\bigg\|\int_0^{t_j} \sum_{i=0}^{j-1}\1_{[t_i,t_{i+1})}(s) S(t_j-s) [g(s)-g(t_i)] \dWHs \bigg\|\bigg\rVert_p\nonumber\\
        &\le K\sqrt{\max\{\log(N_k),p\}} \Big\lVert\Big(\int_0^{T}\max_{1 \le j \le N_k}\| \Phi_s^{(j)} \|_\gHX^2 \ds\Big)^{1/2}\Big\rVert_p\nonumber\\
        &\le KM \ee^{\lambda T} \sqrt{\max\{\log(N_k),p\}} \bigg\lVert\bigg(\sum_{l=0}^{N_k-1}\int_{t_\ell}^{t_{\ell+1}} [g]_{\alpha,X}^2(s-t_\ell)^{2\alpha} \ds\bigg)^{1/2}\bigg\rVert_p\nonumber\\
        &\le KM \ee^{\lambda T}\frac{1}{\sqrt{2\alpha+1}}  \sqrt{\max\{\log(N_k),p\}} k^{\alpha+1/2} \bigg\|\bigg(\sum_{l=0}^{N_k-1} [g]_{\alpha,X}^2 \bigg)^{1/2}\bigg\|_p\nonumber\\
        & = KM \ee^{\lambda T} \big\| [g]_{\alpha,X} \big\|_p \frac{\sqrt{T}}{\sqrt{2\alpha+1}}\sqrt{\max\{\log(N_k),p\}}k^\alpha,
    \end{align}
    where we have used Hölder continuity of $g$.
    Analogously, with $\Phi_s^{(j)}= \sum_{i=0}^{j-1} \1_{[t_i,t_{i+1})}(s) [S(t_j-t_i)-S(t_j-s)]g(t_i)$ for $E_3$ we obtain
    \begin{equation}
    \label{eq:E3additive}
        E_3 \le 2KM \ee^{\lambda T}\CY \frac{\sqrt{T}}{\sqrt{2\alpha+1}} \nn g \nn_{p,\infty,Y} \sqrt{\log(N_k)}k^{\alpha}
    \end{equation}
    using pathwise boundedness of $g$, i.e., $g(\omega,\cdot): [0,T] \to \calL_2(H,Y)$ being bounded for $\PP$-almost every $\omega \in \Omega$, and noting that by  \eqref{eq:interpolationSgDifferenceGeneral}
    \begin{align*}
        \big\|[S(t_j-t_\ell)-S(t_j-s)] g(t_\ell)\big\|_\gHX
        \le 2M \ee^{\lambda T}\CY (s-t_\ell)^\alpha \|g(t_\ell)\|_\gHY
    \end{align*}
    holds $\PP$-almost surely.
    Likewise, with $\Phi_s^{(j)}= \sum_{i=0}^{j-1} \1_{[t_i,t_{i+1})}(s) [S(t_j-t_i)-R_k^{j-i}]g(t_i)$, we obtain
    \begin{equation}
    \label{eq:E4additive}
        E_4 \le KC_\alpha \frac{\sqrt{T}}{\sqrt{2\alpha+1}} \nn g\nn_{p,\infty,Y} \sqrt{\log(N_k)} k^{\alpha},
    \end{equation}
    since $R$ approximates $S$ to order $\alpha$ on $Y$.
    The error bound follows from inserting \eqref{eq:E1additive}, \eqref{eq:E2additive}, \eqref{eq:E3additive}, and \eqref{eq:E4additive} into \eqref{eq:E1234additive}.
\end{proof}

For the \emph{exponential Euler method}, less regularity of the initial value suffices for the same convergence behaviour. The exponential Euler method is obtained by setting $R_k=S(k)$ in \eqref{eq:defUjAdditive}, i.e., we would solve exactly in the absence of noise $g$.

\begin{corollary}[Exponential Euler]
\label{cor:expEulerAdditive}
    Let $X$ and $Y$ be Hilbert spaces such that $Y\hookrightarrow X$. Let $A$ be the generator of a $C_0$-semigroup $(S(t))_{t \ge 0}$ on $X$ with $\|S(t)\| \le Me^{\lambda t}$ for some $M \ge 1$ and $\lambda \ge 0$. Assume that $g\in L_\calP^p(\Omega;C([0,T];\gHY))$ and $g \in  L_\calP^p(\Omega;C^\alpha([0,T];\gHX))$ for some $\alpha \in (0,1]$. Suppose that $Y \hra D_A(\alpha,\infty)$ continuously if $\alpha \in (0,1)$ or $Y \hra D(A)$ continuously if $\alpha=1$. Let $p \in [2,\infty)$ and $u_0 \in L_{\calF_0}^p(\Omega;X)$.
    Denote by $U$ the mild solution of \eqref{eq:StEvolEqnAdditive} and by $(U^j)_{j=0,\ldots,N_k}$ the temporal approximations as defined in \eqref{eq:defUjAdditive} obtained with the exponential Euler method $R \ce S$. Then for $N_k \ge 2$
    \begin{equation*}
        \bigg\lVert\max_{0 \le j \le N_k}\|U(t_j)-U^j\|\bigg\rVert_p
        \le C \sqrt{\max\{\log (T/k),p\}} k^{\alpha}
    \end{equation*}
    with constant
    \begin{align*}
        C &\ce KM \ee^{\lambda T} \frac{\sqrt{T}}{\sqrt{2\alpha+1}}\left(\left\| [g]_{\alpha,X} \right\|_p  + 2\CY \nn g\nn_{p,\infty,Y} \right),
    \end{align*}
    where $K=4\exp(1+\frac{1}{2\mathrm{e}})$ and $\CY$ denotes the embedding constant of $Y$ into $D_A(\alpha,\infty)$ or $D(A)$.

    In particular, if $Y \hra D(A)$ and $g$ is Lipschitz continuous as a map to $\gHX$, the approximations $(U^j)_j$ converge at rate $1$ up to a logarithmic correction factor as $k \to 0$.
\end{corollary}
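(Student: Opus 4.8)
The plan is to revisit the proof of Theorem~\ref{thm:convRateAdditiveGeneral} rather than invoke it as a black box, since the theorem as stated assumes $u_0 \in L^p_{\calF_0}(\Omega;Y)$ whereas here only $u_0 \in L^p_{\calF_0}(\Omega;X)$ is available. The single feature driving everything is the semigroup identity $R_k^j = S(k)^j = S(jk) = S(t_j)$, valid for the exponential Euler choice $R_k = S(k)$. This identity collapses two of the four error terms in the decomposition \eqref{eq:E1234additive}, so that the $C_\alpha$-dependence (and with it the need for $Y$-regular data) disappears.

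First I would recall the splitting $E \le E_1 + E_2 + E_3 + E_4$ and observe that the initial-value term
\[E_1 = \Big\lVert \max_{0\le j\le N_k} \|[S(t_j) - R_k^j]u_0\|\Big\rVert_p\]
vanishes identically because $S(t_j) = R_k^j$. The point to stress is that this holds for \emph{any} $u_0 \in L^p(\Omega;X)$, with no $Y$-norm entering, which is exactly what licenses the weaker hypothesis on the initial data. Next I would note that the scheme-approximation term $E_4$ also vanishes: for a.e.\ $s \in (t_i,t_{i+1})$ one has $S^k(t_j-s) = R_k^{j-i} = S(t_j-t_i) = S(t_j-\lfloor s\rfloor)$, so the integrand defining $E_4$ is zero $\PP$-almost surely.

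It then remains to estimate $E_2$ and $E_3$, and here I would reuse the computations \eqref{eq:E2additive} and \eqref{eq:E3additive} verbatim, noting that neither involves $C_\alpha$: the bound for $E_2$ rests only on Proposition~\ref{prop:PropLogMainPaper}, the Hölder seminorm $[g]_{\alpha,X}$, and $\|S(t_j-s)\|\le M\ee^{\lambda T}$, while the bound for $E_3$ rests on the interpolation estimate \eqref{eq:interpolationSgDifferenceGeneral} for $\|S(t_j-t_\ell)-S(t_j-s)\|_{\calL(Y,X)}$ and the pathwise $\gHY$-boundedness of $g$. Adding the two, replacing $\sqrt{\log(N_k)}$ by the common majorant $\sqrt{\max\{\log(N_k),p\}}$, and using $N_k = T/k$ produces the advertised constant $C = KM\ee^{\lambda T}\frac{\sqrt{T}}{\sqrt{2\alpha+1}}\big(\|[g]_{\alpha,X}\|_p + 2\CY\nn g\nn_{p,\infty,Y}\big)$.

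The final assertion is then read off by specialising: taking $Y \hra D(A)$ forces $\alpha = 1$, and Lipschitz continuity of $g$ into $\gHX$ makes $[g]_{1,X}$ finite, so both norms in $C$ are finite and the rate is $1$ up to the $\sqrt{\log(T/k)}$ factor. I expect no genuine obstacle: all analytic content is inherited from Theorem~\ref{thm:convRateAdditiveGeneral}, and the only delicate point is the bookkeeping observation that the \emph{vanishing} of $E_1$ (not merely its smallness) is precisely what removes the $L^p(\Omega;Y)$ requirement on $u_0$.
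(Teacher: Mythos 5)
Your proposal is correct and matches the paper's own proof: the paper likewise splits the error as in \eqref{eq:E1234additive}, observes that $E_1$ and $E_4$ vanish because $S(t_j)-R_k^j=0$ and $S(t_j-t_i)-R_k^{j-i}=0$, and inserts the bounds \eqref{eq:E2additive} and \eqref{eq:E3additive} for the remaining terms. Your additional remark that the identical vanishing of $E_1$ (rather than a $C_\alpha k^\alpha\|u_0\|_{L^p(\Omega;Y)}$ bound) is what permits $u_0 \in L^p_{\calF_0}(\Omega;X)$ is exactly the right bookkeeping point, though the paper leaves it implicit.
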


\begin{proof}
    We split the error as in \eqref{eq:E1234additive}. For the exponential Euler method, the terms $E_1$ and $E_4$ in \eqref{eq:E1234additive} vanish due to $S(t_j)-R_k^j= S(jk)-S(k)^j=S(jk)-S(jk)=0$ and, likewise, $S(t_j-t_i)-R_k^{j-i}=0$. The error bound follows from inserting the bounds \eqref{eq:E2additive} and \eqref{eq:E3additive} of the remaining terms into \eqref{eq:E1234additive}.
\end{proof}

\subsection{Quasi-contractive Semigroups}
\label{subsec:quasiContractive}
Considering quasi-contractive semigroups, that is, $C_0$-semigroups $(S(t))_{t \ge 0}$ for which $\|S(t)\| \le e^{\lambda t}$ for some $\lambda \ge 0$ for all $t \ge 0$, allows us to eliminate the logarithmic factor for the exponential Euler method. The principle that lies at the heart of our proof is the maximal inequality from Theorem \ref{thm:maxIneqQuasiContractive}, which is used to estimate the stochastic convolutions in the error term. Depending on the spatial regularity of the noise $g$, the convergence rate $\alpha \in (0,1]$ is attained without a logarithmic correction factor.

\begin{theorem}[exponential Euler, quasi-contractive case]
\label{thm:convRateAdditiveExpEulerQuasiC}
    Adopt the notation and assumptions of Corollary \ref{cor:expEulerAdditive}. In addition, assume that $\|S(t)\| \le \ee^{\lambda t}$ for some $\lambda \ge 0$ for all $t \in [0,T]$. Then for $N_k \geq 2$
    \begin{equation*}
        \left\lVert\max_{0 \le j \le N_k}\|U(t_j)-U^j\|\right\rVert_p
        \le C k^\alpha
    \end{equation*}
    with constant
    \begin{align*}
        C \ce \frac{B_p\sqrt{T}}{\sqrt{2\alpha+1}} \left(\ee^{\lambda T}\left\| [g]_{\alpha,X} \right\|_p + 2 \CY \ee^{2\lambda T} \nn g \nn_{p,\infty,Y}\right),
    \end{align*}
    where $B_{p}$ is the constant from Theorem \ref{thm:maxIneqQuasiContractive}.
\end{theorem}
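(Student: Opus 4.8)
The plan is to reuse the error decomposition \eqref{eq:E1234additive} from the proof of Corollary \ref{cor:expEulerAdditive}. For the exponential Euler method $R_k = S(k)$ the initial-value term $E_1$ and the scheme-approximation term $E_4$ vanish identically, since $S(t_j) - R_k^j = 0$ and $S(t_j - t_i) - R_k^{j-i} = 0$; hence $E \le E_2 + E_3$ and everything reduces to the two stochastic-convolution terms. The essential point is that, for $R_k = S(k)$, both $E_2$ and $E_3$ can be written as genuine stochastic convolutions $\int_0^t S(t-s) h(s) \dWHs$ against the semigroup. This lets me replace the application of Proposition \ref{prop:PropLogMainPaper} (which produces the $\sqrt{\log(N_k)}$ factor) by the maximal inequality of Theorem \ref{thm:maxIneqQuasiContractive}, whose constant $B_p$ is independent of $N_k$; the logarithmic factor thereby disappears.

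For $E_2$ I would set $h(s) \ce g(s) - g(\lfloor s \rfloor)$, so that the $j$-th integrand is $S(t_j - s) h(s)$. Bounding the maximum over the grid points $t_j$ by the supremum over all $t \in [0,T]$ and applying Theorem \ref{thm:maxIneqQuasiContractive} gives $E_2 \le \ee^{\lambda T} B_p \|h\|_{L^p(\Omega;L^2(0,T;\gHX))}$. The $L^2$-norm is then controlled by the $\alpha$-Hölder continuity of $g$ in $X$: since $\|h(s)\|_\gHX \le [g]_{\alpha,X}(s - \lfloor s \rfloor)^\alpha$, a term-by-term integration over the subintervals $(t_\ell, t_{\ell+1}]$ yields the factor $\frac{\sqrt{T}}{\sqrt{2\alpha+1}} k^\alpha$, reproducing exactly the first summand of the constant $C$.

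The term $E_3$ requires the key manipulation. For $s \in (t_i, t_{i+1}]$ the integrand is $[S(t_j - t_i) - S(t_j - s)] g(t_i)$, and I would use the semigroup law to factor
\[S(t_j - t_i) - S(t_j - s) = S(t_j - s)\big[S(s - t_i) - I\big],\]
so that, with $\tilde{h}(s) \ce [S(s - \lfloor s \rfloor) - I] g(\lfloor s \rfloor)$, the $j$-th integrand becomes $S(t_j - s) \tilde h(s)$. Crucially, the entire $j$-dependence now sits in the convolution kernel $S(t_j - s)$ while $\tilde h$ is independent of $j$; this is precisely the structure to which Theorem \ref{thm:maxIneqQuasiContractive} applies after bounding the maximum over grid points by the supremum over $[0,T]$, and it contributes one factor $\ee^{\lambda T}$. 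To estimate $\|\tilde h\|_{L^2(0,T;\gHX)}$ I would invoke the interpolation estimate \eqref{eq:interpolationSgDifferenceGeneral} (with $M = 1$) applied to $S(s - t_\ell) - I = S(s - t_\ell) - S(0)$, giving $\|\tilde h(s)\|_\gHX \le 2\CY \ee^{\lambda T} (s - t_\ell)^\alpha \|g(t_\ell)\|_\gHY$ and hence a second factor $\ee^{\lambda T}$; together with the pathwise bound $\|g(t_\ell)\|_\gHY \le \sup_{t \in [0,T]}\|g(t)\|_\gHY$ and the same subinterval integration this produces the second summand $2\CY \ee^{2\lambda T} \nn g\nn_{p,\infty,Y}\, \frac{\sqrt{T}}{\sqrt{2\alpha+1}} k^\alpha$.

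Combining the bounds for $E_2$ and $E_3$ gives $E \le C k^\alpha$ with $C$ as stated. I expect the only genuinely delicate step to be the factorisation in $E_3$: it is what converts a $j$-indexed family of integrals into a single stochastic convolution, and it is therefore the reason the maximal inequality (rather than a union bound) can be applied and the logarithm removed. The appearance of $\ee^{2\lambda T}$ in the second term, as opposed to $\ee^{\lambda T}$ in the first, is the natural consequence of the two separate uses of quasi-contractivity, one inside the maximal inequality and one inside the pointwise estimate of $\tilde h$.
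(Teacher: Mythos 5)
Your proposal is correct and follows essentially the same route as the paper's proof: the same decomposition with $E_1$ and $E_4$ vanishing, the semigroup factorisation $S(t_j-t_i)-S(t_j-s)=S(t_j-s)[S(s-t_i)-I]$ to recast $E_3$ as a single stochastic convolution, Theorem \ref{thm:maxIneqQuasiContractive} in place of Proposition \ref{prop:PropLogMainPaper}, and the interpolation bound \eqref{eq:interpolationSgDifferenceGeneral} supplying the second factor $\ee^{\lambda T}$. Your accounting for the constants, including the distinction between $\ee^{\lambda T}$ and $\ee^{2\lambda T}$, matches the paper exactly.
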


\begin{proof}
    We bound the error as in \eqref{eq:E1234additive}, where the first and fourth term vanish as discussed in the proof of Corollary \ref{cor:expEulerAdditive}. We proceed to bound the remaining terms using the maximal inequality from Theorem \ref{thm:maxIneqQuasiContractive} instead of Proposition \ref{prop:PropLogMainPaper} to obtain
    \begin{align}
    \label{eq:E2additiveQuasiC}
        E_2 &\le \bigg\lVert\sup_{t \in [0,T]} \bigg\|\int_0^{t} S(t-s) [g(s)-g(\lfloor s \rfloor)] \dWHs \bigg\|\bigg\rVert_p \nonumber\\
        &\le B_p \ee^{\lambda T}\bigg\lVert\bigg(\int_0^{T}\left\| g(s)-g(\lfloor s \rfloor )\right\|_\gHX^2 \ds\bigg)^{1/2}\bigg\rVert_p\nonumber\\
        &\le B_p \ee^{\lambda T}\bigg\lVert\bigg(\sum_{i=0}^{N_k-1}\int_{t_i}^{t_{i+1}}[g]_{\alpha,X}^2(s-t_i)^{2\alpha} \ds\bigg)^{1/2}\bigg\rVert_p\nonumber\\
        & \le \frac{B_p\ee^{\lambda T}\sqrt{T}}{\sqrt{2\alpha+1}} \big\| [g]_{\alpha,X} \big\|_p k^\alpha
    \end{align}
    by Hölder continuity of $g$. Analogously, for $E_3$ we deduce from the semigroup bound \eqref{eq:interpolationSgDifferenceGeneral} that
    \begin{align}
        \label{eq:E3additiveQuasiC}
        E_3 &\le \bigg\lVert\sup_{t \in [0,T]}\bigg\|\int_0^{t} S(t-s)[S(s-\lfloor s \rfloor) -I]g(\lfloor s \rfloor) \dWHs \bigg\|\bigg\rVert_p\nonumber\\
        &\le B_p \ee^{\lambda T}\bigg\lVert\bigg(\int_0^{T}\left\| [S(s-\lfloor s \rfloor) -I]g(\lfloor s \rfloor)\right\|_\gHX^2 \ds\bigg)^{1/2}\bigg\rVert_p\nonumber\\
        &\le 2 B_p \ee^{2\lambda T} \CY \bigg\lVert\bigg(\sum_{i=0}^{N_k-1}\int_{t_i}^{t_{i+1}} (s-t_i)^{2\alpha}\left\|g(t_i)\right\|_\gHY^2 \ds\bigg)^{1/2}\bigg\rVert_p\nonumber\\
        & \le 2 B_p \ee^{2\lambda T}\CY \frac{\sqrt{T}}{\sqrt{2\alpha+1}} \nn g\nn_{p,\infty,Y} k^\alpha.
    \end{align}
    The final error bound follows from adding \eqref{eq:E2additiveQuasiC} and \eqref{eq:E3additiveQuasiC}.
\end{proof}

In particular, convergence rate $1$ is attained without logarithmic correction factor for spatially sufficiently regular noise $g$. General, possibly irregular initial values $u_0 \in L_{\calF_0}^p(\Omega;X)$ are still admissible as the following corollary shows.

\begin{corollary}
    Let $X$ be a Hilbert space and let $A$ be the generator of a quasi-contractive $C_0$-semigroup on $X$ with parameter $\lambda>0$. Assume that $g\in L_\calP^p(\Omega;C([0,T];\calL_2(H,D(A))))$ and is pathwise Lipschitz continuous as a map to $\gHX$. Let $p \in [2,\infty)$ and $u_0 \in L_{\calF_0}^p(\Omega;X)$. Denote by $U$ the mild solution of \eqref{eq:StEvolEqnAdditive} and by $(U^j)_{j=0,\ldots,N_k}$ the temporal approximations as defined in \eqref{eq:defUjAdditive} obtained with the exponential Euler method $R \ce S$.
    Then there is a constant $C\ge 0$ depending on $(g,T,p,\alpha,\lambda,X,D(A))$ such that for $N_k \geq 2$
    \begin{equation*}
        \left\lVert\max_{0 \le j \le N_k}\|U(t_j)-U^j\|\right\rVert_p
        \le C k,
    \end{equation*}
    i.e., the approximations $(U^j)_j$ converge at rate $1$ as $k \to 0$.
\end{corollary}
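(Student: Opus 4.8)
The plan is to obtain the statement as the $\alpha = 1$ end-point of Theorem \ref{thm:convRateAdditiveExpEulerQuasiC}, simply reading off the quantitative constant there for this particular choice of data. Concretely, I would take $Y \ce D(A)$ and $\alpha \ce 1$ and verify that every hypothesis of that theorem is satisfied, after which the bound $\bigl\lVert \max_j \lVert U(t_j) - U^j\rVert\bigr\rVert_p \le Ck$ drops out with no further estimation.

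For the verification, first note that the assumption $g \in L_\calP^p(\Omega; C([0,T]; \calL_2(H, D(A))))$ is exactly the hypothesis $g \in L_\calP^p(\Omega; C([0,T]; \gHY))$ of Corollary \ref{cor:expEulerAdditive} for the choice $Y = D(A)$. Next, the pathwise Lipschitz continuity of $g$ as a map into $\gHX$ says precisely that $g(\omega, \cdot) \in C^\alpha([0,T]; \gHX)$ with $\alpha = 1$ for almost every $\omega$; combined with $g \in L_\calP^p(\Omega; C([0,T]; \gHX))$, which follows from the first assumption via the embedding $\calL_2(H,D(A)) \hra \gHX$, this places $g$ in $L_\calP^p(\Omega; C^\alpha([0,T]; \gHX))$ with $\alpha = 1$. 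The embedding hypothesis holds trivially in its borderline form $Y = D(A) \hra D(A)$ with constant $\CY = 1$, and the semigroup is quasi-contractive with parameter $\lambda > 0$ by assumption. Finally, only $u_0 \in L_{\calF_0}^p(\Omega; X)$ is needed, which is admissible precisely because the exponential Euler choice $R \ce S$ annihilates the initial-value error, $S(t_j) - R_k^j = S(jk) - S(k)^j = 0$, so that no $Y$-regularity of $u_0$ ever enters the estimate.

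Applying Theorem \ref{thm:convRateAdditiveExpEulerQuasiC} with $\alpha = 1$ then gives the claimed estimate with the explicit constant
\[
C = \frac{B_p \sqrt{T}}{\sqrt{3}}\Bigl(\ee^{\lambda T}\bigl\lVert [g]_{1,X}\bigr\rVert_p + 2\,\ee^{2\lambda T}\,\nn g\nn_{p,\infty,D(A)}\Bigr),
\]
which depends only on $(g, T, p, \lambda, X, D(A))$, and rate $1$ follows from $k^\alpha = k$. The one point I expect to require care is the integrability of the time-Lipschitz seminorm: the literal phrase "pathwise Lipschitz" guarantees $[g(\omega, \cdot)]_{1,X} < \infty$ only for almost every $\omega$, whereas the constant above needs $\lVert [g]_{1,X}\rVert_{L^p(\Omega)} < \infty$. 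I would resolve this by reading the hypothesis as furnishing a Lipschitz constant lying in $L^p(\Omega)$ (in particular, a deterministic Lipschitz constant suffices); with that understood, the corollary is a direct specialisation of the preceding theorem and requires no new argument.
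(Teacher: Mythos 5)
Your proposal is correct and coincides with the paper's (implicit) argument: the corollary is stated without a separate proof precisely because it is the direct specialisation of Theorem \ref{thm:convRateAdditiveExpEulerQuasiC} with $Y=D(A)$, $\alpha=1$, and $\CY=1$, and your explicit constant matches the one given there. Your closing caveat is also the intended reading: since the theorem's hypothesis is $g\in L_\calP^p(\Omega;C^\alpha([0,T];\gHX))$, the phrase ``pathwise Lipschitz'' must be understood as furnishing a Lipschitz seminorm $[g]_{1,X}\in L^p(\Omega)$, exactly as you resolve it.
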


\subsection{Application to the linear Schrödinger equation with additive noise}
\label{subsec:SchroedingerAdd}

In this subsection, we study convergence rates of time discretisations of the linear stochastic Schrödinger equation with a potential and additive noise
\begin{align}
\label{eq:linearSchroedingerAddNoise}
    \Bigg\{\begin{split} \rmd u &= -\iu(\Delta + V) u \;\rmd t-\iu\; \rmd W~~~ \text{ on }[0,T],\\
    u(0)&=u_0
    \end{split}
\end{align}
in $\R^d$ for $d \in \N$, where $\{W(t)\}_{t\ge 0}$ is a square-integrable
$\K$-valued $Q$-Wiener process (see Subsection \ref{sec:SI}), $\K\in \{\R,\C\}$, with respect to a normal filtration $(\filtrF_t)_{t \ge 0}$, $V$ is a $\K$-valued potential, $u_0$ is an $\filtrF_0$-measurable random variable, $\iu$ is the imaginary unit, and $\Delta$ the Laplace operator on $\R^d$. Next, we introduce conditions on the dimension and the regularity of $V$. With a slight variation of the methods below, one can also consider \eqref{eq:linearSchroedingerAddNoise} on $[0,L]^d$ with periodic boundary conditions. More general domains with Dirichlet or Neumann boundary conditions can be treated as well, but for this, suitable adjustments are needed in the proofs below.

Let $\sigma \ge 0$ and, for this subsection only, write $L^2=L^2(\R^d)$ and $H^\sigma=H^\sigma(\R^d)$. We will also be using the Bessel potential spaces $H^{\sigma,q}(\R^d)$, which coincide with the classical Sobolev spaces $W^{\sigma,q}(\R^d)$ if $\sigma\in \N$ and $q\in (1, \infty)$. For details on these spaces the reader is referred to \cite{BL76,Tr1}.

To ensure the well-posedness of \eqref{eq:linearSchroedingerAddNoise}, we assume one of the following mutually exclusive conditions holds.
\begin{assumption}
\label{ass:sigmadVSchrodinger}
    Let $\sigma \ge 0$, $d \in \N$ and $V \in L^2$ such that
    \begin{enumerate}[label=(\roman*)]
    \item $\sigma > \frac{d}{2}$ and $V \in H^\sigma$, or \label{item:sigmad2}
    \item $\sigma = 0$ and $V \in H^\beta$ for some $\beta>\frac{d}{2}$, or \label{item:sigma0}
    \item $\sigma \in (0,1)$, $d> 2 \sigma$, and $V \in H^\beta$ for some $\beta>\frac{d}{2}$, or
    \label{item:sigma01}
    \item $\sigma =1$, $d\ge 2$, and $V \in H^\beta$ for some $\beta>\frac{d}{2}$. \label{item:sigma1}
\end{enumerate}
\end{assumption}
In particular, this assumption implies that $Vu \in H^\sigma$ for any $u \in H^\sigma$ and $\|Vu\|_{H^\sigma} \le C_V \|u\|_{H^\sigma}$ for some constant $C_V \ge 0$ depending on $V$. This follows from the algebra property of $H^\sigma$ in case \ref{item:sigmad2}. Note that while \ref{item:sigmad2} is taken verbatim from \cite[Prop.~4.1]{AC18}, cases \ref{item:sigma0} and \ref{item:sigma1} assume less regularity in our assumption and case \ref{item:sigma01} is new. In the second case \ref{item:sigma0}, Hölder's inequality and the Sobolev embedding $H^\beta \hra L^\infty$ for $\beta >\frac{d}{2}$ yield
\begin{equation*}
    \|Vu\|_{L^2} \le \|V\|_{L^\infty}\|u\|_{L^2} \lesssim \|V\|_{H^\beta}\|u\|_{L^2}
\end{equation*}
in the case \ref{item:sigma0}, see \cite[Prop.~4.1]{AC18}. The case \ref{item:sigma01} is covered by Lemma \ref{lem:SoAddSigma01case} below. Lastly, $\|Vu\|_{H^1} \lesssim \|u\|_{H^1}$ in the case \ref{item:sigma1} follows from Hölder's inequality, once with $p=2\beta$ and $q=\frac{4\beta}{2\beta-2}$, $\beta>1$, and the embeddings $H^\beta \hra L^\infty$, $H^1 \hra L^q$, as well as $H^\beta \hra H^{1,2\beta}$ via
\begin{align*}
    \|Vu\|_{H^1}^2 &\lesssim \|Vu\|_{L^2}^2+\|Vu'\|_{L^2}^2+\|V'u\|_{L^2}^2 \\
    &\le \|V\|_{L^\infty}^2 (\|u\|_{L^2}^2+\|u'\|_{L^2}^2)+\|V'\|_{L^{2\beta}}^2\|u\|_{L^q}^2\\
    &\lesssim (\|V\|_{H^\beta}^2+\|V\|_{H^{1,2\beta}}^2) \|u\|_{H^1}^2 \lesssim \|V\|_{H^\beta}^2 \|u\|_{H^1}^2.
\end{align*}
Hence, multiplication by $V$ is a bounded operator on $H^\sigma$ if Assumption \ref{ass:sigmadVSchrodinger} holds.

\begin{lemma}
\label{lem:SoAddSigma01case}
        Let $\sigma \in (0,1)$, $d \in \N$ such that $d > 2\sigma$, and $V \in H^{\beta}(\R^d)$ for some $\beta>\frac{d}{2}$. Then  $\|Vu\|_{H^\sigma} \le C_V \|u\|_{H^\sigma}$ for some constant $C_V \ge 0$ for all $u \in H^\sigma(\R^d)$.
\end{lemma}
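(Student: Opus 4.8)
The plan is to reduce the claim to two scaling-critical Sobolev embeddings combined with a fractional Leibniz (Kato--Ponce) estimate, with the hypothesis $d>2\sigma$ entering precisely through the embedding $H^\sigma(\R^d)\hookrightarrow L^{q}(\R^d)$ at $q=\frac{2d}{d-2\sigma}$. Writing $\|w\|_{H^\sigma}\simeq \|w\|_{L^2}+\|(-\Delta)^{\sigma/2}w\|_{L^2}$, I would estimate the two pieces of $\|Vu\|_{H^\sigma}$ separately. For the $L^2$-piece, since $\beta>\frac d2$ the Sobolev embedding $H^\beta\hookrightarrow L^\infty$ gives $\|Vu\|_{L^2}\le \|V\|_{L^\infty}\|u\|_{L^2}\lesssim \|V\|_{H^\beta}\|u\|_{H^\sigma}$, so this term is harmless.

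For the homogeneous piece I would invoke the fractional Leibniz rule in the endpoint form
\[
\|(-\Delta)^{\sigma/2}(Vu)\|_{L^2}\lesssim \|(-\Delta)^{\sigma/2}V\|_{L^{p_1}}\|u\|_{L^{q_1}}+\|V\|_{L^\infty}\|(-\Delta)^{\sigma/2}u\|_{L^2},
\]
valid for $\sigma>0$ and conjugate exponents $\frac{1}{p_1}+\frac{1}{q_1}=\frac12$. The second summand is again controlled by $\|V\|_{H^\beta}\|u\|_{H^\sigma}$ via $H^\beta\hookrightarrow L^\infty$. For the first summand I would take $q_1=\frac{2d}{d-2\sigma}$ and $p_1=\frac{d}{\sigma}$, so that $\frac{1}{p_1}+\frac{1}{q_1}=\frac12$.

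Two Sobolev embeddings then close the estimate. First, $H^\sigma\hookrightarrow L^{q_1}$ with $\frac{1}{q_1}=\frac12-\frac{\sigma}{d}$ holds precisely because $0<\sigma<\frac d2$, i.e.\ $d>2\sigma$, giving $\|u\|_{L^{q_1}}\lesssim \|u\|_{H^\sigma}$. Second, the embedding of Bessel potential spaces $H^{\beta,2}\hookrightarrow H^{\sigma,p_1}$ holds since $\beta-\frac d2>0=\sigma-\frac{d}{p_1}$ (using $\beta>\frac d2$ and $\frac{d}{p_1}=\sigma$), which yields $\|(-\Delta)^{\sigma/2}V\|_{L^{p_1}}\lesssim \|V\|_{H^{\sigma,p_1}}\lesssim \|V\|_{H^\beta}$. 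Combining the two pieces gives $\|Vu\|_{H^\sigma}\lesssim \|V\|_{H^\beta}\|u\|_{H^\sigma}$, which is the assertion with $C_V\simeq \|V\|_{H^\beta}$.

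The main obstacle is the borderline nature of everything: the Leibniz rule must be used in its endpoint form with an $L^\infty$ factor (as established by Grafakos--Oh), and both Sobolev embeddings are scaling-critical, so the strict inequalities $d>2\sigma$ and $\beta>\frac d2$ are exactly what keep the chosen exponents admissible ($q_1<\infty$ and $p_1>2$). An alternative, more self-contained route avoids Kato--Ponce altogether: estimate the Gagliardo seminorm of $Vu$ by splitting $V(x)u(x)-V(y)u(y)=V(x)\bigl(u(x)-u(y)\bigr)+\bigl(V(x)-V(y)\bigr)u(y)$, bound the first term by $\|V\|_{L^\infty}[u]_{H^\sigma}$, and bound the second via Hölder's inequality in $y$ together with the Besov characterisation $H^\beta\hookrightarrow B^{\sigma}_{d/\sigma,2}$; this produces the same exponents and again relies on $d>2\sigma$.
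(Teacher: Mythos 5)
Your proof is correct and essentially identical to the paper's: the paper uses the same Hölder split with exponents $q_1=\frac{2d}{d-2\sigma}$, $q_2=\frac{d}{\sigma}$, the same embeddings $H^\sigma\hookrightarrow L^{q_1}$, $H^{d/2}\hookrightarrow H^{\sigma,d/\sigma}$ (equivalently your $H^{\beta,2}\hookrightarrow H^{\sigma,p_1}$), and $H^\beta\hookrightarrow L^\infty$, invoking the product estimate of Taylor (Prop.~2.1.1) where you invoke the endpoint Kato--Ponce inequality --- the same paraproduct-based bound in inhomogeneous rather than homogeneous form. Your exponent bookkeeping and the role of the strict inequalities $d>2\sigma$ and $\beta>\frac{d}{2}$ match the paper's reasoning exactly.
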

\begin{proof}
    Let $q_1 = \frac{2d}{d-2\sigma}$ and $q_2=\frac{d}{\sigma}$. Then $\frac{1}{q_1}+\frac{1}{q_2}=\frac{1}{2}$ and $q_1<\infty$ because $d>2\sigma$. By classical Sobolev and Bessel potential space embeddings \cite[Thm.~6.5.1]{BL76}, $H^{d/2} \hra H^{\sigma,q_2}$, $H^\sigma \hra L^{q_1}$, and $H^\beta \hra C_b(\R^d) \hra L^\infty$. Thus, an application of the product estimate \cite[Prop.~2.1.1]{TaylorBook} yields
    \begin{align*}
        \|Vu\|_{H^\sigma} &\lesssim \|V\|_{H^{\sigma,q_2}}\|u\|_{L^{q_1}} + \|V\|_{L^\infty}\|u\|_{H^\sigma}\lesssim (\|V\|_{H^{d/2}}+\|V\|_{H^\beta})\|u\|_{H^\sigma} \lesssim \|V\|_{H^\beta}\|u\|_{H^\sigma}. \qedhere
    \end{align*}
\end{proof}

Since $-\iu \Delta$ generates a contractive semigroup \cite[Lemma~2.1]{AC18}, its bounded perturbation $-\iu(\Delta + V)$ generates a quasi-contractive semigroup \cite[Thm.~III.1.3]{EngelNagel}. Thus, we are in the setting of Subsection \ref{subsec:quasiContractive}. Global existence and uniqueness of mild solutions $U \in L^p(\Omega;C([0,T];H^\sigma))$ to \eqref{eq:linearSchroedingerAddNoise} in $H^\sigma$ are guaranteed provided that $p \in [2,\infty)$, $u_0 \in L_{\calF_0}^p(\Omega;H^\sigma)$, $Q^{1/2} \in \calL_2(L^2,H^\sigma)$, and Assumption \ref{ass:sigmadVSchrodinger} holds.

Therefore, the Schrödinger equation \eqref{eq:linearSchroedingerAddNoise} can be rewritten in the form of \eqref{eq:StEvolEqnAdditive} on $X=H^\sigma$ with an $H$-cylindrical Brownian motion $W_H$ for $H=L^2$.

For the exponential Euler method, we recover the error bound from \cite[Thm.~4.3]{AC18}, showing convergence of rate $1$ in the case of sufficiently regular $Q^{1/2}$ under less regularity assumptions on $V$. Moreover, under weaker regularity assumptions on $Q^{1/2}$ and $V$, we additionally provide an error bound for fractional convergence rates $\alpha \in (0,1]$.

\begin{theorem}
\label{thm:SoExpEulerAdditive}
    Let $\sigma \ge 0$, $d \in \N$, and $V \in L^2$ satisfy Assumption \ref{ass:sigmadVSchrodinger}, and let $p \in [2,\infty)$. Assume that $u_0 \in L_{\mathcal{F}_0}^p(\Omega;H^\sigma)$ and
    $Q^{1/2} \in \calL_2(L^2,H^{\sigma+2\alpha})$ for some $\alpha \in (0,1]$. Denote by $U$ the mild solution of the linear stochastic Schrödinger equation  with additive noise \eqref{eq:linearSchroedingerAddNoise} and by $(U^j)_{j=0,\ldots,N_k}$ the temporal approximations as defined in \eqref{eq:defUjAdditive} obtained with the exponential Euler method $R\ce S$. Then there exists a constant $C \ge 0$ depending on $(V, u_0,T,p,\alpha,\sigma,d)$ such that for $N_k \geq 2$
    \begin{equation*}
        \left\| \max_{0 \le j \le N_k} \|U(t_j)-U^j\|_{H^\sigma} \right\|_p \le C \|Q^{1/2}\|_{\calL_2(L^2,H^{\sigma+2\alpha})} k^\alpha.
    \end{equation*}
\end{theorem}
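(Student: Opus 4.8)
The plan is to recognise Theorem \ref{thm:SoExpEulerAdditive} as a direct specialisation of the abstract quasi-contractive exponential Euler bound of Subsection \ref{subsec:quasiContractive}, so that essentially no new probabilistic work is needed. I would set $X=H^\sigma$ and $H=L^2$, let $A=-\iu(\Delta+V)$, and recall from the discussion preceding the theorem that $A$ generates a quasi-contractive semigroup with some parameter $\lambda\ge 0$ determined by $V$. Since the $\K$-valued $Q$-Wiener process satisfies $W=Q^{1/2}W_H$ for an $L^2$-cylindrical Brownian motion $W_H$, the noise term $-\iu\,\rmd W$ rewrites as $g\,\rmd W_H$ with the \emph{time-independent} coefficient $g\ce -\iu Q^{1/2}$, putting \eqref{eq:linearSchroedingerAddNoise} into the form \eqref{eq:StEvolEqnAdditive}. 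The decisive structural observation is that $g$ is constant in time, so its Hölder seminorm on $X$ vanishes, $[g]_{\alpha,X}=0$, and only the $\nn g\nn_{p,\infty,Y}$-term in the constant of Theorem \ref{thm:convRateAdditiveExpEulerQuasiC} survives.

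Next I would fix the regularity space $Y\ce H^{\sigma+2\alpha}$. Because $-\iu$ has modulus one, the Hilbert--Schmidt norm is unchanged, so $\nn g\nn_{p,\infty,Y}=\|Q^{1/2}\|_{\calL_2(L^2,H^{\sigma+2\alpha})}$, which is finite by hypothesis; the same computation together with $H^{\sigma+2\alpha}\hra H^\sigma$ shows $g\in\gHX$, and the deterministic constancy of $g$ makes the memberships $g\in L_\calP^p(\Omega;C([0,T];\gHY))$ and $g\in L_\calP^p(\Omega;C^\alpha([0,T];\gHX))$ trivial. It remains to verify the embedding $Y\hra D_A(\alpha,\infty)$ (for $\alpha\in(0,1)$) or $Y\hra D(A)$ (for $\alpha=1$). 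Under Assumption \ref{ass:sigmadVSchrodinger} (using Lemma \ref{lem:SoAddSigma01case} in case \ref{item:sigma01}), multiplication by $V$ is bounded on $H^\sigma$, so $-\iu V$ is a bounded perturbation of $-\iu\Delta$ and hence $D(A)=D(-\iu\Delta)=H^{\sigma+2}$ with equivalent graph norm. Consequently $D_A(\alpha,\infty)=(H^\sigma,H^{\sigma+2})_{\alpha,\infty}$, and the standard real-interpolation identity for Bessel potential spaces yields
\[ H^{\sigma+2\alpha}=(H^\sigma,H^{\sigma+2})_{\alpha,2}\hra (H^\sigma,H^{\sigma+2})_{\alpha,\infty}=D_A(\alpha,\infty), \]
with a finite embedding constant $\CY$; for $\alpha=1$ one simply has $Y=D(A)$.

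With all hypotheses verified, applying Theorem \ref{thm:convRateAdditiveExpEulerQuasiC} and inserting $[g]_{\alpha,X}=0$ gives
\[ \Big\|\max_{0\le j\le N_k}\|U(t_j)-U^j\|_{H^\sigma}\Big\|_p\le \frac{2B_p\,\ee^{2\lambda T}\CY\sqrt{T}}{\sqrt{2\alpha+1}}\,\|Q^{1/2}\|_{\calL_2(L^2,H^{\sigma+2\alpha})}\,k^\alpha, \]
which is exactly the claimed estimate with $C\ce 2B_p\ee^{2\lambda T}\CY\sqrt{T}/\sqrt{2\alpha+1}$, a constant independent of $k$ and $u_0$ and depending on $(V,T,p,\alpha,\sigma,d)$ only through $\lambda$, $\CY$, and $B_p$. (Note that for the exponential Euler method the initial-value contribution $E_1$ from \eqref{eq:E1234additive} vanishes identically, which is why no $u_0$-term and only $u_0\in L^p(\Omega;X)$ are required.)

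The main obstacle is not analytic but structural: the two facts about $A$ that feed the abstract theorem, namely the identification $D(A)=H^{\sigma+2}$ via the bounded-perturbation argument (which rests on the boundedness of multiplication by $V$ on $H^\sigma$ across all four cases of Assumption \ref{ass:sigmadVSchrodinger}) and the correct invocation of the real-interpolation identity to secure $Y\hra D_A(\alpha,\infty)$. Once these are settled, the estimate is an immediate consequence of the quasi-contractive exponential Euler bound, with the clean dependence on $\|Q^{1/2}\|_{\calL_2(L^2,H^{\sigma+2\alpha})}$ arising precisely because the additive noise coefficient $g=-\iu Q^{1/2}$ is constant in time.
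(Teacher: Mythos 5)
Your proposal is correct and follows essentially the same route as the paper's own proof: rewrite \eqref{eq:linearSchroedingerAddNoise} in the abstract form with $X=H^\sigma$, $H=L^2$, $g=-\iu Q^{1/2}$, note that $[g]_{\alpha,X}=0$ since $g$ is constant in time, take $Y=H^{\sigma+2\alpha}=(H^\sigma,D(A))_{\alpha,2}\hra D_A(\alpha,\infty)$, and apply the quasi-contractive exponential Euler bound of Theorem \ref{thm:convRateAdditiveExpEulerQuasiC}. The only difference is cosmetic: you spell out the identification $D(A)=H^{\sigma+2}$ via the bounded-perturbation argument, which the paper leaves implicit in the interpolation identity.
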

\begin{proof}
    As discussed above, $A= -\iu(\Delta + V)$ generates a quasi-contractive semigroup on $H^\sigma$. Furthermore, setting $g = -\iu Q^{1/2}$ allows us to rewrite \eqref{eq:linearSchroedingerAddNoise} in the form of a stochastic evolution equation \eqref{eq:StEvolEqnAdditive}. Thus, Theorem \ref{thm:convRateAdditiveExpEulerQuasiC} is applicable with $X=H^\sigma$ and $H=L^2$. It remains to check that $g \in L_\calP^p(\Omega; C([0,T];\gHY))$ for some $Y \hra D_A(\alpha,\infty)$ and that $g \in L^p_\calP(\Omega; C^\alpha([0,T];\gHX))$. The latter holds for any $\alpha \in (0,1]$ due to $g$ being constant in time. Taking $Y=H^{\sigma+2\alpha} = (H^\sigma, H^{\sigma+2})_{\alpha,2}=(H^\sigma,D(A))_{\alpha,2}\hra (H^\sigma, D(A))_{\alpha,\infty}$, the first condition is satisfied as well. Corollary \ref{cor:expEulerAdditive} yields the desired error bound.
\end{proof}

Furthermore, Theorem \ref{thm:convRateAdditiveGeneral} enables us to extend \cite[Thm.~4.3]{AC18} to general discretisation schemes $R$ involving rational approximations, at the price of an additional logarithmic factor. We state it for the implicit Euler and the Crank--Nicolson method.

\begin{theorem}
\label{thm:SoIECNAdditive}
    Let $\sigma \ge 0$, $d \in \N$, and $V \in L^2$ satisfy Assumption \ref{ass:sigmadVSchrodinger}, and let $p \in [2,\infty)$. Let $(R_k)_{k>0}$ be the implicit Euler method (IE) or the Crank--Nicolson method (CN) and set $\ell=4$ or $\ell=3$, respectively. Assume that $u_0 \in L_{\mathcal{F}_0}^p(\Omega;H^{\sigma+\ell\alpha})$ and
    $Q^{1/2} \in \calL_2(L^2,H^{\sigma+\ell\alpha})$ for some $\alpha \in (0,1]$.
    Denote by $U$ the mild solution of the linear stochastic Schrödinger equation  with additive noise \eqref{eq:linearSchroedingerAddNoise} and by $(U^j)_{j=0,\ldots,N_k}$ the temporal approximations as defined in \eqref{eq:defUjAdditive}.
    Then there exists a constant $C \ge 0$ depending on $(V, u_0,T,p,\alpha,\sigma,d,\ell)$ such that for $N_k \ge 2$
    \begin{equation*}
        \left\| \max_{0 \le j \le N_k} \|U(t_j)-U^j\|_{H^\sigma} \right\|_p \le C \big(1+\|Q^{1/2}\|_{\calL_2(L^2,H^{\sigma+\ell\alpha})}\big)\sqrt{\log(T/k)} k^\alpha.
    \end{equation*}
\end{theorem}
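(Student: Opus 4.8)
The plan is to recast \eqref{eq:linearSchroedingerAddNoise} as the abstract equation \eqref{eq:StEvolEqnAdditive} on $X = H^\sigma$ with $A = -\iu(\Delta+V)$ and $g = -\iu Q^{1/2}$, and then verify the hypotheses of Theorem \ref{thm:convRateAdditiveGeneral}. As recalled in Subsection \ref{subsec:SchroedingerAdd}, Assumption \ref{ass:sigmadVSchrodinger} makes multiplication by $V$ bounded on $H^\sigma$, so $A$ is a bounded perturbation of $-\iu\Delta$ and therefore generates a quasi-contractive $C_0$-semigroup on $X$; in particular $\|S(t)\| \le \ee^{\lambda t}$ with $M=1$, which is all Theorem \ref{thm:convRateAdditiveGeneral} requires regarding growth.

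First I would fix the intermediate space $Y = H^{\sigma+\ell\alpha}$. The value of $\ell$ is dictated by the approximation orders listed after Definition \ref{def:orderScheme}: (IE) converges of order $\alpha$ on $D((-A)^{2\alpha})$ and (CN) of order $\alpha$ on $D((-A)^{3\alpha/2})$. Hence the crucial step is the identification of fractional domains with Bessel potential spaces, $D((-A)^\theta) = H^{\sigma+2\theta}$ with equivalent norms, which gives $D((-A)^{2\alpha}) = H^{\sigma+4\alpha}$ ($\ell=4$) and $D((-A)^{3\alpha/2}) = H^{\sigma+3\alpha}$ ($\ell=3$). For the free operator $-\iu\Delta$ this identification is immediate from the Fourier multiplier representation (it acts as multiplication by $\iu|\xi|^2$, so on the domain scale $(-A)^\theta$ is comparable to $(1-\Delta)^\theta$); the potential must then be absorbed as a bounded perturbation on the whole scale. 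With this in hand, $R$ approximates $S$ to order $\alpha$ on $Y$, since $Y$ equals precisely the space on which the respective scheme has order $\alpha$.

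Next I would check the remaining structural hypotheses, which are now routine. Since $\ell\alpha/2 \in \{2\alpha, 3\alpha/2\}$ exceeds $\alpha$, nesting of fractional domains together with \eqref{eq:DAalphaembeds} gives $Y = D((-A)^{\ell\alpha/2}) \hra D((-A)^\alpha) \hra D_A(\alpha,\infty)$ for $\alpha \in (0,1)$, while for $\alpha = 1$ one has $H^{\sigma+\ell}\hra H^{\sigma+2} = D(A)$ because $\ell \ge 3$. The initial value satisfies $u_0 \in L^p(\Omega;H^{\sigma+\ell\alpha}) = L^p(\Omega;Y)$ by assumption. Finally, $g = -\iu Q^{1/2}$ is constant in time, so its $C^\alpha([0,T];\gHX)$-seminorm vanishes and $\nn g\nn_{p,\infty,Y} = \|Q^{1/2}\|_{\calL_2(L^2,H^{\sigma+\ell\alpha})}$; thus $g \in L^p_\calP(\Omega;C([0,T];\gHY))$ and $g \in L^p_\calP(\Omega;C^\alpha([0,T];\gHX))$ hold trivially.

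Applying Theorem \ref{thm:convRateAdditiveGeneral} then yields the bound $(C_1 + C_2\sqrt{\max\{\log(T/k),p\}})k^\alpha$ with $C_1 = C_\alpha\|u_0\|_{L^p(\Omega;Y)}$ and $C_2$ proportional to $\|Q^{1/2}\|_{\calL_2(L^2,H^{\sigma+\ell\alpha})}$. Since $N_k \ge 2$ forces $\log(T/k)\ge\log 2>0$ and $p$ is fixed, $\sqrt{\max\{\log(T/k),p\}} \le C_p\sqrt{\log(T/k)}$, and collecting the $u_0$-dependent but $Q$-independent contribution into the constant gives the claimed form $C(1+\|Q^{1/2}\|_{\calL_2(L^2,H^{\sigma+\ell\alpha})})\sqrt{\log(T/k)}\,k^\alpha$. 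I expect the main obstacle to be the fractional-domain identification $D((-A)^\theta)=H^{\sigma+2\theta}$ in the presence of the potential: since $-A$ is not sectorial (it generates a group), fractional powers need care, and one must argue that the bounded perturbation $V$ does not disturb the Sobolev scale. A related subtlety is that the stated (CN) approximation order assumes a contractive $S$, whereas here $S$ is only quasi-contractive; this should be resolvable by rescaling, absorbing the resulting $\ee^{\lambda T}$-type factor into the constant.
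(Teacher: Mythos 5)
Your proposal is correct and takes essentially the same route as the paper, whose proof of Theorem \ref{thm:SoIECNAdditive} is exactly this reduction to Theorem \ref{thm:convRateAdditiveGeneral} with $Y=H^{\sigma+\ell\alpha}$ identified as the fractional domain $D((-A)^{2\alpha})$ for (IE), respectively $D((-A)^{3\alpha/2})$ for (CN), the constant $g=-\iu Q^{1/2}$ killing the H\"older term. In fact you give more detail than the paper's two-line argument, correctly flagging the two points it passes over silently, namely the stability of the fractional-domain scale under the bounded perturbation $V$ and the quasi-contractivity issue in the stated (CN) approximation order (only the minor quibble that $-A$, shifted to be invertible, is in fact sectorial of angle $\pi/2$ as a bounded-group generator, so fractional powers are classically defined; the genuine care is needed in identifying the perturbed fractional domains with Bessel potential spaces, e.g.\ via bounded imaginary powers or complex interpolation, exactly as you indicate).
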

\begin{proof}
    This follows from Theorem \ref{thm:convRateAdditiveGeneral} noting that (IE) approximates $S$ to order $\alpha$ on $D((-A)^{2\alpha})$ and this fractional domain is given by $D((\iu \Delta)^{2\alpha})=H^{\sigma+4\alpha}$, which is chosen as the space $Y$. Likewise, (CN) approximates $S$ to order $\alpha$ on $D((-A)^{3\alpha/2})=H^{\sigma+3\alpha}$.
\end{proof}

Comparing this result to Theorem \ref{thm:SoExpEulerAdditive} for the exponential Euler method (EE), it becomes apparent that lower-order schemes like (IE) need higher regularity of the noise $Q^{1/2}$ to achieve the same rate of convergence ($\calL_2(L^2,H^{\sigma+4\alpha})$ compared to $\calL_2(L^2,H^{\sigma+2\alpha})$). For instance, for $Q^{1/2}\in\calL_2(L^2,H^{\sigma+2})$, the rates for (EE), (CN), and (IE) are $1$, $\frac23$, and $\frac12$, respectively. If $Q^{1/2}\in\calL_2(L^2,H^{\sigma+3})$, (EE) and (CN) have the same convergence rates up to a logarithmic factor, and if $Q^{1/2}\in\calL_2(L^2,H^{\sigma+4})$, so does (IE), all provided that $V$ and $u_0$ are sufficiently smooth. 

Note that in the absence of a potential, the same convergence rates are obtained without any limitation on the dimension $d \in \N$ in terms of the parameter $\sigma$. An analogue of Theorem \ref{thm:SoIECNAdditive} can be obtained for other implicit Runge-Kutta methods if the space is known on which the scheme approximates the semigroup to a given order.

\section{Well-posedness}
\label{sec:wellposed}

We consider the stochastic evolution equation with multiplicative noise
\begin{align}
\label{eq:stEvolEqnFG_WP}
    \Bigg\{\begin{split} \rmd U &=(A U + F(t,U))\,\rmd t  + G(t,U) \,\rmd W_H~~~\text{ on } [0,T],\\ U(0) &= u_0 \in L_{\filtrF_0}^p(\Omega;X)
    \end{split}
\end{align}
for $1 \le p <\infty$ and $A$ generating a $C_0$-semigroup $(S(t))_{t \ge 0}$ of contractions on $X$. In this section, we present progressive measurability, linear growth and global Lipschitz conditions on $F$ and $G$ ensuring the well-posedness of the above equation.

\begin{assumption}
\label{ass:FG_WP}
    Let $X$ be a Hilbert space and let $p \in [2,\infty)$.
    Let $F:\Omega \times [0,T] \times X \to X, F(\omega, t,x) = \tilde{F}(\omega,t,x) + f(\omega,t)$ and $G:\Omega \times [0,T] \times X \to \gHX, G(\omega, t,x) = \tilde{G}(\omega,t,x) + g(\omega,t)$ be strongly $\calP\otimes \calB(X)$-measurable, and such that $\tilde{F}(\cdot,\cdot,0) = 0$ and $\tilde{G}(\cdot,\cdot,0) = 0$, and suppose
    \begin{enumerate}[label=(\alph*)]
        \item\label{item:FG_WP_globalLipschitz} \emph{(global Lipschitz continuity on $X$)} there exist constants $\CFX, \CGX \ge 0$ such that for all $\omega \in \Omega, t \in [0,T]$ and $x,y\in X$, it holds that
        \begin{align*}
            \|\tilde{F}(\omega,t,x)-\tilde{F}(\omega,t,y)\| &\le \CFX\|x-y\|,\\
            \|\tilde{G}(\omega,t,x)-\tilde{G}(\omega,t,y)\|_\gHX &\le \CGX\|x-y\|,
        \end{align*}
        \item\label{item:FG_WP_integrability} \emph{(integrability)} $f \in L^p_\calP(\Omega; L^1(0,T;X))$ and $g \in L^p_\calP(\Omega; L^2(0,T;\gHX))$.
    \end{enumerate}
\end{assumption}
Note that Assumption \ref{ass:FG_WP} implies linear growth of $F$ and $G$:
\begin{equation}\label{eq:lineargrowthX}
            \|\tilde{F}(\omega,t,x)\| \le \CFX(1+\|x\|),~ \|\tilde{G}(\omega,t,x)\|_\gHX \le \CGX(1+\|x\|),
\end{equation}
where the constant $1$ can be left out, but is included for later use in Theorem \ref{thm:wellposedY}.

Well-posedness shall be understood in the sense of existence and uniqueness of mild solutions to \eqref{eq:stEvolEqnFG_WP}. Denote by $L^0(\Omega;V)$ the space of all strongly measurable $V$-valued random variables for Banach spaces $V$.
\begin{definition}
    A $U\in L^0_{\calP}(\Omega;C([0,T];X))$ is called a \emph{mild solution} to \eqref{eq:stEvolEqnFG_WP} if a.s.\ for all $t \in [0,T]$
    \begin{equation*}
        U(t) = S(t)u_0 + \int_0^t S(t-s) F(s,U(s)) \,\rmd s + \int_0^t S(t-s) G(s,U(s)) \,\rmd W_H(s).
    \end{equation*}
\end{definition}

The following well-posedness result is more or less standard \cite[Chapters~6,7]{DaPratoZabczyk14}.
\begin{theorem}
\label{thm:wellposed}
    Suppose that Assumption \ref{ass:FG_WP} holds for some $p \in [2,\infty)$. Let $A$ be the generator of a $C_0$-contraction semigroup $(S(t))_{t \ge 0}$ on $X$. Let $u_0 \in L_{\calF_0}^p(\Omega;X)$.
   Then \eqref{eq:stEvolEqnFG_WP} has a unique mild solution $U \in L^p(\Omega;C([0,T];X))$. Moreover,
    \begin{align*}
        \|U\|_{L^p(\Omega;C([0,T];X))} \le \Cbdd^X\Big(&1+\|u_0\|_{L^p(\Omega;X)} + \|f\|_{L^p(\Omega;L^1(0,T;X))}+ B_{p}\|g\|_{L^p(\Omega;L^2(0,T;\gHX))} \Big),
    \end{align*}
    where $\Cbdd^X \ce (1+C^2 T)^{1/2}\ee^{(1+C^2T)/2}$  with
        $C \ce C_{F,X}T^{1/2}+B_{p}C_{G,X}$, and $B_{p}$ is the constant from Theorem \ref{thm:maxIneqQuasiContractive}.
\end{theorem}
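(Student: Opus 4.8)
The plan is to realise the mild solution as the unique fixed point of the solution map
\[
(\Lambda V)(t) \ce S(t)u_0 + \int_0^t S(t-s)F(s,V(s))\ds + \int_0^t S(t-s)G(s,V(s))\dWHs
\]
on the Banach space $\calV \ce L^p_\calP(\Omega;C([0,T];X))$. First I would verify that $\Lambda$ maps $\calV$ into itself. Progressive measurability and pathwise continuity of the first two terms are routine (the drift convolution is a Bochner integral and $\|S(t)\|\le 1$), while the continuity and the $L^p(\Omega)$-bound of the stochastic convolution follow from the maximal inequality of Theorem \ref{thm:maxIneqQuasiContractive} in the contractive case $\lambda=0$. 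Finiteness of all three norms uses the global Lipschitz bound and the linear growth \eqref{eq:lineargrowthX} of $\tilde F,\tilde G$ together with the integrability of $f,g$ from Assumption \ref{ass:FG_WP}\ref{item:FG_WP_integrability}.

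For existence and uniqueness I would estimate $\Lambda V_1-\Lambda V_2$ for $V_1,V_2\in\calV$. The affine parts $f,g$ cancel, leaving only the Lipschitz contributions of $\tilde F,\tilde G$. Bounding the drift by $\|S(t-s)\|\le1$ and Cauchy--Schwarz in time (which produces the factor $T^{1/2}$), the diffusion by Theorem \ref{thm:maxIneqQuasiContractive}, and invoking Assumption \ref{ass:FG_WP}\ref{item:FG_WP_globalLipschitz}, I would arrive at
\[
\|\Lambda V_1-\Lambda V_2\|_{L^p(\Omega;C([0,t];X))} \le C\Big(\int_0^t \|V_1-V_2\|_{L^p(\Omega;C([0,s];X))}^2\ds\Big)^{1/2}, \quad C \ce \CFX T^{1/2}+B_p\CGX.
\]
Iterating this bound shows that $\Lambda^n$ is a strict contraction for $n$ large (the iterates pick up factorial denominators), so $\Lambda$ has a unique fixed point $U\in\calV$, which is the unique mild solution; equivalently, applying Lemma \ref{lem:gronwallvar} with $\alpha=0$ to $\phi(t)=\|U_1-U_2\|_{L^p(\Omega;C([0,t];X))}$ forces two solutions to coincide.

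For the a priori bound I would apply the same estimates to the fixed point $U=\Lambda U$, now retaining the affine terms, and work with $\phi(t)\ce 1+\|U\|_{L^p(\Omega;C([0,t];X))}$, where the extra $1$ matches the additive constant in the linear growth \eqref{eq:lineargrowthX}. The initial term contributes $\|u_0\|_{L^p(\Omega;X)}$, the drift $\|f\|_{L^p(\Omega;L^1(0,T;X))}$, and the stochastic term $B_p\|g\|_{L^p(\Omega;L^2(0,T;\gHX))}$ via Theorem \ref{thm:maxIneqQuasiContractive}; the remaining growth parts are bounded using \eqref{eq:lineargrowthX}, namely $\|\tilde F(s,U(s))\|\le\CFX(1+\sup_{r\le s}\|U(r)\|)$ and likewise for $\tilde G$, whose running suprema have $L^p(\Omega)$-norm at most $\phi(s)$. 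Collecting terms yields $\phi(t)\le\alpha+C(\int_0^t\phi(s)^2\ds)^{1/2}$ with $\alpha=1+\|u_0\|_{L^p(\Omega;X)}+\|f\|_{L^p(\Omega;L^1(0,T;X))}+B_p\|g\|_{L^p(\Omega;L^2(0,T;\gHX))}$. Lemma \ref{lem:gronwallvar} with $\beta=C$ and $t=T$ then gives exactly $\phi(T)\le\Cbdd^X\alpha$ with $\Cbdd^X=(1+C^2T)^{1/2}\ee^{(1+C^2T)/2}$, which is the claimed estimate.

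The step I expect to require the most care is closing the Gronwall loop: to pass from pointwise-in-time bounds to an inequality in $\phi$, I must first replace $\|U(s)\|$ by the running supremum $\sup_{r\le s}\|U(r)\|$, and then interchange the $L^p(\Omega)$-norm with the time integral, converting $\big\|(\int_0^t\|U(s)\|^2\ds)^{1/2}\big\|_{L^p(\Omega)}$ into $(\int_0^t\phi(s)^2\ds)^{1/2}$ by Minkowski's integral inequality in $L^{p/2}(\Omega)$ --- which is precisely where the hypothesis $p\ge 2$ enters. The only other genuinely nontrivial input is the pathwise continuity together with the sharp maximal bound for the stochastic convolution supplied by Theorem \ref{thm:maxIneqQuasiContractive}, which hinges on the contractivity of $S$; everything else is standard and follows the references \cite[Chapters~6,7]{DaPratoZabczyk14}.
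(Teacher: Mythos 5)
Your proposal is correct, and its quantitative core coincides with the paper's proof: the a priori bound is obtained in exactly the same way, by setting $\phi(t)=1+\|U\|_{L^p(\Omega;C([0,t];X))}$, estimating the three terms of the mild solution formula via contractivity of $S$, the maximal inequality of Theorem \ref{thm:maxIneqQuasiContractive} with $\lambda=0$, the linear growth bound \eqref{eq:lineargrowthX}, Cauchy--Schwarz in time for the drift (producing the factor $T^{1/2}$), and Minkowski's integral inequality --- your flagged use of $p\ge2$ is precisely where the paper ``pulls in the $L^{p}(\Omega)$ and $L^{p/2}(\Omega)$ norms'' --- and then invoking Lemma \ref{lem:gronwallvar} with $\beta=C=\CFX T^{1/2}+B_p\CGX$, which yields the identical constant $\Cbdd^X$. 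The only genuine difference is in the fixed-point mechanics: the paper (in sketch form) makes the solution map a strict contraction on a short interval $[0,\delta]$ and then concatenates local solutions over $[\delta,2\delta]$, $[2\delta,3\delta]$, and so on, whereas you run Picard iteration globally on $[0,T]$ and exploit the squared inequality $d_n(t)^2\le C^2\int_0^t d_{n-1}(s)^2\,\mathrm{d}s$, whose iteration produces the factor $C^{2n}T^n/n!$ and makes $\Lambda^n$ a strict contraction for large $n$. Your route buys a cleaner argument --- no choice of $\delta$, no gluing of solutions or re-checking adaptedness at the restart times --- at the modest cost of the standard fixed-point-for-an-iterate lemma. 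Note that both routes deliver uniqueness only within $L^p_\calP(\Omega;C([0,T];X))$, which is what the theorem asserts; your alternative Gronwall uniqueness argument (Lemma \ref{lem:gronwallvar} with $\alpha=0$ applied to the difference of two solutions) likewise presupposes that both solutions have finite $L^p$-norm, so it has the same scope and is consistent with the claim.
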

\begin{proof}
    First, the local existence and uniqueness of solutions are to be proven. Second, local solutions are concatenated to obtain global existence and uniqueness. We only sketch the steps. Let $\delta \in (0,T]$. Define the spaces $Z_\delta \ce L^p(\Omega;C([0,\delta];X))$, $Z \ce Z_T$, $Z_\delta^{\calP}$ as the subset of all adapted $v \in Z_\delta$ and $Z^{\calP} \ce Z_T^{\calP}$. For $v \in Z_{\delta}^{\calP}$, we define the fixed point functional
    \begin{equation}
    \label{eq:defGamma}
        \Gamma v(t) \ce S(t)u_0 + \int_0^t S(t-s)F(s,v(s))\,\rmd s + \int_0^t S(t-s) G(s,v(s))\,\rmd W_H(s).
    \end{equation}
    The problem of finding local mild solutions of \eqref{eq:stEvolEqnFG_WP} then reduces to finding fixed points $v \in Z_\delta^{\calP}$ of $\Gamma$. The contraction mapping theorem yields such unique fixed points provided that $\Gamma$ is a contraction which maps $Z^{\calP}$ and thus $Z_\delta^{\calP}$ into itself. That is, \textit{(i)} continuity of paths of $\Gamma v$ and maximal estimates for $v \in Z_\delta^{\calP}$ (see Theorem \ref{thm:maxIneqQuasiContractive}) as well as \textit{(ii)} adaptedness of $\Gamma v$, and that \textit{(iii)} $\Gamma$ is a (strict) contraction on $Z_\delta^{\calP}$. Lastly, we consider the evolution equation on $[\delta,2\delta]$ with initial value $U(\delta)$ to extend the solution to larger time intervals.

    It remains to prove the a priori estimate for the mild solution $U$. Let $r \in [0,T]$. Let $\psi(r) = 1+\left\|\sup_{t\in [0,r]}\|U(t)\|\right\|_p$.
     From the triangle inequality, Theorem \ref{thm:maxIneqQuasiContractive} and \eqref{eq:lineargrowthX} we see that
    \begin{align*}
    \psi(r) & \leq 1+\|u_0\|_{L^p(\Omega;X)} + C_{F,X}\left\|\int_0^r 1+ \|U(s)\|\ds \right\|_p+\left\|f\right\|_{L^p(\Omega;L^1(0,r;X))} \\ & \ +B_{p} \left[C_{G,X} \left\|\left(\int_0^r (1+\|U(s)\|)^2 ds\right)^{1/2}\right\|_p \ds+\|g\|_{L^p(\Omega;L^2(0,r;\gHX))} \right]
    \\ & \leq c_{u_0,f,g} + C_{F,X}\int_0^r \psi(s) \ds+  B_{p} C_{G,X}\left(\int_0^r \psi(s)^2 \ds\right)^{1/2}
    \\ & \leq c_{u_0,f,g} + C \left(\int_0^r \psi(s)^2 \ds\right)^{1/2},
    \end{align*}
where $c_{u_0, f, g} = 1+\|u_0\|_{L^p(\Omega;X)} + \|f\|_{L^p(\Omega;L^1(0,T;X))}+ B_{p}\|g\|_{L^p(\Omega;L^2(0,T;\gHX))}$ and $C = C_{F,X} T^{1/2}+B_{p} C_{G,X}$. Here we used Minkowski's inequality to pull in the $L^{p}(\Omega)$ and  $L^{p/2}(\Omega)$ norms.
Lastly, the version of Gronwall's inequality from Lemma \ref{lem:gronwallvar} yields the desired result
\begin{equation*}
\psi(T) \leq c_{u_0, f, g} (1+ C^2 T)^{1/2} e^{(1+C^2T)/2}. \quad\qedhere
\end{equation*}
\end{proof}

Lastly, we present a well-posedness result on subspaces $Y \hra X$ which does not require Lipschitz continuity of $\tilde{F}, \tilde{G}$ on $Y$ but merely linear growth. The reader is referred to Remark \ref{rem:Lipschitz-growthY} below for a discussion where we explain why Lipschitz continuity on $Y$ should be avoided.

\begin{theorem}
\label{thm:wellposedY}
    Suppose that Assumption \ref{ass:FG_WP} holds. Let $Y \hra X$ be a Hilbert space and $A$ the generator of a $C_0$-contraction semigroup $(S(t))_{t \ge 0}$ on both $X$ and $Y$. Let $p \in [2,\infty)$ and $u_0 \in L_{\calF_0}^p(\Omega;Y)$.
    Additionally, suppose that $f\in L^p_\calP(\Omega;L^1(0,T;Y))$, $g\in L_\calP^p(\Omega;L^2(0,T;\gHY))$, $F:\Omega \times [0,T] \times Y \to Y$, $G: \Omega \times [0,T] \times Y \to \gHY$ are strongly $\calP\otimes \mathcal{B}(Y)$-measurable, and there are $\LFY,\LGY \ge 0$ such that for all $\omega \in\Omega$, $t \in [0,T]$, and $x \in Y$,
    \begin{equation*}
        \|\tilde{F}(\omega,t,x)\|_Y \le \LFY(1+\|x\|_Y),~
        \|\tilde{G}(\omega,t,x)\|_\gHY \le \LGY(1+\|x\|_Y).
    \end{equation*}
    Under these conditions the mild solution $U\in L^p(\Omega;C([0,T];X))$ to \eqref{eq:stEvolEqnFG_WP} is in $L^p(\Omega;C([0,T];Y))$ and
    \begin{align*}
        \|U\|_{L^p(\Omega;C([0,T];Y))}\leq \Cbdd^Y\Big(&1+\|u_0\|_{L^p(\Omega;Y)}+ \|f\|_{L^p(\Omega;L^1(0,T;Y))}+B_{p}\|g\|_{L^p(\Omega;L^2(0,T;\gHY))}\Big),
    \end{align*}
    where $\Cbdd^Y \ce (1+C^2 T)^{1/2}\ee^{(1+C^2T)/2}$ with
    $C \ce L_{F,Y}T^{1/2}+B_{p}L_{G,Y}$, and $B_{p}$ is the constant from Theorem \ref{thm:maxIneqQuasiContractive}.
\end{theorem}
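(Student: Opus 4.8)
The key difficulty is that we are only given linear growth of $\tilde{F}$ and $\tilde{G}$ on $Y$, not Lipschitz continuity, so the contraction mapping argument of Theorem \ref{thm:wellposed} cannot be rerun directly on the space $L^p(\Omega;C([0,T];Y))$. The plan is instead to extract the $Y$-regularity of the already-constructed $X$-solution $U$ from the Picard iterates used to build it. First I would set $U_0(t)\ce S(t)u_0$ and $U_{n+1}\ce \Gamma U_n$, with $\Gamma$ as in \eqref{eq:defGamma}. Since $u_0\in L^p(\Omega;Y)$ and $S$ is a contraction semigroup on $Y$, the linear growth bounds, the deterministic convolution estimate, and the maximal inequality of Theorem \ref{thm:maxIneqQuasiContractive} applied on $Y$ (with $\lambda=0$) show by induction that each $U_n\in L_\calP^p(\Omega;C([0,T];Y))$. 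Moreover, repeating the computation in the proof of Theorem \ref{thm:wellposed} verbatim but with $X$ replaced by $Y$ (again using Minkowski's inequality to pull in the $L^p(\Omega)$ and $L^{p/2}(\Omega)$ norms) gives, for $\psi_n(r)\ce 1+\big\|\sup_{t\in[0,r]}\|U_n(t)\|_Y\big\|_p$, the recursion
\[
\psi_{n+1}(r)\le c + C\Big(\int_0^r \psi_n(s)^2\ds\Big)^{1/2},\qquad \psi_0(r)\le c,
\]
with $c\ce 1+\|u_0\|_{L^p(\Omega;Y)}+\|f\|_{L^p(\Omega;L^1(0,T;Y))}+B_p\|g\|_{L^p(\Omega;L^2(0,T;\gHY))}$ and $C\ce \LFY T^{1/2}+B_p\LGY$, where $\psi_0\le c$ uses contractivity of $S$ on $Y$.

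Second, I would turn this one-step recursion into a bound uniform in $n$. The subtlety is that it links $\psi_{n+1}$ to $\psi_n$, so Lemma \ref{lem:gronwallvar} is not directly applicable; instead I would introduce the truncated maxima $\phi_N(r)\ce \max_{0\le n\le N}\psi_n(r)$, which are finite and continuous in $r$ (each $\psi_n$ is continuous by dominated convergence since $\sup_{t\le T}\|U_n(t)\|_Y\in L^p(\Omega)$). From the recursion together with $\psi_0\le c$ one obtains $\phi_N(r)\le c + C(\int_0^r \phi_N(s)^2\ds)^{1/2}$, so Lemma \ref{lem:gronwallvar} yields $\phi_N(r)\le c(1+C^2 r)^{1/2}\ee^{(1+C^2r)/2}$ with a right-hand side independent of $N$. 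Letting $N\to\infty$ and taking $r=T$ gives $\sup_n\|U_n\|_{L^p(\Omega;C([0,T];Y))}\le \Cbdd^Y\, c$, which is exactly the asserted a priori bound.

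Finally, I would transfer this bound to $U$ itself. By the contraction-mapping step of Theorem \ref{thm:wellposed}, $U_n\to U$ in $L^p(\Omega;C([0,T];X))$, so along a subsequence $U_n\to U$ in $C([0,T];X)$ almost surely. Since $Y$ is a Hilbert space continuously embedded in $X$, its closed balls are closed in $X$ (by weak compactness in $Y$ and uniqueness of weak limits), so $\|\cdot\|_Y$, extended by $+\infty$ off $Y$, is lower semicontinuous on $X$; hence for a.e.\ $\omega$ and every $t$ one has $\|U(t)\|_Y\le \liminf_n\|U_n(t)\|_Y$. Taking a supremum over $t$ and invoking Fatou's lemma gives $U\in L^p(\Omega;L^\infty(0,T;Y))$ with the stated bound. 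To upgrade this to path-continuity in $Y$, I would re-insert $U$ into the mild formula: now that $U(s)\in Y$ for a.e.\ $(s,\omega)$ with $\sup_s\|U(s)\|_Y\in L^p(\Omega)$, the linear growth bounds give $F(\cdot,U)\in L^p(\Omega;L^1(0,T;Y))$ and $G(\cdot,U)\in L^p(\Omega;L^2(0,T;\gHY))$, so $U$ equals $S(\cdot)u_0$ plus a deterministic and a stochastic convolution, each of which has continuous $Y$-valued paths because $S$ is a $C_0$-semigroup on $Y$ (using Theorem \ref{thm:maxIneqQuasiContractive} on $Y$ for the stochastic term). As this $Y$-continuous version agrees a.s.\ with the $X$-continuous $U$, we conclude $U\in L^p(\Omega;C([0,T];Y))$.

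The hard part will be precisely this passage to the limit: because there is no contraction, indeed no Lipschitz bound, on $Y$, the iterates converge only in the weaker $X$-topology, so the $Y$-regularity of $U$ must be recovered indirectly through the uniform bound and lower semicontinuity, and the final continuity in $Y$ has to be obtained a posteriori by re-inserting $U$ into the mild equation rather than as a by-product of the fixed-point construction.
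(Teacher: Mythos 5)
Your proof is correct, and it reaches the conclusion by a genuinely different route than the paper. The paper also starts from the Picard iterates $U_{n+1}=\Gamma(U_n)$, but it measures them in the \emph{weaker} norm $Z^2=L^p_\calP(\Omega;L^2(0,\delta;Y))$ on a short interval, with $\delta$ chosen so small that $\|\Gamma(v)\|_{Z^2}\le \theta(1+\dots+\|v\|_{Z^2})$ with $\theta\le\frac12$; a geometric-series argument bounds $\|U_n\|_{Z^2}$ uniformly, reflexivity of $Z^2$ produces a weakly convergent subsequence whose limit is identified with $U$ through the $X$-convergence, the estimate \eqref{eq:Gammav} applied to $v=U=\Gamma(U)$ upgrades to $U\in L^p(\Omega;C([0,\delta];Y))$, and the construction is concatenated over $[j\delta,(j+1)\delta]$, with the a priori constant recovered at the end by rerunning the Theorem \ref{thm:wellposed} computation on $U$ itself. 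You instead close the recursion directly in the \emph{stronger} norm $L^p(\Omega;C([0,T];Y))$ on the whole interval: your max-over-$n$ device, converting the one-step recursion $\psi_{n+1}\le c+C(\int_0^r\psi_n^2\,\mathrm{d}s)^{1/2}$ into a closed inequality for $\phi_N=\max_{0\le n\le N}\psi_n$ to which Lemma \ref{lem:gronwallvar} applies, is a nice trick that eliminates both the smallness of $\delta$ and the concatenation; you then replace weak compactness in $Z^2$ by pointwise weak compactness of $Y$-balls (lower semicontinuity of $\|\cdot\|_Y$ under $X$-convergence) plus Fatou, and your final re-insertion of $U$ into the mild equation to obtain $Y$-continuity is the same move as the paper's. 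What the paper's route buys is that measurability comes for free: the weak limit already lives in $Z^2$, hence is automatically a strongly measurable $Y$-valued process, whereas your pointwise route needs a standard but worth-stating patch — for separable $Y\hra X$ the Borel $\sigma$-algebra of $Y$ coincides with the trace of that of $X$ (Kuratowski), so that $U$, being $X$-measurable with values in $Y$, is strongly progressively measurable as a $Y$-valued process, $\sup_{t\in[0,T]}\|U(t)\|_Y$ is a genuine random variable, and $F(\cdot,U)$, $G(\cdot,U)$ are progressively measurable with values in $Y$ and $\gHY$. One further small point to make precise: the contraction in Theorem \ref{thm:wellposed} is only set up on $[0,\delta]$ for $\delta\le T_0$, so the convergence $U_n\to U$ in $L^p(\Omega;C([0,T];X))$ you invoke should either be justified by passing to an equivalent exponentially weighted norm making $\Gamma$ a strict contraction on all of $[0,T]$ (standard under the global Lipschitz hypothesis), or your argument should simply be run on $[0,\delta]$ and concatenated exactly as the paper does; neither adjustment changes anything of substance, and the constant $\Cbdd^Y$ you obtain matches the statement.
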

The constant $C$ appears exponentially in the above. In the special case $p=2$, $L_{F,Y} = L_{G,Y} = T = 1$, this leads to $\Cbdd^Y\leq \sqrt{10} e^{5} \leq 470$.

\begin{proof}
Recall that by Banach's fixed point theorem for $\delta\leq T_0$, where $T_0\in(0,1]$ only depends on $p$, $C_{F,X}$, $C_{G,X}$ and $X$,
one has $U = \lim_{n\to \infty} U_n$ in $L^p(\Omega;C([0,\delta];X))$, where $U_0 = u_0$ and $U_{n+1} = \Gamma(U_n)$ with $\Gamma$ as defined in \eqref{eq:defGamma}.
Since $F$ and $G$ map $Y$ into $Y$, we can also consider $\Gamma$ as a mapping on $Z^{2}\ce L_\calP^p(\Omega;L^2(0,\delta;Y))$ to eventually show that $U$ is in $L^p_{\calP}(\Omega;C([0,\delta];Y)) \seq Z^{2}$. Note that for $U\in Z^2$, $F(\cdot, U)$ and $G(\cdot, U)$ are progressively measurable as $Y$ and $\gHY$-valued mappings by \cite[Theorem 1.1.6]{AnalysisBanachSpacesI}. Moreover, we claim that for all $v\in Z^{2}$,
\begin{align}
\label{eq:Gammav}
    \|\Gamma(v)\|_{L^p(\Omega;C([0,\delta];Y))} &\leq \|u_0\|_{L^p(\Omega;Y)} + \|f\|_{L^p(\Omega;L^1(0,\delta;Y))}\nonumber\\
    &\phantom{\leq }+B_{p}\|g\|_{L^p(\Omega;L^2(0,\delta;\gHY))}+\big(\LFY + B_{p} \LGY\big) (1+ \|v\|_{Z^{2}}).
\end{align}
Indeed, since $S$ is contractive, the maximal inequality, linear growth of $\tilde{F}$ and $\tilde{G}$ on $Y$, and $\delta\leq 1$ imply
\begin{align*}
    \|\Gamma(v)  - S(\cdot) u_0\|_{L^p(\Omega;C([0,\delta];Y))}
    &\leq \|F(\cdot,v)\|_{L^p(\Omega;L^1(0,\delta;Y))} +  B_{p}\|G(\cdot,v)\|_{L^p(\Omega;L^2(0,\delta;\gHY))}\\
    &\leq \|f\|_{L^p(\Omega;L^1(0,\delta;Y))}+\LFY\left(\delta + \|v\|_{L^p(\Omega;L^1(0,\delta;Y))}\right)\\
    &\phantom{\leq }+B_{p}\left(\|g\|_{L^p(\Omega;L^2(0,\delta;\gHY))}+\LGY \left(\sqrt{\delta}+\|v\|_{L^p(\Omega;L^2(0,\delta;Y))}\right)\right)\\
    &\leq \|f\|_{L^p(\Omega;L^1(0,\delta;Y))}+B_{p}\|g\|_{L^p(\Omega;L^2(0,\delta;\gHY))}\\
    &\phantom{\leq }+\big(\LFY+B_{p}\LGY\big) \left(1+\|v\|_{Z^2}\right).
\end{align*}
Therefore, \eqref{eq:Gammav} follows. Now \eqref{eq:Gammav} implies
\begin{align*}
    \|\Gamma(v)\|_{Z^2} &\leq \delta^{1/2} \|\Gamma(v)\|_{L^p(\Omega;C([0,\delta];Y))}\\
    &\leq \theta (1+\|u_0\|_{L^p(\Omega;Y)}+\|f\|_{L^p(\Omega;L^1(0,\delta;Y))}+\|g\|_{L^p(\Omega;L^2(0,\delta;\gHY))}+\|v\|_{Z^2}),
\end{align*}
where $\theta = \delta^{1/2}\max\{1,B_{p},\LFY  + B_{p} \LGY \}$. Choosing $\delta\in (0,T_0]$ such that $\theta\leq \frac12$, iteratively we obtain that for $n\geq 1$,
\begin{align*}
    \|U_{n}\|_{Z^2} & \leq \theta (1+\|u_0\|_{L^p(\Omega;Y)} + \|f\|_{L^p(\Omega;L^1(0,\delta;Y))}+\|g\|_{L^p(\Omega;L^2(0,\delta;\gHY))}) +\theta\|U_{n-1}\|_{Z^2}\\
    & \leq \theta (1+\|u_0\|_{L^p(\Omega;Y)}+\|f\|_{L^p(\Omega;L^1(0,\delta;Y))}+\|g\|_{L^p(\Omega;L^2(0,\delta;\gHY))})\\
    &\phantom{\leq }+\theta^2 (1+\|u_0\|_{L^p(\Omega;Y)}+\|f\|_{L^p(\Omega;L^1(0,\delta;Y))}+\|g\|_{L^p(\Omega;L^2(0,\delta;\gHY))}+\|U_{n-2}\|_{Z^2})\\
    & \leq \ldots \leq \sum_{j=1}^n \theta^j (1+\|u_0\|_{L^p(\Omega;Y)}
    +\|f\|_{L^p(\Omega;L^1(0,\delta;Y))}+\|g\|_{L^p(\Omega;L^2(0,\delta;\gHY))}) + \theta^n \|U_0\|_{Z^2}\\
    &\leq 1+\|f\|_{L^p(\Omega;L^1(0,\delta;Y))}+\|g\|_{L^p(\Omega;L^2(0,\delta;\gHY))}
    +2\|u_0\|_{L^p(\Omega;Y)}.
\end{align*}
In conclusion, $(U_n)_{n\in \N}$ is bounded in $Z^2$. By reflexivity of $Y$, and thus of $Z^2$ (see \cite[Corollary 1.3.22]{AnalysisBanachSpacesI}), there is a subsequence $(U_{n_j})_{j\in\N}$ and $V\in Z^2$ such that $U_{n_j}\to V$ weakly in $Z^2$ and
\begin{align}
\label{eq:estVu0}
    \|V\|_{Z^2}\leq 1+\|f\|_{L^p(\Omega;L^1(0,\delta;Y))}+\|g\|_{L^p(\Omega;L^2(0,\delta;\gHY))}+2\|u_0\|_{L^p(\Omega;Y)}.
\end{align}
Since $U_{n}\to U$ in $L^p(\Omega;C([0,\delta];X))$, it follows that $V = U$. Since $U = \Gamma(U)$, \eqref{eq:Gammav} and \eqref{eq:estVu0} give that $U$ is in $L^p(\Omega;C([0,\delta];Y))$.
The same argument can be applied on $[j\delta, (j+1)\delta]$ using the initial value $U(j\delta)\in L^p(\Omega;Y)$ for $j=1,2, \ldots$ to obtain the statement on $[0,T]$.

The final a priori estimate follows as in Theorem \ref{thm:wellposed}, where we note that the Lipschitz conditions on $F$ and $G$ were not used in the estimate.
\end{proof}

\begin{remark}\label{rem:Lipschitz-growthY}

In applications, one often takes $X = L^2(O)$ and $Y = H^1(O)$ with $O\subseteq \R^d$, and $F$ is a Nemtskij operator for a given nonlinearity $\phi:\R\to \R$, i.e. $F(x)(\xi) = \phi(x(\xi))$ for $x\in L^2(O)$ and $\xi\in O$. Lipschitz continuity of such mappings holds for $F$ seen as a mapping from $X$ to $X$ if $\phi$ is Lipschitz. Also, linear growth holds for $F$ as a mapping from $Y$ into $Y$ if $\phi$ is Lipschitz. A less trivial fact is that $F$ is continuous from $Y$ into $Y$ (see \cite[Proposition 2.6.4]{TaylorBook}), but nothing more can be expected. For instance, Lipschitz continuity of $F:Y\to Y$ would require the estimate
\[\|\phi'(x) x'  - \phi'(y) y'\|_{L^2(O)}\leq C\|x  -y\|_{H^1(O)}.\]
The latter is true if and only if
$\|(\phi'(x) - \phi'(y))x'\|_{L^2(O)}\leq \tilde{C}\|x  -y\|_{H^1(O)}$. This cannot be expected even if $\phi\in C^\infty(\R^d)$ with bounded derivatives. Indeed, a product of $x-y$ and $x'$ needs to be estimated, but this cannot be done in terms of $\|x  -y\|_{H^1(O)}$.
Similarly, problems would occur for $Y = H^{\alpha}(O)$ for other values of $\alpha>0$. For a detailed exposition which estimates can be expected for $\phi(x) - \phi(y)$, the reader is referred to \cite[Section 2.7]{TaylorBook}.

\end{remark}

\section{Stability}
\label{sec:stability}

Before analysing the convergence of temporal approximations to solutions of the stochastic evolution equation \eqref{eq:stEvolEqnFG_WP} with multiplicative noise,
the question of stability of time discretisation schemes arises. We aim to prove the stability of contractive time discretisation schemes under linear growth assumptions on $F$ and $G$, and contractivity conditions on the scheme $R$. We formulate the result for mappings on $X$, but they will also be applied on $Y$ later on.

Let $R_k: X \to X$ be a contractive time discretisation scheme with time step $k>0$ on a uniform grid $\{t_j=jk:~j=0,\ldots, N_k\}\subseteq [0,T]$ with $T=t_{N_k}>0$ and $N_k=\frac{T}{k}\in \N$. We consider the temporal approximations of the mild solution to \eqref{eq:stEvolEqnFG_WP}
given by $U^0 \ce u_0$ and
\begin{align}
\label{eq:defUjStab}
    U^j &\ce R_k U^{j-1} + k R_k F(t_{j-1},U^{j-1})+ R_k G(t_{j-1},U^{j-1})\Delta W_j
\end{align}
with Wiener increments $\Delta W_j \ce W_H(t_j)-W_H(t_{j-1})$ (see \eqref{eq:convradonW})  for $1 \le j \le N_k$. The above definition of $U^j$ can be reformulated as the discrete variation-of-constants formula
\begin{equation}
\label{eq:VoCUStab}
    U^j = R_k^ju_0+k\sum_{i=0}^{j-1}R_k^{j-i}F(t_i,U^i)+\sum_{i=0}^{j-1} R_k^{j-i} G(t_i,U^i)\Delta W_{i+1}
\end{equation}
for $j=0,\ldots,N_k$.

\begin{proposition}[Stability]
\label{prop:stab}
    Let $X$ be a Hilbert space, $p \in [2, \infty)$ and $u_0 \in L_{\calF_0}^p(\Omega;X)$. Suppose that $F:\Omega \times [0,T] \times X \to X$, $G: \Omega \times [0,T] \times X \to \gHX$ are strongly $\calP\otimes \mathcal{B}(X)$-measurable, where $F = \tilde{F} +f$ and $G = \tilde{G}+g$, $f\in L^p_\calP(\Omega;C([0,T];X))$, $g\in L_\calP^p(\Omega;C([0,T];\gHX))$,  and there are $\LFX,\LGX \ge 0$ such that for all $\omega \in\Omega$, $t \in [0,T]$ and $x \in X$,
    \begin{equation*}
        \|\tilde{F}(\omega,t,x)\|_X \le \LFX(1+\|x\|_X),~
        \|\tilde{G}(\omega,t,x)\|_\gHX \le \LGX (1+\|x\|_X).
    \end{equation*}
    Let $(R_{k})_{k>0}$ be a contractive time discretisation scheme and $N_k \geq 2$.
    Then the temporal approximations $(U^j)_{j=0,\ldots,N_k}$ obtained via \eqref{eq:defUjStab} are stable in the sense of
        \[1+\left\| \max_{0 \le j \le N_k} \|U^j\|\right\|_p \le \Cstab c_{u_0, f, g,T},\]
    where $\Cstab \ce (1+C^2T)^{1/2}e^{(1+C^2T)/2}$ with $C\ce\LFX T^{1/2}+B_p\LGX$,
    \[c_{u_0, f, g,T}\ce  1+\|u_0\|_{L^p(\Omega;X)} + \|f\|_{L^p(\Omega;C([0,T];X))} T + \|g\|_{L^p(\Omega;C([0,T];\gHX))} B_p T^{1/2},\]
    and $B_{p}$ is the constant from Theorem \ref{thm:maxIneqQuasiContractive}.
\end{proposition}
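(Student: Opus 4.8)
The plan is to mimic the a priori estimate in the proof of Theorem~\ref{thm:wellposed}, but built on the discrete variation-of-constants formula \eqref{eq:VoCUStab} and closed by the discrete Gronwall inequality (Lemma~\ref{lem:KruseGronwall}) in place of Lemma~\ref{lem:gronwallvar}. For $0 \le m \le N_k$ I would set
\[
\varphi_m \ce 1 + \Big\|\max_{0\le j \le m}\|U^j\|\Big\|_p ,
\]
so that the assertion is exactly $\varphi_{N_k} \le \Cstab\, c_{u_0,f,g,T}$. Taking the maximum over $j\le m$ in \eqref{eq:VoCUStab} and then the $L^p(\Omega)$-norm, I split the error into an initial-value term $R_k^j u_0$, a deterministic discrete convolution $k\sum_{i=0}^{j-1}R_k^{j-i}F(t_i,U^i)$, and a stochastic discrete convolution $\sum_{i=0}^{j-1}R_k^{j-i}G(t_i,U^i)\Delta W_{i+1}$, and estimate the three contributions separately.

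For the first two terms the contractivity of $R_k$ does the work. Since $\|R_k^j\|_{\calL(X)}\le 1$, the initial-value term is bounded by $\|u_0\|_p$ uniformly in $j$. For the deterministic term, $\|R_k^{j-i}\|\le 1$, the triangle inequality, the splitting $F=\tilde F+f$, and the linear growth $\|\tilde F(t_i,U^i)\|\le \LFX(1+\|U^i\|)$ give the pathwise bound $k\sum_{i=0}^{m-1}\|f(t_i)\| + \LFX k\sum_{i=0}^{m-1}(1+\|U^i\|)$. Using $kN_k = T$, the $f$-part is controlled by $\|f\|_{L^p(\Omega;C([0,T];X))}\,T$, while Cauchy--Schwarz in $i$ together with Minkowski's inequality in $\Omega$ converts the $\tilde F$-part into $\LFX T^{1/2}k^{1/2}\big(\sum_{i=0}^{m-1}\varphi_i^2\big)^{1/2}$, since $1+\big\|\,\|U^i\|\,\big\|_p \le \varphi_i$.

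The stochastic term is the crux. Writing it as $\int_0^{t_j}\Psi^{(j)}_s\dWHs$ with $\Psi^{(j)}_s = \sum_{i=0}^{j-1}\1_{[t_i,t_{i+1})}(s)R_k^{j-i}G(t_i,U^i)$ exhibits a discrete stochastic convolution whose integrand depends on the upper index $j$; hence the partial sums do \emph{not} form a martingale and the Burkholder--Davis--Gundy inequality \eqref{eq:BDG} cannot be applied directly. Applying the general maximal bound of Proposition~\ref{prop:PropLogMainPaper} with $N=N_k$ is possible but introduces a spurious factor $\sqrt{\log N_k}$, which is incompatible with the log-free constant $\Cstab$. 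The key ingredient is therefore a \emph{discrete} analogue of Theorem~\ref{thm:maxIneqQuasiContractive}: for a contractive scheme,
\[
\Big\|\max_{0\le j\le m}\Big\|\sum_{i=0}^{j-1}R_k^{j-i}\Phi_i\,\Delta W_{i+1}\Big\|\Big\|_p
\le B_p\Big\|\Big(\sum_{i=0}^{m-1}\|\Phi_i\|_{\gHX}^2\, k\Big)^{1/2}\Big\|_p ,
\]
where the contractivity of $R_k$ plays the role the quasi-contractive semigroup plays in the continuous case and keeps the constant free of logarithms. Granting this with $\Phi_i = G(t_i,U^i)$, the splitting $G=\tilde G+g$ and the linear growth of $\tilde G$ bound the stochastic term by $B_p\|g\|_{L^p(\Omega;C([0,T];\gHX))}T^{1/2}$ plus $B_p\LGX k^{1/2}\big(\sum_{i=0}^{m-1}\varphi_i^2\big)^{1/2}$, again via Minkowski's inequality and $\sum_i k = T$.

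Collecting the three contributions yields the recursion
\[
\varphi_m \le c_{u_0,f,g,T} + C k^{1/2}\Big(\sum_{i=0}^{m-1}\varphi_i^2\Big)^{1/2},
\qquad C = \LFX T^{1/2}+B_p\LGX ,
\]
where the constant term assembles as $1+\|u_0\|_p+\|f\|_{L^p(\Omega;C([0,T];X))}T+B_p\|g\|_{L^p(\Omega;C([0,T];\gHX))}T^{1/2} = c_{u_0,f,g,T}$. Lemma~\ref{lem:KruseGronwall} then applies with $\alpha = c_{u_0,f,g,T}$ and $\beta = Ck^{1/2}$; since $\beta^2 N_k = C^2 k N_k = C^2 T$, it gives $\varphi_{N_k}\le c_{u_0,f,g,T}(1+C^2T)^{1/2}e^{(1+C^2T)/2} = \Cstab\, c_{u_0,f,g,T}$, which is the claim. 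I expect the only genuinely non-routine step to be establishing the log-free discrete convolution maximal inequality displayed above; everything else is bookkeeping with contractivity, linear growth, and Minkowski's inequality, exactly parallel to Theorem~\ref{thm:wellposed}.
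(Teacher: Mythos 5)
Your proposal follows the paper's proof almost verbatim in structure: the same $\varphi_m$, the same three-way splitting of the discrete variation-of-constants formula, the same treatment of the initial-value and deterministic terms via contractivity and Cauchy--Schwarz, and the same closing via Lemma \ref{lem:KruseGronwall} with $\beta^2 N_k = C^2T$; all of those steps check out, and the constants assemble correctly. You have also correctly diagnosed the crux: the discrete stochastic convolution is not a martingale in $j$, Proposition \ref{prop:PropLogMainPaper} would cost a spurious $\sqrt{\log N_k}$, and what is needed is exactly the log-free maximal inequality you display. But that inequality is the entire content of the proposition beyond bookkeeping, and you only ``grant'' it; as written, the proof has a genuine gap at precisely the point you flagged.

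The missing idea is the Sz.-Nagy dilation theorem \cite[Theorem I.4.2]{Nagybook}, which is how the paper fills this gap. Since $R_k$ is a contraction on the Hilbert space $X$, there exist a Hilbert space $\wt{X}$, a contractive injection $Q:X\to\wt{X}$, a contractive projection $P:\wt{X}\to X$, and a \emph{unitary} $\wt{R}_k$ on $\wt{X}$ with $R_k^i = P\wt{R}_k^i Q$ for all $i\ge 0$ (the power dilation is essential here). Then, writing $\Phi_i = G(t_i,U^i)$,
\begin{equation*}
\Big\|\sum_{i=0}^{j-1}R_k^{j-i}\Phi_i\,\Delta W_{i+1}\Big\|_X
\le \Big\|\sum_{i=0}^{j-1}\wt{R}_k^{j-i}Q\Phi_i\,\Delta W_{i+1}\Big\|_{\wt{X}}
= \Big\|\sum_{i=0}^{j-1}\wt{R}_k^{-i}Q\Phi_i\,\Delta W_{i+1}\Big\|_{\wt{X}},
\end{equation*}
where the last equality factors out the unitary $\wt{R}_k^{j}$. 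The point is that the right-hand sum has an integrand free of the upper index $j$, hence \emph{is} a martingale transform: setting $S^k(s)\ce\wt{R}_k^{-i}$ and $\Phi^k(s)\ce\Phi_i$ for $s\in[t_i,t_{i+1})$ (adaptedness of $U^i$ makes this progressively measurable), it coincides with $\int_0^{t_j}S^k(s)Q\Phi^k(s)\,\rmd W_H(s)$, so the Burkholder--Davis--Gundy inequality \eqref{eq:BDG} in $\wt{X}$ applies directly and, since $\wt{R}_k^{-i}Q$ is contractive, yields exactly your displayed bound with constant $B_p$ and no logarithm. (Your analogy with Theorem \ref{thm:maxIneqQuasiContractive} is apt in a stronger sense than you may have intended: the continuous-time result from \cite{HausSei} is itself proved by a unitary dilation of the contraction semigroup, cf.\ \cite[Theorem I.7.1]{Nagybook}, which is also how the paper handles the term $T_2$ in Lemma \ref{lem:pathRegStConv}.) With this lemma supplied, your proof is complete and identical to the paper's.
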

Examples for contractive schemes include the exponential Euler, the implicit Euler, and the Crank--Nicolson method, as well as $A$-stable higher-order implicit Runge-Kutta methods such as Radau methods, BDF(2), Lobatto IIA, IIB, and IIC (see Proposition \ref{prop:functionalcalculus}).

The exponential dependence in Proposition \ref{prop:stab} comes from an application of Gronwall's inequality. Therefore, to make the result suitable for numerical applications, some optimization of the constants was necessary. In the special case that $L_{F,X} = L_{G,X}=T=1$, and $p=2$ one can check that $\Cstab=\sqrt{10} e^{5} \leq 470$ which seems a reasonable constant for error estimates in applications. Later on, we will also apply Proposition \ref{prop:stab} in case the space $X$ is replaced by $Y$ in the setting of Section \ref{sec:wellposed}. 

\begin{proof}
    Let $\vp_N \ce 1+\| \max_{0 \le j \le N} \|U^j\|\|_p$ and $N \in \{0,\ldots,N_k\}$. Then the variation-of-constants formula \eqref{eq:VoCUStab} and contractivity of $R_k$ allow us to bound
    \begin{align}
    \label{eq:boundStabilityProof}
        \vp_N &\le 1+ \|u_0\|_{L^p(\Omega;X)} + k \sum_{i=0}^{N-1} \left\|\max_{0 \le j \le i}\|F(t_j,U^j)\|\right\|_p\nonumber\\
        &\phantom{\le }+ \left\|\max_{0 \le j \le N} \left\|\sum_{i=0}^{j-1} R_k^{j-i}G(t_i,U^i)\Delta W_{i+1} \right\|\right\|_p.
    \end{align}
    Invoking linear growth of $\tilde{F}$ and boundedness of $f$ for the third term, we obtain the bound
    \begin{align}
    \label{eq:stabProof2ndTerm}
        k\sum_{i=0}^{N-1} \left\|\max_{0 \le j \le i}\|F(t_j,U^j)\|\right\|_p
        &\le k\sum_{i=0}^{N-1}\bigg\|\max_{0\le j \le i}\left(\LFX\left(1+\|U^j\| \right) + \|f(t_j)\|\right)\bigg\|_p\nonumber\\
        &\le k\sum_{i=0}^{N-1}\bigg(\LFX\bigg(1+\bigg\|\max_{0\le j \le i}\|U^j\| \bigg\|_p  \bigg) + \|f\|_{L^p(\Omega;C([0,T];X))}\bigg)\nonumber\\
        &= C_{1,f} t_N  + \LFX k \sum_{i=0}^{N-1} \varphi_i\leq  C_{1,f} t_N  + \LFX t_N^{1/2} \bigg(k \sum_{i=0}^{N-1} \varphi_i^2\bigg)^{1/2},
    \end{align}
    where we have set $C_{1,f} \ce \|f\|_{L^p(\Omega;C([0,T];X))}$, and used the Cauchy--Schwarz inequality and $N k = t_N$ in the last line. It remains to bound the last term in \eqref{eq:boundStabilityProof}.

    Since $R_k$ is a contraction, by the Sz.-Nagy dilation theorem \cite[Theorem I.4.2]{Nagybook} we can find a Hilbert space $\wt{X}$, a contractive injection $Q:X\to \wt{X}$, a contractive projection $P:\wt{X}\to X$, and a unitary $\wt{R}_k$ on $\wt{X}$ such that
    \[R_k^i = P \wt{R}_k^i Q \ \ \text{for all $i\geq 0$}.\]
    Let $G^k(s) \ce G(t_i,U^i)$ and
    $S^k(s) \ce \wt{R}_k^{-i} $ for $s \in [t_i,t_{i+1}), 0 \le i \le N_k-1$. Then it follows from Theorem \ref{thm:maxIneqQuasiContractive} that
    \begin{align}
    \label{eq:stabProof3rdTerm}
     \bigg\|\max_{0 \le j \le N} \bigg\| \sum_{i=0}^{j-1} R_k^{j-i}G(t_i,U^i) \Delta W_{i+1} \bigg\|\bigg\|_p  &= \bigg\|\max_{0 \le j \le N} \bigg\|\sum_{i=0}^{j-1} \wt{R}_k^{j-i} Q G(t_i,U^i)\Delta W_{i+1} \bigg\|\bigg\|_p
       \nonumber\\ &= \bigg\|\max_{0 \le j \le N} \bigg\|\sum_{i=0}^{j-1} \wt{R}_k^{-i} Q G(t_i,U^i)\Delta W_{i+1} \bigg\|\bigg\|_p
    \nonumber\\ & \leq \bigg\|\sup_{t\in [0,t_N]}\Big\|\int_0^{t}S^k(s)Q G^k(s) \dWHs \Big\|\bigg\|_p
       \nonumber\\ & \leq B_{p} \bigg\|\Big(\int_0^{t_N} \|G^k(s)\|_{\gHX}^2 ds \Big)^{1/2} \bigg\|_p
       \nonumber\\ & \leq B_{p} \left( k\sum_{i=0}^{N-1} \left\|\|G(t_{i}, U^i)\|_{\gHX} \right\|_p^2 \right)^{1/2}
        \nonumber\\ & \leq B_{p} \LGX\left( k\sum_{i=0}^{N-1} \varphi_i^2 \right)^{1/2}
       + C_{2,g} t_N^{1/2},
       \end{align}
       where we have set $C_{2,g} \ce B_p\|g\|_{L^p(\Omega;C([0,T];\gHX))}$.

     Inserting \eqref{eq:stabProof2ndTerm} and \eqref{eq:stabProof3rdTerm} in \eqref{eq:boundStabilityProof} gives the bound
    \begin{align*}
        \vp_N \le 1+\|u_0\|_{L^p(\Omega;X)} + C_{1,f} t_N + C_{2,g} t_N^{1/2}
        + (\LFX t_N^{1/2}+B_p\LGX) \left( k\sum_{i=0}^{N-1} \vp_i^2\right)^{1/2}.
    \end{align*}
    Setting $C\ce \LFX t_N^{1/2}+B_p\LGX$ and $c_{u_0, f, g,t_N}\ce 1+\|u_0\|_{L^p(\Omega;X)} + C_{1,f} t_N + C_{2,g} t_N^{1/2}$, we obtain from the discrete version of Gronwall's Lemma \ref{lem:KruseGronwall} that
    \begin{align*}
        \vp_N \le c_{u_0, f, g} (1+C^2kN)^{1/2} \ee^{(1+C^2kN)/2}.
    \end{align*}
    This implies the desired statement for $N=N_k$ noting that $t_{N_k}=kN_k=T$.
\end{proof}

\section{Convergence rates for multiplicative noise}
\label{sec:rateOfConvergenceMultNoise}

Our aim is to prove rates of convergence of contractive time discretisation schemes for nonlinear stochastic evolution equations of the form
\begin{equation}
\label{eq:StEvolEqnFG}
    \rmd U = (AU + F(t,U))\,\rmd t + G(t,U)\,\rmd W_H(t),~~U(0)=u_0 \in L^p(\Omega;X)
\end{equation}
with $t \in [0,T]$ on a Hilbert space $X$ with norm $\|\cdot\|$, where $W_H$ is an $H$-cylindrical Brownian motion for some Hilbert space $H$ and $p \in [2,\infty)$. The operator $A$ is assumed to generate a contractive $C_0$-semigroup $(S(t))_{t\ge 0}$ on $X$ and $F,G$ are assumed to be progressively measurable, of linear growth and globally Lipschitz as detailed in Assumption \ref{ass:FG_WP}. Hence, we have the unique mild solution given by a fixed point of
\begin{equation}
\label{eq:mildSoldefU}
    U(t) = S(t)u_0 + \int_0^t S(t-s)F(s,U(s))\,\rmd s + \int_0^t S(t-s)G(s,U(s))\,\rmd W_H(s)
\end{equation}
for $t \in [0,T]$, see Section \ref{sec:wellposed}.

To obtain convergence rates for temporal discretisations of the mild solution, we assume additional structure of the nonlinearity $F$ and the noise $G$.
Let $Y$ be another Hilbert space such that $Y \hra X$ and the semigroup $(S(t))_{t \ge 0}$ is also contractive on $Y$. We will assume $F$ and $G$ map $Y$ into $Y$ and enjoy linear growth conditions as on $X$ also on $Y$. Note that Lipschitz continuity is not assumed on $Y$ contrary to $X$. This additional structure resembling the famous Kato setting \cite{Kato75}, which was briefly mentioned in the introduction, allows for convergence rates of temporal discretisations for a large class of schemes introduced in Subsection \ref{subsec:mainThmMultNoise}. The quantitative error estimate in Theorem \ref{thm:convergenceRate} is the main result of this paper, stating that the additional structure suffices to obtain the order of the scheme as the convergence rate of the temporal approximations up to a logarithmic correction factor for sufficiently regular initial data. For the \emph{exponential Euler method}, the logarithmic correction factor can be omitted, as illustrated in Subsection \ref{subsec:expEulerMultNoise}. The main error estimate of Theorem \ref{thm:convergenceRate} is extended to the full time interval $[0,T]$ in Subsection \ref{subsec:fullTimeInterval} As an application, we revisit the Schrödinger equation, now with a multiplicative potential, in Subsection \ref{subsec:SchroedingerMult}, including its numerical simulation in Subsection \ref{subsec:numerical}, and consider the stochastic Maxwell's equations in Subsection \ref{subsec:Maxwell}.

\subsection{General contractive time discretisation schemes}
\label{subsec:mainThmMultNoise}

We now detail the assumptions on the structure of $F$ and $G$ on $Y$. Note that the assumption also implies that the conditions of Theorems \ref{thm:wellposed} and \ref{thm:wellposedY} hold.

\begin{assumption}\label{ass:FG_structured}
    Let $X,Y$ be Hilbert spaces such that $Y \hra X$ continuously, and let $p \in [2,\infty)$.
    Let $F:\Omega \times [0,T] \times X \to X, F(\omega, t,x) = \tilde{F}(\omega,t,x) + f(\omega,t)$ and $G:\Omega \times [0,T] \times X \to \gHX, G(\omega, t,x) = \tilde{G}(\omega,t,x) + g(\omega,t)$ be strongly $\calP\otimes \calB(X)$-measurable, and such that $\tilde{F}(\cdot,\cdot,0) = 0$ and $\tilde{G}(\cdot,\cdot,0) = 0$, and suppose
    \begin{enumerate}[label=(\alph*)]
        \item\label{assCond:LipschitzFG} \emph{(global Lipschitz continuity on $X$)} there exist constants $\CFX, \CGX \ge 0$ such that for all $\omega \in \Omega, t \in [0,T]$, and $x,y\in X$, it holds that
        \begin{equation*}
            \|\tilde{F}(\omega,t,x)-\tilde{F}(\omega,t,y)\| \le \CFX\|x-y\|,~ \|\tilde{G}(\omega,t,x)-\tilde{G}(\omega,t,y)\|_\gHX \le \CGX\|x-y\|,
        \end{equation*}
        \item \label{assCond:HoldercontinuityFG}\emph{(Hölder continuity with values in $X$)} for some $\alpha \in (0,1]$,
        \begin{align*}
            C_{\alpha,F}\ce \sup_{\omega\in \Omega, x \in X} [F(\omega,\cdot,x)]_\alpha <\infty,~C_{\alpha,G}\ce \sup_{\omega\in \Omega, x \in X} [G(\omega,\cdot,x)]_\alpha <\infty,
        \end{align*}
        \item \label{item:FG_rate_Yinvariance} \emph{($Y$-invariance)} $F:\Omega \times [0,T] \times Y \to Y$ and $G:\Omega \times [0,T] \times Y \to \gHY$ are strongly $\calP\otimes \calB(Y)$-measurable, $f \in L^p_\calP(\Omega; C([0,T];Y))$, and $g \in L^p_\calP(\Omega; C([0,T];\gHY))$,
        \item \emph{(linear growth on $Y$)} there exist constants $\LFY, \LGY \ge 0$ such that for all $\omega \in \Omega, t \in [0,T]$, and $x\in Y$, it holds that
        \begin{equation*}
            \|\tilde{F}(\omega,t,x)\|_Y \le \LFY(1+\|x\|_Y),~ \|\tilde{G}(\omega,t,x)\|_\gHY \le \LGY(1+\|x\|_Y).
        \end{equation*}
    \end{enumerate}
\end{assumption}

Condition \ref{assCond:HoldercontinuityFG} can be weakened to
the existence of some $\alpha \in (0,1]$ such that
\begin{equation*}
    \sup_{x \in X} \sup_{0 \le s \le t \le T} \frac{F(\cdot,t,x)-F(\cdot,s,x)}{(t-s)^\alpha} \in L^p(\Omega)
\end{equation*}
 and likewise for $G$, i.e., pathwise Hölder continuity uniformly in $x \in X$ is sufficient together with existence of $p$-th moments of the Hölder seminorms. Assumption \ref{ass:FG_structured} implies that \eqref{eq:StEvolEqnFG} has a unique mild solution.

To bound the error arising from time discretisation of the mild solution, moment bounds of differences of the mild solution at different time points as in the following lemma are required.  As a shorthand notation in accordance with \eqref{eq:shorthandOnlyg}, let
\begin{equation}
\label{eq:shorthandNormfg}
    \|f\|_{p,q,Z} \ce \|f\|_{L^p(\Omega;L^q(0,T;Z))},~ \nn g\nn_{p,q,Z} \ce \|g\|_{L^p(\Omega;L^q(0,T;\calL_2(H,Z)))}
\end{equation}
for Hilbert spaces $Z$, $p \in [2,\infty)$, and $q \in [1,\infty]$. We further introduce the constants
\begin{align}
\label{eq:defCu0fgZ}
    C_{u_0,f,g,Z} &\ce 1+\Cbdd^Z(1+\|u_0\|_{L^p(\Omega;Z)}+\|f\|_{p,1,Z}+\nn g\nn_{p,2,Z})
\end{align}
for $Z \in \{X,Y\}$ with $\Cbdd^X$ and $\Cbdd^Y$ as in Theorems \ref{thm:wellposed} and \ref{thm:wellposedY}, respectively. Then the estimate
\begin{equation}
\label{eq:WPboundCu0fgZ}
    1+\bigg\|\sup_{r \in [0,T]} \|U(r)\|_Z\bigg\|_p \le C_{u_0,f,g,Z} < \infty
\end{equation}
holds for $Z\in \{X, Y\}$.

\begin{lemma}
\label{lem:mildsolest}
Suppose that Assumption \ref{ass:FG_structured} holds for some $\alpha \in (0,1]$ and $p \in [2,\infty)$. Let $A$ be the generator of a $C_0$-contraction semigroup $(S(t))_{t \ge 0}$ on both $X$ and $Y$.
Suppose that $Y \hra D_A(\alpha,\infty)$ continuously if $\alpha \in (0,1)$ or $Y \hra D(A)$ continuously if $\alpha=1$.
Let $u_0 \in L_{\calF_0}^p(\Omega;Y)$. Then for all $0 \le s \le t \le T$ the mild solution $U$ of \eqref{eq:StEvolEqnFG} satisfies
    \begin{align*}
        &\left(\E\|U(t)-U(s)\|^p\right)^{1/p}  \le L_1 (t-s) + L_2 (t-s)^{1/2} +L_3 (t-s)^{\alpha}
    \end{align*}
    with constants $L_1\ce\CFX  \CufgX+\|f\|_{p,\infty,X}$, $L_2 \ce B_{p} (\CGX  \CufgX+
        \nn g\nn_{p,\infty,X})$, and
    \begin{equation*}
        L_3 \ce 2\CY \Big[ \|u_0\|_{L^p(\Omega;Y)} + T\LFY\CufgY + \|f\|_{p,1,Y}+ B_{p} \big(T^{1/2}\LGY \CufgY
        + \nn g\nn_{p,2,Y}\big) \Big],
    \end{equation*}
    where $\CufgX$ and $\CufgY$ are as defined in \eqref{eq:defCu0fgZ}, $\CY$ denotes the embedding constant of $Y$ into $D_A(\alpha,\infty)$ or $D(A)$, and $B_p$ is the constant from Theorem \ref{thm:maxIneqQuasiContractive}.
\end{lemma}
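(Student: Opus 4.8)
The plan is to expand $U(t)-U(s)$ via the mild solution formula \eqref{eq:mildSoldefU} and split it into the initial-value contribution, the difference of the two deterministic convolutions, and the difference of the two stochastic convolutions. Each convolution difference is then further decomposed into a \emph{new} piece, the integral over the fresh interval $[s,t]$, and a \emph{semigroup-difference} piece, the integral over $[0,s]$ with $S(t-r)$ replaced by $S(t-r)-S(s-r)$:
\[
U(t)-U(s)=\big(S(t)-S(s)\big)u_0+\Delta_F+\Delta_G,
\]
where
\[
\Delta_F=\int_s^t S(t-r)F(r,U(r))\,\rmd r+\int_0^s\big(S(t-r)-S(s-r)\big)F(r,U(r))\,\rmd r
\]
and $\Delta_G$ is the analogous difference of stochastic convolutions (with $G$ and $\,\mathrm{d}W_H(r)$). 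After the triangle inequality in $L^p(\Omega)$ the new deterministic piece will give $L_1(t-s)$, the new stochastic piece $L_2(t-s)^{1/2}$, and the initial-value term together with the two semigroup-difference pieces will combine to $L_3(t-s)^\alpha$.

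First I would handle the two new pieces, which live in $X$. For the deterministic one I use contractivity of $S$ on $X$ (so $\|S(t-r)\|\le 1$) and the linear growth $\|F(r,U(r))\|_X\le\CFX(1+\|U(r)\|_X)+\|f(r)\|_X$; carrying out the $\rmd r$-integral over $[s,t]$ \emph{pathwise} and only then passing to the $L^p(\Omega)$-norm, the a priori bound \eqref{eq:WPboundCu0fgZ} on $X$ yields $(\CFX\CufgX+\|f\|_{p,\infty,X})(t-s)=L_1(t-s)$. For the new stochastic piece I apply Burkholder--Davis--Gundy \eqref{eq:BDG}, bound $\|S(t-r)\|\le1$, estimate the $L^2(s,t)$-square function crudely by $(t-s)^{1/2}$ times a supremum in $r$, and invoke linear growth on $X$ together with \eqref{eq:WPboundCu0fgZ} to get $B_p(\CGX\CufgX+\nn g\nn_{p,\infty,X})(t-s)^{1/2}=L_2(t-s)^{1/2}$.

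The three $(t-s)^\alpha$ contributions are where the $Y$-regularity of the solution (Theorem \ref{thm:wellposedY}) enters. The key quantitative input is that $S(t-r)-S(s-r)=S\big((s-r)+(t-s)\big)-S(s-r)$ has time-gap exactly $(t-s)$ \emph{independently} of $r$, so \eqref{eq:interpolationSgDifferenceGeneral} with $M=1,\lambda=0$ gives the uniform bound $\|S(t-r)-S(s-r)\|_{\calL(Y,X)}\le2\CY(t-s)^\alpha$ for every $r\in[0,s]$, and likewise $\|(S(t)-S(s))u_0\|_X\le2\CY(t-s)^\alpha\|u_0\|_Y$. For the deterministic semigroup-difference piece I factor out $2\CY(t-s)^\alpha$, perform the remaining $\rmd r$-integral of $\|F(r,U(r))\|_Y\le\LFY(1+\|U(r)\|_Y)+\|f(r)\|_Y$ pathwise, and only afterwards take the $L^p(\Omega)$-norm, so that \eqref{eq:WPboundCu0fgZ} on $Y$ and the definition of $\|f\|_{p,1,Y}$ turn this into $2\CY(\LFY T\CufgY+\|f\|_{p,1,Y})(t-s)^\alpha$. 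For the stochastic semigroup-difference piece I again use \eqref{eq:BDG}; the essential observation is that postcomposing the Hilbert--Schmidt operator $G(r,U(r))\in\gHY$ with $S(t-r)-S(s-r)\colon Y\to X$ satisfies $\|(S(t-r)-S(s-r))G(r,U(r))\|_{\gHX}\le\|S(t-r)-S(s-r)\|_{\calL(Y,X)}\,\|G(r,U(r))\|_{\gHY}$, so that $2\CY(t-s)^\alpha$ again comes out and the residual $L^2(0,s)$-square function of $\|G(r,U(r))\|_{\gHY}$ is controlled by $T^{1/2}\LGY\CufgY+\nn g\nn_{p,2,Y}$ via linear growth on $Y$ and \eqref{eq:WPboundCu0fgZ}. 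Adding the initial-value term, these three pieces sum to exactly $L_3(t-s)^\alpha$, and combining all five bounds gives the claim.

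The main obstacle I expect is the bookkeeping with the order of operations: to reproduce the stated constants one must integrate (or take the supremum) in time \emph{pathwise first} and take the $L^p(\Omega)$-norm \emph{last}, since the opposite order would force Minkowski's inequality in the unfavourable direction and would fail to produce $\|f\|_{p,1,Y}$, $\nn g\nn_{p,2,Y}$, and the factors $\CufgX,\CufgY$. The only genuinely structural point, as opposed to routine estimation, is the uniform-in-$r$ semigroup-difference bound on $\calL(Y,X)$ and its correct transfer to Hilbert--Schmidt norms; everything else reduces to the triangle inequality, contractivity, Burkholder--Davis--Gundy, and the a priori bounds of Theorems \ref{thm:wellposed} and \ref{thm:wellposedY}.
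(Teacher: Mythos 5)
Your proposal is correct and follows essentially the same route as the paper's proof: the identical five-term decomposition (initial value, the two semigroup-difference pieces over $[0,s]$, and the two fresh pieces over $[s,t]$), the uniform-in-$r$ bound $\|S(t-r)-S(s-r)\|_{\calL(Y,X)}\le 2\CY(t-s)^\alpha$ from \eqref{eq:interpolationSgDifferenceGeneral} with $M=1$, $\lambda=0$, the ideal property of $\gHX$ for the stochastic semigroup-difference term, and the a priori bounds \eqref{eq:WPboundCu0fgZ} on $X$ and $Y$, with the same order of operations (pathwise time integration before the $L^p(\Omega)$-norm) that the paper uses to produce $\|f\|_{p,1,Y}$, $\nn g\nn_{p,2,Y}$, and the constants $L_1$, $L_2$, $L_3$ exactly. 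The only cosmetic difference is that you cite \eqref{eq:BDG} where the paper invokes Theorem \ref{thm:maxIneqQuasiContractive} for the stochastic terms, which yields the same constant $B_p$ since only fixed-endpoint integrals are involved.
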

\begin{proof}
    Since the conditions of Theorems \ref{thm:wellposed} and \ref{thm:wellposedY} are met, $U$ is pathwise continuous on $X$. By Theorem \ref{thm:wellposedY}, the pathwise continuity of $U$ follows on $Y$ as well. Moreover, the bound \eqref{eq:WPboundCu0fgZ} holds.

    Fix $t,s \in [0,T]$ with $s \le t$. From the mild solution formula \eqref{eq:mildSoldefU}, we deduce that
    \begin{align*}
        &\left(\E\|U(t)-U(s)\|^p\right)^{1/p} \le \left\|[S(t)-S(s)]u_0\right\|_{L^p(\Omega;X)}\\
        &+ \Big\|\int_0^{s} \|[S(t-r)-S(s-r)]F(r,U(r))\|\;\rmd r\Big\|_p+ \Big\|\int_{s}^t \|S(t-r)F(r,U(r))\|\;\rmd r\Big\|_p\\
        & + \Big\|\int_0^{s} [S(t-r)-S(s-r)]G(r,U(r))\;\rmd W_H(r)\Big\|_{L^p(\Omega;X)}\\
        & + \Big\|\int_{s}^t S(t-r)G(r,U(r))\;\rmd W_H(r)\Big\|_{L^p(\Omega;X)} \ec E_1+E_2+E_3+E_4+E_5,
    \end{align*}
    where $E_\ell = E_\ell(t,s)$ for $1 \le \ell \le 5$. We proceed to bound these five expressions individually.
    By the semigroup bound \eqref{eq:interpolationSgDifferenceGeneral},
    \begin{align*}
        E_1 & \le \|S(t)-S(s)\|_{\calL(Y,X)} \|u_0\|_{L^p(\Omega;Y)} \le 2\CY (t-s)^{\alpha} \|u_0\|_{L^p(\Omega;Y)}.
    \end{align*}
    Using \eqref{eq:WPboundCu0fgZ}  and \eqref{eq:interpolationSgDifferenceGeneral} as well as linear growth of $\tilde{F}$ on $Y$ and $f \in L^p(\Omega;L^1(0,T;Y))$, we obtain
    \begin{align*}
        E_2 &\le 2\CY \Big\| \int_0^s [(t-r)-(s-r)]^\alpha \|F(r,U(r))\|_Y \,\rmd r \Big\|_p\\
        &\le 2\CY (t-s)^{\alpha}\bigg(s \LFY\bigg\| \sup_{r \in [0,T]}(1+\|U(r)\|_Y) \bigg\|_p + \Big\| \int_{0}^s\|f(r)\|_Y \,\rmd r\Big\|_p\bigg)\\
        &\le 2\CY \big(T\LFY  \CufgY +\|f\|_{p,1,Y}\big)  (t-s)^{\alpha}.
    \end{align*}
    Analogously,
    \begin{align*}
        E_3 &\le (\CFX  \CufgX+\|f\|_{p,\infty,X}) (t-s)
    \end{align*}
    is obtained by contractivity of the semigroup, linear growth of $F$ on $X$ and boundedness of the solution. For the terms involving a stochastic integral, we apply Theorem \ref{thm:maxIneqQuasiContractive}. Additionally making use of the bound \eqref{eq:interpolationSgDifferenceGeneral} for semigroup differences, splitting the integral as in $E_2$, and using linear growth of $\tilde{G}$, \eqref{eq:WPboundCu0fgZ}, as well as $g\in L^p(\Omega;L^2(0,T;\gHY)))$ results in
    \begin{align*}
        E_4&\le B_{p} \bigg(\E \left( \int_0^{s} \|[S(t-r)-S(s-r)]G(r,U(r))\|_\gHX^2\;\rmd r\right)^{p/2}\bigg)^{1/p}\\
        &\le 2B_{p} \CY \big(T^{1/2}\LGY  \CufgY+\nn g\nn_{p,2,Y}\big)  (t-s)^{\alpha}.
    \end{align*}
    For the last term, the contractivity of the semigroup and linear growth of $G$ yield
    \begin{align*}
        E_5 &\le B_{p}  \bigg(\E \left( \int_{s}^t \|S(t-r)G(r,U(r))\|_\gHX^2\;\rmd r\right)^{p/2}\bigg)^{1/p} \\
        &\le B_{p} (\CGX  \CufgX+\nn g\nn_{p,\infty,X}) (t-s)^{1/2}.
    \end{align*}
    In conclusion from the five individual bounds, we obtain the statement of the lemma
    \begin{align*}
        \left(\E\|U(t)-U(s)\|^p\right)^{1/p}  &\le (\CFX  \CufgX+\|f\|_{p,\infty,X}) (t-s)\\
        &\phantom{\le }+ B_{p} \big(\CGX  \CufgX+\nn g\nn_{p,\infty,X}\big) (t-s)^{1/2}\\
        &\phantom{\le }+2\CY \Big[ \|u_0\|_{L^p(\Omega;Y)} + T\LFY\CufgY + \|f\|_{p,1,Y} \\
        &\phantom{\le }+ B_{p} \big(T^{1/2}\LGY \CufgY+\nn g\nn_{p,2,Y}\big) \Big] (t-s)^{\alpha}. \qedhere
    \end{align*}
\end{proof}
\begin{remark}
\label{rem:lemmafgRegularity}
    Suppose that $\alpha \in (0,\frac12]$. Lemma \ref{lem:mildsolest} implies $\alpha$-H\"older continuity of $U$ in $p$-th moment. The latter remains true if the pathwise continuity of $f$ and $g$ with values in $Y$ from Assumption \ref{ass:FG_structured}\ref{item:FG_rate_Yinvariance} are relaxed to $\|f\|_{p,1,Y}<\infty$ and $\nn g \nn_{p,2,Y}<\infty$. Performing an additional Hölder argument for $E_3$ and $E_5$, the pathwise continuity assumption with values in $X$ can be relaxed to $\|f\|_{p,\frac{1}{1-\alpha},X}<\infty$ and $\nn g \nn_{p,\frac{2}{1-2\alpha},X}<\infty$, where we use the convention $\frac{1}{0}=\infty$. Although the lemma could be improved, for our purposes the above version is enough since even pathwise continuity with values in $Y$ is required in Theorem \ref{thm:convergenceRate}.
\end{remark}

For time discretisation, we employ a contractive time discretisation scheme $R: [0,\infty) \to \calL(X)$ with time step $k>0$ on a uniform grid $\{t_j=jk:~j=0,\ldots, N_k\}\subseteq [0,T]$ with final time $T=t_{N_k}>0$ and $N_k=\frac{T}{k}\in \N$ being the number of time steps. As in the previous section, the discrete solution is given by $U^0 \ce u_0$ and
\begin{align}
\label{eq:defUj}
    U^j &\ce R_k U^{j-1} + k R_k F(t_{j-1},U^{j-1})+ R_k G(t_{j-1},U^{j-1})\Delta W_j \\
\label{eq:VoCU}
    &= R_k^j u_0 + k \sum_{i=0}^{j-1} R_k^{j-i}F(t_i,U^i) + \sum_{i=0}^{j-1} R_k^{j-i}G(t_i,U^i)\Delta W_{i+1}
\end{align}
for $j=1,\ldots,N_k$ with Wiener increments $\Delta W_j \ce W_H(t_j)-W_H(t_{j-1})$.

We recall from Definition \ref{def:orderScheme} that $R$ \textit{approximates $S$ to order $\alpha>0$ on $Y$} or, equivalently, \textit{$R$ converges of order $\alpha$ on $Y$} if there is a constant $C_\alpha \ge 0$ such that for all $u \in Y$
\begin{equation*}
    \|(S(t_j)-R_k^j)u\| \le C_\alpha k^\alpha\|u\|_Y.
\end{equation*}

Under the conditions of Assumption \ref{ass:FG_structured} we conclude from Proposition \ref{prop:stab} and the remark thereafter that $R$ is stable not only on $X$ but also on $Y$ provided that $u_0 \in L_{\calF_0}^p(\Omega;Y)$ and both $S$ and $R$ are contractive on both $X$ and $Y$. Thus,
\begin{equation}
    \label{eq:defKu0fgY}
    1+ \left\lVert \max\limits_{0 \le j \le N_k}\|U^j \|_Y\right\rVert_p \le \KufgY,
\end{equation}
where $\KufgY\ce \Cstab c_{u_0, f, g,T}$ with constants $\Cstab, c_{u_0, f, g,T}$ as in Proposition \ref{prop:stab} applied on $Y$ instead of $X$. Furthermore, we recall the shorthand notation $\|f\|_{p,\infty,Y}$ and $\nn g \nn_{p,\infty,Y}$ from \eqref{eq:shorthandNormfg}.

We can now state and prove the main result of this paper.

\begin{theorem}
\label{thm:convergenceRate}
    Suppose that Assumption \ref{ass:FG_structured} holds for some $\alpha \in (0,1]$ and $p \in [2,\infty)$. Let $A$ be the generator of a $C_0$-contraction semigroup $(S(t))_{t \ge 0}$ on both $X$ and $Y$.
    Let $(R_{k})_{k>0}$ be a time discretisation scheme which is contractive on $X$ and $Y$. Assume $R$ approximates $S$ to order $\alpha$ on $Y$. Suppose that $Y \hra D_A(\alpha,\infty)$ continuously if $\alpha \in (0,1)$ or $Y \hra D(A)$ continuously if $\alpha=1$. Let $u_0 \in L_{\calF_0}^p(\Omega;Y)$. Denote by $U$ the mild solution of \eqref{eq:StEvolEqnFG} and by $(U^j)_{j=0,\ldots,N_k}$ the temporal approximations as defined in \eqref{eq:defUj}. Then for $N_k \ge 2$
    \begin{equation*}
        \bigg\lVert\max_{0 \le j \le N_k}\|U(t_j)-U^j\|\bigg\rVert_p
        \le C_\ee\Big(C_1 k+ C_2k^{1/2}
        +\big(C_3 +C_4\sqrt{\max\{\log (T/k),p\}}\big)k^{\alpha}\Big)
    \end{equation*}
    with constants $C_\ee \ce (1+C^2T)^{1/2}\exp((1+C^2T)/2)$, $C \ce \CFX\sqrt{T}+B_p\CGX$, $C_1 \ce L_1(\frac{\CFX}{2}T^2+  B_p \CGX \sqrt{T})$, $C_2 \ce L_2(\frac{2}{3} \CFX T+(\frac{3}{2})^{1/2} B_p \CGX \sqrt{T} )$, $C_4 \ce C_{3,\log}\sqrt{T}$, and
    \begin{align*}
        C_3&\ce C_\alpha\|u_0\|_{L^p(\Omega;Y)}+C_{2,\alpha} T +C_{3,\alpha}\sqrt{T},\\
        C_{2,\alpha} &\ce \frac{\CFX L_3+C_{\alpha,F}}{\alpha+1}+ \big(\LFY \KufgY+\|f\|_{p,\infty,Y}\big)\left(\frac{2\CY}{\alpha+1}+C_\alpha\right),\\
        C_{3,\alpha} &\ce \frac{B_p}{\sqrt{2\alpha +1}} \Big(\sqrt{3}\CGX L_3 + C_{\alpha,G}+2 \CY\big(\LGY \KufgY+\nn g\nn_{p,\infty,Y}\big)\Big),\\
        C_{3,\log} &\ce K C_\alpha \big(\LGY \KufgY
        +\nn g\nn_{p,\infty,Y}\big),
    \end{align*}
    where $L_1,L_2,L_3$ are as defined in Lemma \ref{lem:mildsolest}, $\KufgY$ as in \eqref{eq:defKu0fgY}, $K=4\exp(1+\frac{1}{2\mathrm{e}})$, $\CY$ denotes the embedding constant of $Y$ into $D_A(\alpha,\infty)$ or $D(A)$, and $B_{p}$ is the constant from Theorem \ref{thm:maxIneqQuasiContractive}.

    In particular, the approximations $(U^j)_j$ converge at rate $\min\{\alpha,\frac{1}{2}\}$ up to a logarithmic correction factor as $k \to 0$.
\end{theorem}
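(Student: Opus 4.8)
The plan is to recast the scheme \eqref{eq:VoCU} in the integral form of the mild solution \eqref{eq:mildSoldefU}. With the piecewise-constant propagator $S^k(t)\ce R_k^j$ on $(t_{j-1},t_j]$ and the floor $\lfloor s\rfloor$, one has $U^j=R_k^ju_0+\int_0^{t_j}S^k(t_j-s)\bar F(s)\ds+\int_0^{t_j}S^k(t_j-s)\bar G(s)\dWHs$, where $\bar F(s)\ce F(t_i,U^i)$ and $\bar G(s)\ce G(t_i,U^i)$ on $[t_i,t_{i+1})$. Subtracting this from \eqref{eq:mildSoldefU} and comparing integrand by integrand, I would split the error into the initial-value term $[S(t_j)-R_k^j]u_0$, four drift terms, and five noise terms. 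Each contribution is produced by telescoping $S(t_j-s)F(s,U(s))-S^k(t_j-s)F(t_i,U^i)$ (and its $G$-analogue) through the integrand values $F(t_i,U(t_i))$ and $F(t_i,U^i)$ and through the propagator chain $S(t_j-s)\to S(t_j-t_i)\to S^k(t_j-s)$: a regularity term $S(t_j-s)[F(s,U(s))-F(t_i,U(t_i))]$, a self-referential term $S(t_j-s)[F(t_i,U(t_i))-F(t_i,U^i)]$, a semigroup-regularity term $[S(t_j-s)-S(t_j-t_i)]F(t_i,U^i)$, and a scheme-approximation term $[S(t_j-t_i)-R_k^{j-i}]F(t_i,U^i)$. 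For the noise the regularity term is split once more into a time-Hölder part $G(s,U(s))-G(t_i,U(s))$ and a spatial-Lipschitz part $G(t_i,U(s))-G(t_i,U(t_i))$, which accounts for five terms instead of four.

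I would then estimate the ten terms by type. The initial-value term is $\le C_\alpha\|u_0\|_{L^p(\Omega;Y)}k^\alpha$ since $R$ approximates $S$ to order $\alpha$ on $Y$. The time-regularity terms are controlled by the Hölder constants $C_{\alpha,F},C_{\alpha,G}$ of Assumption~\ref{ass:FG_structured}\ref{assCond:HoldercontinuityFG}. In the terms where $\CFX$ or $\CGX$ acts on the solution increment $U(s)-U(t_i)$, Lemma~\ref{lem:mildsolest} supplies the bound $L_1(s-t_i)+L_2(s-t_i)^{1/2}+L_3(s-t_i)^\alpha$; this is precisely the mechanism that spreads the error over the three powers $k$, $k^{1/2}$, $k^\alpha$ and thereby populates $C_1$, $C_2$, and (through $L_3$) $C_3$. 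The semigroup-regularity terms are handled by \eqref{eq:interpolationSgDifferenceGeneral} together with the linear growth of $F,G$ on $Y$ and the $Y$-stability bound $\KufgY$ from \eqref{eq:defKu0fgY}, which controls $\|F(t_i,U^i)\|_Y$ and $\|G(t_i,U^i)\|_{\gHY}$ uniformly in $i$; the scheme-approximation terms likewise use $\|S(t_{j-i})-R_k^{j-i}\|_{\calL(Y,X)}\le C_\alpha k^\alpha$ on these same $Y$-bounded quantities. The only self-referential contributions are the two in which $\CFX$, resp.\ $\CGX$, multiplies $U(t_i)-U^i$; these place $\max_{0\le i\le j-1}\|U(t_i)-U^i\|$ on the right-hand side and will drive the Gronwall step.

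The delicate point, and where I expect the main obstacle to lie, is keeping the maximum over $j$ inside the expectation for the five noise terms while paying a logarithm only where it is harmless. For the time-regularity, spatial-Lipschitz, and self-referential noise terms the propagator is $S(t_j-s)$ and the integrand does not depend on $j$, so $\max_{m\le j}\|\cdot\|$ is dominated by $\sup_{t\in[0,t_j]}\bigl\|\int_0^t S(t-s)\Psi(s)\dWHs\bigr\|$ for the relevant fixed $\Psi$, and Theorem~\ref{thm:maxIneqQuasiContractive} applies with constant $B_p$ and no logarithm. The semigroup-regularity noise term I would first rewrite via the semigroup law $S(t_j-t_i)=S(t_j-s)S(s-t_i)$ as $\int_0^{t_j}S(t_j-s)[I-S(s-t_i)]G(t_i,U^i)\dWHs$, which is again a genuine $S$-convolution with a $j$-independent integrand and is treated by Theorem~\ref{thm:maxIneqQuasiContractive}. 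The single term that resists this reduction is the stochastic scheme-approximation term $\sum_{i=0}^{j-1}[S(t_{j-i})-R_k^{j-i}]G(t_i,U^i)\Delta W_{i+1}$: its integrand genuinely depends on the endpoint $j$ and cannot be refactored into a fixed $S$-convolution, since $R_k$ possesses no semigroup property. I would therefore estimate it by the maximal inequality of Proposition~\ref{prop:PropLogMainPaper}, at the cost of the factor $\sqrt{\max\{\log(T/k),p\}}$. The hard part is recognising that this is acceptable: the term involves only the a priori stable values $U^i$ (bounded by $\KufgY$) and not the error $U(t_i)-U^i$, so the logarithm sits on a purely inhomogeneous $O(k^\alpha)$ contribution and yields exactly the summand $C_4k^\alpha\sqrt{\max\{\log(T/k),p\}}$ rather than being amplified by Gronwall.

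Collecting all contributions and writing $\varphi_j\ce\bigl\|\max_{0\le m\le j}\|U(t_m)-U^m\|\bigr\|_p$, the self-referential drift term contributes a coefficient $\CFX\sqrt{t_j}$ after Cauchy--Schwarz and the self-referential noise term a coefficient $B_p\CGX$ via Theorem~\ref{thm:maxIneqQuasiContractive}, so the estimate is expected to take the form
\[
\varphi_j\le\big(C_1k+C_2k^{1/2}+C_3k^\alpha+C_4k^\alpha\sqrt{\max\{\log(T/k),p\}}\big)+C\Big(k\sum_{i=0}^{j-1}\varphi_i^2\Big)^{1/2}
\]
with $C=\CFX\sqrt{T}+B_p\CGX$. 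The discrete Gronwall inequality of Lemma~\ref{lem:KruseGronwall} then produces the prefactor $C_\ee=(1+C^2T)^{1/2}\exp((1+C^2T)/2)$ and closes the bound. The resulting rate is $\min\{\alpha,\tfrac12\}$, the exponent $\tfrac12$ arising from the $L_2(s-t_i)^{1/2}$ part of the solution increment in Lemma~\ref{lem:mildsolest} and the exponent $\alpha$ from the interpolation, semigroup-, and scheme-approximation estimates.
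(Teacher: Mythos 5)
Your proposal is correct and follows essentially the same route as the paper's proof: the identical $1+4+5$ term decomposition, Lemma \ref{lem:mildsolest} for the solution increments (producing the $k$, $k^{1/2}$, $k^\alpha$ split), the $Y$-stability bound \eqref{eq:defKu0fgY} on the inhomogeneous terms, Theorem \ref{thm:maxIneqQuasiContractive} for the four stochastic terms with $j$-independent integrands (including the semigroup-law rewriting $[S(t_j-t_i)-S(t_j-s)]=S(t_j-s)[S(s-t_i)-I]$), Proposition \ref{prop:PropLogMainPaper} solely for the scheme-approximation stochastic term so that the logarithm lands on an $O(k^\alpha)$ inhomogeneity untouched by Gronwall, and finally Cauchy--Schwarz plus Lemma \ref{lem:KruseGronwall}. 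The only deviation is cosmetic: you telescope the noise regularity term as time-H\"older then spatial-Lipschitz, whereas the paper does the reverse order, which is immaterial since the H\"older constant is uniform in $x$ and the Lipschitz constant uniform in $t$.
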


This convergence result applies to schemes such as the exponential Euler, the implicit Euler, and the Crank--Nicolson method, as well as other $A$-acceptable implicit Runge-Kutta methods such as Radau  methods, BDF(2), Lobatto IIA, IIB, and IIC by virtue of Proposition \ref{prop:functionalcalculus}. If $R$ commutes with the resolvent of $A$, contractivity of $R$ and $S$ extend to fractional domain spaces and complex interpolation spaces. Hence, contractivity on $Y$ often comes together with contractivity on $X$.

The constant $C_{\ee}$ appears exponentially in the above. In the special case that $C_{F,X} = C_{G,X}=T=1$, and $p=2$, one can check that, similarly to Theorem \ref{thm:wellposedY}, this yields the numerically reasonable value $C_{\ee}=\sqrt{10} e^{5} \leq 470$.

\begin{proof}
The assumptions of Theorems \ref{thm:wellposed} and \ref{thm:wellposedY} hold, and thus the mild solution $U$ exists and the bound \eqref{eq:WPboundCu0fgZ} holds.

By definition, $U(t_j)=U^j=u_0$ for $j=0$. Let $N\in \{1,\ldots,N_k\}$.
Using \eqref{eq:VoCU}, the discretisation error can be split into three parts
\begin{align*}
    E(N) &\ce \bigg\lVert\max_{1 \le j \le N}\|U(t_j)-U^j\|\bigg\rVert_p\\
    &\le \bigg\lVert\max_{1 \le j \le N}\|(S(t_j)-R_k^j)u_0\|\bigg\rVert_p\\
    &\phantom{\le }+ \bigg\lVert\max_{1 \le j \le N}\bigg\|\int_0^{t_j}S(t_j-s)F(s,U(s))\ds-k\sum_{i=0}^{j-1}R_k^{j-i}F(t_i,U^i)\bigg\|\bigg\rVert_p\\
    &\phantom{\le }+\bigg\lVert\max_{1 \le j \le N}\bigg\|\int_0^{t_j}S(t_j-s)G(s,U(s))\dWHs - \sum_{i=0}^{j-1} R_k^{j-i}G(t_i,U^i)\Delta W_{i+1}\bigg\|\bigg\rVert_p\\
    &\ec M_1+M_2+M_3.
\end{align*}
Using convergence of $R$ of order $\alpha$ on $Y$ and the dominated convergence theorem, we obtain
\begin{align}
\label{eq:M1rate}
    M_1 & \le C_\alpha k^\alpha \|u_0\|_{L^p(\Omega;Y)}.
\end{align}

To shorten the notation for the discrete terms, we introduce the piecewise constant functions $F^k(s) \ce F(t_i,U^i)$ and $G^k(s) \ce G(t_i,U^i)$ for $s \in [t_i,t_{i+1}), 0 \le i \le N_k-1$ as well as
$S^k(s) \ce R_k^i $ for $s \in (t_{i-1},t_i], 1\le i \le N_k$. This allows us to rewrite
\begin{align*}
    M_2 &= \left\lVert\max_{1 \le j \le N}\left\|\int_0^{t_j}S(t_j-s)F(s,U(s))-S^k(t_j-s)F^k(s)\ds\right\|\right\rVert_p\\
    &\le \left\lVert \sum_{i=0}^{N-1} \int_{t_i}^{t_{i+1}} \max_{1 \le j \le N}\left\|S(t_j-s)[F(s,U(s))-F(s,U(t_i))]\right\|\ds\right\rVert_p\\
    &\phantom{\le }+\left\lVert \sum_{i=0}^{N-1} \int_{t_i}^{t_{i+1}} \max_{1 \le j \le N}\left\|S(t_j-s)[F(s,U(t_i))-F(t_i,U(t_i))]\right\|\ds\right\rVert_p\\
    &\phantom{\le }+\left\lVert \sum_{i=0}^{N-1} \int_{t_i}^{t_{i+1}} \max_{1 \le j \le N}\left\|S(t_j-s)[F(t_i,U(t_i))-F(t_i,U^i)]\right\|\ds\right\rVert_p\\
    &\phantom{\le }+ \left\lVert \int_0^{t_N}\max_{1 \le j \le N}\left\|[S(t_j-s)-S^k(t_j-s)]F^k(s)\right\|\ds\right\rVert_p\\
    &\ec M_{2,1}+M_{2,2}+M_{2,3}+M_{2,4}.
\end{align*}
Making use of Minkowski's inequality in $L^p(\Omega)$, contractivity of $(S(t))_{t \ge 0}$ and Lipschitz continuity of $\tilde{F}$, we derive the bound
\begin{equation}
\label{eq:M23rate}
    M_{2,3} \le \CFX \sum_{i=0}^{N-1}\left\lVert\int_{t_i}^{t_{i+1}} \left\|U(t_i)-U^i\right\|\ds\right\rVert_p \le \CFX  k \sum_{i=0}^{N-1} E(i)
\end{equation}
for $M_{2,3}$. Proceeding likewise for $M_{2,1}$, we obtain from Lemma \ref{lem:mildsolest} that
\begin{align}
\label{eq:M21rate}
    M_{2,1} &\le \CFX \sum_{i=0}^{N-1}\int_{t_i}^{t_{i+1}} (\E\left\|U(s)-U(t_i)\right\|^p)^{1/p}\ds\nonumber\\
    &\le \CFX  \sum_{i=0}^{N-1} \int_{t_i}^{t_{i+1}} L_1 (s-t_i)+L_2 (s-t_i)^{1/2}+L_3(s-t_i)^\alpha \ds\nonumber\\
    &\le \CFX  \sum_{i=0}^{N-1} \left(\frac{L_1}{2} k^2 + \frac{2L_2}{3} k^{3/2} + \frac{L_3}{\alpha+1} k^{\alpha+1} \right)\nonumber\\
    & = \CFX  t_N\left(\frac{L_1}{2} k + \frac{2L_2}{3} k^{1/2} + \frac{L_3}{\alpha+1} k^{\alpha} \right).
\end{align}
Analogously, uniform Hölder continuity yields
\begin{align}
\label{eq:M22rate}
    M_{2,2}&\le  \sum_{i=0}^{N-1} \int_{t_i}^{t_{i+1}} \left\|F(s,U(t_i))-F(t_i,U(t_i))\right\|_{L^p(\Omega;X)}\ds\nonumber\\
    &\le  \sum_{i=0}^{N-1} \int_{t_i}^{t_{i+1}} (s-t_i)^\alpha\ds \left\| [F(\cdot,U(t_i))]_\alpha\right\|_p\nonumber\\
    &\le  \sum_{i=0}^{N-1} \frac{k^{\alpha+1}}{\alpha+1}C_{\alpha,F} = \frac{C_{\alpha,F}t_N}{\alpha+1} k^\alpha.
\end{align}
Using the semigroup bound \eqref{eq:interpolationSgDifferenceGeneral} together with the assumed convergence rate $\alpha$ of $R$ on $Y$, the linear growth assumption and stability of $R$, we obtain
\begin{align}
\label{eq:M24rate}
    M_{2,4} &\le \left\lVert \sum_{i=0}^{N-1}\int_{t_i}^{t_{i+1}} \left\|[S(t_j-s)-S(t_j-t_i)]F(t_i,U^i)\right\|\ds\right\rVert_p\nonumber\\
    &\phantom{\le }+ \left\lVert \sum_{i=0}^{N-1}\int_{t_i}^{t_{i+1}} \left\|\left[S(t_j-t_i)-R_k^{j-i}\right]F(t_i,U^i)\right\|\ds\right\rVert_p\nonumber\\
    &\le 2\CY \sum_{i=0}^{N-1} \left\lVert \int_{t_i}^{t_{i+1}} (s-t_i)^\alpha \|F(t_i,U^i)\|_Y\ds\right\rVert_p+ C_\alpha k^\alpha\sum_{i=0}^{N-1}\left\lVert \int_{t_i}^{t_{i+1}} \left\|F(t_i,U^i)\right\|_Y\ds\right\rVert_p\nonumber\\
    &\le \left(\frac{2 \CY}{\alpha+1} +C_\alpha\right)  k^{\alpha+1} \sum_{i=0}^{N-1}\left(\LFY\left\lVert 1+\|U^i\|_Y\right\rVert_p+\|f(t_i)\|_{L^p(\Omega;Y)}\right)\nonumber\\
    &\le \left(\frac{2 \CY}{\alpha+1} +C_\alpha\right)\big(\LFY  \KufgY + \|f\|_{p,\infty,Y}\big) t_N k^\alpha.
\end{align}
In conclusion from \eqref{eq:M21rate}, \eqref{eq:M22rate}, \eqref{eq:M23rate}, and \eqref{eq:M24rate}, $M_2$ is bounded by
\begin{align}
\label{eq:M2rate}
    M_2 &\le \frac{\CFX L_1}{2}t_N k + \frac{2\CFX L_2}{3}t_N k^{1/2}
    + C_{2,\alpha}t_N k^\alpha+\CFX  k \sum_{i=0}^{N-1} E(i)\nonumber\\
&\le \frac{\CFX L_1}{2}t_N k + \frac{2\CFX L_2}{3}t_N k^{1/2}
    + C_{2,\alpha}t_N k^\alpha+\CFX  \sqrt{t_N} \bigg(k\sum_{i=0}^{N-1} E(i)^2\bigg)^{1/2},
\end{align}
where we have used the Cauchy--Schwarz inequality in the last line.

Let $\lfloor s \rfloor = \max\{t_i: 0 \le i \le N_k-1, t_i \le s\}$. The remaining term $M_3$ can be rewritten as
\begin{align*}
    M_3&= \left\lVert\max_{1 \le j \le N}\left\|\int_0^{t_j}S(t_j-s)G(s,U(s))-S^k(t_j-s)G^k(s) \dWHs\right\|\right\rVert_p\\
    &\le \left\lVert\max_{1 \le j \le N}\left\|\int_0^{t_j}S(t_j-s)[G(s,U(s))-G(s,U(\lfloor s \rfloor)] \dWHs\right\|\right\rVert_p\\
    &\phantom{\le }+\left\lVert\max_{1 \le j \le N}\left\|\int_0^{t_j}S(t_j-s)[G(s,U(\lfloor s \rfloor))-G(\lfloor s \rfloor,U(\lfloor s \rfloor)] \dWHs\right\|\right\rVert_p\\
    &\phantom{\le} +\left\lVert\max_{1 \le j \le N}\left\|\int_0^{t_j}S(t_j-s)[G(\lfloor s \rfloor, U(\lfloor s \rfloor))-G^k(s)] \dWHs\right\|\right\rVert_p\\
    &\phantom{\le }+ \left\lVert\max_{1 \le j \le N}\left\|\int_0^{t_j}[S(t_j-\lfloor s \rfloor)-S(t_j-s)]G^k(s) \dWHs\right\|\right\rVert_p\\
    &\phantom{\le }+ \left\lVert\max_{1 \le j \le N}\left\|\int_0^{t_j}[S(t_j-\lfloor s \rfloor)-S^k(t_j-s)]G^k(s) \dWHs\right\|\right\rVert_p\\
    &\ec M_{3,1}+M_{3,2}+M_{3,3}+M_{3,4}+M_{3,5}.
\end{align*}
We bound each term individually. An application of the maximal inequality Theorem \ref{thm:maxIneqQuasiContractive}, the Lipschitz continuity of $\tilde{G}$ and Lemma \ref{lem:mildsolest} result in
\begin{align}
\label{eq:M31rate}
    M_{3,1} &\le \left\lVert\sup_{t \in [0,t_N]}\left\|\int_0^t S(t-s) [G(s,U(s))-G(s,U(\lfloor s \rfloor)] \dWHs\right\|\right\rVert_p\nonumber\\
    &\le B_p \left(\E \left( \sum_{i=0}^{N-1} \int_{t_i}^{t_{i+1}} \|G(s,U(s))-G(s,U(t_i))\|_\gHX^2 \ds\right)^{p/2} \right)^{1/p}\nonumber\\
    &\le B_p\CGX \bigg(\sum_{i=0}^{N-1} \int_{t_i}^{t_{i+1}} \left( \E\|U(s)-U(t_i)\|^p\right)^{2/p} \ds\bigg)^{1/2}\nonumber\\
    &\le \sqrt{3}B_p\CGX\bigg(\sum_{i=0}^{N-1} \int_{t_i}^{t_{i+1}} L_1^2 (s-t_i)^2 + L_2^2(s-t_i) + L_3^2 (s-t_i)^{2\alpha} \ds\bigg)^{1/2}\nonumber\\
    &= \sqrt{3}B_p\CGX \sqrt{t_N} \left( \frac{L_1^2}{3}k^2+\frac{L_2^2}{2} k+\frac{L_3^2}{2\alpha+1} k^{2\alpha} \right)^{1/2}\nonumber\\
    &\le \sqrt{3}B_p\CGX \sqrt{t_N} \left( \frac{L_1}{\sqrt{3}}k+\frac{L_2}{\sqrt{2}} k^{1/2}+\frac{L_3}{\sqrt{2\alpha+1}} k^{\alpha} \right).
\end{align}
Again invoking the maximal inequality, we conclude
\begin{align}
\label{eq:M32rate}
    M_{3,2} &\le B_p\bigg( \sum_{i=0}^{N-1} \int_{t_i}^{t_{i+1}} \left\|\|G(s,U(t_i))-G(t_i,U(t_i))\|_\gHX\right\|_p^2 \ds\bigg)^{1/2}\nonumber\\
    &\le B_p \bigg(\sum_{i=0}^{N-1} \int_{t_i}^{t_{i+1}} (s-t_i)^{2\alpha} \ds \left\|[G(\cdot,U(t_i))]_\alpha\right\|_p^2\bigg)^{1/2} \le \frac{B_pC_{\alpha,G}}{\sqrt{2\alpha+1}}\sqrt{t_N} k^{\alpha}
\end{align}
from the uniform Hölder continuity of $G$.
Proceeding analogously for $M_{3,3}$ and then applying Minkowski's inequality in $L^{p/2}(\Omega)$ results in
\begin{align}
\label{eq:M33rate}
    M_{3,3} &\le \left\lVert\sup_{t \in [0,t_N]}\left\|\int_0^t S(t-s) [G(\lfloor s \rfloor,U(\lfloor s \rfloor)-G^k(s)] \dWHs\right\|\right\rVert_p\nonumber\\
    &\le B_p\CGX \left(\E \left( k\sum_{i=0}^{N-1} \|U(t_i)-U^i\|^2 \right)^{p/2} \right)^{1/p}\nonumber\\
    &= B_p\CGX k^{1/2} \left\|\sum_{l=0}^{N-1} \max_{0\le j \le l}\|U(t_j)-U^j\|^2 \right\|_{p/2}^{1/2} \nonumber\\
    &\le B_p\CGX\sqrt{k}\bigg( \sum_{l=0}^{N-1} \left\| \max_{0\le j \le l}\|U(t_j)-U^j\|^2 \right\|_{p/2}\bigg)^{1/2}\nonumber\\
    &= B_p\CGX \sqrt{k} \bigg(\sum_{l=0}^{N-1} E(l)^2\bigg)^{1/2}.
\end{align}
Since $R$ is contractive on $Y$ by assumption, the conditions of Proposition \ref{prop:stab} are fulfilled not only on $X$ but also on $Y$. Thus, we can use the estimate \eqref{eq:defKu0fgY}. Together with the maximal inequality, the semigroup difference bound \eqref{eq:interpolationSgDifferenceGeneral}, the ideal property of $\gHX$, and linear growth of $\tilde{G}$, this yields
\begin{align}
\label{eq:M34rate}
    M_{3,4}&\le\left\lVert\sup_{t \in [0,t_N]}\left\|\int_0^{t}S(t-s)\left(\sum_{i=0}^{j-1}\1_{[t_i,t_{i+1})}(s)[S(s-t_i)-I]G(t_i,U^i)\right) \dWHs\right\|\right\rVert_p \nonumber\\
    &\le B_p \left(\E \left(\int_0^{t_N} \left\|\1_{[t_i,t_{i+1})}(s)[S(s-t_i)-I]G(t_i,U^i) \right\|_\gHX^2 \ds \right)^{p/2} \right)^{1/p}\nonumber\\
    &\le 2B_p\CY \left(\E \left(\sum_{\ell=0}^{N-1}\int_{t_\ell}^{t_{\ell+1}} (s-t_\ell)^{2\alpha}\left\|G(t_\ell,U^\ell) \right\|_\gHY^2 \ds \right)^{p/2} \right)^{1/p}\nonumber\\
    &\le \frac{2B_p\CY}{\sqrt{2\alpha+1}} \sqrt{t_N} k^\alpha \left\|\max_{0 \le j \le N-1}\left\|G(t_j,U^j) \right\|_\gHY \right\|_p\nonumber\\
    &\le \frac{2B_p\CY}{\sqrt{2\alpha+1}} \big(\LGY \KufgY+\nn g\nn_{p,\infty,Y}\big) \sqrt{t_N} k^\alpha.
\end{align}
Applying Proposition \ref{prop:PropLogMainPaper} with $\Phi_s^{(j)}=\sum_{i=0}^{j-1}\1_{[t_i,t_{i+1})}(s)[S(t_j-t_i)-R_k^{j-i}]G(U^i)$ to the remaining term, we conclude that
\begin{align}
\label{eq:M35rate}
    &M_{3,5}=\bigg(\E\max_{1 \le j \le N}\bigg\|\int_0^{t_j}\sum_{i=0}^{j-1}\1_{[t_i,t_{i+1})}(s)[S(t_j-t_i)-R_k^{j-i}]G(t_i,U^i) \dWHs\bigg\|^p\bigg)^{1/p} \nonumber\\
    &\le K\sqrt{\max\{\log (N),p\}} \bigg\lVert\bigg(\sum_{\ell=0}^{N-1}k \Big(\max_{1\le j \le N} \left\|[S(t_j-t_\ell)-R_k^{j-\ell}]G(t_\ell,U^\ell)\right\|_\gHX\Big)^2\bigg)^{1/2} \bigg\rVert_p\nonumber\\
    &\le K\sqrt{\max\{\log (N),p\}} \bigg(\E\bigg(\sum_{l=0}^{N-1}k \left( C_\alpha k^\alpha\left\|G(t_\ell,U^\ell)\right\|_{\gHY}\right)^2\bigg)^{p/2} \bigg)^{1/p}\nonumber\\
    &\le KC_\alpha \sqrt{t_N} \sqrt{\max\{\log (N),p\}}k^\alpha \bigg\lVert\max_{0 \le j \le N-1} \left\|G(t_j,U^j)\right\|_\gHY \bigg\rVert_p\nonumber\\
    &\le KC_\alpha  \big(\LGY \KufgY+\nn g\nn_{p,\infty,Y}\big) \sqrt{t_N}\sqrt{\max\{\log (N),p\}}k^\alpha
\end{align}
using that $R$ approximates $S$ to order $\alpha$ on $Y$, the ideal property of $\gHX$, linear growth, and stability of $R$ on $Y$.
Combining the bounds \eqref{eq:M31rate} to  \eqref{eq:M35rate}, we deduce
\begin{align}
\label{eq:M3rate}
	M_3 &\le B_p \CGX L_1 \sqrt{t_N} k + \sqrt{\frac{3}{2}} B_p\CGX L_2\sqrt{t_N} k^{1/2} + C_{3,\alpha} \sqrt{t_N} k^{\alpha}\nonumber\\
    &\phantom{\le }+C_{3,\log}\sqrt{t_N}\sqrt{\max\{\log (N),p\}}k^{\alpha}+B_p\CGX  \bigg(k\sum_{l=0}^{N-1} E(l)^2\bigg)^{1/2}.
\end{align}
Having bounded each term individually in \eqref{eq:M1rate}, \eqref{eq:M2rate} and \eqref{eq:M3rate}, we conclude
\begin{align*}
    E(N) &\le C_1 k + C_2k^{1/2} +C_3 k^{\alpha}+C_4 \sqrt{\max\{\log(N_k),p\}}k^{\alpha}+ C \bigg(k \sum_{l=0}^{N-1} E(l)^2\bigg)^{1/2},
\end{align*}
noting that $N \le N_k$ and $t_N \le T$. Thus, by the discrete version of Gronwall's Lemma \ref{lem:KruseGronwall}
\begin{align*}
    E(N) \le (1+C^2t_N)^{1/2}\ee^{(1+C^2t_N)/2}\left(C_1 k+ C_2k^{1/2}
    +C_3 k^{\alpha} +C_4\sqrt{\max\{\log(N_k),p\}}k^{\alpha}\right)
\end{align*}
follows. The desired error estimate is obtained for $N=N_k$. As $k \to 0$, the terms with the lowest exponents dominate, i.e.
\begin{equation*}
    E(N_k) \lesssim k^{1/2}+k+\sqrt{\max\{\log(N_k),p\}}k^{\alpha} \lesssim \sqrt{\max\{\log(N_k),p\}} k^{\min\{\frac{1}{2},\alpha\}},~~~ (k \to 0). \qedhere
\end{equation*}
\end{proof}

\begin{remark}
\label{rem:Kolmogorovapproach}
The result \cite[Theorem 1.1]{CHJNW} combines H\"older regularity in the $p$-th moment and bounds on the pointwise strong error to obtain a uniform strong error. Their effective method is based on a sophisticated application of the Kolmogorov-Chentsov continuity theorem, as well as approximation arguments. Let us refer to this method for obtaining uniform strong error estimates as the {\em Kolmogorov-Chentsov method}. At first sight, one might think that the result can be used to obtain the convergence rate of Theorem \ref{thm:convergenceRate} up to an arbitrary $\varepsilon>0$. Below, we point out what can precisely be achieved via their method.

Suppose that $R$ approximates $S$ to order $1/2$, a pointwise strong error estimate of rate $1/2$ has already been established, and Assumption \ref{ass:FG_structured} holds for fixed $p\in [2, \infty)$ and $\alpha=1/2$.
This means that the fixed data $(u_0, f, g)$ is assumed to have certain $L^p(\Omega)$-integrability. We will check what type of rate the Kolmogorov-Chentsov method yields for
\[
    {\rm E}_{k}^{p,\infty}\coloneqq\bigg\lVert\max_{0 \le j \le N_k}\|U(t_j)-U^j\|\bigg\rVert_p,
\]
and compare it to the rate ${\rm E}_{k}^{p,\infty}\leq C_p k^{1/2}\sqrt{\log(T/k)}$ we obtained in Theorem \ref{thm:convergenceRate}.
We distinguish between three cases.
\begin{enumerate}[label=(\alph*)]
    \item Integrability of data in $L^2(\Omega)$: In this case, the Kolmogorov-Chentsov method does not apply, so no convergence rate is obtained.
    \item Integrability of data in $L^p(\Omega)$ for a fixed $p\in (2, \infty)$: the Kolmogorov-Chentsov method gives ${\rm E}_{k}^{p,\infty}\leq C_{\gamma,p} k^{\gamma-1/p}$ for any $\gamma\in (1/p,1/2)$.
    \item Integrability of data in $L^p(\Omega)$ for all $p\in (2, \infty)$: the Kolmogorov-Chentsov method gives ${\rm E}_{k}^{p,\infty}\leq C_{\gamma,p} k^{\gamma}$ for any $\gamma\in (0,1/2)$.
\end{enumerate}
In the last case, there is an arbitrarily small difference in the error rate. We can obtain this error rate under the assumption that the data is $L^p(\Omega)$-integrable for a fixed $p\in [2, \infty)$. In the case one has this for all $p<\infty$, one needs to choose a very large $p$ in the Kolmogorov-Chentsov method to get close to the desired rate, which in turn produces large constants in the rate estimate.
\end{remark}

\subsection{The exponential Euler method}
\label{subsec:expEulerMultNoise}

We analyse the time discretisation error for the special case $R_k\ce S(k)$ known as the \textit{exponential Euler method}. Obviously, the exponential Euler method is contractive for contractive semigroups. Furthermore, several terms in the error analysis vanish for the exponential Euler method, since $S(t_j)-R_k^j = S(t_j)-S(k)^j=0$ by the semigroup property. In particular, the logarithmic correction factor is not needed for this scheme.

\begin{corollary}[Exponential Euler]
\label{cor:expEulerMultiplicative}
    Suppose that Assumption \ref{ass:FG_structured} holds for some $\alpha \in (0,1]$ and $p \in [2,\infty)$.
    Let $A$ be the generator of a $C_0$-contraction semigroup $(S(t))_{t \ge 0}$ on both $X$ and $Y$.
    Suppose that $Y \hra D_A(\alpha,\infty)$ continuously if $\alpha \in (0,1)$ or $Y \hra D(A)$ continuously if $\alpha=1$. Let $u_0 \in L_{\calF_0}^p(\Omega;Y)$. Consider the exponential Euler method $R\ce S$ for time discretisation. Denote by $U$ the mild solution of \eqref{eq:StEvolEqnFG} and by $(U^j)_{j=0,\ldots,N_k}$ the temporal approximations as defined in \eqref{eq:defUj}.
    Then for $N_k \geq 2$
    \begin{equation*}
        \left\lVert\max_{0 \le j \le N_k}\|U(t_j)-U^j\|\right\rVert_p
        \le C_{\rmS,\ee} \left(C_{\rmS,1} k+ C_{\rmS,2}k^{1/2}
        +C_{\rmS,3}k^{\alpha}\right)
    \end{equation*}
    with constants $C_{\rmS,\ee} \ce C_\ee$, $C_{\rmS,1} \ce C_1$, $C_{\rmS,2} \ce C_2$ as in Theorem \ref{thm:convergenceRate}, $C_{\rmS,3} \ce C_{\rmS,2,\alpha}T+C_{\rmS,3,\alpha} T^{1/2}$, $C_{\rmS,3,\alpha} \ce C_{3,\alpha}$, and
    \begin{align*}
         C_{\rmS,2,\alpha} &\ce \frac{1}{\alpha+1}\left(\CFX L_3+C_{\alpha,F} + 2\CY\big(\LFY \KufgY+\|f\|_{L^p(\Omega;C([0,T];Y))}\big)\right),
    \end{align*}
    where $C_{3,\alpha}$ is as defined in Theorem \ref{thm:convergenceRate}, $L_3$ as in Lemma \ref{lem:mildsolest}, $\KufgY$ as in \eqref{eq:defKu0fgY}, $\CY$ denotes the embedding constant of $Y$ into $D_A(\alpha,\infty)$ or $D(A)$, and $B_p$ is the constant from Theorem \ref{thm:maxIneqQuasiContractive}.

    In particular, the approximations $(U^j)_j$ converge at rate $\min\{\alpha,\frac{1}{2}\}$ as $k \to 0$.
\end{corollary}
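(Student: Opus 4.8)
The plan is to revisit the proof of Theorem \ref{thm:convergenceRate} with the specific choice $R_k = S(k)$ and to track precisely which terms collapse. The semigroup property gives $S(t_j) - R_k^j = S(jk) - S(k)^j = 0$ and, more generally, $S(t_j - t_i) - R_k^{j-i} = 0$ for all $0 \le i \le j \le N_k$. Since $S$ is contractive on both $X$ and $Y$, the exponential Euler scheme $R_k = S(k)$ is contractive on both spaces, so Assumption \ref{ass:FG_structured}, the well-posedness results, and the stability bound \eqref{eq:defKu0fgY} remain available. I would reuse verbatim the splitting $E(N) \le M_1 + M_2 + M_3$ together with the finer decompositions $M_2 = M_{2,1} + \cdots + M_{2,4}$ and $M_3 = M_{3,1} + \cdots + M_{3,5}$ from that proof, and simply inspect the effect of the two identities above.

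First I would observe that $M_1$ vanishes entirely: it is controlled by $C_\alpha k^\alpha \|u_0\|_{L^p(\Omega;Y)}$ in \eqref{eq:M1rate}, and here $S(t_j) - R_k^j = 0$. This deletes the $C_\alpha \|u_0\|_{L^p(\Omega;Y)}$ contribution to $C_3$. Next, in the bound \eqref{eq:M24rate} for $M_{2,4}$ the integrand splits into a semigroup-continuity part $[S(t_j - s) - S(t_j - t_i)]F(t_i, U^i)$ and a scheme-approximation part $[S(t_j - t_i) - R_k^{j-i}]F(t_i, U^i)$. For the exponential Euler method the latter is identically zero, so only the first part survives, controlled through the semigroup-difference estimate \eqref{eq:interpolationSgDifferenceGeneral}. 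Consequently the factor $\big(\tfrac{2\CY}{\alpha+1} + C_\alpha\big)$ appearing in \eqref{eq:M24rate} is replaced by $\tfrac{2\CY}{\alpha+1}$, and this is exactly the modification that turns the constant $C_{2,\alpha}$ of Theorem \ref{thm:convergenceRate} into $C_{\rmS,2,\alpha}$.

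The decisive simplification occurs in $M_3$: the term $M_{3,5}$ of \eqref{eq:M35rate} is built from the integrand $[S(t_j - t_i) - R_k^{j-i}]G(t_i, U^i)$, which vanishes identically for $R_k = S(k)$. Hence $M_{3,5} = 0$, and the invocation of Proposition \ref{prop:PropLogMainPaper} — the sole source of the logarithmic factor $\sqrt{\max\{\log(N_k), p\}}$ — is no longer needed. All the remaining pieces $M_{2,1}, M_{2,2}, M_{2,3}$ and $M_{3,1}, M_{3,2}, M_{3,3}, M_{3,4}$ rest only on Lipschitz and uniform Hölder continuity, linear growth, the stability estimate \eqref{eq:defKu0fgY}, and the semigroup-difference bound \eqref{eq:interpolationSgDifferenceGeneral}, none of which refers to the scheme approximation; their estimates therefore carry over unchanged. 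In particular $M_{3,4}$ (a semigroup-continuity term, not a scheme-approximation term) does \emph{not} vanish, so the $k^\alpha$-coefficient $C_{3,\alpha}$ aggregating $M_{3,1}$ through $M_{3,4}$ is unaltered and $C_{\rmS,3,\alpha} = C_{3,\alpha}$.

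Finally I would assemble the surviving bounds into $E(N) \le C_{\rmS,1} k + C_{\rmS,2} k^{1/2} + \big(C_{\rmS,2,\alpha}\, t_N + C_{\rmS,3,\alpha}\sqrt{t_N}\big) k^\alpha + C\big(k\sum_{l=0}^{N-1} E(l)^2\big)^{1/2}$ with $C = \CFX\sqrt{T} + B_p\CGX$ as before, and apply the discrete Gronwall Lemma \ref{lem:KruseGronwall} exactly as in Theorem \ref{thm:convergenceRate}, specialising to $N = N_k$ and $t_{N_k} = T$; this yields the stated estimate with $C_{\rmS,3} = C_{\rmS,2,\alpha} T + C_{\rmS,3,\alpha} T^{1/2}$ and no logarithmic factor. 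There is no genuine analytic obstacle here — the content is entirely bookkeeping. The one point demanding care is to keep straight which terms survive: the semigroup-continuity pieces (the first part of $M_{2,4}$ and the term $M_{3,4}$) persist, whereas the scheme-approximation pieces ($M_1$, the second part of $M_{2,4}$, and $M_{3,5}$) vanish. Getting this distinction right is precisely what produces the modified constant $C_{\rmS,2,\alpha}$ and, above all, the removal of the logarithmic correction factor.
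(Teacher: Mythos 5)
Your proposal is correct and follows essentially the same route as the paper's own proof: both verify contractivity of $R_k = S(k)$ from that of $S$, observe that $M_1$, the scheme-approximation part of $M_{2,4}$, and $M_{3,5}$ vanish by the semigroup identity $S(t_j-t_i)-R_k^{j-i}=0$, and conclude via the same discrete Gronwall argument, with the removal of $M_{3,5}$ eliminating the logarithmic factor. Your accounting of the constants (the $\frac{2\CY}{\alpha+1}$ replacing $\frac{2\CY}{\alpha+1}+C_\alpha$ in $C_{\rmS,2,\alpha}$, the dropped $C_\alpha\|u_0\|_{L^p(\Omega;Y)}$ term, and $C_{\rmS,3,\alpha}=C_{3,\alpha}$ since $M_{3,4}$ survives) matches the paper exactly.
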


\begin{proof}
Adopt the notation from the proof of Theorem \ref{thm:convergenceRate}. Contractivity of $R$ on $X$ and $Y$ is immediate from contractivity of $S$ on these spaces. Since $S(t_j)-R_k^j=0$ for any $j \in \{0,\ldots,N_k\}$, the terms $M_1$ and $M_{3,5}$ vanish. Moreover, the second term in $M_{2,4}$ vanishes so that
\begin{align*}
    M_{2,4} &\le \frac{2 \CY }{\alpha+1} \big(\LFY \KufgY+\|f\|_{p,\infty,Y}\big)t_N k^\alpha.
\end{align*}
Combining the individual bounds for the remaining terms, the estimate follows from a discrete Gronwall argument as in the proof of Theorem \ref{thm:convergenceRate}. The logarithmic correction factor vanishes due to $M_{3,5}=0$.
\end{proof}

\begin{remark}
\label{rem:Milstein}
    Adding a term that is quadratic in the Wiener increment to the exponential Euler method yields the \emph{Milstein scheme}, which has been found to give good convergence properties \cite{Milstein}. In the parabolic case (i.e., $A$ self-adjoint and with compact resolvent), \cite[Thm.~1]{Milstein} yields convergence of rate arbitrarily close to $1$ in the cases of additive noise or multiplicative noise satisfying a commutativity condition, which has been removed in subsequent work \cite{MilsteinNonCommutative}. An extension of these results for the Milstein scheme to the hyperbolic case has been raised as a direction for future research in \cite{Milstein}, which, to the best of our knowledge, remains open. Moreover, in \cite{Milstein, MilsteinNonCommutative}, the pointwise strong error is analysed, from which a pathwise uniform convergence rate can only be obtained at the price of deteriorating the convergence rate, as discussed Remark \ref{rem:Kolmogorovapproach}.
    
\end{remark}

\subsection{Error estimates on the full time interval}
\label{subsec:fullTimeInterval}

In this subsection, we will extend the error estimates of Theorem \ref{thm:convergenceRate} and Corollary \ref{cor:expEulerMultiplicative} to the full time interval by using a suitable H\"older regularity of the paths of the mild solution.

\begin{example}
    Fix $N\geq 1$. Below, we construct a process $v_N:[0,1]\times\Omega\to \R$ such that $\sup_{t\in [0,1]}\E |v_N(t)|^p\leq 1/N$, but $v_N(t) = 1$ for all $t$ in a neighborhood of $\{i/N: i\in \{1, \ldots, N\}\}$. This show that information on the pointwise strong error does not provide much insight on the path of $v_N$ in general.

    Indeed, let $\Omega = \{\omega_{m, i}: i\in \{1, \ldots, N\}, m\in \N\}$. For every $i\in\{1, \ldots, N\}$ suppose that  $\P(\omega_{m,i}) = \frac{2^{-m}}{N}$. Let $I_{N} = \bigcup_{m\geq 1} \bigcup_{i=1}^{N} \{\omega_{m,i}\}\times(\frac{i}{N} - \frac{1}{2N}, \frac{i}{N}+ \frac{1}{2N})$, and set $v_N(\omega,t) = 1$ if $(\omega,t)\in I_{N}$. Then one can check that $v_N$ satisfies the required estimates.
\end{example}

The undesired behavior in the above example shows the need for having maximal estimates on the full time interval, i.e.\ estimates for $\|\sup_{t \in [0,T]} \|U(t)-\tilde{U}(t)\|\|_p$, where $\tilde{U}$ is the process obtained from the discrete approximation using piecewise constant interpolation.

The following simple deterministic result provides a way to connect the uniform error to the error on the grid. Given a non-decreasing function $\Phi:[0,T] \to [0,\infty)$ such that $\Phi \neq 0$ on $(0,T]$ we say that $u\in C^\Phi([0,T];X)$ if $u:[0,T]\to X$ is continuous and
\[[u]_{C^\Phi([0,T];X)} = \sup_{0\leq s<t\leq T} \frac{\|u(t)-u(s)\|}{\Phi(t-s)}<\infty.\]
Moreover, we set $\|u\|_{C^\Phi([0,T];X)} \ce \|u\|_{\infty}+ [u]_{C^\Phi([0,T];X)}$. We shall be particularly interested in the function $\Phi(r)= r^\alpha(1+\log(\frac{T}{r}))^{1/2}$ for $r \in (0,T]$ for some $\alpha>0$ and $\Phi(0)= 0$ in the following.

\begin{lemma}[Decomposition of the error on the full time interval]
\label{lem:splitContErrorPhi}
    Let $u \in C^\Phi([0,T];X)$ for a non-decreasing function $\Phi:[0,T] \to [0,\infty)$ such that $\Phi \neq 0$ on $(0,T]$. Let $\Pi \subseteq [0,T]$ be a finite time grid, and denote by $\tilde{u}:\Pi \to X$ an approximation of $u$, which is extended to $[0,T]$ by setting $\tilde{u}(t) \ce \tilde{u}(\lfloor t \rfloor_\Pi)$ for $t \notin \Pi$, where $\lfloor t \rfloor_\Pi \ce \max\{s \in \Pi: s\leq t\}$. Then it holds that
    \begin{equation*}
        \sup_{t \in [0,T]} \|u(t)-\tilde{u}(t)\| \le \Phi(h)\cdot\|u\|_{C^\Phi([0,T];X)}+\sup_{t \in \Pi} \|u(t)-\tilde{u}(t)\|
    \end{equation*}
    for the maximal time step $h\ce \sup_{t \in [0,T]} \dist(t,\Pi)$.
\end{lemma}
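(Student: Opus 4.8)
The plan is to establish the claimed estimate pointwise in $t$, i.e.\ to show that for every fixed $t\in[0,T]$ one has
\[
\|u(t)-\tilde u(t)\| \le \Phi(h)\,\|u\|_{C^\Phi([0,T];X)} + \sup_{s\in\Pi}\|u(s)-\tilde u(s)\|,
\]
and then to take the supremum over $t$. Since the right-hand side does not depend on $t$, the desired bound for $\sup_{t\in[0,T]}\|u(t)-\tilde u(t)\|$ follows at once.

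To prove the pointwise inequality, I would fix $t\in[0,T]$ and write $s\ce\lfloor t\rfloor_\Pi\in\Pi$ for its grid predecessor, so that by the definition of the piecewise constant extension $\tilde u(t)=\tilde u(s)$. Inserting $u(s)$ and using the triangle inequality gives $\|u(t)-\tilde u(t)\|\le\|u(t)-u(s)\|+\|u(s)-\tilde u(s)\|$. The second term is bounded by $\sup_{\tau\in\Pi}\|u(\tau)-\tilde u(\tau)\|$ because $s\in\Pi$, which produces the grid-error contribution. The first term is controlled by the $C^\Phi$-regularity of $u$: by the definition of the seminorm and $t\ge s$ one has $\|u(t)-u(s)\|\le[u]_{C^\Phi([0,T];X)}\,\Phi(t-s)\le\|u\|_{C^\Phi([0,T];X)}\,\Phi(t-s)$.

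The only point requiring a moment of care is the passage $\Phi(t-s)\le\Phi(h)$. Here I would invoke monotonicity of $\Phi$ together with the fact that the backward distance $t-\lfloor t\rfloor_\Pi$ from $t$ to its grid predecessor is bounded by the maximal grid spacing, which is exactly the role played by $h$; for the uniform grid $\Pi=\{0,k,\dots,T\}$ of the paper this amounts to $t-\lfloor t\rfloor_\Pi\le k$, so $\Phi(t-s)\le\Phi(k)$. Combining the three bounds and passing to the supremum over $t$ completes the argument.

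There is no genuine obstacle: the statement is a purely deterministic triangle-inequality estimate, and its strength is deferred to the later choice $\Phi(r)=r^\alpha(1+\log(T/r))^{1/2}$, which, combined with a path-regularity estimate for $U$ in $C^\Phi$ and the grid-point bound of Theorem~\ref{thm:convergenceRate}, reproduces the logarithmic factor on the full interval $[0,T]$. The single subtlety worth stating explicitly is the bound on $t-\lfloor t\rfloor_\Pi$ by $h$ together with monotonicity of $\Phi$; everything else is immediate.
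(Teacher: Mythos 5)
Your proof is correct and is essentially identical to the paper's: insert the grid predecessor $\lfloor t\rfloor_\Pi$, apply the triangle inequality, bound the regularity term by $\|u\|_{C^\Phi([0,T];X)}\,\Phi(t-\lfloor t\rfloor_\Pi)$, and conclude via monotonicity of $\Phi$ after taking the supremum over $t$. Your explicit remark that $t-\lfloor t\rfloor_\Pi$ is controlled by the maximal step (interpreting $h$ as the mesh width, as the paper does implicitly when applying the lemma with $\Phi(k)$) is exactly the reading the paper intends, so nothing is missing.
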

\begin{proof}
For $t\in [0,T]$ we can write
\begin{align*}
\|u(t)-\tilde{u}(t)\| & \leq \|u(t)-u(\lfloor t\rfloor_\Pi)\| + \|u(\lfloor t\rfloor_\Pi)-\tilde{u}(t)\|
\\ & \leq \|u\|_{C^\Phi([0,T];X)} \cdot \Phi(t-\lfloor t\rfloor_\Pi)+ \sup_{s \in \Pi} \|u(s)-\tilde{u}(s)\|,
\end{align*}
which implies the required result.
\end{proof}

From the above, we see that to estimate the uniform error on $[0,T]$, we need an (optimal) H\"older regularity result for the mild solution $U$ to \eqref{eq:StEvolEqnFG}. To obtain such a result, the main difficulty lies in estimating the stochastic convolution.
\begin{lemma}[Path regularity of stochastic convolutions]
    \label{lem:pathRegStConv}
    Let $X,Y$ be Hilbert spaces such that $Y \hra X$ continuously. Let $A$ be the generator of a $C_0$-contraction semigroup $(S(t))_{t \ge 0}$ on both $X$ and $Y$. Suppose that $Y \hra D_A(\alpha,\infty)$ holds for some $\alpha \in (0,1/2]$. Let $q\in (2, \infty]$ be such that $\frac{1}{2}-\frac1q = \alpha$ and let $2\leq p<p_0<\infty$. Suppose that \[g\in L^{p}(\Omega;L^2(0,T;\calL_2(H,Y))) \cap L^{p_0}(\Omega;L^q(0,T;\calL_2(H,X)))\]
    and define $J_g:\Omega\times [0,T]\to X$ as the stochastic convolution
    \[J_g(t) = \int_0^t S(t-s) g(s) d W_H(s).\]
    Then one has $J_g\in L^{p}(\Omega;C^\Psi([0,T];X))$ for $\Psi: (0,T] \to (0,\infty), \Psi(r) \ce r^{\alpha} (1+\log(\frac{T}{r}))^{1/2}$ and there exist constants $C_{p},C_{\alpha,p,p_0,T} \geq 0$ such that
    \begin{align*}
    \|J_g\|_{L^{p}(\Omega;C^\Psi([0,T];X))}\leq C_{p} \|g\|_{L^p(\Omega;L^2(0,T;\calL_2(H,Y)))}+ C_{\alpha,p,p_0,T} \|g\|_{L^{p_0}(\Omega;L^q(0,T;\calL_2(H,X)))}.
    \end{align*}
\end{lemma}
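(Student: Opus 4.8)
The plan is to control the two constituents of the $C^\Psi$-norm separately, namely the sup-norm $\|\cdot\|_\infty$ and the $\Psi$-modulus seminorm $[\,\cdot\,]_{C^\Psi([0,T];X)}$. For the sup-norm, since $S$ is a contraction I would apply the maximal inequality of Theorem~\ref{thm:maxIneqQuasiContractive} (with $\lambda=0$) to obtain $\bigl\|\sup_{t\in[0,T]}\|J_g(t)\|\bigr\|_p\le B_p\|g\|_{L^p(\Omega;L^2(0,T;\gHX))}$, and then bound $\|g\|_{L^2(0,T;\gHX)}\lesssim\|g\|_{L^2(0,T;\gHY)}$ via $Y\hookrightarrow X$; this is absorbed into the first term on the right-hand side. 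The whole difficulty therefore lies in the seminorm. By pathwise continuity of $J_g$ it suffices to estimate the increments $J_g(t)-J_g(s)$ over a dyadic grid, and the key is the decomposition into a diagonal and a smoothing part,
\[
J_g(t)-J_g(s)=\int_s^t S(t-r)g(r)\,\rmd W_H(r)+\int_0^s S(s-r)[S(t-s)-I]g(r)\,\rmd W_H(r),
\]
where the semigroup law $S(t-r)=S(s-r)S(t-s)$ has been used on the second integral.

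The smoothing part is the benign term. For fixed increment length $h=t-s$ the second integral is itself a genuine stochastic convolution of the integrand $[S(h)-I]g$, so its supremum over $s$ is handled for free and \emph{without any logarithm} by Theorem~\ref{thm:maxIneqQuasiContractive}. Combined with the semigroup-difference bound $\|S(h)-I\|_{\calL(Y,X)}\le 2\CY h^\alpha$ coming from \eqref{eq:interpolationSgDifferenceGeneral}, this yields, at dyadic scale $h_n=2^{-n}T$, a bound on the maximal increment of order $h_n^\alpha\|g\|_{L^p(\Omega;L^2(0,T;\gHY))}$. Normalising by $\Psi(h_n)\simeq h_n^\alpha\sqrt{1+n}$ produces a gain $(1+n)^{-1/2}$, so the smoothing contribution is summable across scales and yields exactly the first term $C_p\|g\|_{L^p(\Omega;L^2(0,T;\gHY))}$; the crucial point is that no logarithm is spent, because the supremum over the grid points is absorbed by the continuous maximal inequality.

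The diagonal part is the hard term. On each cell $[t_i^n,t_{i+1}^n]$ the diagonal integral $\int_{t_i^n}^{t_{i+1}^n}S(t_{i+1}^n-r)g(r)\,\rmd W_H(r)$ cannot be written as a single convolution, so the maximum over the $2^n$ cells must be taken with the sharp maximal inequality of Proposition~\ref{prop:PropLogMainPaper}, which charges a factor $\sqrt{\log 2^n}\simeq\sqrt{n}$ — exactly matching the logarithm built into $\Psi$. Hölder's inequality in time, using $\tfrac12-\tfrac1q=\alpha$, converts the $L^2(t_i^n,t_{i+1}^n)$-norm of the integrand into $h_n^\alpha$ times an $L^q$-norm, which is where the $X$-valued, $L^q$-in-time hypothesis enters; applying Proposition~\ref{prop:PropLogMainPaper} at the higher exponent $p_0$ then controls the per-scale maximal increment by $\sqrt{\max\{\log 2^n,p_0\}}\,h_n^\alpha\,\|g\|_{L^{p_0}(\Omega;L^q(0,T;\gHX))}$.

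I expect the \emph{summation of the diagonal contribution across dyadic scales} to be the main obstacle. After normalising by $\Psi(h_n)$ the factor $\sqrt{n}$ from Proposition~\ref{prop:PropLogMainPaper} cancels the $\sqrt{1+n}$ from the modulus, so the normalised per-scale maxima are merely \emph{bounded} in $L^{p_0}(\Omega)$ rather than summable, and a crude union bound over scales diverges; this is precisely why the strict gap $p<p_0$ is imposed. The extra integrability has to be turned into genuine decay of the normalised increments across scales — morally the same mechanism that renders the $L^p$-moments of Lévy's modulus of continuity of Brownian motion finite — by exploiting the absolute continuity of $r\mapsto\int\|g(r)\|_{\gHX}^q\,\rmd r$ together with the $L^{p_0}$-control, so that the slow decay of the local mass is compensated by a correspondingly smaller local increment. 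Once summability is secured, a standard chaining estimate bounds $[J_g]_{C^\Psi([0,T];X)}$ by the supremum over scales of the normalised maximal increments, and adding the diagonal and smoothing contributions together with the sup-norm bound gives the claimed estimate; the associated passage from the grid to the whole interval is exactly the deterministic content of Lemma~\ref{lem:splitContErrorPhi}.
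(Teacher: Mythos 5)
Your opening decomposition of $J_g(t)-J_g(s)$ into the smoothing part $(S(t-s)-I)J_g(s)$ and the diagonal part $\int_s^t S(t-r)g(r)\,\rmd W_H(r)$ is exactly the paper's, and your treatment of the smoothing part is essentially right in substance — though note two things. First, no dyadic decomposition is needed there at all: since $T_1(t,s)\le c\,(t-s)^\alpha\|J_g(s)\|_Y$ holds pointwise in $(s,t)$ and $\Psi(r)\ge r^\alpha$, one gets $\sup_{s<t}T_1(t,s)/\Psi(t-s)\le c\sup_s\|J_g(s)\|_Y$ in one stroke, after which Theorem \ref{thm:maxIneqQuasiContractive} applied on $Y$ (this is where contractivity of $S$ on $Y$ enters) yields the first term. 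Second, your claim that the normalised gain $(1+n)^{-1/2}$ makes the smoothing contribution ``summable across scales'' is false as stated ($\sum_n(1+n)^{-1/2}$ diverges); it is only harmless because that part never needed summation in the first place.

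The genuine gap is in the diagonal part, and you have diagnosed it correctly yourself: after applying Proposition \ref{prop:PropLogMainPaper} at scale $n$ and normalising by $\Psi(h_n)$, the per-scale maxima are merely bounded in $L^{p_0}(\Omega)$, a union bound over scales diverges, and your proposed repair — ``morally the same mechanism as L\'evy's modulus'' via absolute continuity of $r\mapsto\int\|g(r)\|_{\gHX}^q\,\rmd r$ — is a gesture, not an argument. There is also a structural obstruction you gloss over: increments of $J_g$ do not telescope (the kernel $S(t-r)$ depends on the endpoint $t$), so a ``standard chaining estimate'' cannot be run on $J_g$ directly; each chaining step would require redoing the diagonal/smoothing split. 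The paper resolves both problems at once with an ingredient absent from your proposal: the Sz.-Nagy dilation $S(t)=PG(t)Q$ with $(G(t))_{t\in\R}$ a unitary group on a larger Hilbert space $\wt{X}$. Unitarity lets one factor $G(t)$ out of the diagonal term, giving $T_2(t,s)\le\|I(t)-I(s)\|_{\wt{X}}$ with $I(t)=\int_0^t G(-r)Qg(r)\,\rmd W_H(r)$ a genuine martingale whose increments do telescope. The uniform-over-scales control is then not re-derived by hand but imported from the sharp Besov--Orlicz regularity theory of stochastic integrals in \cite{OndrejatVeraar}: $I\in L^p(\Omega;B^\alpha_{\Phi_2,\infty}(0,T;\wt{X}))$, whose exponential-square (Gaussian concentration) structure is precisely what upgrades ``bounded at each scale'' to the $\sqrt{\log}$ modulus $\Psi$, and which is also where the strict gap $p<p_0$ and the hypothesis $g\in L^{p_0}(\Omega;L^q(0,T;\calL_2(H,X)))$ with $\frac12-\frac1q=\alpha$ are consumed. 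Your Hölder-in-time step and the role you assign to $p<p_0$ are consistent with this, but without the dilation-plus-Besov--Orlicz input (or an equivalent concentration argument, e.g.\ extracting exponential tails from the $p$-uniform constant in Proposition \ref{prop:PropLogMainPaper} and running a weighted union bound — which would amount to re-proving the cited embedding) the central step of the lemma remains open in your write-up.
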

By a simple rescaling, the result extends to quasi-contraction semigroups. Moreover, from the proof below one can see that a certain Orlicz integrability in $\Omega$ is sufficient for $g$. Note that the above path regularity is optimal for $q=\infty$. Indeed, L\'evy's modulus of continuity theorem for a scalar Brownian motion states that a.s.\
\[\limsup_{h\downarrow 0} \sup_{t\in [0,1-h]} \frac{B(t+h) - B(t)}{\sqrt{2h\log(1/h)}} = 1,\]
which shows that $\Psi$ cannot be replaced by a ``better'' function.

\begin{proof}[Proof of Lemma \ref{lem:pathRegStConv}]
    For $0\leq s<t\leq T$, we can write
    \begin{align*}
    \|J_g(t) - J_g(s)\|&\leq \Big\|(S(t-s)-I) \int_0^s S(s-r) g(r)\,\rmd W_H(r)\Big\| + \Big\|\int_s^t S(t-r) g(r) \,\rmd W_H(r)\Big\|\\ &\ec T_1(t,s)+T_2(t,s).
    \end{align*}
    For $T_1$ we can write
    \[T_1(t,s)\leq \|S(t-s)-I\|_{\calL(Y,X)} \Big\|\int_0^s S(s-r) g(r)\,\rmd W_H(r)\Big\|_Y  \leq c(t-s)^{\alpha} \|J_g(s)\|_Y\]
    for some $c \geq 0$. Therefore, by Theorem \ref{thm:maxIneqQuasiContractive} we obtain
    \begin{align*}
    \bigg\|\sup_{0\leq s<t\leq T}\frac{T_1(t,s)}{\Psi(t-s)}\bigg\|_{p}
    \leq c \bigg\|\sup_{0\leq s<t\leq T}\frac{\|J_g(s)\|_Y}{(1+\log(\frac{T}{t-s}))^{1/2}}\bigg\|_{L^p(\Omega)}
    \leq c B_p \|g\|_{L^p(\Omega;L^2(0,T;\calL_2(H,Y)))}.
    \end{align*}

    For $T_2$ we use the dilation result of \cite[Theorem I.7.1]{Nagybook} (cf. \cite{HausSei}). We can find a Hilbert space $\wt{X}$, a contractive injection $Q:X\to \wt{X}$, a contractive projection $P:\wt{X}\to X$, and a unitary $C_0$-group $(G(t))_{t\in \R}$ on $\wt{X}$ such that $S(t) = P G(t) Q$ for $t\geq 0$. Thus, we can write
    \begin{align*}
    T_2(t,s) = \Big\|\int_s^t PG(t-r) Q g(r)\,\rmd W_H(r)\Big\|_{X}
    \leq \Big\|\int_s^t G(-r) Q g(r)\,\rmd W_H(r)\Big\|_{\wt{X}}
    = \|I(t) - I(s)\|_{\wt{X}},
    \end{align*}
    where $I(t) \ce \int_0^t G(-r) Q g(r)\,\rmd W_H(r)$. Then by \cite[(2.12) and Theorem 3.2(vi)]{OndrejatVeraar} we have $I\in L^{p}(\Omega;C^{|\cdot|^{\alpha} |\log(\cdot)|^{1/2}}([0,T];\tilde{X}))$ and thus by boundedness of $|\log(\cdot)|^{1/2}(1+\log(\frac{T}{\cdot}))^{-1/2}$ on $(0,T]$ also $I\in L^{p}(\Omega;C^\Psi([0,T];\tilde{X}))$. Moreover,
    there are constants $c_{\alpha,T}, C_{\alpha,p,p_0,T} \geq 0$ such that
    \begin{align*}
    \|I\|_{L^p(\Omega;C^\Psi([0,T];\tilde{X}))} & \leq c_{\alpha,T} \|I\|_{L^p(\Omega;B^{\alpha}_{\Phi_2,\infty}(0,T;\wt{X}))}
    \\ & \leq C_{\alpha,p,p_0,T} \|G(-r) Q g(r)\|_{L^{p_0}(\Omega;L^q(0,T;\calL_2(H,\wt{X})))}
    \\ & \leq C_{\alpha,p,p_0,T} \|g\|_{L^{p_0}(\Omega;L^q(0,T;\calL_2(H,X)))},
    \end{align*}
    where $B_{\Phi_2,\infty}^\alpha(0,T;\tilde{X})$ denotes the Besov-Orlicz space corresponding to $\Phi_2(x)\ce \exp(x^2)-1$, cf. \cite[Section~2.3]{OndrejatVeraar} for the definition. It follows that
    \begin{align*}
        \bigg\|\sup_{0\leq s<t\leq T}\frac{T_2(t,s)}{\Psi(t-s)}\bigg\|_{p} &\le \|I\|_{L^p(\Omega;C^\Psi([0,T];\tilde{X}))}
        \leq C_{\alpha,p,p_0,T} \|g\|_{L^{p_0}(\Omega;L^q(0,T;\calL_2(H,X)))}.
    \end{align*}
    Now the required estimate follows by combining the estimates for $T_1$ and $T_2$.
\end{proof}

\begin{remark}
    For analytic semigroups on $X$, the result of Lemma \ref{lem:pathRegStConv} even holds if merely $g\in L^{p_0}(\Omega;L^q(0,T;\calL_2(H,X)))$, and even $J_g\in L^p(\Omega;B^{\alpha}_{\Phi_2,\infty}(0,T;X))$ (see \cite[Theorem 5.1]{OndrejatVeraar}). In particular, the space $Y$ and contractivity of $S$ are not needed. We do not know if one can take $p_0 = p$ in Lemma \ref{lem:pathRegStConv}, even in the analytic case. Also, we do not know if the above Besov regularity of $J_g$ holds in the non-analytic case.

Sharp path regularity results such as the one of Lemma \ref{lem:pathRegStConv} play an important role in obtaining convergence rates for numerical schemes for SPDEs. In particular, recent other applications of \cite{OndrejatVeraar} to numerics include \cite{DHW, le2023class, wichmann2022temporal, Wich23}. Below, we apply Lemma \ref{lem:pathRegStConv} to obtain additional information on the numerical approximation in the Kato setting, and it seems to be the first of its kind for hyperbolic equations.
\end{remark}
After these preparations, we can now prove the required path regularity of the mild solution.
\begin{proposition}[Path regularity of the mild solution]
\label{prop:pathRegMildSol}
    Suppose that Assumption \ref{ass:FG_structured} holds for some $\alpha \in (0,1/2]$ and $p \in [2,\infty)$.
    Let $p_0 \in (p,\infty)$ and $q\in (2, \infty]$ be such that $\frac{1}{2}-\frac1q = \alpha$, and suppose that $f,g$, and $u_0$ additionally satisfy
    \[f\in L^{p_0}(\Omega;L^1(0,T;X)), \  \ g\in L^{p_0}(\Omega;L^q(0,T;\calL_2(H,X))), \ \ \text{and}  \ \ u_0 \in L_{\calF_0}^{p_0}(\Omega;X)\cap     L_{\calF_0}^{p}(\Omega;Y).\]
    Let $A$ be the generator of a $C_0$-contraction semigroup $(S(t))_{t \ge 0}$ on both $X$ and $Y$.
    Suppose that $Y \hra D_A(\alpha,\infty)$ continuously. Let $\Psi: (0,T] \to (0,\infty)$ be given by  $\Psi(r) \ce r^{\alpha} (1+\log(\frac{T}{r}))^{1/2}$.
    Then the mild solution to \eqref{eq:StEvolEqnFG}  satisfies $U\in L^{p}(\Omega;C^\Psi([0,T];X))$ and there exists a constant $C$ depending on $(T,p,p_0, \alpha, \tilde{F}, \tilde{G}, \; X, Y)$ such that
    \begin{align*}
    \|U\|_{L^p(\Omega;C^\Psi([0,T];X))} \le C\big(1&+\|u_0\|_{L^{p}(\Omega;Y)}+\|f\|_{p,\infty,Y}+\nn g\nn_{p,\infty,Y}\\ & +
    \|u_0\|_{L^{p_0}(\Omega;X)}  + \|f\|_{p_0,1,X}+\nn g\nn_{{p_0},q,X}\big).
    \end{align*}
\end{proposition}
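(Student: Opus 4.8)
The plan is to substitute the mild solution formula \eqref{eq:mildSoldefU} and estimate the three resulting contributions separately in $C^\Psi([0,T];X)$, namely the semigroup orbit $S(\cdot)u_0$, the deterministic convolution $\Phi_F(t)\ce\int_0^t S(t-s)F(s,U(s))\,\rmd s$, and the stochastic convolution $\Phi_G(t)\ce\int_0^t S(t-s)G(s,U(s))\,\rmd W_H(s)$, before adding the three bounds. The decisive path-regularity input for the stochastic term has already been isolated in Lemma \ref{lem:pathRegStConv}, so the bulk of the work is to verify that the integrand $G(\cdot,U(\cdot))$ lies in the two function spaces appearing there, and to keep careful track of the two integrability scales $p,p_0$ against the two spatial scales $X,Y$.

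First I would dispose of the two deterministic terms, both of which land entirely on the $Y$-side of the estimate. For the orbit, since $Y\hra D_A(\alpha,\infty)$ the interpolated semigroup bound \eqref{eq:interpolationSgDifferenceGeneral} gives $\|S(t)u_0-S(s)u_0\|_X\le 2\CY(t-s)^\alpha\|u_0\|_Y$, and as $\Psi(r)\ge r^\alpha$ on $(0,T]$, dividing by $\Psi(t-s)$ and taking $L^p(\Omega)$-norms contributes $2\CY\|u_0\|_{L^p(\Omega;Y)}$. For the deterministic convolution I would split, for $0\le s<t\le T$,
\[\Phi_F(t)-\Phi_F(s)=\int_0^s[S(t-r)-S(s-r)]F(r,U(r))\,\rmd r+\int_s^t S(t-r)F(r,U(r))\,\rmd r.\]
Estimating the first integrand by \eqref{eq:interpolationSgDifferenceGeneral} as $2\CY(t-s)^\alpha\|F(r,U(r))\|_Y$ and again using $\Psi(t-s)\ge(t-s)^\alpha$ turns it into $2\CY T\sup_r\|F(r,U(r))\|_Y$, while the near-diagonal integral is at most $(t-s)\sup_r\|F(r,U(r))\|_X$, so dividing by $\Psi(t-s)$ and using $(t-s)^{1-\alpha}\le T^{1-\alpha}$ (here $\alpha\le\tfrac12$ keeps the exponent positive) together with $Y\hra X$ controls it by $\sup_r\|F(r,U(r))\|_Y$ as well. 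Linear growth of $F$ on $Y$ and the well-posedness estimate \eqref{eq:WPboundCu0fgZ} with $Z=Y$ (Theorem \ref{thm:wellposedY}) then bound the resulting $L^p(\Omega)$-norm by $1+\|u_0\|_{L^p(\Omega;Y)}+\|f\|_{p,\infty,Y}+\nn g\nn_{p,\infty,Y}$.

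The stochastic term is where the two scales meet and is the heart of the matter. I would invoke Lemma \ref{lem:pathRegStConv} with $G(\cdot,U(\cdot))$ in place of $g$, after checking its two hypotheses. Membership in $L^p(\Omega;L^2(0,T;\gHY))$ follows from linear growth of $\tilde G$ on $Y$ and $g\in L^p(\Omega;C([0,T];\gHY))$ combined with well-posedness on $Y$, and it feeds the $C_p$-term of Lemma \ref{lem:pathRegStConv}, reproducing the $Y$-side quantities. Membership in $L^{p_0}(\Omega;L^q(0,T;\gHX))$ requires first upgrading the integrability of the solution: since $u_0\in L^{p_0}(\Omega;X)$, $f\in L^{p_0}(\Omega;L^1(0,T;X))$, and $g\in L^{p_0}(\Omega;L^q(0,T;\gHX))\hra L^{p_0}(\Omega;L^2(0,T;\gHX))$ on the finite interval, Theorem \ref{thm:wellposed} applies at integrability level $p_0$ to give $U\in L^{p_0}(\Omega;C([0,T];X))$; linear growth of $\tilde G$ on $X$ then yields $G(\cdot,U(\cdot))\in L^{p_0}(\Omega;L^q(0,T;\gHX))$ with norm controlled by $1+\|u_0\|_{L^{p_0}(\Omega;X)}+\|f\|_{p_0,1,X}+\nn g\nn_{p_0,q,X}$, feeding the $C_{\alpha,p,p_0,T}$-term of Lemma \ref{lem:pathRegStConv} and producing exactly the $X$-side quantities. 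Collecting the three contributions gives the claimed estimate.

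The only genuine obstacle is the correct matching of scales. One must respect that $Y$-regularity of $U$ is available only at integrability level $p$ (via the linear-growth-based Theorem \ref{thm:wellposedY}, Lipschitz continuity on $Y$ being unavailable), whereas the higher integrability $p_0$ is available only on $X$ (via the Lipschitz-based Theorem \ref{thm:wellposed}). Lemma \ref{lem:pathRegStConv} is engineered precisely so that a low-integrability high-regularity norm and a high-integrability low-regularity norm together suffice, and once $G(\cdot,U(\cdot))$ is placed in both required spaces the remainder is routine bookkeeping.
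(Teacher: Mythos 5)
Your proposal is correct and follows essentially the same route as the paper's proof: the same three-way split of the mild solution formula, the deterministic terms estimated as in Lemma \ref{lem:mildsolest}, and the stochastic convolution handled by Lemma \ref{lem:pathRegStConv} after verifying its two hypotheses via linear growth of $\tilde{G}$ on $Y$ at integrability level $p$ and Lipschitz continuity on $X$ at level $p_0$. You even make explicit a step the paper leaves implicit, namely that Theorem \ref{thm:wellposed} must be re-applied at integrability level $p_0$ (using $g\in L^{p_0}(\Omega;L^q)\hookrightarrow L^{p_0}(\Omega;L^2)$ on the finite interval) to obtain $U\in L^{p_0}(\Omega;C([0,T];X))$, which the paper buries in the constant $\tilde{C}_{u_0,f,g,X}$.
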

\begin{proof}
    The mild solution formula \eqref{eq:mildSoldefU} yields an initial value term, a difference of deterministic convolutions, and a stochastic version of the latter. The first two can be estimated as in the proof of Lemma \ref{lem:mildsolest}, resulting in an upper bound of the form
    \begin{equation*}
        c(1+\|u_0\|_{L^p(\Omega;Y)}+\|f\|_{p,\infty,Y} + \nn g \nn_{p,2,Y})
    \end{equation*}
    for some $c \ge 0$ depending on $T$. To the remaining term, we apply Lemma \ref{lem:pathRegStConv} and note that
    \begin{align*}
        \nn G(\cdot,U(\cdot))\nn_{p,2,Y} &\le \LGY \CufgY+\nn g \nn_{p,\infty,Y},\\
        \nn G(\cdot,U(\cdot))\nn_{p_0,q,X} &\le T^{1/q} \nn G(\cdot,U(\cdot))\nn_{p_0,\infty,X}+\nn g \nn_{p_0,q,X} \le T^{1/q}\CGX \tilde{C}_{u_0,f,g,X}+\nn g \nn_{p_0,q,X}\\
        &\lesssim 1+\|u_0\|_{L^{p_0}(\Omega;X)}+\|f\|_{p_0,1,X}+\nn g\nn_{p_0,q,X},
    \end{align*}
    where $\tilde{C}_{u_0,f,g,X}$ is defined as $\CufgX$ in \eqref{eq:defCu0fgZ} with $p$ replaced by $p_0$.
\end{proof}

Consequently, we can now ``upgrade'' Theorem \ref{thm:convergenceRate} and Corollary \ref{cor:expEulerMultiplicative} to estimates on the full time interval.
\begin{theorem}[Uniform error on the full interval]\label{thm:generalschemeuniform0T}
    Suppose that Assumption \ref{ass:FG_structured} holds for some $\alpha \in (0,1/2]$ and $p \in [2,\infty)$. Let $A$ be the generator of a $C_0$-contraction semigroup $(S(t))_{t \ge 0}$ on both $X$ and $Y$.
    Let $(R_{k})_{k>0}$ be a time discretisation scheme which is contractive on $X$ and $Y$ and $R$ approximates $S$ to order $\alpha$ on $Y$ or suppose that $R_k=S(k)$ is the exponential Euler method. Suppose that $Y \hra D_A(\alpha,\infty)$ continuously. Let $p_0 \in (p,\infty)$ and $q\in (2, \infty]$ be such that $\frac{1}{2}-\frac1q = \alpha$, and suppose that $f,g$, and $u_0$ have additional integrability as $X$-valued processes
    \[f\in L^{p_0}(\Omega;L^1(0,T;X)), \  \ g\in L^{p_0}(\Omega;L^q(0,T;\calL_2(H,X))), \ \ \text{and}  \ \ u_0 \in L_{\calF_0}^{p_0}(\Omega;X)\cap     L_{\calF_0}^{p}(\Omega;Y).\]
    Denote by $U$ the mild solution of \eqref{eq:StEvolEqnFG} and by $(U^j)_{j=0,\ldots,N_k}$ the temporal approximations as defined in \eqref{eq:defUj}. Define the piecewise constant extension $\tilde{U}:[0,T] \to L^p(\Omega;X)$ by $\tilde{U}(t) \ce U^j$ for $t \in [t_j,t_{j+1})$, $0 \le j \le N_k-1$, and $\tilde{U}(T) \ce U^{N_k}$. Then for all $N_k \ge 2$ there is a constant $C \ge 0$ depending on $(u_0,T,p,p_0,\alpha,F,G, X,Y)$ such that
    \[
        \bigg\|\sup_{t \in [0,T]} \|U(t)-\tilde{U}(t)\|\bigg\|_p \le C \big(1+\sqrt{\log(T/k)}\big)k^\alpha.
    \]
\end{theorem}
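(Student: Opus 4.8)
The plan is to combine the deterministic decomposition of Lemma~\ref{lem:splitContErrorPhi} with the sharp path regularity of Proposition~\ref{prop:pathRegMildSol} and the error bound already obtained at the grid points. First I would apply Lemma~\ref{lem:splitContErrorPhi} pathwise to the mild solution $U$, taking $\Pi = \{t_j : 0 \le j \le N_k\}$ the uniform grid, $\tilde{U}$ the piecewise constant interpolant, and the choice $\Phi = \Psi$ with $\Psi(r) = r^{\alpha}(1+\log(\tfrac{T}{r}))^{1/2}$. Since the grid is uniform with step $k$, the maximal gap satisfies $h \le k$, and because $\Psi$ is non-decreasing the lemma yields, $\P$-almost surely,
\begin{equation*}
\sup_{t \in [0,T]} \|U(t) - \tilde{U}(t)\| \le \Psi(k)\,\|U\|_{C^\Psi([0,T];X)} + \max_{0 \le j \le N_k} \|U(t_j) - U^j\|.
\end{equation*}

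Next I would take $L^p(\Omega)$-norms and use the triangle inequality to split the right-hand side into two contributions. The first, $\Psi(k)\,\|U\|_{L^p(\Omega;C^\Psi([0,T];X))}$, is controlled by Proposition~\ref{prop:pathRegMildSol}: under the stated additional integrability of $f$, $g$, and $u_0$ (namely $f \in L^{p_0}(\Omega;L^1(0,T;X))$, $g \in L^{p_0}(\Omega;L^q(0,T;\calL_2(H,X)))$, $u_0 \in L^{p_0}(\Omega;X) \cap L^p(\Omega;Y)$ with $\tfrac12 - \tfrac1q = \alpha$ and $\alpha \le \tfrac12$), one has $U \in L^p(\Omega;C^\Psi([0,T];X))$ with the explicit data-dependent bound established there. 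The second contribution is exactly $\|\max_j \|U(t_j)-U^j\|\|_p$, which is bounded by Theorem~\ref{thm:convergenceRate} in the general contractive case and by Corollary~\ref{cor:expEulerMultiplicative} for the exponential Euler scheme.

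It then remains to reconcile the two rates. Because $\alpha \in (0,\tfrac12]$, the grid estimate of Theorem~\ref{thm:convergenceRate} simplifies: for $k \in (0,T/2]$ the lower-order terms $C_1 k$ and $C_2 k^{1/2}$ are dominated by $k^\alpha$ up to a constant, and $\sqrt{\max\{\log(T/k),p\}} \le \sqrt{\log(T/k)} + \sqrt{p}$ with $\sqrt{p}$ absorbed into the constant, so the grid error is $\lesssim (1+\sqrt{\log(T/k)})\,k^\alpha$. Simultaneously, $\Psi(k) = k^\alpha(1+\log(\tfrac{T}{k}))^{1/2} \lesssim (1+\sqrt{\log(T/k)})\,k^\alpha$ by $\sqrt{a+b}\le \sqrt a + \sqrt b$. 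Adding the two contributions and collecting all data-dependent constants into a single $C$ depending on $(u_0,T,p,p_0,\alpha,F,G,X,Y)$ gives the claimed bound. The exponential Euler case is identical, the only difference being that the grid term carries no logarithm; the logarithmic factor in the final estimate then originates solely from $\Psi(k)$, i.e.\ from the path regularity, which by L\'evy's modulus of continuity cannot be improved.

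Since every ingredient is already in place, there is no genuine obstacle in this argument; the only point requiring care is the bookkeeping that matches the exponent $\alpha$ and the $\sqrt{\log}$-factor coming from the path regularity to those of the grid-point error, so that the two estimates combine into a single clean rate rather than producing a spurious higher power of the logarithm.
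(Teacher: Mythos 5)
Your proposal is correct and follows essentially the same route as the paper, whose proof consists precisely of applying Lemma \ref{lem:splitContErrorPhi} with $\Phi(r)=r^\alpha(1+\log(\frac{T}{r}))^{1/2}$ and bounding the two resulting terms by Proposition \ref{prop:pathRegMildSol} and Theorem \ref{thm:convergenceRate} (resp.\ Corollary \ref{cor:expEulerMultiplicative}), respectively. Your additional bookkeeping—absorbing the $k$ and $k^{1/2}$ terms into $k^\alpha$ for $\alpha\le\frac12$ and merging the logarithmic factors—is exactly the routine verification the paper leaves implicit.
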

\begin{proof}
    The error bound follows from applying Lemma \ref{lem:splitContErrorPhi} with $\Phi=(\cdot)^\alpha (1+\log(\frac{T}{\cdot}))^{1/2}$ in combination with Theorem \ref{thm:convergenceRate} and Proposition \ref{prop:pathRegMildSol} to bound the first and second term obtained from the proposition, respectively.
\end{proof}

Thus we can conclude that  Theorem \ref{thm:convergenceRate} and Corollary \ref{cor:expEulerMultiplicative} can be improved to a uniform error estimate on $[0,T]$ at the price of a slightly more restrictive integrability condition on $g$ and $u_0$. Moreover, in the exponential Euler method, an additional logarithmic factor appears. Recall from \cite[Theorem 3]{Muller-Gronbach} that already for SDEs the error has to grow at least as $\log(T/k)^{1/2}k^{1/2}$ for $k\to 0$. Therefore, for $\alpha = 1/2$, Theorem \ref{thm:generalschemeuniform0T} gives the optimal convergence rate for any scheme.

In the applications given below, we restrict ourselves to the uniform error estimate on the grid points. By the above result, these statements can be extended to the full interval $[0,T]$ with additionally the square root of a logarithmic factor by imposing extra integrability conditions on the data.

\subsection{Application to the Schrödinger equation}
\label{subsec:SchroedingerMult}

In this subsection, we reconsider the stochastic Schrödinger equation with a potential from Subsection \ref{subsec:SchroedingerAdd}, now with linear multiplicative noise
\begin{align}
\label{eq:linearSchroedingerPotentialMultNoise}
    \Bigg\{\begin{split} \rmd u &= -\iu(\Delta + V) u \;\rmd t-\iu u\; \rmd W~~~ \text{ on }[0,T],\\
    u(0)&=u_0
    \end{split}
\end{align}
and its nonlinear variant with $\phi:\C\to \C$ and $\psi:\C \to \C$,
\begin{align}
\label{eq:linearSchroedingerPotentialMultNoiseNL}
    \Bigg\{\begin{split} \rmd u &= -\iu(\Delta u + V u + \phi(u))\;\rmd t-\iu \psi(u)\; \rmd W~~~ \text{ on }[0,T],\\
    u(0)&=u_0
    \end{split}
\end{align}
in $\R^d$ for $d \in \N$, with $Q$-Wiener process $\{W(t)\}_{t\ge 0}$, potential
$V$ and initial value $u_0$ as introduced in Subsection \ref{subsec:SchroedingerAdd}.

Let $\sigma \ge 0$ and, for this subsection only, write $L^2=L^2(\R^d; \C)$ and $H^\sigma=H^\sigma(\R^d;\C)$. We recall that the well-posedness of \eqref{eq:linearSchroedingerAddNoise} required Assumption \ref{ass:sigmadVSchrodinger} on $\sigma$ and $d \in \N$ to hold so that multiplication by $V$ is a bounded operator on $X=H^\sigma$. For multiplicative noise, this assumption is also required to hold on $Y=H^{\sigma+\ell\alpha}$, where the choice of $\ell$ depends on the scheme employed. To facilitate checking the assumptions on $Y$, we use the following equivalent reformulation of Assumption \ref{ass:sigmadVSchrodinger}:
\begin{assumption}
\label{ass:sigmadVSchrodingerReformulated}
    Let $\sigma \ge 0$, $d \in \N$ and $V \in L^2$ such that
    \begin{enumerate}[label=(\roman*)]
        \item $\sigma > \frac{d}{2}$ and $V \in H^\sigma$, or
        \item $\sigma = 0$ and $V \in H^\beta$ for some $\beta>\frac{d}{2}$, or
        \item $d=1$, $\sigma \in (0,\frac12)$, and $V \in H^\beta$ for some $\beta >\frac{1}{2}$
        \item $d \ge 2$, $\sigma \in (0,1]$, and $V \in H^\beta$ for some $\beta>\frac{d}{2}$.
    \end{enumerate}
\end{assumption}

Based on the combination of the cases of Assumption \ref{ass:sigmadVSchrodingerReformulated} for $X=H^\sigma$ and $Y=H^{\sigma+\ell\alpha}$, the following assumption emerges.

\begin{assumption}
\label{ass:sigmadVSchrodingerMult}
    Let $\sigma \ge 0$, $d \in \N$, $\alpha \in \left(0,\frac{1}{2}\right]$, $\ell \in (0,\infty)$, $V \in H^\beta$ for some $\beta>0$ such that
    \begin{enumerate}[label=(\roman*)]
    \item $\sigma > \frac{d}{2}$ and $\beta=\sigma+\ell\alpha$, or \label{item:sigmad2Mult}
    \item $\sigma = 0$, $1 \le d < \ell$, $\alpha >\frac{d}{8}$, and $\beta= \ell\alpha$, or
    \label{item:sigma0largeAMult}
    \item $\sigma = 0$, $d=1$, $\alpha<\frac{1}{2\ell}$, and $\beta>\frac{1}{2}$, or
        \label{item:sigma0d1Mult}
    \item $\sigma = 0$, $d\ge 2$, $\alpha\le\frac{1}{\ell}$, and $\beta>\frac{d}{2}$, or
        \label{item:sigma0d2Mult}
    \item $d=1$, $\sigma \in (0,\frac12)$, $\alpha> \frac{1-2\sigma}{2\ell}$, and $V\in H^{\sigma+\ell \alpha}$, or
        \label{item:d1largeAMult}
    \item $d=1$, $\sigma \in (0,\frac12)$, $\alpha<\frac{1-2\sigma}{2 \ell}$, and $\beta>\frac{1}{2}$, or
        \label{item:d1smallAMult}
    \item $2 \le d < 2\sigma + \ell$, $\sigma \in (0,1]$, $\alpha >\frac{d-2\sigma}{2\ell}$, and $\beta=\sigma+\ell\alpha$, or
        \label{item:d2largeAMult}
    \item $d \ge 2$, $\sigma \in (0,1]$, $\alpha\le \frac{1-\sigma}{\ell}$, and $\beta>\frac{d}{2}$.
        \label{item:d2smallAMult}
\end{enumerate}
\end{assumption}

For the exponential Euler method, we recover the error bound from \cite[Thm.~5.5]{AC18} showing convergence rate $\frac{1}{2}$ for linear noise in the case of sufficiently regular $Q^{1/2}$ and $V$ and $\sigma>\frac{d}{2}$. Assuming less regularity of $Q^{1/2}$ and $V$ we extend their result to fractional convergence rates $\alpha \in \left(0,\frac{1}{2}\right]$ as well as the cases \ref{item:sigma0largeAMult}-\ref{item:d2smallAMult} of Assumption \ref{ass:sigmadVSchrodingerMult}.

\begin{theorem}
\label{thm:SoExpEulerMultiplicative}
    Let $\sigma \ge 0$, $d \in \N$, and $V \in L^2$. Suppose that Assumption \ref{ass:sigmadVSchrodingerMult} is satisfied for some $\ell\ge 2$ and some $\alpha \in \left(0,\frac{1}{2}\right]$, $\beta>0$, and $p \in [2,\infty)$, and that $u_0 \in L_{\calF_0}^p(\Omega;H^{\sigma+\ell\alpha})$ as well as $Q^{1/2} \in \calL_2(L^2,H^\beta)$. Denote by $U$ the mild solution of the linear stochastic Schrödinger equation with multiplicative noise \eqref{eq:linearSchroedingerPotentialMultNoise} and by $(U^j)_{j=0,\ldots,N_k}$ the temporal approximations as defined in \eqref{eq:defUj} obtained with the exponential Euler method $R\ce S$.
    Then there exists a constant $C \ge 0$ depending on $(V, u_0, T, p, \alpha, \sigma, d, \ell)$ such that for $N_k \geq 2$
    \begin{equation*}
        \bigg\| \max_{0 \le j \le N_k} \|U(t_j)-U^j\|_{H^\sigma} \bigg\|_p \le C\big(1+ \|Q^{1/2}\|_{\calL_2(L^2,H^\beta)}\big) k^\alpha.
    \end{equation*}
    In particular, the approximations $(U^j)_j$ converge at rate $\frac{1}{2}$ as $k \to 0$ if $Q^{1/2} \in \calL_2(L^2,H^{\sigma+1})$, $V \in H^{\sigma+1}$, $\sigma>\frac{d}{2}$, and $u_0 \in L_{\calF_0}^p(\Omega;H^{\sigma+1})$.
\end{theorem}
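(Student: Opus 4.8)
The plan is to recast the linear multiplicative Schrödinger equation \eqref{eq:linearSchroedingerPotentialMultNoise} as an instance of the abstract equation \eqref{eq:StEvolEqnFG} and to apply Corollary \ref{cor:expEulerMultiplicative}. I take $X = H^\sigma$, $Y = H^{\sigma+\ell\alpha}$, and $H = L^2$, and I split the linear part so that the potential sits in the nonlinearity: set $A = -\iu\Delta$, $F(u) = -\iu V u$ (so $f = 0$) and, after reducing the $Q$-Wiener process to a cylindrical Brownian motion as in Subsection~\ref{sec:SI}, $G(u)\colon L^2 \to H^\sigma$, $h \mapsto -\iu\, u\,(Q^{1/2}h)$ (so $g = 0$). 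The operator $A = -\iu\Delta$ is a Fourier multiplier whose symbol has modulus one, hence it generates a unitary, isometric --- in particular contractive --- $C_0$-group on every $H^s$, and thus on both $X$ and $Y$. This is the reason for keeping $V$ in $F$ rather than in $A$: for a $\C$-valued potential, $-\iu(\Delta+V)$ is only quasi-contractive, whereas Corollary \ref{cor:expEulerMultiplicative} requires a contraction. Since we use the exponential Euler method $R_k = S(k)$, we have $S(t_j) - R_k^j = 0$, so the order-$\alpha$ approximation property holds automatically.

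It remains to verify Assumption \ref{ass:FG_structured} for the given $\alpha, p$ together with the embedding $Y \hra D_A(\alpha,\infty)$. For the embedding, note $D(A) = H^{\sigma+2}$, so $H^{\sigma+2\alpha} = (H^\sigma,H^{\sigma+2})_{\alpha,2} \hra D_A(\alpha,\infty)$, and $\ell \ge 2$ gives $Y = H^{\sigma+\ell\alpha} \hra H^{\sigma+2\alpha}$. Within Assumption \ref{ass:FG_structured}, the Hölder-in-time condition \ref{assCond:HoldercontinuityFG} holds trivially with $C_{\alpha,F} = C_{\alpha,G} = 0$ since $F$ and $G$ are autonomous, and the data conditions in \ref{item:FG_rate_Yinvariance} are vacuous as $f = g = 0$. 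Because $F$ and $G$ are linear in $u$, the global Lipschitz conditions on $X$ and the linear-growth conditions on $Y$ all reduce to two pointwise multiplication estimates, each needed at the two levels $s \in \{\sigma,\sigma+\ell\alpha\}$: boundedness of $u \mapsto Vu$ on $H^s$, and a product bound $\|u\,w\|_{H^s} \lesssim \|u\|_{H^s}\|w\|_{H^\beta}$, which controls the Hilbert--Schmidt norm via $\|G(u)\|_{\gHX}^2 = \sum_n \|u\,(Q^{1/2}h_n)\|_{H^s}^2 \lesssim \|u\|_{H^s}^2\,\|Q^{1/2}\|_{\calL_2(L^2,H^\beta)}^2$.

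The heart of the proof, and its main obstacle, is to establish these multiplication estimates simultaneously at both levels under each of the eight cases of Assumption \ref{ass:sigmadVSchrodingerMult}. Since $V$ and every $Q^{1/2}h_n$ lie in $H^\beta$, both estimates follow once the pointwise multiplication $H^s \cdot H^\beta \hra H^s$ is bounded for $s = \sigma$ and $s = \sigma+\ell\alpha$; this boundedness is exactly the content of Assumption \ref{ass:sigmadVSchrodingerReformulated} (equivalently Assumption \ref{ass:sigmadVSchrodinger}) at a single regularity level. I would therefore proceed case by case, checking that each case of Assumption \ref{ass:sigmadVSchrodingerMult} constrains $(\alpha,\sigma,d,\ell,\beta)$ precisely so that the pair $(\sigma,\beta)$ \emph{and} the pair $(\sigma+\ell\alpha,\beta)$ both fall into an admissible case of Assumption \ref{ass:sigmadVSchrodingerReformulated}; the lower and upper bounds imposed on $\alpha$ are exactly what keeps the higher level $\sigma+\ell\alpha$ within the admissible range while preserving the endpoint conditions ($\beta > d/2$, or $\beta > \tfrac12$ in the one-dimensional borderline cases). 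The concrete bounds come from the algebra property of $H^s$ for $s > d/2$, the embeddings in \cite[Thm.~6.5.1]{BL76}, and the product estimate \cite[Prop.~2.1.1]{TaylorBook}, exactly as in Lemma \ref{lem:SoAddSigma01case} and the computations of Subsection \ref{subsec:SchroedingerAdd}, now carried out at both levels.

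With Assumption \ref{ass:FG_structured} and the interpolation embedding verified, Corollary \ref{cor:expEulerMultiplicative} applies and delivers the rate $\min\{\alpha,\tfrac12\} = \alpha$ (using $\alpha \le \tfrac12$), with no logarithmic factor since this is the exponential Euler method. Tracking the constants, the noise constants $\CGX, \CGY$ (equivalently $\LGY$) are proportional to $\|Q^{1/2}\|_{\calL_2(L^2,H^\beta)}$ while the constants associated with $F$ depend only on $V$; collecting the terms of Corollary \ref{cor:expEulerMultiplicative} that are linear in these noise constants produces the factor $1 + \|Q^{1/2}\|_{\calL_2(L^2,H^\beta)}$, with the remaining dependence (including the Gronwall stability factor) absorbed into a constant $C = C(V,u_0,T,p,\alpha,\sigma,d,\ell)$. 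The final ``in particular'' statement is the special instance $\ell = 2$, $\alpha = \tfrac12$, $\beta = \sigma+1$, which falls under case \ref{item:sigmad2Mult} ($\sigma > d/2$) and yields the optimal rate $\tfrac12$.
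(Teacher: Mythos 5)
Your proposal is correct and follows essentially the same route as the paper's own proof: the paper likewise takes $A=-\iu\Delta$ (contractive on every $H^s$), places the potential in $F(u)=-\iu Vu$ and the noise in $G(u)=-\iu M_u Q^{1/2}$ with $H=L^2$, verifies Assumption \ref{ass:FG_structured} via the multiplication bounds $\|M_u\|_{\calL(H^\beta,H^s)}\lesssim\|u\|_{H^s}$ at both levels $s\in\{\sigma,\sigma+\ell\alpha\}$, uses the embedding chain $H^{\sigma+\ell\alpha}\hra H^{\sigma+2\alpha}\hra (H^\sigma,D(A))_{\alpha,\infty}$, and concludes by Corollary \ref{cor:expEulerMultiplicative}. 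The case-by-case verification you identify as the main work is exactly what the paper compresses into the assertion that Assumption \ref{ass:sigmadVSchrodingerMult} implies Assumption \ref{ass:sigmadVSchrodinger} for both $\sigma$ and $\sigma+\ell\alpha$ (referring back to the computations of Subsection \ref{subsec:SchroedingerAdd} and Lemma \ref{lem:SoAddSigma01case}), so your write-up matches the paper's proof in both structure and level of detail.
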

\begin{proof}
    By \cite[Lemma~2.1]{AC18}, $A= -\iu \Delta$ generates a contractive semigroup on both Hilbert spaces $X=H^\sigma$ and $Y=H^{\sigma+\ell\alpha}$. Furthermore, setting $F(u) = -\iu V \cdot u$ and $G(u) = -\iu M_u Q^{1/2}$ for $u \in H^\sigma$ with the multiplication operator $M_u$ allows us to rewrite \eqref{eq:linearSchroedingerPotentialMultNoise} in the form of a stochastic evolution equation \eqref{eq:StEvolEqnFG}. It remains to verify the mapping, linear growth and Lipschitz continuity conditions from Assumption \ref{ass:FG_structured}.

    Note that Assumption \ref{ass:sigmadVSchrodingerMult} implies that Assumption \ref{ass:sigmadVSchrodinger} is satisfied for both $\sigma$ and $\sigma+\ell\alpha$. In particular, this means that $Vu \in Y=H^{\sigma+\ell\alpha}$ for any $u \in H^{\sigma+\ell\alpha}$ and $\|Vu\|_{H^{\sigma+\ell\alpha}} \le C_V \|u\|_{H^{\sigma+\ell\alpha}}$ for some constant $C_V \ge 0$. More specifically, it can be shown that $C_V \lesssim \|V\|_{H^\beta}$, cf. Subsection \ref{subsec:SchroedingerAdd}.
    Hence, $F$ maps both $X$ and $Y$ into themselves and it is of linear growth on $Y$ because of
    \begin{equation*}
        \|F(u)\|_Y =\|-\iu V \cdot u \|_{H^{\sigma+\ell\alpha}} \le C_V \|u\|_{H^{\sigma+\ell\alpha}} = C_V \|u\|_Y,~~~u \in Y.
    \end{equation*}
    Likewise, Lipschitz continuity on $X$ is obtained.

    Set $H=L^2$. Due to
    \begin{align}
    \label{eq:estGQSo}
        \|G(u)\|_\gHY &=\|-\iu M_u \cdot Q^{1/2} \|_{\calL_2(L^2,H^{\sigma+\ell\alpha})}\nonumber\\
        &\le \|M_u\|_{\calL(H^\beta,H^{\sigma+\ell\alpha})}  \|Q^{1/2} \|_{\calL_2(L^2,H^\beta)}\nonumber\\
        &\lesssim \|Q^{1/2} \|_{\calL_2(L^2,H^\beta)} \|u\|_{H^{\sigma+\ell\alpha}} = \|Q^{1/2} \|_{\calL_2(L^2,H^\beta)}\|u\|_Y,~~~u \in Y,
    \end{align}
    $G$ is of linear growth on $Y$. To see this, we estimate the operator norm of $M_u$ from $H^\beta$ to $H^{\sigma+\ell\alpha}$ using either the Banach algebra property of $H^\beta$, a combination of Hölder's inequality and Sobolev embeddings or an argument analogously to Lemma \ref{lem:SoAddSigma01case} as discussed in Subsection \ref{subsec:SchroedingerAdd}. Likewise, we check Lipschitz continuity of $G$ on $X$ with a multiple of $\|Q^{1/2} \|_{\calL_2(L^2,H^\beta)}$ as Lipschitz constant. Measurability and Hölder continuity in time are trivially fulfilled due to $F$ and $G$ depending only on $u \in X$.
    Thus, Corollary \ref{cor:expEulerMultiplicative} is applicable with $X=H^\sigma,\, H=L^2$, and $Y=H^{\sigma+\ell\alpha}\hra H^{\sigma+2\alpha} \hra (H^\sigma, D(A))_{\alpha,\infty}$, yielding the desired error bound.
\end{proof}

Furthermore, Theorem \ref{thm:convergenceRate} enables us to extend \cite[Thm.~5.5]{AC18} to general discretisation schemes $R$ involving rational approximations at the price of an additional logarithmic factor. We focus on the implicit Euler method (IE) and the Crank--Nicolson method (CN), which approximate the Schrödinger semigroup to rate $\alpha$ on $Y=H^{\sigma+4 \alpha}$ and $Y=H^{\sigma+3\alpha}$, respectively (see Theorem \ref{thm:SoIECNAdditive}).

\begin{theorem}
\label{thm:SoIECNMultiplicative}
    Let $\sigma \ge 0$, $d \in \N$, and $V \in L^2$. Let $(R_k)_{k>0}$ be the implicit Euler method (IE) or the Crank--Nicolson method (CN) and set $\ell_0\ce 4$ or $\ell_0 \ce 3$, respectively. Suppose that Assumption \ref{ass:sigmadVSchrodingerMult} is satisfied for some $\ell\ge\ell_0$ and for some $\alpha \in \left( 0, \frac{1}{2}\right]$, $\beta >0$, and $p \in [2,\infty)$. Further, suppose that $u_0 \in L_{\calF_0}^p(\Omega;H^{\sigma+\ell\alpha})$ as well as $Q^{1/2} \in \calL_2(L^2,H^\beta)$. Denote by $U$ the mild solution of the linear stochastic Schrödinger equation with multiplicative noise \eqref{eq:linearSchroedingerPotentialMultNoise}
    and by $(U^j)_{j=0,\ldots,N_k}$ the temporal approximations as defined in \eqref{eq:defUj}. Then there exists a constant $C \ge 0$ depending on $(V, u_0, T, p, \alpha, \sigma, d, \ell)$ such that for $N_k \ge 2$
    \begin{equation*}
        \left\| \max_{0 \le j \le N_k} \|U(t_j)-U^j\|_{H^\sigma} \right\|_p \le C \big(1+ \|Q^{1/2}\|_{\calL_2(L^2,H^\beta)}\big) \sqrt{\log (T/k)} k^\alpha.
    \end{equation*}
    In particular, (IE) and (CN) converge at rate $\frac12$ up to logarithmic correction as $k \to 0$ if $V\in H^{\sigma+\ell\alpha}$, $Q^{1/2}\in \calL_2(L^2,H^{\sigma+\ell\alpha})$, $\sigma >\frac{d}{2}$, and $u_0 \in L_{\filtrF_0}^p(\Omega;H^{\sigma+\ell\alpha})$ with $\ell=4$ and $\ell=3$, respectively.
\end{theorem}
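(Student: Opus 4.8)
The plan is to recast \eqref{eq:linearSchroedingerPotentialMultNoise} as an abstract evolution equation of the form \eqref{eq:StEvolEqnFG} and then invoke the general convergence result, Theorem \ref{thm:convergenceRate}, in place of its exponential-Euler specialisation used for Theorem \ref{thm:SoExpEulerMultiplicative}. Concretely, I would take $X = H^\sigma$, $Y = H^{\sigma+\ell\alpha}$, $H = L^2$, let $A = -\iu\Delta$, and absorb the potential into the drift by setting $F(u) = -\iu V\cdot u$ and $G(u) = -\iu M_u Q^{1/2}$, where $M_u$ denotes multiplication by $u$. Since $-\iu\Delta$ generates a contraction semigroup on every $H^s$ by \cite[Lemma~2.1]{AC18}, $(S(t))_{t\ge 0}$ is contractive on both $X$ and $Y$, so I am in the Kato setting required by Theorem \ref{thm:convergenceRate}. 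Note that $F(0)=G(0)=0$, whence the inhomogeneous parts $f,g$ in Assumption \ref{ass:FG_structured} vanish and their integrability conditions are trivial.

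The verification of Assumption \ref{ass:FG_structured} proceeds exactly as in the proof of Theorem \ref{thm:SoExpEulerMultiplicative}. Assumption \ref{ass:sigmadVSchrodingerMult} is designed so that Assumption \ref{ass:sigmadVSchrodinger} holds simultaneously for $\sigma$ and for $\sigma+\ell\alpha$; hence multiplication by $V$ is bounded on both $X$ and $Y$ with $C_V\lesssim\|V\|_{H^\beta}$, giving global Lipschitz continuity of $F$ on $X$ and linear growth of $F$ on $Y$. For the noise, the operator-norm bound $\|M_u\|_{\calL(H^\beta,H^{\sigma+\ell\alpha})}\lesssim\|u\|_{H^{\sigma+\ell\alpha}}$ (via the algebra property of $H^\beta$, Hölder's inequality combined with Sobolev embeddings, or Lemma \ref{lem:SoAddSigma01case}, according to the case) yields $\|G(u)\|_\gHY\lesssim\|Q^{1/2}\|_{\calL_2(L^2,H^\beta)}\|u\|_Y$, so $G$ is of linear growth on $Y$ and Lipschitz on $X$ with a constant that is a multiple of $\|Q^{1/2}\|_{\calL_2(L^2,H^\beta)}$; this is the origin of the prefactor $(1+\|Q^{1/2}\|_{\calL_2(L^2,H^\beta)})$ in the claimed bound. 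The Hölder-in-time condition \ref{assCond:HoldercontinuityFG} is vacuous since $F$ and $G$ are autonomous.

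The genuinely new point, compared with the exponential Euler case, is the approximation order of the scheme on $Y$. Here I would quote Theorem \ref{thm:SoIECNAdditive}: (IE) approximates $S$ to order $\alpha$ on $D((-A)^{2\alpha})=H^{\sigma+4\alpha}$ and (CN) on $D((-A)^{3\alpha/2})=H^{\sigma+3\alpha}$. Because $\ell\ge\ell_0$, the space $Y=H^{\sigma+\ell\alpha}$ embeds continuously into $H^{\sigma+\ell_0\alpha}$, so the estimate $\|(S(t_j)-R_k^j)u\|_X\le C_\alpha k^\alpha\|u\|_{H^{\sigma+\ell_0\alpha}}\le C_\alpha' k^\alpha\|u\|_Y$ transfers to $Y$. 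Contractivity of (IE) and (CN) on $X$ follows from Proposition \ref{prop:functionalcalculus}, and, since both schemes are rational functions of $A$ that commute with its resolvent, it extends to $Y=H^{\sigma+\ell\alpha}$ as noted after Theorem \ref{thm:convergenceRate}. Finally $Y=H^{\sigma+\ell\alpha}\hookrightarrow H^{\sigma+2\alpha}=(H^\sigma,D(A))_{\alpha,2}\hookrightarrow D_A(\alpha,\infty)$ since $\ell\ge\ell_0\ge 3>2$, so all hypotheses of Theorem \ref{thm:convergenceRate} are met and the $\sqrt{\log(T/k)}\,k^\alpha$ bound follows.

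For the ``in particular'' assertion I would take $\alpha=\tfrac12$ and apply case \ref{item:sigmad2Mult} of Assumption \ref{ass:sigmadVSchrodingerMult}, which under $\sigma>\tfrac d2$ only requires $\beta=\sigma+\ell\alpha=\sigma+\tfrac{\ell}{2}$; the stated hypotheses $V,Q^{1/2}\in H^{\sigma+\ell\alpha}$ and $u_0\in L^p_{\filtrF_0}(\Omega;H^{\sigma+\ell\alpha})$ with $\ell=4$ (IE) or $\ell=3$ (CN) then fit exactly into the framework and deliver rate $\tfrac12$ up to the logarithmic factor. The main obstacle is not analytic but one of bookkeeping: one must match the regularity index $\ell\alpha$ of $Y$ to the scheme-specific order $\ell_0$ so that both the $Y$-approximation estimate and the embedding $Y\hookrightarrow D_A(\alpha,\infty)$ hold, while simultaneously ensuring, through Assumption \ref{ass:sigmadVSchrodingerMult}, that $V$ and $Q^{1/2}$ act boundedly on the larger space $Y$. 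Once these indices are aligned, the convergence rate is an immediate consequence of Theorem \ref{thm:convergenceRate}.
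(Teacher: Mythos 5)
Your proposal is correct and takes essentially the same route as the paper: the paper likewise applies Theorem \ref{thm:convergenceRate} with $Y=H^{\sigma+\ell\alpha}$, reuses the verification of Assumption \ref{ass:FG_structured} from the exponential Euler case (Theorem \ref{thm:SoExpEulerMultiplicative}) with $\ell\ge 2$ replaced by $\ell\ge\ell_0$, relies on Theorem \ref{thm:SoIECNAdditive} for the order-$\alpha$ approximation of (IE) and (CN) on $H^{\sigma+4\alpha}$ and $H^{\sigma+3\alpha}$, and deduces contractivity on both $H^\sigma$ and $H^{\sigma+\ell\alpha}$ from Proposition \ref{prop:functionalcalculus} using that $R_k$ commutes with the resolvent of $A$. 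Your write-up simply makes explicit a few steps the paper leaves implicit, such as the transfer of the approximation estimate from $H^{\sigma+\ell_0\alpha}$ to $Y$, the embedding $Y\hookrightarrow H^{\sigma+2\alpha}\hookrightarrow D_A(\alpha,\infty)$, and the vacuity of the H\"older-in-time condition for the autonomous $F$ and $G$.
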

An analogous statement holds for all time discretisation schemes $(R_{k})_{k>0}$ which are contractive on $H^\sigma$ and $H^{\sigma+\ell\alpha}$ and approximate $S$ to order $\alpha$ on $H^{\sigma+\ell\alpha}$. The reader is referred to Proposition \ref{prop:functionalcalculus} for a tool to check contractivity.
As in the additive case, the conditions on the dimension $d \in \N$ are not required in the absence of a potential. In most cases, choosing $\ell=\ell_0$ is sufficient. However, in the situation of Assumption \ref{ass:sigmadVSchrodingerMult}\ref{item:sigma0largeAMult} or \ref{item:d2largeAMult}, choosing a larger $\ell$ can yield the additional regularity required to solve Schrödinger's equation in higher dimensions.
\begin{proof}
        We want to apply Theorem \ref{thm:convergenceRate} with $Y=H^{\sigma+\ell\alpha}$ for $\ell\ge \ell_0\in \{3,4\}$ and $X,H,F,G$ as in Theorem \ref{thm:SoExpEulerMultiplicative} for the exponential Euler method. The proof works analogously, replacing $\ell \ge 2$ by $\ell \ge \ell_0$. It remains to check that (IE) and (CN) are contractive on $H^\sigma$ and $H^{\sigma+\ell\alpha}$.
        But since (IE) and (CN) are defined via $A$ and a scaled version of its resolvent, $R_k$ commutes with resolvents of $A$ in both cases. Thus, Proposition \ref{prop:functionalcalculus} yields the assertion.
\end{proof}

When passing to a nonlinear situation as in \eqref{eq:linearSchroedingerPotentialMultNoiseNL}, showing Lipschitz continuity of $G$ requires estimates of the form
\begin{equation*}
    \|\psi(u)-\psi(v)\|_{H^\sigma} \lesssim \|u-v\|_{H^\sigma},~~~u,v \in H^\sigma
\end{equation*}
and similar for $\phi$. However, the best estimate known for $\sigma \in (0,1)$ and $\psi \in C^2$ with bounded first and second derivatives is \cite[Prop.~2.7.2]{TaylorBook},
\begin{equation*}
    \|\psi(u)-\psi(v)\|_{H^\sigma} \lesssim \|u-v\|_{H^\sigma} + (1+\|u\|_{H^\sigma}+\|v\|_{H^\sigma})\|u-v\|_{L^\infty}.
\end{equation*}
Since this estimate is nonlinear in $u$ and $v$, showing Lipschitz continuity of $G$ is currently out of reach for $\sigma>0$. Another reason to restrict our considerations to $\sigma=0$ in the following is the negative result from Dahlberg \cite{DahlbergAffineLinear}, see also the survey \cite{surveyBourdaud}. It states that for $\sigma+2\alpha \in \left(\frac{3}{2},1+\frac{d}{2}\right)$, the only mappings $\psi$ such that $\psi \circ u \in H^{\sigma+2\alpha}$ for all $ u \in H^{\sigma+2\alpha}$ are the affine-linear ones. Hence, in dimension $d>1$, the optimal rate $\alpha=\frac{1}{2}$ cannot be expected for all $\sigma>\frac{1}{2}$ for genuinely nonlinear $\psi$. For $\sigma=0$, however, a convergence rate can be obtained.

\begin{theorem}
    Let $\sigma = 0$, $d \in \N$, and $V \in L^2$. Suppose that one of the cases \ref{item:sigma0largeAMult}-\ref{item:sigma0d2Mult} of Assumption \ref{ass:sigmadVSchrodingerMult} is satisfied for $\ell = 2$ and for some $\alpha \in \left(0,\frac{1}{2}\right]$, $\beta>0$, and $p \in [2,\infty)$. Further, suppose that $u_0 \in L_{\calF_0}^p(\Omega;H^{\sigma+2\alpha})$ as well as $Q^{1/2} \in \calL_2(L^2,H^\beta)$. Let $\phi,\psi:\C\to \C$ be Lipschitz continuous and such that $\phi(0) = \psi(0) = 0$. Denote by $U$ the mild solution of the nonlinear stochastic Schrödinger equation with multiplicative noise \eqref{eq:linearSchroedingerPotentialMultNoiseNL} and by $(U^j)_{j=0,\ldots,N_k}$ the temporal approximations as defined in \eqref{eq:defUj} obtained with the exponential Euler method $R \ce S$. Then there exists a constant $C \ge 0$ depending on $(V, u_0, \phi,\psi,T, p, \alpha, d, \ell)$ such that for $N_k \geq 2$
    \begin{equation*}
        \left\| \max_{0 \le j \le N_k} \|U(t_j)-U^j\|_{L^2} \right\|_p \le C \big(1+ \|Q^{1/2}\|_{\calL_2(L^2,H^\beta)}\big) k^\alpha.
    \end{equation*}
    In particular, the approximations $(U^j)_{j}$ converge at rate $\frac{1}{2}$ as $k \to 0$ if $Q^{1/2} \in \calL_2(L^2,H^1)$, $V \in H^1$, and $u_0 \in L_{\calF_0}^p(\Omega;H^{1})$ for $d=1$. In dimension $d\ge 2$, this is attained for $Q^{1/2}\in \calL_2(L^2,H^{\beta})$ and $V \in H^{\beta}$ for some $\beta>\frac{d}{2}$, and $u_0 \in L_{\calF_0}^p(\Omega;H^{1})$.
\end{theorem}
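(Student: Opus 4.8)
The plan is to recast \eqref{eq:linearSchroedingerPotentialMultNoiseNL} as the abstract equation \eqref{eq:StEvolEqnFG} with $X=L^2$, $Y=H^{2\alpha}$ (so $\sigma+\ell\alpha=2\alpha$), $H=L^2$, and $A=-\iu\Delta$, and then to invoke Corollary \ref{cor:expEulerMultiplicative}. By \cite[Lemma~2.1]{AC18} the operator $A$ generates a $C_0$-contraction semigroup on both $X$ and $Y$, and the embedding hypothesis holds via $Y=H^{2\alpha}=(L^2,H^2)_{\alpha,2}=(X,D(A))_{\alpha,2}\hra D_A(\alpha,\infty)$. Writing $F(u)=-\iu(Vu+\phi(u))$ and $G(u)=-\iu M_{\psi(u)}Q^{1/2}$ with $M_{\psi(u)}$ the multiplication operator, and noting $f=g=0$ and $F(0)=G(0)=0$ (as $\phi(0)=\psi(0)=0$), it remains to verify Assumption \ref{ass:FG_structured}. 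Part \ref{assCond:HoldercontinuityFG} is immediate since $F,G$ are autonomous.

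The genuinely new ingredient compared with the linear Theorem \ref{thm:SoExpEulerMultiplicative} is the treatment of the Nemytskij operators $u\mapsto\phi(u)$ and $u\mapsto\psi(u)$. On $X=L^2$ I would obtain the Lipschitz bounds directly from the pointwise estimates $|\phi(z)-\phi(w)|\le L_\phi|z-w|$ and $|\psi(z)-\psi(w)|\le L_\psi|z-w|$; combined with the $V$- and $Q^{1/2}$-multiplication estimates from Subsection \ref{subsec:SchroedingerAdd} (via \cite[Prop.~2.1.1]{TaylorBook} and the embeddings dictated by the relevant case of Assumption \ref{ass:sigmadVSchrodingerMult}), this yields part \ref{assCond:LipschitzFG} with Lipschitz constant proportional to $1+\|Q^{1/2}\|_{\calL_2(L^2,H^\beta)}$. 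For the $Y$-invariance and growth in part \ref{item:FG_rate_Yinvariance} and part (d), the key estimate is the \emph{growth} of the composition on $H^{2\alpha}$, $2\alpha\in(0,1]$, namely $\|\phi(u)\|_{H^{2\alpha}}\lesssim\|u\|_{H^{2\alpha}}$ and likewise for $\psi$: for $2\alpha\in(0,1)$ this follows from the Gagliardo seminorm together with $|\phi(u(x))-\phi(u(y))|\le L_\phi|u(x)-u(y)|$, while for $2\alpha=1$ it follows from the Lipschitz chain rule $\nabla\phi(u)=\phi'(u)\nabla u$ with $|\phi'|\le L_\phi$ almost everywhere. Crucially, only growth — not Lipschitz continuity — is required on $Y$, exactly as permitted by Corollary \ref{cor:expEulerMultiplicative} and discussed in Remark \ref{rem:Lipschitz-growthY}; this is what makes genuinely nonlinear $\phi,\psi$ admissible and sidesteps Dahlberg's obstruction on higher-order spaces. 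Feeding the growth bound for $\psi(u)$ into the product estimate $\|M_{\psi(u)}\|_{\calL(H^\beta,H^{2\alpha})}\lesssim\|\psi(u)\|_{H^{2\alpha}}$ then gives $\|G(u)\|_{\gHY}\lesssim\|Q^{1/2}\|_{\calL_2(L^2,H^\beta)}(1+\|u\|_{H^{2\alpha}})$, and analogously for $F$.

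With Assumption \ref{ass:FG_structured} verified, Corollary \ref{cor:expEulerMultiplicative} delivers the stated bound at rate $\min\{\alpha,\tfrac12\}=\alpha$ with constant proportional to $1+\|Q^{1/2}\|_{\calL_2(L^2,H^\beta)}$; the particular rate-$\tfrac12$ assertions follow by taking $\alpha=\tfrac12$ and reading off the regularity demanded in each dimension. The hardest part will be the bookkeeping that matches the Sobolev product and composition estimates to the admissible parameter ranges of Assumption \ref{ass:sigmadVSchrodingerMult}, ensuring the Lipschitz property on $L^2$ and the growth property on $H^{2\alpha}$ hold \emph{simultaneously} across cases \ref{item:sigma0largeAMult}--\ref{item:sigma0d2Mult}; the restriction $\sigma=0$ is precisely what keeps these two demands compatible while avoiding the failure of nonlinear composition on $H^s$ for larger $s$.
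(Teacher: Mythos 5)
Your proposal is correct and takes essentially the same route as the paper: reduce to Corollary \ref{cor:expEulerMultiplicative} with $X=L^2$, $Y=H^{2\alpha}$, obtain Lipschitz continuity of $F$ and $G$ on $L^2$ from the pointwise Lipschitz bounds on $\phi,\psi$ together with the linear-case estimates for $V$ and $Q^{1/2}$, and verify only \emph{linear growth} of the compositions on $H^{2\alpha}$, split into the cases $2\alpha\in(0,1)$ and $2\alpha=1$. The sole cosmetic difference is that where you prove the composition bounds directly (Gagliardo seminorm for $2\alpha\in(0,1)$, a.e.\ chain rule for $2\alpha=1$), the paper cites \cite[Prop.~2.4.1]{TaylorBook} and \cite[Prop.~2.6.1]{TaylorBook} for the same two estimates.
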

\begin{proof}
    From the linear case, it is already clear that
    \begin{equation*}
        \|G(u)-G(v)\|_{\calL_2(L^2,L^2)} \lesssim \|\psi \circ u-\psi \circ v\|_{L^2} \|Q^{1/2}\|_{\calL_2(L^2,H^\beta)}.
    \end{equation*}
    Lipschitz continuity of $\psi$ with Lipschitz constant $C_\psi \ge 0$ implies Lipschitz continuity of $G$ on $X=L^2$ via
      \begin{align*}
        \|\psi \circ u-\psi \circ v\|_{L^2} \|Q^{1/2}\|_{\calL_2(L^2,H^\beta)}
        &\le C_\psi \|Q^{1/2}\|_{\calL_2(L^2,H^\beta)}\|u-v\|_{L^2}.
    \end{align*}
    Since from \eqref{eq:estGQSo} we know that
    \begin{equation}
    \label{eq:proofNLcomposition}
        \|G(u)\|_{\calL_2(L^2,H^{2\alpha})} \lesssim \|\psi \circ u\|_{H^{2\alpha}} \|Q^{1/2}\|_{\calL_2(L^2,H^\beta)},
    \end{equation}
    it remains to estimate the norm of the composition $\|\psi \circ u\|_{H^{2\alpha}}$ by a multiple of $\| u\|_{H^{2\alpha}}$ to show linear growth of $G$ on $H^{2\alpha}$. In case $\alpha<\frac{1}{2}$, $2\alpha \in (0,1)$ and thus, by \cite[Prop.~2.4.1]{TaylorBook}, $\|\psi \circ u \|_{H^{2\alpha}} \lesssim \|u\|_{H^{2\alpha}}$.  In the remaining cases, $2\alpha=1$ holds, so that
    \begin{align*}
        \|\psi \circ u \|_{H^{2\alpha}}^2 = \|\psi \circ u \|_{L^2}^2 + \|\nabla(\psi \circ u) \|_{L^2}^2 \le \|\psi \circ u \|_{L^2}^2 + C_\psi^2 \|\nabla u \|_{L^2}^2 \le \max\{1,C_\psi^2\} \|u\|_{H^1}^2,
    \end{align*}
    where in the first inequality we have invoked \cite[Prop.~2.6.1]{TaylorBook}. Hence, $G$ is of linear growth on $Y=H^{2\alpha}$. In the same way one can see that $F(u) = -\iu(Vu+\phi(u))$ is Lipschitz on $X$ and of linear growth on $Y$. The statement of this theorem follows by an application of Corollary \ref{cor:expEulerMultiplicative}.
\end{proof}

To estimate the composition in \eqref{eq:proofNLcomposition}, we required $2\alpha \in (0,1]$ to apply the composition estimates. It is an open problem whether such estimates also hold in $H^s$ for $s>1$. For real-valued functions, results have been obtained for $s< \frac32$ in \cite[Thm.~18]{BourdaudSickel}. These estimates being unknown for $s>1$ limits us to suboptimal convergence rates for schemes involving rational approximations, at least for nonlinear Schrödinger equations.

\begin{theorem}
    Let $\sigma = 0$, $d \in \N$, and $V \in L^2$. Let $(R_k)_{k>0}$ be the implicit Euler method (IE) or the Crank--Nicolson method (CN) and set $\ell_0\ce 4$ or $\ell_0 \ce 3$, respectively. Suppose that one of the cases \ref{item:sigma0largeAMult}-\ref{item:sigma0d2Mult} of Assumption \ref{ass:sigmadVSchrodingerMult} is satisfied for $\ell=\ell_0$ and some $\alpha \in \left(0,\frac{1}{\ell}\right]$, $\beta>0$, and $p\in[2,\infty)$. Further, suppose that $u_0 \in L_{\calF_0}^p(\Omega;H^{\ell\alpha})$ as well as $Q^{1/2} \in \calL_2(L^2,H^\beta)$.
    Let $\phi,\psi:\C\to \C$ be Lipschitz continuous and such that $\phi(0) = \psi(0) = 0$.
    Denote by $U$ the mild solution of the nonlinear stochastic Schrödinger equation with multiplicative noise \eqref{eq:linearSchroedingerPotentialMultNoiseNL} and by $(U^j)_{j=0,\ldots,N_k}$ the temporal approximations as defined in \eqref{eq:defUj}. Then there exists a constant $C \ge 0$ depending on $(V, u_0, \phi,\psi,T, p, \alpha, d, \ell)$ such that for $N_k \ge 2$
    \begin{equation*}
        \bigg\| \max_{0 \le j \le N_k} \|U(t_j)-U^j\|_{H^\sigma} \bigg\|_p \le C \big(1+\|Q^{1/2}\|_{\calL_2(L^2,H^\beta)}\big) \sqrt{\log(T/k)} k^\alpha.
    \end{equation*}
    In particular, in dimension $d=1$, (IE) converges at rate $\frac14$ up to logarithmic correction as $k \to 0$ if $V\in H^{1}$, $Q^{1/2}\in \calL_2(L^2,H^1)$, and $u_0 \in L_{\filtrF_0}^p(\Omega;H^1)$. For the same regularity of $V$,  $Q^{1/2}$, and $u_0$, (CN) converges at rate $\frac13$ up to logarithmic correction as $k \to 0$ in dimension $d=1$.
\end{theorem}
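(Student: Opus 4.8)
The plan is to recast the nonlinear equation \eqref{eq:linearSchroedingerPotentialMultNoiseNL} in the abstract form \eqref{eq:StEvolEqnFG} and verify the hypotheses of Theorem \ref{thm:convergenceRate}, exactly as in the linear Theorem \ref{thm:SoIECNMultiplicative} but now with a genuinely nonlinear drift and diffusion. I would take $X = H^\sigma = L^2$, $H = L^2$, and $Y = H^{\ell\alpha}$ with $\ell = \ell_0 \in \{3,4\}$ according to whether $R$ is (CN) or (IE), and set $A = -\iu\Delta$, which by \cite[Lemma~2.1]{AC18} generates a contractive $C_0$-semigroup on both $L^2$ and $H^{\ell\alpha}$, together with $F(u) = -\iu(Vu+\phi(u))$ and $G(u) = -\iu M_{\psi(u)}Q^{1/2}$. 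Since (IE) and (CN) approximate $S$ to order $\alpha$ on $D((-A)^{2\alpha}) = H^{4\alpha}$ and $D((-A)^{3\alpha/2}) = H^{3\alpha}$ respectively (cf.\ Theorem \ref{thm:SoIECNAdditive}), the choice $Y = H^{\ell_0\alpha}$ matches the order of the scheme, and $Y = H^{\ell\alpha}\hra H^{2\alpha} = (L^2,D(A))_{\alpha,2}\hra D_A(\alpha,\infty)$ because $\ell\ge 2$.

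First I would verify Assumption \ref{ass:FG_structured}. Global Lipschitz continuity of $F$ and $G$ on $X = L^2$ follows as in the exponential-Euler nonlinear theorem above: boundedness of multiplication by $V$ on $L^2$ and Lipschitz continuity of $\phi,\psi$ give $\|F(u)-F(v)\|_{L^2}\lesssim\|u-v\|_{L^2}$ and $\|G(u)-G(v)\|_{\calL_2(L^2,L^2)}\lesssim C_\psi\|Q^{1/2}\|_{\calL_2(L^2,H^\beta)}\|u-v\|_{L^2}$, while H\"older continuity in time is trivial since $F,G$ are autonomous. The crucial step is linear growth on $Y = H^{\ell\alpha}$, and this is where the constraint $\alpha\le 1/\ell$ enters decisively: it forces $\ell\alpha\le 1$, which is precisely the range in which composition (Nemytskij) estimates are available. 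For $\ell\alpha\in(0,1)$ the bound $\|\psi\circ u\|_{H^{\ell\alpha}}\lesssim\|u\|_{H^{\ell\alpha}}$ holds by \cite[Prop.~2.4.1]{TaylorBook}, while for $\ell\alpha = 1$ one uses \cite[Prop.~2.6.1]{TaylorBook} to get $\|\psi\circ u\|_{H^1}\le\max\{1,C_\psi\}\|u\|_{H^1}$; the same applies to $\phi$. Combined with \eqref{eq:estGQSo} and boundedness of multiplication by $V$ on $H^{\ell\alpha}$ (guaranteed because Assumption \ref{ass:sigmadVSchrodingerMult} is designed so that Assumption \ref{ass:sigmadVSchrodinger} holds for both $\sigma = 0$ and $\sigma+\ell\alpha = \ell\alpha$), this yields the required linear growth of $F$ and $G$ on $Y$.

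It then remains to confirm that (IE) and (CN) are contractive on both $L^2$ and $H^{\ell\alpha}$. Since both are rational functions $R_k = r(kA)$ with $r$ holomorphic on $\C_-$ and $|r|\le 1$ there, and $A$ generates a contraction semigroup on each space, Proposition \ref{prop:functionalcalculus} gives contractivity on $L^2$ and, because $R_k$ commutes with the resolvent of $A$, also on $H^{\ell\alpha}$. With $\alpha\le 1/\ell\le 1/2$ all hypotheses of Theorem \ref{thm:convergenceRate} are in force, and that theorem delivers the bound $C(1+\|Q^{1/2}\|_{\calL_2(L^2,H^\beta)})\sqrt{\log(T/k)}\,k^\alpha$. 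The concluding rates follow by choosing $\alpha = 1/\ell$, so that $\ell\alpha = 1$ and $Y = H^1$: rate $\tfrac14$ for (IE) and $\tfrac13$ for (CN) in $d=1$ under the stated regularity of $V$, $Q^{1/2}$, and $u_0$.

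I expect the main obstacle to be exactly the linear-growth verification on $Y$, and specifically the unavoidability of the hypothesis $\alpha\le 1/\ell$. As emphasised in the remark following the exponential-Euler nonlinear theorem, the composition estimates $\|\psi\circ u\|_{H^s}\lesssim\|u\|_{H^s}$ are known only for $s\in(0,1]$ and are open for $s>1$; this is what caps the attainable regularity at $\ell\alpha\le 1$ and produces the suboptimal rates $\tfrac14$ and $\tfrac13$ rather than $\tfrac12$. No new stochastic analysis is needed: once the Schr\"odinger nonlinearities are correctly fitted into the Kato framework on $H^{\ell\alpha}$, the abstract estimate of Theorem \ref{thm:convergenceRate} does the remaining work.
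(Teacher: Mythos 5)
Your proposal is correct and follows essentially the same route as the paper: the paper proves this theorem by fitting $F(u)=-\iu(Vu+\phi(u))$ and $G(u)=-\iu M_{\psi(u)}Q^{1/2}$ into Assumption \ref{ass:FG_structured} with $X=L^2$, $Y=H^{\ell\alpha}$ exactly as in the preceding exponential Euler result --- including the composition estimates \cite[Prop.~2.4.1, Prop.~2.6.1]{TaylorBook} for $\ell\alpha\in(0,1]$, which is precisely what forces $\alpha\le 1/\ell$ --- and then invokes Theorem \ref{thm:convergenceRate} after checking contractivity of (IE) and (CN) on both spaces via Proposition \ref{prop:functionalcalculus}, as in Theorem \ref{thm:SoIECNMultiplicative}. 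Your identification of the embedding chain $H^{\ell\alpha}\hra H^{2\alpha}=(L^2,D(A))_{\alpha,2}\hra D_A(\alpha,\infty)$ and of the composition-estimate barrier as the source of the suboptimal rates $\tfrac14$ and $\tfrac13$ matches the paper's reasoning exactly.
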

This theorem can be generalised to time discretisation schemes $(R_{k})_{k>0}$ that are contractive on $L^2$ and $H^{\ell\alpha}$, and that approximate $S$ to order $\alpha \in (0,\frac{1}{\ell}]$ on $H^{\ell\alpha}$.

\subsection{Numerical experiments for the Schrödinger equation}
\label{subsec:numerical}

In this subsection, we illustrate that convergence rates observed in numerical simulations correspond well to the analytic convergence rates obtained in Subsections \ref{subsec:SchroedingerAdd} and \ref{subsec:SchroedingerMult} for the Schrödinger equation. The code for the numerical simulations is available at \cite{schroedingerCode}.

We consider the linear stochastic Schrödinger equation without potential ($V=0$) and with periodic boundary conditions on $[0,2\pi]$ in the case of multiplicative noise \eqref{eq:linearSchroedingerPotentialMultNoise} and additive noise \eqref{eq:linearSchroedingerAddNoise}, respectively. For spatial discretisation, we employ a spectral Galerkin method with $M=2^{10}$ Fourier modes and calculate $L^2$-errors, i.e. $\sigma=0$. The initial values $u_0$ are taken with Fourier coefficients $(1+|\ell|^6)^{-1}$, $-M/2+1 \le \ell \le M/2$, resulting in sufficiently smooth initial values. We take the covariance operator $Q$ to have eigenvalues $\lambda_\ell = (1+|\ell|^\beta)^{-1}$ to the eigenfunctions $e_\ell = (2\pi)^{-1/2}\exp(\mathrm{i}\ell \cdot)$, $\ell \in \Z$. We choose the exponent as $\beta=5.1$ for additive noise and $\beta=3.1$ for multiplicative noise, which leads to $Q^{1/2} \in \calL_2(L^2,H^{2+\varepsilon})$ and $Q^{1/2} \in \calL_2(L^2,H^{1+\varepsilon})$ for any $\varepsilon \in (0,0.05)$, respectively. In the simulation, both the noise and the approximate solutions are truncated at wave numbers $-M/2+1 \le \ell \le M/2$. For time discretisation, we consider the exponential Euler method (EXP), the implicit Euler method (IE), and the Crank--Nicolson method (CN). For additive noise, case \ref{item:sigma0} of Assumption \ref{ass:sigmadVSchrodinger} is satisfied, so that according to Theorem \ref{thm:SoExpEulerAdditive}, for any $p \in [2,\infty)$, (EXP) shall converge with the optimal rate $1$. Analogously, by Theorem \ref{thm:SoIECNAdditive}, (IE) shall converge with rate $\frac{2+\varepsilon}{4}\approx 0.525$ and (CN) with rate $\frac{2+\varepsilon}{3} \approx 0.68$. The truncation error of the spectral Galerkin method can be computed to be of order $(M/2)^{-4} \approx 10^{-9}$, which is negligible. For multiplicative noise, case \ref{item:sigma0largeAMult} of Assumption \ref{ass:sigmadVSchrodingerMult} is satisfied, resulting in analytical rates of convergence $0.5$, $\frac{1+\varepsilon}{3} \approx 0.35$, and $\frac{1+\varepsilon}{4} \approx 0.26$ for (EXP), (CN), and (IE), respectively, based on Theorems \ref{thm:SoExpEulerMultiplicative} and \ref{thm:SoIECNMultiplicative}, respectively.

The numerical rates of convergence of the pathwise uniform error with $p=2$ of the three different schemes are illustrated in Figure \ref{fig:numerical} and stated in Table \ref{tab:numericalConvrate} for additive and multiplicative noise as described above. The expected analytical rates of convergence can be confirmed. Small deviations of the numerical from the analytical rate of convergence can be explained by the fact that the analytical solution is approximated by the exponential Euler method with a small time step $k=2^{-12}$ and $100$ samples
are used for the approximation of the expected values. For the approximations, time steps $k=2^{-5},\ldots, 2^{-9}$ are used.

\begin{center}
    \includegraphics[width=0.8\textwidth]{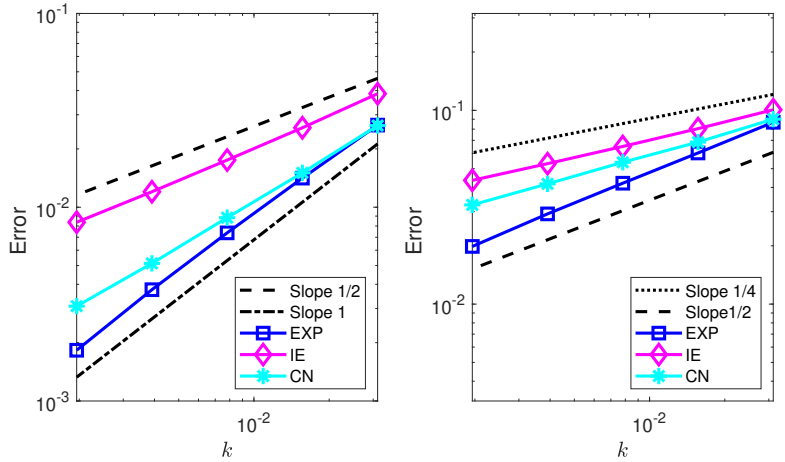}
    \captionof{figure}{Numerical rates of convergence for the stochastic Schrödinger equation with additive noise  (left) and multiplicative noise (right) for EXP (squares), IE (diamonds), and CN (asterisks).}\label{fig:numerical}
\end{center}

\begin{table}[ht]
\begin{tabular}{|c|c|c|c|}
  \hline
  & Exponential Euler & Implicit Euler & Crank--Nicolson \\ \hline
  $H^{2+\varepsilon}$-valued additive noise & 0.9650 & 0.5510 & 0.7071 \\
  \hline
  $H^{1+\varepsilon}$-valued multiplicative noise & 0.5321 & 0.3025 & 0.3675 \\
  \hline
\end{tabular}
\caption{Numerical rates of convergence for the stochastic Schrödinger equation}\label{tab:numericalConvrate}
\end{table}

\subsection{Application to Maxwell's equations}
\label{subsec:Maxwell}

As a second example, we consider the stochastic Maxwell's equations
\begin{align}
\label{eq:Maxwell}
    \Bigg\{\begin{split} \rmd U &= [AU+F(U)] \;\rmd t+ G(U)\; \rmd W~~~ \text{ on }[0,T],\\
    U(0)&=(\bfE_0^\top,\bfH_0^\top)^\top
    \end{split}
\end{align}
with boundary conditions of a perfect conductor as in \cite{CCHS20}. It describes the behaviour of the electric and magnetic field $\bfE$ and $\bfH$, respectively, on a bounded, simply connected domain $\calO \subseteq \R^3$ with smooth boundary with unit outward normal vector $\mathbf{n}$. Here, $A: D(A) \to X \ce L^2(\calO)^6$ is the Maxwell operator defined by
\begin{equation*}
    A\begin{pmatrix} \bfE\\ \bfH \end{pmatrix} \ce \begin{pmatrix} 0 & \varepsilon^{-1}\nabla\times\\-\mu^{-1}\nabla \times&0 \end{pmatrix}\begin{pmatrix} \bfE\\ \bfH \end{pmatrix} = \begin{pmatrix} \varepsilon^{-1}\nabla\times \bfH \\-\mu^{-1}\nabla \times \bfE \end{pmatrix}
\end{equation*}
on $D(A) \ce H_0(\curl,\calO) \times H(\curl,\calO)$ with $H(\curl,\calO) \ce \{\bfH \in (L^2(O))^3\,:\,\nabla \times \bfH \in L^2(\calO)^3\}$ and its subspace $H_0(\curl,\calO)$ of those $\bfH$ with vanishing tangential trace $\mathbf{n} \times \mathbf{H}\vert_{\partial\calO}$. The permittivity and permeability $\varepsilon, \mu \in L^\infty(\calO)$ are assumed to be uniformly positive, i.e., $\varepsilon,\mu \ge \kappa >0$ for some constant $\kappa$. We equip the Hilbert space $X=L^2(\calO)^6=L^2(\calO)^3\times L^2(\calO)^3$ with the weighted scalar product
\begin{equation*}
    \left\langle \begin{pmatrix} \bfE_1\\\bfH_1 \end{pmatrix},\begin{pmatrix} \bfE_2\\\bfH_2 \end{pmatrix}\right\rangle
    \ce \int_\calO \big(\mu \langle \bfH_1,\bfH_2\rangle + \varepsilon \langle \bfE_1,\bfE_2\rangle\big)\,\rmd x,
\end{equation*}
where $\langle\cdot,\cdot\rangle$ denotes the standard scalar product in $L^2(\calO)^3$.
Furthermore, $W$ is a $Q$-Wiener process for a symmetric, non-negative operator $Q$ with finite trace such that $Q^{1/2}\in \calL_2(H,X)$, where $H=L^2(\calO)^6$ is equipped with the standard norm.

For $F:\Omega \times[0,T] \times X \to X$ we consider the linear drift term given by
\begin{equation}
\label{eq:defFMaxwell}
    (\omega,t,U) \mapsto F(\omega,t,U)=\begin{pmatrix} \sigma_1(\cdot,t) \bfE\\\sigma_2(\cdot,t) \bfH\end{pmatrix},\quad U=(\bfE^\top,\bfH^\top)^\top,
\end{equation}
for sufficiently smooth $\sigma_1,\sigma_2: \calO \times [0,T] \to \R$. We assume boundedness of $\sigma_1,\sigma_2$ and their partial derivatives w.r.t. the spatial variables. In particular, let $\sigma_j$ be uniformly Lipschitz continuous in time and let $\partial_{x_i}\sigma_j, \sigma_j \in L^\infty(\calO \times [0,T])$ for $i=1,2,3$ and $j=1,2$. Then $F$ is Lipschitz on $X$ due to
\begin{align*}
    \|F(t,V)\|_X^2 &= \int_\calO \left( \mu(x) \|\sigma_2(\cdot,t)\bfH_V\|_{L^2(\calO)^3}^2+\varepsilon(x) \|\sigma_1(\cdot,t)\bfE_V\|_{L^2(\calO)^3}^2\right)\dx\\
    &\le \max\{\|\sigma_1\|_\infty,\|\sigma_2\|_\infty\}^2 \|V\|_X^2 \ec C_F^2 \|V\|_X^2,\quad V=(\bfE_V^\top,\bfH_V^\top)^\top,
\end{align*}
and linearity of $F$. A straightforward explicit calculation of the curl operator shows that
\begin{align*}
    \|AF(t,V)\|_X^2
    &= \left\| \begin{pmatrix}
        \varepsilon^{-1} \nabla \times (\sigma_2(\cdot,t)\bfH_V)\\-\mu^{-1}\nabla \times (\sigma_1(\cdot,t) \bfE_V)
    \end{pmatrix}\right\|_X^2\\
    &\le \kappa^{-2} \int_\calO \mu \|\nabla \times (\sigma_1(\cdot,t) \bfE_V)\|_{L^2(\calO)^3}^2 + \varepsilon \|\nabla \times(\sigma_2(\cdot,t)\bfH_V)\|_{L^2(\calO)^3}^2\dx\\
    &\le 3\kappa^{-2}\left(C_F^2 \|AV\|_X^2+2\max_{j=1,2}\max_{i=1,2,3} \|\partial_{x_i}\sigma_j\|_\infty^2\|V\|_X^2\right).
\end{align*}
We conclude linear growth of $F$ on $Y \ce D(A)$ by
\begin{align*}
    \|F(t,V)\|_{D(A)}^2 &= \|F(t,V)\|_X^2+\|AF(t,V)\|_X^2\\
    &\le \left(\max\{1,3\kappa^{-2}\}C_F^2+6\kappa^{-2}\max_{j=1,2}\max_{i=1,2,3} \|\partial_{x_i}\sigma_j\|_\infty^2\right) \|V\|_{D(A)}^2.
\end{align*}
As noise $G(V)$, where $V=(\bfE_V^\top,\bfH_V^\top)^\top\in L^2(\calO)^6$, we consider the Nemytskij map associated to $\diag((-\varepsilon^{-1}\bfE_V^\top,-\mu^{-1}\bfH_V^\top))Q^{1/2}$, i.e., for $h\in L^2(\calO)^6$ and $x \in \calO$, we have
\begin{equation}
\label{eq:defGMaxwell}
    (G(V)h)(x)=\begin{pmatrix}
        -\varepsilon^{-1}(x)\diag(\bfE_V(x))&0\\
        0&-\mu^{-1}(x)\diag(\bfH_V(x))
    \end{pmatrix}
    (Q^{1/2}h)(x) \in \R^6.
\end{equation}
Since for $V_1, V_2 \in L^2(\calO)^6$,
\begin{align*}
    \|G(V_1-V_2)\|_{\calL_2(H,X)}\le \kappa^{-1}\|Q^{1/2}\|_{\calL_2(H,X)} \|V_1-V_2\|_X,
\end{align*}
$G:X \to \calL_2(H,X)$ is Lipschitz continuous on $X$.
As discussed in \cite[p.5]{CCHS20},
$G$ is of linear growth on $D(A)$ under higher regularity assumptions on $Q^{1/2}$. To be precise, if $Q^{1/2} \in \calL_2(L^2(\calO)^6,H^{1+\beta}(\calO)^6)$ for some $\beta>\frac{3}{2}$, then, for some $C \ge 0$,
\begin{align*}
    \|G(V)\|_{\calL_2(H,D(A))}
    &\le C \|Q^{1/2}\|_{\calL_2(L^2(\calO)^6,H^{1+\beta}(\calO)^6)}(1+\|V\|_{D(A)}).
\end{align*}
This directly follows from the estimate \cite[formula (7)]{CCHS20} for $\mathbb{G}$ defined by $G=\mathbb{G}Q^{1/2}$ taking into account that for an orthonormal basis $(e_l)_{l\in\N}$ of $H$, we have
\begin{align*}
    \|G(V)\|_{\calL_2(H,D(A))} &= \sum_{l \in \N} \|G(V)e_l\|_{D(A)} = \sum_{l \in \N} \|\mathbb{G}(V)Q^{1/2}e_l\|_{D(A)}= \|\mathbb{G}(V)\|_{\calL_2(Q^{1/2}H,D(A))}.
\end{align*}
The choice of the coefficient $\beta>\frac{3}{2}$ stems from the fact that the Sobolev embedding $H^\beta(\calO) \hra L^\infty(\calO)$ holds for $\beta>\frac{d}{2}=\frac{3}{2}$ since $\calO \seq \R^3$ \cite[Ex.~9.3.4]{AnalysisBanachSpacesII}. Thus, for the embedding into $D(A)$ to hold, $Q^{1/2}$ is required to map into $H^{1+\beta}(\calO)^6$.

\begin{theorem}
    Let $p \in [2,\infty)$ and $F,G$ as introduced in \eqref{eq:defFMaxwell} and \eqref{eq:defGMaxwell}, respectively. Suppose that
    $u_0 \in L_{\calF_0}^p(\Omega;D(A))$ and $Q^{1/2} \in \calL_2(L^2(\calO)^6,H^{1+\beta}(\calO)^6)$ for some $\beta >\frac{3}{2}$. Denote by $U$ the mild solution to the stochastic Maxwell's equations \eqref{eq:Maxwell} with multiplicative noise \eqref{eq:linearSchroedingerPotentialMultNoise}
    and by $(U^j)_{j=0,\ldots,N_k}$ the temporal approximations as defined in \eqref{eq:defUj} obtained with the exponential Euler method $R \ce S$. Then there exists a constant $C \ge 0$ depending on $(\sigma_1,\sigma_2,u_0,T,p,\alpha, \varepsilon,\mu, \kappa)$ such that for $N_k \geq 2$
    \begin{equation*}
        \left\| \max_{0 \le j \le N_k} \|U(t_j)-U^j\|_{H^\sigma} \right\|_p \le C\big(1+ \|Q^{1/2}\|_{\calL_2(L^2(\calO)^6,H^{1+\beta}(\calO)^6)}\big) k^{1/2},
    \end{equation*}
    i.e., the approximations $(U^j)_j$ converge at rate $\frac{1}{2}$ as $k \to 0$.
\end{theorem}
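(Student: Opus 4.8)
The plan is to verify that the Maxwell setup fits the hypotheses of Corollary \ref{cor:expEulerMultiplicative} with $X=L^2(\calO)^6$ (carrying the weighted scalar product), $Y\ce D(A)$ with the graph norm, $H=L^2(\calO)^6$, and $\alpha=\tfrac12$, and then read off the rate $\min\{\alpha,\tfrac12\}=\tfrac12$. Most of the analytic work---the Lipschitz estimates for $F,G$ on $X$ and the linear growth bounds on $D(A)$---has already been assembled in the discussion preceding the statement, so the argument is essentially a matter of checking the remaining structural conditions and invoking the corollary.

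First I would record the semigroup facts. The Maxwell operator $A$ is skew-adjoint with respect to the weighted scalar product (see \cite{CCHS20}), so by Stone's theorem it generates a unitary $C_0$-group and, in particular, a $C_0$-contraction semigroup $(S(t))_{t\ge0}$ on $X$. Contractivity on $Y=D(A)$ is then automatic: since $S(t)$ commutes with $A$, one has
\[
\|S(t)x\|_{D(A)}^2=\|S(t)x\|_X^2+\|S(t)Ax\|_X^2\le\|x\|_X^2+\|Ax\|_X^2=\|x\|_{D(A)}^2,
\]
using contractivity on $X$. The required embedding $Y=D(A)\hookrightarrow D_A(\tfrac12,\infty)$ is the standard interpolation inclusion $D(A)\hookrightarrow D_A(\theta,\infty)$ for $\theta\in(0,1)$.

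Next I would confirm Assumption \ref{ass:FG_structured} for $\alpha=\tfrac12$. Since $F$ is linear in $U$ and $G$ vanishes at $U=0$, we have $\tilde F=F$, $\tilde G=G$ and $f=g=0$, so the integrability part of condition \ref{item:FG_rate_Yinvariance} is trivial. Lipschitz continuity on $X$ of both $F$ and $G$ (the latter with constant $\kappa^{-1}\|Q^{1/2}\|_{\calL_2(H,X)}$) is exactly the computation given above, establishing condition \ref{assCond:LipschitzFG}. For \ref{assCond:HoldercontinuityFG}, note that $G$ is time-independent and $F$ is uniformly Lipschitz in $t$ because the coefficients $\sigma_1,\sigma_2$ are, whence $C_{1/2,F},C_{1/2,G}<\infty$. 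The $Y$-invariance and linear growth on $Y=D(A)$ of $F$ follow from the curl computation already performed, and for $G$ from the estimate $\|G(V)\|_{\calL_2(H,D(A))}\le C\|Q^{1/2}\|_{\calL_2(L^2(\calO)^6,H^{1+\beta}(\calO)^6)}(1+\|V\|_{D(A)})$ valid for $\beta>\tfrac32$, yielding $\LGY\lesssim\|Q^{1/2}\|_{\calL_2(L^2(\calO)^6,H^{1+\beta}(\calO)^6)}$. With all conditions in place, Corollary \ref{cor:expEulerMultiplicative} applies; since $f=g=0$, the data-dependence of the constant collapses to the single factor $1+\|Q^{1/2}\|_{\calL_2(L^2(\calO)^6,H^{1+\beta}(\calO)^6)}$ entering through $\LGY$, giving the stated estimate at rate $\tfrac12$.

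The main obstacle is the linear growth of $G$ on $D(A)$: controlling $\|G(V)\|_{\calL_2(H,D(A))}$ requires differentiating the Nemytskij product and bounding the curl of a product of a field component with a $Q^{1/2}$-term, which forces the extra regularity $Q^{1/2}\in\calL_2(L^2,H^{1+\beta})$ with $\beta>\tfrac32=\tfrac{d}{2}$ so that $H^{\beta}\hookrightarrow L^\infty$ can absorb the pointwise products. This is precisely where the dimensional threshold $\beta>\tfrac32$ enters, and it is the one point at which trace-class noise alone does not suffice; fortunately this estimate is available from \cite{CCHS20} and need only be cited here.
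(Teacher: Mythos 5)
Your proposal is correct and follows essentially the same route as the paper: both verify that the preceding computations establish the Lipschitz conditions on $X$ and the linear growth on $D(A)$, check contractivity of $S$ on $X$ and on $Y=D(A)$ (the paper cites \cite[Formula (3)]{CCHS20} and the graph norm, you argue via skew-adjointness and Stone's theorem, which is equivalent), and then invoke Corollary \ref{cor:expEulerMultiplicative} with $\alpha=\tfrac12$ and $Y=D(A)$. Your additional details (the embedding $D(A)\hookrightarrow D_A(\tfrac12,\infty)$, the identification $\tilde F=F$, $\tilde G=G$, $f=g=0$, and tracing the $Q$-dependence through $L_{G,Y}$) only make explicit what the paper's terse proof leaves implicit.
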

\begin{proof}
    The theorem follows from Corollary \ref{cor:expEulerMultiplicative} with $\alpha=\frac{1}{2}$ and $Y=D(A)$. From the above considerations, it follows that the conditions on $F$ and $G$ are met. It remains to verify that $Y$ is Hilbert and $(S(t))_{t \ge 0}$ is a contraction semigroup on both $X$ and $Y$. Since $Y=D(A)$ is a Banach space \cite[p. 410]{MaxwellBanachDA} and $\lambda-A$ defines an isomorphism between $D(A)$ and $X$ for $\lambda \in \rho(A)$, it is also a Hilbert space. By \cite[Formula (3)]{CCHS20}, $(S(t))_{t \ge 0}$ is a contraction semigroup on $X$. By definition of the graph norm, this implies contractivity on $D(A)$.
\end{proof}

We can extend \cite[Thm.~3.3]{CCHS20} to schemes involving rational approximations.

\begin{theorem}
    Let $p \in [2,\infty)$ and $F,G$ as introduced in \eqref{eq:defFMaxwell} and \eqref{eq:defGMaxwell}, respectively. Suppose that
    $u_0 \in L_{\calF_0}^p(\Omega;D(A))$ and $Q^{1/2} \in \calL_2(L^2(\calO)^6,H^{1+\beta}(\calO)^6)$ for some $\beta >\frac{3}{2}$. Let $(R_{k})_{k>0}$ be a time discretisation scheme which is contractive on $L^2(\calO)^6$ and $D(A)$. Assume $R$ approximates $S$ to order $\frac{1}{2}$ on $D(A)$. Denote by $U$ the mild solution to the stochastic Maxwell's equations \eqref{eq:Maxwell} with multiplicative noise \eqref{eq:linearSchroedingerPotentialMultNoise} and by $(U^j)_{j=0,\ldots,N_k}$ the temporal approximations as defined in \eqref{eq:defUj}. Then there exists a constant $C \ge 0$ depending on $(\sigma_1,\sigma_2,u_0,T,p,\alpha, \varepsilon,\mu, \kappa)$ such that for $N_k \ge 2$
    \begin{equation*}
        \left\| \max_{0 \le j \le N_k} \|U(t_j)-U^j\|_{H^\sigma} \right\|_p \le C \big(1+\|Q^{1/2}\|_{\calL_2(L^2(\calO)^6,H^{1+\beta}(\calO)^6)} \big) \sqrt{\log(T/k)} k^{1/2},
    \end{equation*}
     i.e., the approximations $(U^j)_j$ converge at rate $1/2$ up to a logarithmic correction factor as $k \to 0$. In particular, rate $\frac12$ is attained for the implicit Euler method and the Crank--Nicolson method.
\end{theorem}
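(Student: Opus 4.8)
The plan is to obtain the estimate as a direct application of the main convergence result, Theorem \ref{thm:convergenceRate}, with the choices $X = L^2(\calO)^6$, $H = L^2(\calO)^6$, $Y = D(A)$, and $\alpha = \tfrac12$, in exact parallel to the preceding exponential Euler theorem. The only structural difference is that $S(k)$ is now replaced by a general scheme $R$ satisfying $\|R_k\|_{\calL(X)}, \|R_k\|_{\calL(Y)} \le 1$ and approximating $S$ to order $\tfrac12$ on $Y$; accordingly the term $M_{3,5}$ in the proof of Theorem \ref{thm:convergenceRate}, which vanished for the exponential Euler method, no longer vanishes and contributes the logarithmic factor $\sqrt{\log(T/k)}$ via Proposition \ref{prop:PropLogMainPaper}.

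First I would note that every hypothesis of Assumption \ref{ass:FG_structured} with $\alpha = \tfrac12$ has already been checked in the computations preceding the exponential Euler theorem: $F$ and $G$ are Lipschitz continuous on $X$ (with constants controlled by $\max\{\|\sigma_1\|_\infty, \|\sigma_2\|_\infty\}$ and by $\kappa^{-1}\|Q^{1/2}\|_{\calL_2(H,X)}$, respectively), and both are of linear growth on $Y = D(A)$, the latter relying on $Q^{1/2} \in \calL_2(L^2(\calO)^6, H^{1+\beta}(\calO)^6)$ with $\beta > \tfrac32$ together with the Sobolev embedding $H^\beta(\calO) \hookrightarrow L^\infty(\calO)$. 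Since $F$ and $G$ are autonomous, the H\"older-in-time condition \ref{assCond:HoldercontinuityFG} holds trivially for every $\alpha$, and the measurability requirements are immediate.

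Second I would verify the semigroup and scheme hypotheses. Contractivity of $(S(t))_{t\ge0}$ on $X$ and on $Y = D(A)$ was already established in the preceding theorem (on $X$ from \cite[Formula (3)]{CCHS20}, and on $D(A)$ from the definition of the graph norm). Contractivity of $R$ on $X$ and $Y$, as well as its approximation of $S$ to order $\tfrac12$ on $Y$, are assumed in the statement. Moreover $Y = D(A) \hookrightarrow D_A(\tfrac12,\infty)$ holds by the standard interpolation embedding $D(A) \hookrightarrow D_A(\theta,\infty)$ for $\theta \in (0,1)$. Thus all hypotheses of Theorem \ref{thm:convergenceRate} are in force, and applying it with $\alpha = \tfrac12$ yields the stated bound, the effective rate being $\min\{\tfrac12, \tfrac12\} = \tfrac12$ up to the logarithmic correction.

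Finally, for the concrete schemes I would argue as follows. Since (IE) and (CN) are built from $A$ and a rescaled resolvent, each $R_k$ commutes with the resolvent of $A$, so that the contractivity of $R$ on $X$ provided by Proposition \ref{prop:functionalcalculus} propagates to the graph norm of $D(A)$, giving contractivity on both $X$ and $Y$. By the orders recorded for these schemes (cf.\ Theorem \ref{thm:SoIECNAdditive} and the $\beta = 1$ column of Table \ref{tab:convrate}), (IE) approximates $S$ to order $\tfrac12$ on $D((-A)^1) = D(A)$ and (CN) to order $\tfrac23 \ge \tfrac12$ on $D(A)$, so both meet the order-$\tfrac12$ hypothesis and rate $\tfrac12$ is attained. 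I expect no genuine obstacle beyond this bookkeeping; the only point deserving care is confirming that the contractivity of $R$ genuinely transfers to the graph norm on $D(A)$ rather than merely holding on $X$, which is exactly what the commutation with the resolvent guarantees.
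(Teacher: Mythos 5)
Your proposal is correct and follows essentially the same route as the paper, which proves this theorem by applying Theorem \ref{thm:convergenceRate} with $X=H=L^2(\calO)^6$, $Y=D(A)$, $\alpha=\tfrac12$, reusing the verifications of Assumption \ref{ass:FG_structured} and the contractivity of $S$ on $X$ and $D(A)$ from the exponential Euler case, and handling (IE)/(CN) via their approximation orders on fractional domains together with Proposition \ref{prop:functionalcalculus} and commutation with the resolvent to get contractivity in the graph norm. One small inaccuracy: $F$ in \eqref{eq:defFMaxwell} is \emph{not} autonomous (the coefficients $\sigma_1,\sigma_2$ depend on $t$), so the H\"older-in-time condition is not trivial; it instead follows from the assumed uniform Lipschitz continuity of $\sigma_1,\sigma_2$ in time, in the weakened form discussed after Assumption \ref{ass:FG_structured} (the H\"older seminorm grows linearly in $\|x\|$, which suffices since it is only evaluated along the solution), exactly as the paper implicitly uses.
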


\section{Convergence rates for abstract wave equations}
\label{sec:rateOfConvergenceWave}

In this section, we shall be concerned with rates of convergence for abstract stochastic wave equations of the form
\begin{equation}
\label{eq:StEvolEqnWave}
    \rmd U = (AU + \bfF(t,U))\,\rmd t + \bfG(t,U)\,\rmd W_H(t),~~U(0)=U_0=(u_0,v_0) \in L^p(\Omega;X)
\end{equation}
on a phase space $X=V\times V_{-1}$ of product structure to be specified later, which takes different regularities of the first and second components of the mild solution into account. We achieve the following convergence rates for sufficiently regular noise:
\begin{itemize}
\item ${\rm E}_{k}^{\infty}\lesssim k^{\alpha} \sqrt{\log(T/k)}$ with $\alpha$ close to one (general contractive schemes, multiplicative noise);
\item ${\rm E}_{k}^{\infty}\lesssim k$ (exponential Euler, multiplicative noise).
\end{itemize}
Up to a logarithmic factor, these rates are optimal for the given problem. They provide an alternative proof of \cite[Thm.~3.1]{Wang15} for the exponential Euler method under less regularity assumptions on $\bfF$ and $\bfG$ and without making use of the group structure of the semigroup. The latter is crucial in order to extend the convergence result beyond the exponential Euler method. We extend the convergence result to general contractive schemes, which, to the best of our knowledge, is novel.

At the heart of our proof lies the higher Hölder continuity of the first component of the mild solution in $V$ compared to the mild solution vector in $X$, which emerges from the product structure of the phase space on which the abstract wave equation is considered. This allows for better estimates of those error terms depending on the Hölder continuity of the mild solution. Incorporating this into the setting of Section \ref{sec:rateOfConvergenceMultNoise} leads to the main Theorem \ref{thm:convergenceRateWave} in Subsection \ref{subsec:GeneralSchemesWave}. Subsection \ref{subsec:expEulerWave} covers the exponential Euler method. An extension of the error estimates to the full time interval is presented in Subsection\ref{subsec:fullTimeIntervalWave}.
The results are illustrated for the stochastic wave equation with trace class noise, space-time white noise, and smooth noise in Subsections \ref{subsec:traceclassnoisewave} to \ref{subsec:smoothnoisewave}.

Let $V$ be a separable Hilbert space equipped with the norm $\|\cdot \|_{V}$. Consider a densely defined, positive self-adjoint invertible operator $\Lambda: D(\Lambda) \seq V \to V$. For $\beta \in \R$, define the norm $\|u\|_{V_\beta} \ce \|\Lambda^{\beta/2}u\|_{V}$ for $u \in V_\beta$ and, for $\beta \ge 0$, denote the domain of $\Lambda^{\frac{\beta}{2}}$ by $V_\beta$ and equip it with this norm. For negative $\beta$, we denote by $V_\beta$ the completion of $V$ with respect to $\|\cdot\|_{V_\beta}$. We can thus interpret $\Lambda$ as an operator mapping from $V_1$ to $V_{-1}$ and it holds that $V=V_0$. In this section, we consider stochastic evolution equations on the phase space $X \ce V_0 \times V_{-1} =V \times V_{-1}$. More generally, we introduce the product spaces
\begin{equation}
\label{eq:defXbeta}
    X_\beta \ce V_\beta \times V_{\beta-1} = D(\Lambda^\frac{\beta}{2}) \times D(\Lambda^\frac{\beta-1}{2})
\end{equation}
for $\beta \in \R$, equipped with the norm $\|U\|_{X_\beta} \ce (\|u\|_{V_\beta}^2+\|v\|_{V_{\beta-1}}^2)^{1/2}$ for $U=(u,v) \in X_\beta$. Clearly, it then holds that $X=X_0$.

The stochastic evolution equation \eqref{eq:StEvolEqnWave} depends on the nonlinearity $\bfF: \Omega \times [0,T] \times X \to X$ and the multiplicative noise $\bfG: \Omega \times [0,T] \times X \to \gHX$ on the phase space $X$.
However, the product structure of $X$ considered in this section motivates an interpretation of \eqref{eq:StEvolEqnWave} as a system of two evolution equations. Setting
    \begin{equation}
    \label{eq:defAbfFG_wave}
        A = \begin{pmatrix} 0&I\\-\Lambda&0 \end{pmatrix},\quad\bfF(t,U) = \begin{pmatrix} 0\\F(t,u) \end{pmatrix},\quad \bfG(t,U) = \begin{pmatrix} 0\\G(t,u) \end{pmatrix}\quad \text{ for }
        U = \begin{pmatrix} u\\v \end{pmatrix} \in X
    \end{equation}
gives rise to the system of evolution equations
\begin{align*}
    \bigg\{\begin{split}
        \rmd u &= v \;\rmd t,\\
        \rmd v &= (-\Lambda u + F(t,u)) \;\rmd t + G(t,u) \;\rmd W_H(t).
    \end{split}
\end{align*}
This precisely captures the setting of stochastic wave equations when thinking of $v(t)$ as the derivative of $u(t)$, thus yielding a stochastic evolution equation for the derivative $\dot{u}(t)$ with left-hand side $\rmd \dot{u}$. The invertibility of $\Lambda$ does not lead to restrictions, because we can always reduce to this case by writing $-\Lambda u + F(t,u)= -(\Lambda +\varepsilon) u + \varepsilon u +F(t,u)$ without changing the properties of $F$.

The operator $A$ from \eqref{eq:defAbfFG_wave} generates a $C_0$-semigroup $(S(t))_{t \ge 0}$ given by
 \begin{equation}
 \label{eq:defS_wave}
    \quad S(t)= \begin{pmatrix} \cos(t\Lambda^{1/2})&\Lambda^{-1/2}\sin(t\Lambda^{1/2})\\-\Lambda^{1/2}\sin(t\Lambda^{1/2})&\cos(t\Lambda^{1/2}) \end{pmatrix},
\end{equation}
where we use the spectral theorem for self-adjoint operators to define the matrix entries. Indeed,
\begin{equation*}
    \lim_{t \to 0} \|\cos(t \Lambda^{1/2}) x - x\| = \lim_{t \to 0} \Big\|\int_0^t \sin(s \Lambda^{1/2}) \Lambda^{1/2}x \;\rmd s\Big\| \leq \lim_{t \to 0} t \|\Lambda^{1/2}x\| =0
\end{equation*}
and, analogously, $\lim_{t \to 0} \|\pm\Lambda^{\mp 1/2}\sin(t\Lambda^{1/2})x-x\|=0$ for $x\in D(\Lambda^{1/2})$. Strong continuity of the semigroup follows by the density of $D(\Lambda^{1/2})$, and the spectral theorem. It is straightforward to see that $S$ satisfies the semigroup property and that $A$ is its infinitesimal generator. Due to $-\Lambda u \in V_{-1}$ if and only if $u \in V_1$, we find that the domain of $A$ is given by
\begin{equation*}
    D(A)=\{U \in X: AU \in X\} = \{(u,v) \in X: (v,-\Lambda u) \in V_0 \times V_{-1}\} = X_1.
\end{equation*}
Let $\beta\in \R$. Combining the respective one-dimensional statements with the spectral theorem, we obtain that $\sin(t\Lambda^{1/2})$ and $\cos(t\Lambda^{1/2})$ are contractive on $V_{\beta}$, $\sin(0\cdot\Lambda^{1/2})=0$, and that $\Lambda$ and powers thereof commute with both $\sin(t\Lambda^{1/2})$ and $\cos(t\Lambda^{1/2})$. The trigonometric identity satisfied by $\sin(t\Lambda^{1/2})$ and $\cos(t\Lambda^{1/2})$ implies contractivity of the semigroup, that is,
\begin{equation}
\label{eq:ScontractiveWave}
    \|S(t)U\|_{X_{\beta}} \le \|U\|_{X_{\beta}}.
\end{equation}

Our aim is to derive conditions on $F$ and $G$ rather than $\bfF$ and $\bfG$ under which the temporal approximations
\begin{equation}
\label{eq:defUjWave}
    U^j = R_k^j U_0 + k \sum_{i=0}^{j-1} \bfF(t_i,U^i)+\sum_{i=0}^{j-1} \Delta W_{i+1}R_k^{j-i}\bfG(t_i,U^i),\quad 0 \le j \le N_k,
\end{equation}
converge to the mild solution $U(t)=(u(t),v(t)) \in X$ at a certain rate. As will become apparent, rates of convergence $>1/2$ can be attained up to a logarithmic correction factor even for general contractive schemes. The key aspect of our main theorem, Theorem \ref{thm:convergenceRate}, enabling this optimal rate consists of higher-order Hölder continuity of the first component of the mild solution.

\subsection{General contractive time discretisation schemes}
\label{subsec:GeneralSchemesWave}

As will be shown, the following assumptions on $F$ and $G$ imply that $\bfF$ and $\bfG$ fall within the scope of Section \ref{sec:rateOfConvergenceMultNoise}.

\begin{assumption}
\label{ass:FG_wave}
     Let $V$ be a Hilbert space, $\Lambda:D(\Lambda) \seq V \to V$ a densely defined, positive, self-adjoint, and invertible operator, and $p \in [2,\infty)$. Let $F:\Omega \times [0,T] \times V \to V_{-1}$, $F(\omega, t,x)=\tilde{F}(\omega,t,x)+f(\omega,t)$ and $G:\Omega \times [0,T] \times V \to \calL_2(H,V_{-1})$, $G(\omega, t,x)=\tilde{G}(\omega,t,x)+g(\omega,t)$ be strongly $\calP \otimes \calB(V)$-measurable, and such that $\tilde{F}(\cdot,\cdot,0) = 0$ and $\tilde{G}(\cdot,\cdot,0) = 0$, and suppose that for some $\delta >0$ and $\alpha \in (0,1]$,
    \begin{enumerate}[label=(\alph*)]
        \item \emph{(Lipschitz continuity from $V$ to $V_{-1}$)} there exist constants $C_F, C_G \ge 0$ such that for all $\omega \in \Omega, t \in [0,T]$ and $x,y\in V$, it holds that
        \begin{align*}
            \|\tilde{F}(\omega,t,x)-\tilde{F}(\omega,t,y)\|_{V_{-1}} &\le C_F\|x-y\|_{V},\\
            \|\tilde{G}(\omega,t,x)-\tilde{G}(\omega,t,x)\|_{\calL_2(H,V_{-1})}
            &\le C_G\|x-y\|_{V},
        \end{align*}
        \item \emph{(Hölder continuity with values in $V_{-1}$)} there are constants $C_{\alpha,F}, C_{\alpha,G} \ge 0$ such that
        \begin{align*}
            \sup_{\omega\in \Omega, x \in V} [\Lambda^{-\frac{1}{2}}F(\omega,\cdot,x)]_\alpha \le C_{\alpha,F},~\sup_{\omega\in \Omega, x \in V} [\Lambda^{-\frac{1}{2}}G(\omega,\cdot,x)]_\alpha \le C_{\alpha,G},
        \end{align*}
        \item \emph{(continuity with values in $V_{\delta-1}$)} $f \in L^p_\calP(\Omega; C([0,T];V_{\delta-1}))$, and $g \in L^p_\calP(\Omega; C([0,T];\calL_2(H,V_{\delta-1})))$,
        \item \emph{(invariance)} \label{item:waveAssYinvariance} $F:\Omega \times [0,T] \times V_{\delta} \to V_{\delta-1}$ and $G:\Omega \times [0,T] \times V_{\delta} \to \calL_2(H,V_{\delta-1})$ are strongly $\calP \otimes \calB(V_\delta)$-measurable,
        \item \emph{(linear growth from $V_{\delta}$ to $V_{\delta-1}$)} \label{item:linGrowthWaveAss}
            there exist constants $L_F, L_G \ge 0$ such that for all $\omega \in \Omega$, $t \in [0,T]$ and $x\in V$, it holds that
        \begin{align*}
            \|\tilde{F}(\omega,t,x)\|_{V_{\delta-1}} &\le L_F(1+\|x\|_{V_{\delta}}),\\
            \|\tilde{G}(\omega,t,x)\|_{\calL_2(H,V_{\delta-1})} &\le L_G(1+\|x\|_{V_{\delta}}).
        \end{align*}
    \end{enumerate}
\end{assumption}
It is important to note that both $\delta\in (0,1]$ and $\delta\in (1, 2]$ will be considered. As for $\delta=2$, optimal rates are obtained for the usual schemes, larger values of $\delta$ are not considered.

Next, we first show that we satisfy the required conditions for the well-posedness and thus \eqref{eq:StEvolEqnWave} has a unique mild solution. Adopt the notation of the proof of Theorem \ref{thm:convergenceRate}, replacing $F, \tilde{F}, f, G, \tilde{G}$ and $g$ by $\bfF, \tilde{\bfF}, \mathbf{f}, \bfG, \tilde{\bfG}$ and $\mathbf{g}$, respectively.

Setting $Y\ce X_\delta$ for some $\delta \ge \alpha$, it is clear from $X=X_0$, invertibility of $\Lambda$, and $D(A^{n})=X_{n}$ that $Y \hra X$ and $Y \hra D_A(\beta,\infty)$ for any $\beta\in (0,\delta)$. Since $V_\delta$ are separable Hilbert spaces for $\delta \in \R$, so are $X$ and $Y$. Contractivity of the semigroup follows from \eqref{eq:ScontractiveWave}. Note that strong $\calP \otimes \calB(X)$-measurability of $\bfF$ and $\bfG$, and that $\tilde{\bfF}, \tilde{\bfG}$ vanish in $0$ immediately follow from the respective assumptions on $\tilde{F}, \tilde{G}$ due to the structure \eqref{eq:defAbfFG_wave}. We are left to prove Lipschitz continuity, linear growth, $Y$-invariance, and Hölder continuity of $\bfF,\bfG$, and continuity of $\mathbf{f}$ and $\mathbf{g}$. Deducing $Y$-invariance from Assumption \ref{ass:FG_wave}  is straightforward noting that
\begin{align}
\label{eq:fnormEstWave}
    \|\mathbf{f}\|_{p,\infty,Y} = \bigg\|\sup_{t \in [0,T]}\|\mathbf{f}(t)\|_Y\bigg\|_p = \bigg\|\sup_{t \in [0,T]}\|f(t)\|_{V_{\delta-1}}\bigg\|_p = \|f\|_{p,\infty,V_{\delta-1}}
\end{align}
and, likewise, $\nn\mathbf{g}\nn_{p,\infty,Y}=\nn g\nn_{p,\infty,V_{\delta-1}}$. The mapping properties on $Y$ and strong $\calP \otimes \calB(Y)$-measurability of $\bfF$ and $\bfG$ follow from Assumption \ref{ass:FG_wave}\ref{item:waveAssYinvariance} because $Y=V_\delta \times V_{\delta-1}$. Linear growth of $\tilde{\bfF}$ from $Y$ to $Y$ follows from linear growth of $\tilde{F}$ from $V_{\delta}$ to $V_{\delta-1}$ as stated in Assumption \ref{ass:FG_wave} taking the structure \eqref{eq:defAbfFG_wave} of $\bfF$ into account via
    \begin{align*}
        \|\tilde{\bfF}(t,U)\|_{Y} &=  \|\tilde{F}(t,u)\|_{V_{\delta-1}}
        \le L_{F}(1+\|u\|_{V_{\delta}}) \le L_{F}(1+\|U\|_{Y})
    \end{align*}
    for $t \in [0,T]$, $U=(u,v) \in Y=V_\delta \times V_{\delta-1}$. Analogously, linear growth of $\tilde{\bfG}$ from $Y$ to $\gHY$ is obtained, since
     \begin{align*}
        \|\tilde{\bfG}(t,U)\|_{Y} = \|\tilde{G}(t,u)\|_{V_{\delta-1}} &\le L_G(1+\|u\|_{V_{\delta}}) \le L_G(1+\|U\|_{Y}).
    \end{align*}
    Lipschitz continuity of $\bfF$ from $X$ to $X$ holds due to
      \begin{align*}
        \|\bfF(t,U_1)-\bfF(t,U_2)\|_{X} &=  \|F(t,u_1)-F(t,u_2)\|_{V_{-1}} = \|\Lambda^{-\frac{1}{2}}[\tilde{F}(t,u_1)-\tilde{F}(t,u_2)]\|_{V}\\
        &\le C_F\|u_1-u_2\|_{V} \le C_F \|U_1-U_2\|_X
    \end{align*}
    for $t \in [0,T]$ and $U_1=(u_1,v_1), U_2=(u_2,v_2) \in X$. Analogously,
    \begin{align*}
        \|\bfG(t,U_1)-\bfG(t,U_2)\|_{\gHX} &= \|\Lambda^{-\frac{1}{2}}[\tilde{G}(t,u_1)-\tilde{G}(t,u_2)]\|_{\calL_2(H,V)}\le C_G \|U_1-U_2\|_X.
    \end{align*}
    Hence, $\bfG:X \to \gHX$ is Lipschitz continuous. Via the same argument,
    \begin{equation*}
        [\bfF(\omega,\cdot,U)]_\alpha = \sup_{0 \le s \le t \le T} \frac{\|\bfF(t,U)-\bfF(s,U)\|_X}{(t-s)^\alpha} = \sup_{0 \le s \le t \le T} \frac{\|\Lambda^{-\frac{1}{2}}[F(t,u)-F(s,u)]\|_{V}}{(t-s)^\alpha},
    \end{equation*}
    from which we conclude $\alpha$-Hölder continuity of $\bfF$.

The above leads to:
\begin{lemma}[Well-posedness]\label{lem:well-posednesswave}
    Suppose that Assumption \ref{ass:FG_wave} holds for some $\alpha \in (0,1]$, $\delta \ge \alpha$, and $p \in [2,\infty)$. Let $Y \ce X_\delta$ as defined in \eqref{eq:defXbeta} and $U_0 \in L_{\calF_0}^p(\Omega;Y)$.
    Under these conditions there exists a unique mild solution $U\in L^p(\Omega;C([0,T];X))$ to \eqref{eq:StEvolEqnWave}. Furthermore, it is in $L^p(\Omega;C([0,T];Y))$ and
    \begin{align*}
        \|U\|_{L^p(\Omega;C([0,T];Y))}\leq \Cbdd^Y\Big(&1+\|U_0\|_{L^p(\Omega;Y)}+ \|f\|_{L^p(\Omega;L^1(0,T;V_{\delta-1}))}\\
        &+B_{p}\|g\|_{L^p(\Omega;L^2(0,T;\calL_2(H,V_{\delta-1})))}\Big),
    \end{align*}
    where $\Cbdd^Y \ce (1+C^2 T)^{1/2}\ee^{(1+C^2T)/2}$ with
    $C \ce L_{F}T^{1/2}+B_{p}L_{G}$, and $B_{p}$ is the constant from Theorem \ref{thm:maxIneqQuasiContractive}.
\end{lemma}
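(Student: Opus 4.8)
The plan is to reduce the statement to the abstract well-posedness results already established in Section \ref{sec:wellposed}, namely Theorem \ref{thm:wellposed} (existence and uniqueness on $X$) and Theorem \ref{thm:wellposedY} (regularity and a priori bound on the subspace $Y$). The discussion preceding the lemma has already recast the system governed by $\bfF,\bfG$ into the format of Section \ref{sec:rateOfConvergenceMultNoise}, so the task is essentially to check that the hypotheses of those two theorems are met for the pair $(\bfF,\bfG)$ and then to read off the resulting estimate in the product-space norm.

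First I would verify Assumption \ref{ass:FG_WP} for $(\bfF,\bfG)$ on $X = X_0 = V_0 \times V_{-1}$. Strong $\calP\otimes\calB(X)$-measurability and the vanishing $\tilde{\bfF}(\cdot,\cdot,0)=\tilde{\bfG}(\cdot,\cdot,0)=0$ follow immediately from the structure \eqref{eq:defAbfFG_wave} together with the corresponding properties of $\tilde{F},\tilde{G}$; global Lipschitz continuity of $\bfF:X\to X$ and $\bfG:X\to\gHX$ has already been established in the text above. The only remaining point is the integrability condition \ref{item:FG_WP_integrability}: since $\delta \ge \alpha > 0$ we have the continuous embedding $V_{\delta-1}\hra V_{-1}$, so Assumption \ref{ass:FG_wave}(c) yields $\mathbf{f}=(0,f)\in L^p_\calP(\Omega;C([0,T];X)) \seq L^p_\calP(\Omega;L^1(0,T;X))$ and likewise $\mathbf{g}\in L^p_\calP(\Omega;L^2(0,T;\gHX))$. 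Applying Theorem \ref{thm:wellposed} then produces the unique mild solution $U\in L^p(\Omega;C([0,T];X))$.

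Next I would upgrade to the subspace $Y = X_\delta$. The preliminary discussion already records that $Y$ is a separable Hilbert space with $Y \hra X$, that $S$ is contractive on $Y$ by \eqref{eq:ScontractiveWave}, and that $\bfF,\bfG$ enjoy the required strong $\calP\otimes\calB(Y)$-measurability, $Y$-invariance, and linear growth from $Y$ into $Y$ (respectively $\gHY$) with constants $L_F,L_G$. Combined with $U_0\in L^p_{\calF_0}(\Omega;Y)$ and the integrability of $\mathbf{f},\mathbf{g}$ in the $Y$-norm (again from Assumption \ref{ass:FG_wave}(c)), these are precisely the hypotheses of Theorem \ref{thm:wellposedY}. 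That theorem then gives $U\in L^p(\Omega;C([0,T];Y))$ together with the a priori bound involving $\Cbdd^Y$, $\|U_0\|_{L^p(\Omega;Y)}$, $\|\mathbf{f}\|_{L^p(\Omega;L^1(0,T;Y))}$ and $\|\mathbf{g}\|_{L^p(\Omega;L^2(0,T;\gHY))}$. The final cosmetic step is to rewrite these product-space data norms in terms of the components via the identities of type \eqref{eq:fnormEstWave}, i.e.\ $\|\mathbf{f}\|_Y=\|f\|_{V_{\delta-1}}$ and $\|\mathbf{g}\|_{\gHY}=\|g\|_{\calL_2(H,V_{\delta-1})}$, which turns the generic estimate into the displayed one and confirms that the constant $C \ce L_F T^{1/2}+B_p L_G$ in $\Cbdd^Y$ matches the statement.

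Since the whole argument is a reduction, there is no deep analytic obstacle; the work lies in careful bookkeeping. The most delicate point to double-check is the contractivity of $S$ on $Y=X_\delta$ across the relevant range of $\delta$ (including $\delta\in(1,2]$), which rests on the contractivity of $\cos(t\Lambda^{1/2})$ and $\sin(t\Lambda^{1/2})$ on each $V_\beta$ and hence on \eqref{eq:ScontractiveWave}, and the verification that $\mathbf{f},\mathbf{g}$ are genuinely $X$-valued, which uses $V_{\delta-1}\hra V_{-1}$.
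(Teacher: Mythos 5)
Your proposal is correct and matches the paper's own derivation: the lemma is stated there as an immediate consequence of the preceding discussion, which verifies exactly the hypotheses you list (Lipschitz and integrability on $X$ via $V_{\delta-1}\hra V_{-1}$ for Theorem \ref{thm:wellposed}, then measurability, $Y$-invariance, linear growth, and contractivity of $S$ on $Y=X_\delta$ via \eqref{eq:ScontractiveWave} for Theorem \ref{thm:wellposedY}) and converts the product-space norms through identities of the type \eqref{eq:fnormEstWave}. Nothing is missing.
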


As established in \eqref{eq:WPboundCu0fgZ}, the well-posedness on $Z\in \{X,Y\}$ implies
\begin{equation*}
    1+\bigg\|\sup_{r \in [0,T]} \|U(r)\|_Z\bigg\|_p \le C_{U_0,\mathbf{f},\mathbf{g},Z} < \infty
\end{equation*}
with $C_{U_0,\mathbf{f},\mathbf{g},Z}$ as defined in \eqref{eq:defCu0fgZ}. In the abstract wave equation setting, the constant simplifies to
\begin{equation}
\label{eq:defCu0fgZwave}
    C_{U_0,\mathbf{f},\mathbf{g},Z} = 1+ \Cbdd^Z(1+\|U_0\|_{L^p(\Omega;Z)}+\|f\|_{p,1,Z_2}+\nn g\nn_{p,2,Z_2}),
\end{equation}
where $\Cbdd^Z$ denotes the constant from Lemma \ref{lem:well-posednesswave}, $Z_2\ce V_{-1}$ if $Z=X$, and $Z_2 \ce V_{\delta-1}$ if $Z=Y$.

\begin{lemma}[Stability]\label{lem:stabilitywave}
    Suppose that Assumption \ref{ass:FG_wave} holds for some $\alpha \in (0,1]$, $\delta \ge \alpha$, and $p \in [2,\infty)$. Let $Y \ce X_\delta$ as defined in \eqref{eq:defXbeta} and $U_0 \in L_{\calF_0}^p(\Omega;Y)$. Let $(R_{k})_{k>0}$ be a time discretisation scheme which is contractive on $X$ and $Y$, and let $N_k \geq 2$.
    Then the temporal approximations $(U^j)_{j=0,\ldots,N_k}$ obtained via \eqref{eq:defUjWave} are stable on both $X$ and $Y$. That is, for $Z \in \{X,Y\}$,
        \[1+\left\| \max_{0 \le j \le N_k} \|U^j\|_Z\right\|_p \le \Cstab^Z c_{U_0, f, g,T,Z},\]
    where $\Cstab^Z \ce (1+C_Z^2T)^{1/2}e^{(1+C_Z^2T)/2}$ with $C_X\ce C_F T^{1/2}+B_pC_G$, $C_Y \ce L_FT^{1/2}+B_pL_G$,
    \begin{align*}
    c_{U_0, f, g,T,Z}\ce    1+\|U_0\|_{L^p(\Omega;Z)} + \|f\|_{L^p(\Omega;C([0,T];Z_2))} T + \|g\|_{L^p(\Omega;C([0,T];\calL_2(H,Z_2)))} B_p T^{1/2},
    \end{align*}
    $Z_2\ce V_{-1}$ if $Z=X$, $Z_2\ce V_{\delta-1}$ if $Z=Y$, and $B_{p}$ is the constant from Theorem \ref{thm:maxIneqQuasiContractive}.
\end{lemma}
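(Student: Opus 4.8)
The plan is to deduce the statement directly from the stability result Proposition \ref{prop:stab}, applied separately on the phase space $X$ and on the subspace $Y \ce X_\delta$. The scheme \eqref{eq:defUjWave} is of the form \eqref{eq:defUjStab}/\eqref{eq:VoCUStab}: the only formal difference is that the semigroup approximation $R_k^{j-i}$ does not appear in front of the deterministic sum, but this is immaterial, since the bound on the deterministic term in the proof of Proposition \ref{prop:stab} (see the derivation of \eqref{eq:stabProof2ndTerm}) uses nothing about that factor beyond $\|R_k^{j-i}\|\le 1$, which holds by contractivity. Thus the entire task reduces to verifying that $\bfF$ and $\bfG$ satisfy the hypotheses of Proposition \ref{prop:stab} once with the abstract space taken to be $X$ and once with it taken to be $Y$, with the appropriate constants.

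For the space $X$ I would invoke the computations carried out just before Lemma \ref{lem:well-posednesswave}, which show that $\tilde{\bfF}$ and $\tilde{\bfG}$ are Lipschitz continuous on $X$ with constants $C_F$ and $C_G$. Since $\tilde{\bfF}(\cdot,\cdot,0)=0$ and $\tilde{\bfG}(\cdot,\cdot,0)=0$, Lipschitz continuity upgrades to the linear growth required by Proposition \ref{prop:stab}, namely $\|\tilde{\bfF}(\omega,t,U)\|_X\le C_F\|U\|_X\le C_F(1+\|U\|_X)$ and likewise for $\tilde{\bfG}$. The inhomogeneities $\mathbf{f}=(0,f)^\top$ and $\mathbf{g}=(0,g)^\top$ have $X$- and $\gHX$-norms equal to $\|f\|_{V_{-1}}$ and $\|g\|_{\calL_2(H,V_{-1})}$, so the continuity assumption of Assumption \ref{ass:FG_wave}, together with the embedding $V_{\delta-1}\hra V_{-1}$ (valid because $\delta>0$), places them in $L^p_\calP(\Omega;C([0,T];X))$ and $L^p_\calP(\Omega;C([0,T];\gHX))$. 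Applying Proposition \ref{prop:stab} with $\LFX=C_F$ and $\LGX=C_G$ yields the asserted bound on $X$ with $C_X=C_FT^{1/2}+B_pC_G$ and with $Z_2=V_{-1}$ in $c_{U_0,f,g,T,X}$.

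For the space $Y=X_\delta$ I would use that the same preliminary discussion already established $Y$-invariance, strong $\calP\otimes\calB(Y)$-measurability, and linear growth of $\tilde{\bfF}$ from $Y$ to $Y$ with constant $L_F$ and of $\tilde{\bfG}$ from $Y$ to $\gHY$ with constant $L_G$ (from Assumption \ref{ass:FG_wave}\ref{item:linGrowthWaveAss} and the structure \eqref{eq:defAbfFG_wave}); moreover \eqref{eq:fnormEstWave} reduces the required regularity of $\mathbf{f},\mathbf{g}$ on $Y$ to the corresponding statements with values in $V_{\delta-1}$. As $R_k$ is contractive on $Y$ by hypothesis, Proposition \ref{prop:stab} applies verbatim on $Y$ and produces the bound with $C_Y=L_FT^{1/2}+B_pL_G$ and $Z_2=V_{\delta-1}$. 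Combining the two cases gives the claim for $Z\in\{X,Y\}$. The only point that genuinely needs care, rather than being a real obstacle, is the legitimacy of applying Proposition \ref{prop:stab} on the smaller space $Y$: this rests on the contractivity of $R_k$ on $Y$ (an explicit hypothesis) and on the maximal inequality of Theorem \ref{thm:maxIneqQuasiContractive} being available on $Y$ (via the Sz.-Nagy dilation used in that proof, which applies to any contraction on a Hilbert space), both of which are in force here.
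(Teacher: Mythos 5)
Your proposal is correct and takes essentially the same approach as the paper, which states this lemma without a separate proof precisely because the discussion preceding Lemma \ref{lem:well-posednesswave} verifies the hypotheses of Proposition \ref{prop:stab} for $\bfF$, $\bfG$ on both $X$ and $Y=X_\delta$ (using \eqref{eq:fnormEstWave} for the inhomogeneities and the remark after Proposition \ref{prop:stab} to apply it with $X$ replaced by $Y$), yielding exactly the constants you state. Your explicit check that the absence of the factor $R_k^{j-i}$ in the deterministic sum of \eqref{eq:defUjWave} is harmless — since the bound \eqref{eq:stabProof2ndTerm} uses only contractivity — is a point the paper leaves implicit, and you handle it correctly.
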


We denote
\begin{equation}
\label{eq:KufgYwave}
     K_{U_0,f,g,Y} \ce \Cstab ^Y c_{U_0,f,g,T,Y} = \Cstab^Y (1+\|U_0\|_{L^p(\Omega;Y)} + \|f\|_{p,\infty,V_{\delta-1}} T + \nn g\nn_{p,\infty,V_{\delta-1}} B_p T^{1/2})
\end{equation}
so that $K_{U_0,f,g,Y}=\KUffggY$ with $\KUffggY$ as defined in \eqref{eq:defKu0fgY}.

For future estimates, it is useful to know the decay of differences of the sine and cosine operators $\sin(t\Lambda^{1/2})$ and $\cos(t\Lambda^{1/2})$. We include a short proof for the convenience of the reader.
\begin{lemma}
\label{lem:sinCosEst}
    Let $t \in [0,T]$. Then for all $\alpha \in [0,1]$, we have
    \begin{align*}
        \|\Lambda^{-\frac{\alpha}{2}}[\sin(t\Lambda^{1/2})-\sin(s\Lambda^{1/2})]\|_{\calL(V)} &\le 2 (t-s)^\alpha,\\
        \|\Lambda^{-\frac{\alpha}{2}}[\cos(t\Lambda^{1/2})-\cos(s\Lambda^{1/2})]\|_{\calL(V)} &\le 2 (t-s)^\alpha
    \end{align*}
    for all $0 \le s \le t \le T$.
\end{lemma}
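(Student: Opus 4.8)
The plan is to reduce everything to a pointwise estimate on the spectral multipliers and then interpolate between a trivial uniform bound and a Lipschitz-type bound. Since $\Lambda$ is positive, self-adjoint and invertible, its spectrum satisfies $\sigma(\Lambda)\subseteq[c,\infty)$ for some $c>0$, so the bounded Borel functional calculus identifies the operator norm of any bounded Borel function $m(\Lambda)$ with $\sup_{\lambda\in\sigma(\Lambda)}|m(\lambda)|$. Writing $\mu=\lambda^{1/2}$, it therefore suffices to bound, uniformly in $\mu>0$ and $0\le s\le t\le T$, the scalar quantities
\[
\mu^{-\alpha}\,|\sin(t\mu)-\sin(s\mu)|\quad\text{and}\quad\mu^{-\alpha}\,|\cos(t\mu)-\cos(s\mu)|
\]
by $2(t-s)^\alpha$, and then pass back to operators by taking the supremum over $\lambda\in\sigma(\Lambda)$.

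For the scalar differences I would use two elementary estimates. First, since $|\sin|\le1$ and $|\cos|\le1$, both differences are bounded by $2$. Second, the fundamental theorem of calculus gives $\sin(t\mu)-\sin(s\mu)=\mu\int_s^t\cos(r\mu)\,\rmd r$ and $\cos(t\mu)-\cos(s\mu)=-\mu\int_s^t\sin(r\mu)\,\rmd r$, whence both differences are bounded by $\mu(t-s)$. Interpolating these two bounds geometrically with exponents $1-\alpha$ and $\alpha$ yields
\[
|\sin(t\mu)-\sin(s\mu)|\le 2^{1-\alpha}\bigl(\mu(t-s)\bigr)^{\alpha},
\]
and the identical argument applies to the cosine difference. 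Dividing by $\mu^\alpha$ and using $2^{1-\alpha}\le2$ for $\alpha\in[0,1]$ gives the claimed pointwise bound; taking the supremum over the spectrum then delivers both operator-norm inequalities.

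There is essentially no serious obstacle here: invertibility of $\Lambda$ guarantees that $\Lambda^{-\alpha/2}$ is bounded, so each function $\lambda\mapsto\lambda^{-\alpha/2}[\sin(t\lambda^{1/2})-\sin(s\lambda^{1/2})]$ (and its cosine analogue) is bounded and Borel measurable, and the spectral theorem applies directly. The only points requiring a little care are to note that the interpolation constant $2^{1-\alpha}$ is uniform in $\alpha\in[0,1]$ and dominated by $2$, and to observe that the endpoint cases $\alpha=0$ and $\alpha=1$, where the geometric interpolation degenerates into one of the two basic bounds, remain consistent with the stated inequality.
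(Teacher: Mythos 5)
Your proposal is correct and follows essentially the same route as the paper: reduce to a scalar estimate via the spectral theorem for the positive self-adjoint operator $\Lambda$, then verify the scalar bound by elementary means. The only cosmetic difference is in the scalar step, where you interpolate geometrically between the uniform bound $2$ and the Lipschitz bound $\mu(t-s)$ (yielding the slightly sharper constant $2^{1-\alpha}$), whereas the paper argues via a case distinction on whether $|t-s|\le 1$ using the mean value theorem; both deliver the stated factor $2$.
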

\begin{proof}
    The statement is trivially fulfilled for $t=s$. Let $0\leq s<t\leq T$. We claim that
    \begin{align*}
        \zeta_{\alpha}(t,s)\ce \frac{|\sin(t) - \sin(s)|}{|t-s|^{\alpha}}\leq 2.
    \end{align*}
    Indeed, if $|t-s|\leq 1$, then by the mean value theorem $\zeta_{\alpha}(t,s)\leq \zeta_{1}(t,s) \leq 1$.  If $|t-s|>1$, then $\zeta_{\alpha}(t,s)\leq 2$.
    Now let $\lambda>0$. Applying the claim with $t\lambda^{1/2}$ and $s\lambda^{1/2}$ gives
    \begin{align*}
        \lambda^{-\alpha/2} |\sin(t \lambda^{1/2}) - \sin(s \lambda^{1/2})|\leq 2 {|t-s|^{\alpha}}.
    \end{align*}
    Thus by the spectral theorem for self-adjoint operators and positivity of $\Lambda$, we get the desired statement. The statement for the cosine is proven analogously.
\end{proof}

While the mild solution $U$ has at most $1/2$-Hölder continuous paths as follows from Lemma \ref{lem:mildsolest}, the product structure of the stochastic evolution equation results in higher Hölder continuity of the first component $u$ of $U$, as the following lemma illustrates. In particular, $u$ has Lipschitz continuous paths for sufficiently regular $F$ and $G$.

\begin{lemma}
\label{lem:mildsolestWave}
    Suppose that Assumption \ref{ass:FG_wave} holds for some $\alpha \in (0,1]$, $\delta\geq \alpha$, and $p \in [2,\infty)$. Let $X \ce X_0$ and $Y \ce X_\delta$ as defined in \eqref{eq:defXbeta} and $U_0 \in L_{\calF_0}^p(\Omega; Y)$. Then for all $0 \le s \le t \le T$, the first component $u$ of the mild solution $U$ of \eqref{eq:StEvolEqnWave} satisfies
    \begin{equation*}
        \|u(t)-u(s)\|_{L^p(\Omega;V)} \le L(t-s)^\alpha
    \end{equation*}
    with constant
    \begin{equation*}
        L \ce 2 C_Y\bigg[\sqrt{2}\|U_0\|_{L^p(\Omega;Y)}+L_{1,F}T\frac{\alpha+2}{\alpha+1}+B_pL_{2,G}T^{1/2}\Big(1+\frac{1}{\sqrt{2\alpha+1}}\Big) \bigg],
    \end{equation*}
    where $L_{1,F} \ce L_F \CUffggY+\|f\|_{L^p(\Omega; L^\infty(0,T;V_{\delta-1}))}$, $L_{2,G} \ce L_G \CUffggY+\|g\|_{L^p(\Omega;L^\infty(0,T;\calL_2(H,V_{\delta-1})))}$ with $\CUffggY$ as in \eqref{eq:defCu0fgZwave}, $\CY$ denotes the embedding constant of $X_\delta$ into $X_\alpha$, and $B_p$ is the constant from Theorem \ref{thm:maxIneqQuasiContractive}.
\end{lemma}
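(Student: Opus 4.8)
The plan is to write out the first component $u$ of the mild solution explicitly and estimate the increment $u(t)-u(s)$ through a five-term decomposition that mirrors the proof of Lemma \ref{lem:mildsolest}, but which exploits the extra factor $\Lambda^{-1/2}$ appearing in the first row of $S$. From \eqref{eq:defS_wave} and the structure \eqref{eq:defAbfFG_wave} of $\bfF,\bfG$, the first component reads
\[
u(t) = \cos(t\Lambda^{1/2})u_0 + \Lambda^{-1/2}\sin(t\Lambda^{1/2})v_0 + \int_0^t \Lambda^{-1/2}\sin((t-r)\Lambda^{1/2})F(r,U(r))\,\rmd r + \int_0^t \Lambda^{-1/2}\sin((t-r)\Lambda^{1/2})G(r,U(r))\,\rmd W_H(r).
\]
Subtracting $u(s)$, I split $u(t)-u(s)$ into: the initial-data term $E_1$; the drift term, divided into its increment on $[0,s]$ (where the two sine operators differ), $E_2$, and the genuinely new part on $[s,t]$, $E_3$; and the two analogous stochastic terms $E_4,E_5$.

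All operator estimates come from Lemma \ref{lem:sinCosEst} in two guises: $\|\Lambda^{-\alpha/2}\sin(\tau\Lambda^{1/2})\|_{\calL(V)}\le 2\tau^\alpha$ (taking $s=0$), used for the $[s,t]$-terms, and $\|\Lambda^{-\alpha/2}[\sin((t-r)\Lambda^{1/2})-\sin((s-r)\Lambda^{1/2})]\|_{\calL(V)}\le 2(t-s)^\alpha$ (time difference $t-s$), used for the $[0,s]$-increments; the same bounds hold for the cosine. The key algebraic step is to write $\Lambda^{-1/2}=\Lambda^{-\alpha/2}\Lambda^{(\alpha-1)/2}$ (operators commute by the spectral theorem), so that the sine/cosine factor absorbs exactly $\Lambda^{-\alpha/2}$ while the residual $\Lambda^{(\alpha-1)/2}$ converts the $V$-norm into the $V_{\alpha-1}$-norm of $F$ or $G$; the embedding $V_{\delta-1}\hookrightarrow V_{\alpha-1}$ (valid since $\delta\ge\alpha$) with constant $\CY$ then lets me invoke the $V_{\delta-1}$ linear-growth bounds. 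For the initial datum the analogous move uses $V_{\delta}\hookrightarrow V_{\alpha}$ and $V_{\delta-1}\hookrightarrow V_{\alpha-1}$.

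With these, $E_1\le 2\sqrt{2}\,\CY(t-s)^\alpha\|U_0\|_{L^p(\Omega;Y)}$ after the $L^p(\Omega)$-norm and Cauchy--Schwarz on the two initial-data pieces. The terms $E_2,E_3$ are handled by integrating the deterministic estimate, invoking linear growth of $\tilde F$ on $V_\delta$, the solution bound $\CUffggY$ from \eqref{eq:defCu0fgZwave} and $f\in L^p(\Omega;L^\infty(0,T;V_{\delta-1}))$; $E_2$ yields a factor $T$ and $E_3$ a factor $\frac{T}{\alpha+1}$ (via $\int_s^t(t-r)^\alpha\,\rmd r$ and $(t-s)^{\alpha+1}\le T(t-s)^\alpha$), together producing $\frac{\alpha+2}{\alpha+1}T$. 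For the stochastic terms I apply the Burkholder--Davis--Gundy inequality \eqref{eq:BDG} (no supremum is needed since $t,s$ are fixed), pull the $L^p(\Omega)$-norm inside the time integral by Minkowski's inequality in $L^{p/2}(\Omega)$, and use linear growth of $\tilde G$ with $\CUffggY$; here $E_5$ produces $\frac{T^{1/2}}{\sqrt{2\alpha+1}}$ (from $\int_s^t(t-r)^{2\alpha}\,\rmd r$ and $(t-s)^{\alpha+1/2}\le T^{1/2}(t-s)^\alpha$) while $E_4$ produces $T^{1/2}$. Summing the five bounds assembles exactly the stated constant $L$.

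The main obstacle — and the conceptual point that distinguishes this lemma from Lemma \ref{lem:mildsolest} — is precisely the extra smoothing $\Lambda^{-1/2}$ carried by the first component: without it (i.e.\ for the full vector $U$ measured in $X$) the $[s,t]$-terms only give $(t-s)^{1/2}$, whereas here the bookkeeping of powers of $\Lambda$ lets the sine operator contribute $(t-r)^\alpha$ inside the integrals, raising the Hölder order to $\alpha$. The difficulty is therefore entirely in the index arithmetic — ensuring that after the split $\Lambda^{-1/2}=\Lambda^{-\alpha/2}\Lambda^{(\alpha-1)/2}$ the leftover power $\Lambda^{(\alpha-1)/2}$ is matched by the available $V_{\delta-1}$-regularity of $F$ and $G$ through $\delta\ge\alpha$ — rather than in any genuinely new analytic estimate.
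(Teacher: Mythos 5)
Your proposal is correct and follows essentially the same route as the paper: the same five-term decomposition of $u(t)-u(s)$, the same use of Lemma \ref{lem:sinCosEst} after writing $\Lambda^{-1/2}=\Lambda^{-\alpha/2}\Lambda^{(\alpha-1)/2}$ so the leftover power is absorbed by the $V_{\delta-1}\hookrightarrow V_{\alpha-1}$ embedding, and the same bookkeeping yielding the factors $T\frac{\alpha+2}{\alpha+1}$ and $B_pT^{1/2}\bigl(1+\frac{1}{\sqrt{2\alpha+1}}\bigr)$. Your only (harmless) deviation is invoking the Burkholder--Davis--Gundy inequality \eqref{eq:BDG} directly for the stochastic terms where the paper cites Theorem \ref{thm:maxIneqQuasiContractive}; since $t,s$ are fixed these give the same bound with the same constant $B_p$.
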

\begin{proof}
    From the structure \eqref{eq:defS_wave} of the semigroup as well as \eqref{eq:defAbfFG_wave} of $\bfF$ and $\bfG$, we deduce the following variation-of-constants formula for the first component of the mild solution.
    \begin{align*}
        u(t)&=\cos(t\Lambda^{1/2})u_0+\Lambda^{-\frac{1}{2}}\sin(t\Lambda^{1/2})v_0+\int_0^t \Lambda^{-\frac{1}{2}}\sin((t-r)\Lambda^{1/2})F(r,u(r))\,\rmd r\\
        &\phantom{= }+ \int_0^t \Lambda^{-\frac{1}{2}}\sin((t-r)\Lambda^{1/2})G(r,u(r))\,\rmd W_H(r).
    \end{align*}
    Hence, the difference can be split up as
     \begin{align*}
        &\|u(t)-u(s)\|_{L^p(\Omega;V)} \le \big\|[\cos(t\Lambda^{1/2})-\cos(s\Lambda^{1/2})]u_0+\Lambda^{-\frac{1}{2}}[\sin(t\Lambda^{1/2})-\sin(s\Lambda^{1/2})]v_0\big\|_{L^p(\Omega;V)}\\
        &+ \Big\|\int_0^{s} \|\Lambda^{-\frac{1}{2}}[\sin((t-r)\Lambda^{1/2})-\sin((s-r)\Lambda^{1/2})]F(r,u(r))\|_{V}\;\rmd r\Big\|_p\\
        &+ \Big\|\int_{s}^t \|\Lambda^{-\frac{1}{2}}\sin((t-r)\Lambda^{1/2})F(r,u(r))\|_{V}\;\rmd r\Big\|_p\\
        & + \Big\|\int_0^{s} \Lambda^{-\frac{1}{2}}[\sin((t-r)\Lambda^{1/2})-\sin((s-r)\Lambda^{1/2})]G(r,u(r))\;\rmd W_H(r)\Big\|_{L^p(\Omega;V)}\\
        & + \Big\|\int_{s}^t \Lambda^{-\frac{1}{2}}\sin((t-r)\Lambda^{1/2})G(r,u(r))\;\rmd W_H(r)\Big\|_{L^p(\Omega;V)} \ec E_1+E_2+E_3+E_4+E_5,
    \end{align*}
    where $E_\ell \ce E_\ell(t,s)$ for $1 \le \ell \le 5$. We proceed to bound these five expressions individually. Lemma \ref{lem:sinCosEst} yields
    \begin{align*}
        E_1 &\le \Big\|\|[\cos(t\Lambda^{1/2})-\cos(s\Lambda^{1/2})]\Lambda^{-\frac{\alpha}{2}}\|_{\calL(V)}\|\Lambda^{\frac{\alpha}{2}}u_0\|_{V}\\
        &\phantom{\le }+\|[\sin(t\Lambda^{1/2})-\sin(s\Lambda^{1/2})]\Lambda^{-\frac{\alpha}{2}}\|_{\calL(V)}\|\Lambda^{\frac{\alpha-1}{2}}v_0\|_{V}\Big\|_p\\
        &\le 2(t-s)^\alpha \|\|u_0\|_{V_\alpha}+\|v_0\|_{V_{\alpha-1}}\|_p \le 2\sqrt{2}\|U_0\|_{L^p(\Omega;X_\alpha)} \cdot(t-s)^\alpha\\
        &\le 2\sqrt{2} \CY \|U_0\|_{L^p(\Omega;Y)}\cdot(t-s)^\alpha,
    \end{align*}
    where we have used the embedding $Y=X_\delta \hra X_\alpha$ in the last line.
    Using the same trick of inserting $\Lambda^{-\frac{\alpha}{2}}$, applying Lemma \ref{lem:sinCosEst}, and using the embedding $V_{\delta-1} \hra V_{\alpha-1}$ as well as linear growth of $\tilde{F}$ from $V_\delta$ to $V_{\delta-1}$, we obtain
    \begin{align*}
        E_2 &\le 2 s(t-s)^\alpha \bigg\|\sup_{r \in [0,T]}\|\Lambda^{\frac{\alpha-1}{2}}F(r,u(r))\|_{V}\bigg\|_p \le 2\CY s(t-s)^\alpha \bigg\|\sup_{r \in [0,T]}\|F(r,u(r))\|_{V_{\delta-1}}\bigg\|_p \\
        &\le 2\CY s (t-s)^\alpha\bigg(L_F\bigg(1+\bigg\|\sup_{r \in [0,T]}\|u(r)\|_{V_\delta}\bigg\|_p\bigg)+\|f\|_{p,\infty,V_{\delta-1}}\bigg)\le 2\CY L_{1,F}T(t-s)^\alpha.
    \end{align*}
    Likewise, for the stochastic integral, we conclude
    \begin{equation*}
        E_4 \le 2\CY B_p(L_G \CUffggY+\nn g\nn_{p,\infty,V_{\alpha-1}})s^\frac{1}{2}(t-s)^\alpha
        \le 2\CY B_p L_{2,G} T^\frac{1}{2}(t-s)^\alpha.
    \end{equation*}
    Recalling that $\sin(0\cdot\Lambda^{1/2})=0$, we can estimate
     \begin{align*}
        E_3 &\le \Big\|\int_s^t\|[\sin((t-r)\Lambda^{1/2})-\sin(0\cdot\Lambda^{1/2})]\Lambda^{-\frac{\alpha}{2}}\|_{\calL(V)}\|\Lambda^{\frac{\alpha-1}{2}}F(r,u(r))\|_{V} \;\rmd r\Big\|_p\\
        &\le 2 \CY \int_s^t(t-r)^\alpha\;\rmd r \bigg\|\sup_{r \in [0,T]}\|F(r,u(r))\|_{V_{\delta-1}}\bigg\|_p
        \\ & \le \frac{2 \CY L_{1,F}}{\alpha+1}(t-s)^{\alpha+1}\leq \frac{2 \CY L_{1,F} T}{\alpha+1}(t-s)^{\alpha},
    \end{align*}
    and, analogously,
    \begin{align*}
        E_5 \le \frac{2\CY B_p L_{2,G}}{\sqrt{2\alpha+1}}(t-s)^{\alpha+\frac{1}{2}}\leq \frac{2\CY B_p L_{2,G} T^{1/2}}{\sqrt{2\alpha+1}}(t-s)^{\alpha}.
    \end{align*}
    Adding the bounds for $E_1$ to $E_5$ results in the desired statement.
\end{proof}

Analogous to the considerations in Remark \ref{rem:lemmafgRegularity}, the regularity assumptions on $f$ and $g$ can be relaxed in this lemma. Having established Hölder continuity of $u$ of order up to $1$, we can derive an error bound attaining the optimal order $1$ for sufficiently good schemes and regular nonlinearity, noise and initial values. The following main theorem  of this section generalises \cite[Thm.~3.1]{Wang15} from the exponential Euler method to general contractive schemes as well as more general $F$ and $G$.

\begin{theorem}
\label{thm:convergenceRateWave}
    Suppose that Assumption \ref{ass:FG_wave} holds for some $\alpha \in (0,1]$, $\delta\geq \alpha$, and $p \in [2,\infty)$. Let $X \ce X_0$ and $Y \ce X_\delta$ as defined in \eqref{eq:defXbeta} and $U_0 \in L_{\calF_0}^p(\Omega; Y)$. Let $(R_k)_{k>0}$ be a contractive time discretisation scheme on $X$ which commutes with the resolvent of $A$. Assume $R$ approximates $S$ to order $\alpha$ on $Y$. Denote by $U$ the mild solution of \eqref{eq:StEvolEqnWave} and by $(U^j)_{j=0,\ldots,N_k}$ the temporal approximations as defined in \eqref{eq:defUjWave}.
    Then for $N_k \ge 2$
    \begin{equation*}
        \left\lVert\max_{0 \le j \le N_k}\|U(t_j)-U^j\|\right\rVert_p
        \le C_\ee\big(C_1 +C_2\sqrt{\max\{\log (T/k),p\}}\big)k^{\alpha}
    \end{equation*}
    with $C_\ee \ce (1+C^2T)^{1/2}\exp((1+C^2T)/2)$, $C \ce C_F\sqrt{T}+B_p C_G$, $C_2 \ce K C_\alpha K_G \sqrt{T}$, and
    \begin{align*}
        C_1&\ce C_\alpha \|U_0\|_{L^p(\Omega;Y)} + \Big( \frac{1}{\alpha+1}(C_FL+C_{\alpha,F}+2\CY K_F)+C_\alpha K_F\Big)T\\
        &\phantom{\ce }+\frac{B_p\sqrt{T}}{\sqrt{2\alpha+1}}(C_G L+C_{\alpha,G}+2\CY K_G),
        \end{align*}
        $K_F \ce L_F K_{U_0,f,g,Y}+\|f\|_{L^p(\Omega; C([0,T];V_{\delta-1}))}$,
        $K_G  \ce L_G K_{U_0,f,g,Y}+\|g\|_{L^p(\Omega;C([0,T];\calL_2(H,V_{\delta-1})))}$, $L$ as defined in Lemma \ref{lem:mildsolestWave}, $K_{U_0,f,g,Y}$ as in \eqref{eq:KufgYwave}, $K=4\exp(1+\frac{1}{2\mathrm{e}})$, $\CY$ denotes the embedding constant of $Y$ into $D_A(\alpha,\infty)$, and $B_p$ is the constant from Theorem \ref{thm:maxIneqQuasiContractive}.

    In particular, the approximations $(U^j)_j$ converge at rate $\min\{\alpha,1\}$ up to a logarithmic correction factor as $k \to 0$.
\end{theorem}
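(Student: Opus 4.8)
The plan is to reduce the statement to the machinery already assembled for Theorem \ref{thm:convergenceRate}. As verified in the discussion preceding Lemma \ref{lem:well-posednesswave}, the structural Assumption \ref{ass:FG_wave} guarantees that $\bfF$ and $\bfG$ satisfy Assumption \ref{ass:FG_structured} with the choice $Y\ce X_\delta$, that $S$ is contractive on both $X=X_0$ and $Y$ (by \eqref{eq:ScontractiveWave}), and that $Y\hra D_A(\alpha,\infty)$. Since $R_k$ commutes with the resolvent of $A$, contractivity of $R$ on $X$ propagates to the fractional space $Y$, so the stability estimate \eqref{eq:KufgYwave} from Lemma \ref{lem:stabilitywave} is at our disposal. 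I would therefore invoke verbatim the error decomposition $E(N)\le M_1+M_2+M_3$ from the proof of Theorem \ref{thm:convergenceRate}, together with the further splittings $M_2=\sum_i M_{2,i}$ and $M_3=\sum_i M_{3,i}$.

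The one new ingredient --- and the source of the improved rate --- is that every term of that decomposition in which the Lipschitz bound for $\bfF$ or $\bfG$ is applied to a \emph{time increment} of the solution only sees the first component $u$. Indeed, by the off-diagonal structure \eqref{eq:defAbfFG_wave} one has $\|\bfF(s,U(s))-\bfF(s,U(t_i))\|_X=\|F(s,u(s))-F(s,u(t_i))\|_{V_{-1}}\le C_F\|u(s)-u(t_i)\|_V$, and likewise for $\bfG$ in the $\gHX$-norm. Hence, in the terms $M_{2,1}$ and $M_{3,1}$ I would replace the generic modulus of continuity of $U$ coming from Lemma \ref{lem:mildsolest} (which contributes the slow summands $L_1(t-s)+L_2(t-s)^{1/2}$) by the sharper first-component estimate $\|u(t)-u(s)\|_{L^p(\Omega;V)}\le L(t-s)^\alpha$ of Lemma \ref{lem:mildsolestWave}. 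After integrating over each subinterval $[t_i,t_{i+1}]$ and summing (and, for $M_{3,1}$, passing through the square function in the maximal inequality of Theorem \ref{thm:maxIneqQuasiContractive}), both terms become pure $k^\alpha$ contributions, so the $C_1k$ and $C_2k^{1/2}$ summands present in Theorem \ref{thm:convergenceRate} simply disappear.

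All remaining terms are estimated exactly as in the proof of Theorem \ref{thm:convergenceRate} and already carry the rate $k^\alpha$: $M_1$ from the order-$\alpha$ approximation property of $R$ on $Y$; $M_{2,2}$ and $M_{3,2}$ from the $\alpha$-Hölder continuity of $\bfF,\bfG$ in time (transferred via the $\Lambda^{-1/2}$ normalisation, cf.\ the computation of $[\bfF(\omega,\cdot,U)]_\alpha$ before Lemma \ref{lem:well-posednesswave}); $M_{2,4}$ and $M_{3,4}$ from \eqref{eq:interpolationSgDifferenceGeneral} together with linear growth on $Y$ and the stability bound \eqref{eq:KufgYwave}; and the logarithmic factor from $M_{3,5}$ through Proposition \ref{prop:PropLogMainPaper}. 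The self-referential terms $M_{2,3}$ and $M_{3,3}$, controlled by $C_F$, resp.\ $C_G$, times $\|U(t_i)-U^i\|_X$, again produce the Gronwall quantity $\big(k\sum_{i=0}^{N-1}E(i)^2\big)^{1/2}$. Collecting everything yields
\[E(N)\le C_1k^\alpha+C_2\sqrt{\max\{\log(N_k),p\}}\,k^\alpha+C\Big(k\sum_{i=0}^{N-1}E(i)^2\Big)^{1/2},\]
with $C_1,C_2$ as in the statement, and the discrete Gronwall Lemma \ref{lem:KruseGronwall} with $C\ce C_F\sqrt{T}+B_pC_G$ and $C_\ee$ finishes the proof upon taking $N=N_k$.

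The main obstacle is bookkeeping rather than a genuinely new estimate: I must track carefully which of the ten terms are ``hit'' by a Lipschitz constant acting on a solution increment (and hence improve to rate $\alpha$ via Lemma \ref{lem:mildsolestWave}) versus which were already $k^\alpha$ for structural reasons, and confirm that the resulting exponent is consistently $\min\{\alpha,1\}$ instead of $\min\{\alpha,\tfrac12\}$. The only conceptual point to get right is that the higher Hölder regularity is available \emph{solely} for the first component $u$, which is exactly the component the nonlinearity and noise feed on; the reduction is therefore legitimate precisely because of the off-diagonal structure \eqref{eq:defAbfFG_wave}.
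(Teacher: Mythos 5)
Your proposal is correct and follows essentially the same route as the paper's own proof: reduce to the error decomposition $M_1+M_2+M_3$ from Theorem \ref{thm:convergenceRate} (with contractivity of $R$ on $Y=X_\delta$ supplied by the resolvent-commutation assumption), and use the improved first-component regularity $\|u(t)-u(s)\|_{L^p(\Omega;V)}\le L(t-s)^\alpha$ of Lemma \ref{lem:mildsolestWave} --- legitimate precisely because the off-diagonal structure \eqref{eq:defAbfFG_wave} makes $\bfF$ and $\bfG$ depend only on $u$ --- to upgrade $M_{2,1}$ and $M_{3,1}$ to pure $k^\alpha$ terms, after which the remaining bounds and the discrete Gronwall argument are taken over verbatim. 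The paper does exactly this, so there is nothing to correct.
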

Possible choices for $R$ in the above include but are not limited to the exponential Euler, the implicit Euler, and the Crank--Nicolson method, as well as other $A$-stable schemes. We recall that the contractivity of a large class of schemes follows from Proposition \ref{prop:functionalcalculus}.
\begin{proof}
By the discussion before  Lemma \ref{lem:well-posednesswave}, the conditions of Theorem \ref{thm:convergenceRate} follow from Assumption \ref{ass:FG_wave}. Second, we make use of Lemma \ref{lem:mildsolestWave} to obtain decay of rate $\alpha$ for those terms limiting the rate of convergence in Theorem \ref{thm:convergenceRate}
    to $\frac{1}{2}$.

Contractivity of $S$, Lipschitz continuity of $\tilde{F}$ from $V$ to $V_{-1}$ and Lemma \ref{lem:mildsolestWave} together yield
    \begin{align*}
        M_{2,1} &\le \sum_{i=0}^{N-1} \int_{t_i}^{t_{i+1}} \|\bfF(s,U(s))-\bfF(s,U(t_i))\|_{L^p(\Omega;X)} \ds\\
        &= \sum_{i=0}^{N-1} \int_{t_i}^{t_{i+1}} \|\tilde{F}(s,u(s))-\tilde{F}(s,u(t_i))\|_{L^p(\Omega;V_{-1})} \ds \\
        &\le C_F \sum_{i=0}^{N-1} \int_{t_i}^{t_{i+1}} \|u(s)-u(t_i)\|_{L^p(\Omega;V)} \ds\le C_F L \sum_{i=0}^{N-1} \int_{t_i}^{t_{i+1}} (s-t_i)^\alpha \ds = \frac{C_F L}{\alpha+1} t_N k^\alpha.
    \end{align*}
    Combining this with the bounds for $M_{2,2}$ to $M_{2,4}$ from Theorem \ref{thm:convergenceRate} leads to
    \begin{align*}
        M_2 \le \left( \frac{C_FL+C_{\alpha,F}+2\CY K_F}{\alpha+1}+C_\alpha K_F\right)t_N k^\alpha+C_F \sqrt{t_N}\left(k \sum_{i=0}^{N-1} E(i)^2\right)^{1/2}.
    \end{align*}
    Here, we have used \eqref{eq:fnormEstWave} to pass from the $Y$-norm of $\mathbf{f}$ to the $V_{\delta-1}$-norm of $f$ appearing in $K_F$.
    For the term $M_{3,1}$, an application of the maximal inequality is required additionally. By the same reasoning as for $M_{2,1}$, we then deduce
    \begin{align*}
        M_{3,1}&\le B_p C_G \bigg(\sum_{i=0}^{N-1} \int_{t_i}^{t_{i+1}} \|u(s)-u(t_i)\|_{L^p(\Omega;V)}^2 \ds\bigg)^{1/2}
        \le \frac{B_pC_GL}{\sqrt{2\alpha+1}}\sqrt{t_N} k^{\alpha}.
    \end{align*}
    In conclusion from the bounds for $M_{3,1}$ to $M_{3,5}$,
    \begin{align*}
        M_3 &\le C_{p,\alpha,G} \sqrt{t_N}k^{\alpha}+KC_\alpha K_G\sqrt{t_N}\sqrt{\max\{\log N,p\}} k^{\alpha}+ B_pC_G\Big(k\sum_{i=0}^{N-1}E(i)^2\Big)^{1/2}
    \end{align*}
    with $C_{p,\alpha,G} \ce B_p(2\alpha+1)^{-1/2}(C_GL+C_{\alpha,G}+2\CY  K_G)$.
    The final statement follows by summing the estimates for $M_1, M_2$ and $M_3$ and then applying Gronwall's inequality from Lemma \ref{lem:KruseGronwall}.
\end{proof}

\subsection{The exponential Euler method}
\label{subsec:expEulerWave}

Also for the abstract stochastic wave equation, the logarithmic correction factor vanishes when using the exponential Euler method. Hence, we obtain convergence of the optimal rate.

\begin{corollary}
\label{cor:expEulerWave}
    Suppose that Assumption \ref{ass:FG_wave} holds for some $\alpha \in (0,1]$, $\delta\geq \alpha$, and $p \in [2,\infty)$. Let $X \ce X_0$ and $Y \ce X_\delta$ as defined in \eqref{eq:defXbeta} and $U_0 \in L_{\calF_0}^p(\Omega; Y)$. Consider the exponential Euler method $R \ce S$ for time discretisation. Denote by $U$ the mild solution of \eqref{eq:StEvolEqnWave} and by $(U^j)_{j=0,\ldots,N_k}$ the temporal approximations as defined in \eqref{eq:defUjWave}.
    Then for $N_k \geq 2$
    \begin{equation*}
        \bigg\| \max_{j=0,\ldots,N_k} \|U(t_j)-U^j\|_X\bigg\|_p \le C_{\rmS,\ee} C_{\rmS} \cdot k^\alpha
    \end{equation*}
    with constants $C_{\rmS,\ee} \ce C_\ee$ as in Theorem \ref{thm:convergenceRateWave} and
    \begin{align*}
        C_{\rmS}&\ce \frac{C_FL+C_{\alpha,F}+2\CY K_F}{\alpha+1} T + \frac{B_p \sqrt{T}}{\sqrt{2\alpha+1}} (C_G L+C_{\alpha,G}+2\CY K_G),
    \end{align*}
    where $L$ is as defined in Lemma \ref{lem:mildsolestWave}, $K_F$ and $K_G$ are as in Theorem \ref{thm:convergenceRateWave}, $\CY$ denotes the embedding constant of $Y$ into $D_A(\alpha,\infty)$, and $B_p$ is the constant from Theorem \ref{thm:maxIneqQuasiContractive}.

    In particular, the approximations $(U^j)_j$ converge at rate $\min\left\{\alpha,1\right\}$ as $k \to 0$.
\end{corollary}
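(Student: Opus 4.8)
The plan is to reuse the error decomposition and the term-by-term estimates established in the proof of Theorem \ref{thm:convergenceRateWave}, and to observe that the exponential Euler choice $R_k = S(k)$ causes precisely those contributions carrying the logarithmic factor (together with the initial-value error) to vanish. This exactly parallels the passage from Theorem \ref{thm:convergenceRate} to Corollary \ref{cor:expEulerMultiplicative} in the general multiplicative setting, so I would import that argument and adapt the bookkeeping to the wave notation. First I would record that $R \ce S$ is contractive on both $X$ and $Y$, which is immediate from the contractivity \eqref{eq:ScontractiveWave} of the semigroup on every $X_\beta$; hence all hypotheses of Theorem \ref{thm:convergenceRateWave} hold and its full splitting $E(N) \le M_1 + M_2 + M_3$, with $M_2$ decomposed into $M_{2,1},\dots,M_{2,4}$ and $M_3$ into $M_{3,1},\dots,M_{3,5}$, is available verbatim.

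The decisive observation is the semigroup identity $S(t_j) = S(jk) = S(k)^j = R_k^j$, so that $S(t_j) - R_k^j = 0$ and, more generally, $S(t_j - t_i) - R_k^{j-i} = 0$ for all admissible $i,j$. This immediately forces $M_1 = 0$, removing the summand $C_\alpha\|U_0\|_{L^p(\Omega;Y)}$ present in the constant $C_1$ of Theorem \ref{thm:convergenceRateWave}. It also forces $M_{3,5} = 0$, which is the crucial point: $M_{3,5}$ is the only term estimated via Proposition \ref{prop:PropLogMainPaper}, hence the only source of the factor $\sqrt{\max\{\log(T/k),p\}}$, i.e.\ of the entire constant $C_2$. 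Finally, in $M_{2,4}$ the part involving $[S(t_j - t_i) - R_k^{j-i}]\bfF(t_i,U^i)$ vanishes, so that the coefficient $C_\alpha$ there drops and only the genuine semigroup-difference contribution $\tfrac{2\CY}{\alpha+1} K_F\, t_N k^\alpha$ survives; this precisely deletes the summand $C_\alpha K_F$ from $C_1$.

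It remains to collect the surviving bounds $M_{2,1},\dots,M_{2,4}$ and $M_{3,1},\dots,M_{3,4}$ exactly as in Theorem \ref{thm:convergenceRateWave} — each of order $k^\alpha$ (times $t_N$ or $\sqrt{t_N}$) together with the recursive term $C\big(k\sum_{i=0}^{N-1} E(i)^2\big)^{1/2}$, where $C \ce C_F\sqrt{T}+B_p C_G$ — and to apply the discrete Gronwall inequality of Lemma \ref{lem:KruseGronwall}, evaluated at $N = N_k$. Adding the surviving coefficients yields exactly $C_{\rmS} = C_1 - C_\alpha\|U_0\|_{L^p(\Omega;Y)} - C_\alpha K_F\, T$, and the absence of $M_{3,5}$ removes the logarithmic factor, giving the bound $C_{\rmS,\ee}\, C_{\rmS}\, k^\alpha$. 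I do not expect a genuine obstacle: the content is bookkeeping. The only point requiring care is confirming that $M_{3,5}$ is indeed the \emph{sole} carrier of the logarithmic factor — so that its vanishing suffices to eliminate it entirely — and checking that, after discarding the three vanishing contributions, the remaining constants reassemble exactly into $C_{\rmS}$.
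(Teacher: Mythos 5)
Your proposal is correct and is exactly the argument the paper intends (it gives no separate proof, relying on the same mechanism as Corollary \ref{cor:expEulerMultiplicative}): since $S(t_j)-R_k^j=0$ and $S(t_j-t_i)-R_k^{j-i}=0$, the terms $M_1$, $M_{3,5}$, and the $C_\alpha$-part of $M_{2,4}$ vanish, removing the logarithmic factor and reducing $C_1$ to $C_{\rmS}$ before the discrete Gronwall step. Your bookkeeping check $C_{\rmS}=C_1-C_\alpha\|U_0\|_{L^p(\Omega;Y)}-C_\alpha K_F T$ and the identification of $M_{3,5}$ as the sole source of the $\sqrt{\log}$ factor (via Proposition \ref{prop:PropLogMainPaper}) are both accurate.
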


\subsection{Error estimates on the full time interval}
\label{subsec:fullTimeIntervalWave}

In the same way as in the proof of Theorem \ref{thm:generalschemeuniform0T}, we see that the next result follows from Theorem \ref{thm:convergenceRateWave}.
\begin{corollary}\label{cor:waveuniform0Talpha12}
Suppose that the conditions of Theorem \ref{thm:convergenceRateWave} hold for $\alpha\in (0,1/2]$. Let $p_0 \in (p,\infty)$ and $q \in (2,\infty]$ be such that $\frac{1}{2}-\frac{1}{q}=\alpha$, and suppose that $f,g$, and $U_0$ have additional integrability
\begin{equation*}
    f \in L^{p_0}(\Omega;L^1(0,T;V)),\quad g \in L^{p_0}(\Omega;L^q(0,T;\calL_2(H,V))),\quad\text{ and }U_0 \in L_{\calF_0}^{p_0}(\Omega;X) \cap L_{\calF_0}^p(\Omega;X_\delta).
\end{equation*}
Denote by $U$ the mild solution of \eqref{eq:StEvolEqnWave} and by $(U^j)_{j=0,\ldots,N_k}$ the temporal approximations as defined in \eqref{eq:defUjWave}. Define the piecewise constant extension $\tilde{U}:[0,T] \to L^p(\Omega;X)$ of $(U^j)_{j=0,\ldots,N_k}$ by $\tilde{U}(t) \ce U^j$ for $t \in [t_j,t_{j+1})$, $0 \le j \le N_k-1$, and $\tilde{U}(T) \ce U^{N_k}$. Then for all $N_k \ge 2$ there is a constant $C \ge 0$ depending on $(T,p,p_0,\alpha,u_0,F, G,V,\delta)$ such that
\begin{equation*}
    \bigg\|\sup_{t \in [0,T]} \|U(t)-\wt{U}(t)\|_X\bigg\|_p \le C\big(1+\sqrt{\log(T/k)}\big)k^\alpha.
\end{equation*}
\end{corollary}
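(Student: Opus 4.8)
The plan is to mirror the proof of Theorem \ref{thm:generalschemeuniform0T}: split the uniform-in-time error into a discrete grid error and a continuity defect via Lemma \ref{lem:splitContErrorPhi}, then control the former by the grid-point estimate of Theorem \ref{thm:convergenceRateWave} and the latter by an optimal log-Hölder path regularity estimate for $U$. Concretely, I would apply Lemma \ref{lem:splitContErrorPhi} in the space $X$ with $\Phi = \Psi$, where $\Psi(r) = r^\alpha(1+\log(\tfrac{T}{r}))^{1/2}$, and with $\Pi$ the uniform grid $\{t_j\}$ whose maximal step is $h=k$. This yields, pathwise in $\omega$,
\[
\sup_{t \in [0,T]}\|U(t) - \tilde{U}(t)\|_X \le \Psi(k)\,\|U\|_{C^\Psi([0,T];X)} + \max_{0 \le j \le N_k}\|U(t_j) - U^j\|_X.
\]
Taking $L^p(\Omega)$-norms and using $\Psi(k) \lesssim (1 + \sqrt{\log(T/k)})\,k^\alpha$ reduces everything to two moment bounds.

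For the grid term, Theorem \ref{thm:convergenceRateWave} applies verbatim and delivers $\big\|\max_j \|U(t_j) - U^j\|_X\big\|_p \le C(1 + \sqrt{\log(T/k)})\,k^\alpha$, as required. For the continuity term the task is to bound $\|U\|_{C^\Psi([0,T];X)}$ in $L^p(\Omega)$, i.e.\ to establish the sharp log-Hölder path regularity of the mild solution. Here I would invoke Proposition \ref{prop:pathRegMildSol} directly: the discussion preceding Lemma \ref{lem:well-posednesswave} shows that Assumption \ref{ass:FG_wave} forces the abstract data $(\bfF,\bfG)$ to satisfy Assumption \ref{ass:FG_structured} with $X = X_0$, $Y = X_\delta \hra D_A(\alpha,\infty)$ (since $\delta \ge \alpha$), and with $A$ contractive on both $X$ and $Y$ by \eqref{eq:ScontractiveWave}. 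Thus the proposition is available as soon as the extra integrability hypotheses are matched.

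The one point to check carefully is precisely this matching of integrability across the product structure $\mathbf{f} = (0,f)^\top$, $\mathbf{g} = (0,g)^\top$. Indeed $\mathbf{f} \in L^{p_0}(\Omega;L^1(0,T;X))$ is equivalent to $f \in L^{p_0}(\Omega;L^1(0,T;V_{-1}))$, and $\mathbf{g} \in L^{p_0}(\Omega;L^q(0,T;\calL_2(H,X)))$ to $g \in L^{p_0}(\Omega;L^q(0,T;\calL_2(H,V_{-1})))$; the corollary's hypotheses, stated with $V$ in place of $V_{-1}$, are therefore more than sufficient because of the contractive embeddings $V \hra V_{-1}$ and $\calL_2(H,V) \hra \calL_2(H,V_{-1})$, while $U_0 \in L^{p_0}(\Omega;X)\cap L^p(\Omega;X_\delta)$ is exactly the required initial-data condition. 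Proposition \ref{prop:pathRegMildSol} then gives $U \in L^p(\Omega;C^\Psi([0,T];X))$ with a bound in terms of the stated norms, and combining the two estimates finishes the proof.

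I expect no genuine obstacle beyond this bookkeeping, since the modulus $\Psi$ produced by Proposition \ref{prop:pathRegMildSol} is by design the same $\Phi$ fed into Lemma \ref{lem:splitContErrorPhi}, so the logarithmic factor and the rate $k^\alpha$ align automatically. The only structural point demanding attention is that this route is confined to $\alpha \le 1/2$ — exactly the hypothesis of the corollary — because Proposition \ref{prop:pathRegMildSol}, and the sharp convolution bound of Lemma \ref{lem:pathRegStConv} underlying it (with $\tfrac12 - \tfrac1q = \alpha$), is only formulated in that range. The higher Hölder regularity of the first component $u$ from Lemma \ref{lem:mildsolestWave} does not help here, since the second component $v$, living in $V_{-1}$, caps the pathwise regularity of the full vector $U$ in the $X$-norm at order $\tfrac12$.
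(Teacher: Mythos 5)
Your proposal is correct and is precisely the paper's argument: the paper proves this corollary with a single sentence referring back to the proof of Theorem \ref{thm:generalschemeuniform0T}, i.e.\ the combination of Lemma \ref{lem:splitContErrorPhi} with $\Phi(r)=r^\alpha(1+\log(\frac{T}{r}))^{1/2}$, Theorem \ref{thm:convergenceRateWave} for the grid error, and Proposition \ref{prop:pathRegMildSol} for the path regularity, which is exactly what you execute. Your bookkeeping of the product structure (translating the hypotheses on $f,g,U_0$ into Assumption \ref{ass:FG_structured} for $\bfF,\bfG$ via $\|\mathbf{f}(t)\|_X=\|f(t)\|_{V_{-1}}$ and $V\hra V_{-1}$) and your remark on why the route is confined to $\alpha\le 1/2$ are both accurate and fill in precisely what the paper leaves implicit.
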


In case we only estimate the first component $u$, more can be said about the convergence rate on the full time interval. Under weaker integrability conditions and for general $\alpha\in (0,1]$ we obtain the following.
\begin{corollary}\label{cor:waveuniform0T}
Suppose that the conditions of Theorem \ref{thm:convergenceRateWave} hold. Define the piecewise constant extension $\tilde{U}=(\tilde{u},\tilde{v}):[0,T] \to L^p(\Omega;X)$ of $(U^j)_{j=0,\ldots,N_k}$ by $\tilde{U}(t) \ce U^j$ for $t \in [t_j,t_{j+1})$, $0 \le j \le N_k-1$, and $\tilde{U}(T) \ce U^{N_k}$. Let $\delta_1 \ce \min\{\delta, 1\}$. Then the following two error estimates hold.
\begin{enumerate}[label=(\roman*)]
\item \emph{(general schemes)} It holds that
\[\bigg\|\sup_{t \in [0,T]} \|u(t)-\tilde{u}(t)\|_V\bigg\|_{p} \le 2 C_{U_0,\textbf{f},\textbf{g},X_{\delta_1}} k^{\delta_1} + C_\ee\big(C_1 +C_2\sqrt{\log (\max\{T/k,p\})}\big)k^{\alpha}.\]
\item \emph{(exponential Euler)} If $R_k = S(k)$ then
\[\bigg\|\sup_{t \in [0,T]} \|u(t)-\tilde{u}(t)\|_V\bigg\|_{p} \le 2 C_{U_0,\textbf{f},\textbf{g},X_{\delta_1}} k^{\delta_1} + C_{\rmS,\ee} C_{\rmS} \cdot k^\alpha.\]
\end{enumerate}
\end{corollary}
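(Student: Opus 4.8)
The plan is to reduce the full-interval bound for the first component to two ingredients that are already in hand: a clean (logarithm-free) pathwise H\"older regularity of $u$ in $V$ of order $\delta_1 = \min\{\delta,1\}$, and the grid-point error from Theorem \ref{thm:convergenceRateWave} (for (i)) and Corollary \ref{cor:expEulerWave} (for (ii)). Writing $\lfloor t\rfloor$ for the largest grid point $\le t$, so that $\tilde u(t) = u^{j}$ with $t_j = \lfloor t\rfloor$, I would split pathwise
\[
\|u(t)-\tilde u(t)\|_V \le \|u(t)-u(\lfloor t\rfloor)\|_V + \|u(\lfloor t\rfloor)-\tilde u(\lfloor t\rfloor)\|_V,
\]
exactly as in Lemma \ref{lem:splitContErrorPhi} with $\Phi(r)=r^{\delta_1}$ (using only the seminorm in the first term to keep the clean constant). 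The second term is dominated by $\max_{0\le j\le N_k}\|u(t_j)-u^j\|_V \le \max_{0\le j\le N_k}\|U(t_j)-U^j\|_X$, whose $L^p(\Omega)$-norm is controlled by $C_\ee(C_1+C_2\sqrt{\log(\max\{T/k,p\})})k^\alpha$ in case (i) and by $C_{\rmS,\ee}C_{\rmS}k^\alpha$ in case (ii). The first term is at most $k^{\delta_1}[u]_{C^{\delta_1}([0,T];V)}$, since $t-\lfloor t\rfloor\le k$.

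The crux is the regularity estimate, and here I would exploit the product structure \eqref{eq:defAbfFG_wave}. The decisive observation is that the first equation $\rmd u = v\,\rmd t$ carries no noise, so that for the mild solution one has the pathwise identity
\[
u(t)-u(s) = \int_s^t v(r)\,\rmd r, \qquad 0\le s\le t\le T.
\]
This follows by differentiating the variation-of-constants formula for $u$ recorded in the proof of Lemma \ref{lem:mildsolestWave}: the initial-data and deterministic-convolution parts differentiate to the corresponding parts of $v$ by \eqref{eq:defS_wave}, and the smoothed stochastic convolution $\int_0^\cdot \Lambda^{-1/2}\sin((\cdot-r)\Lambda^{1/2})G(r,u(r))\,\rmd W_H(r)$ is pathwise differentiable in $t$ with derivative $\int_0^\cdot \cos((\cdot-r)\Lambda^{1/2})G(r,u(r))\,\rmd W_H(r)$, precisely because $\sin(0\cdot\Lambda^{1/2})=0$ removes the boundary contribution. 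The point is that increments of $u$ are now Bochner integrals of the (pathwise continuous, $V_{\delta-1}$-valued) path $r\mapsto v(r)$, containing \emph{no} stochastic integral; this is exactly why the logarithmic factor that afflicts the stochastic-convolution regularity in Lemma \ref{lem:pathRegStConv} does not enter here.

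From this identity I would bound $\|u(t)-u(s)\|_V$ in the two regimes. If $\delta\ge 1$, then $\delta_1=1$ and $v$ takes values in $V_{\delta-1}\hookrightarrow V_0=V$, so $\|u(t)-u(s)\|_V \le (t-s)\sup_r\|v(r)\|_V \le (t-s)\sup_r\|U(r)\|_{X_{\delta_1}}$. If $\delta<1$, then $\delta_1=\delta$ and $V_0$ lies between $V_{\delta-1}$ and $V_\delta$; the spectral (Heinz) interpolation $\|w\|_{V_0}\le \|w\|_{V_{\delta-1}}^{\delta}\|w\|_{V_\delta}^{1-\delta}$, with constant $1$ by H\"older applied to the spectral measure of $\Lambda$, combined with $\|u(t)-u(s)\|_{V_{\delta-1}}\le (t-s)\sup_r\|v(r)\|_{V_{\delta-1}}$ and $\|u(t)-u(s)\|_{V_\delta}\le 2\sup_r\|u(r)\|_{V_\delta}$, yields $\|u(t)-u(s)\|_V \le 2(t-s)^{\delta}\sup_r\|U(r)\|_{X_\delta}$. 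Since $X_{\delta_1}=X_\delta$ when $\delta<1$ and $\|u(r)\|_{V_\delta},\|v(r)\|_{V_{\delta-1}}\le\|U(r)\|_{X_\delta}$, both cases give $[u]_{C^{\delta_1}([0,T];V)}\le 2\sup_{r\in[0,T]}\|U(r)\|_{X_{\delta_1}}$, and \eqref{eq:WPboundCu0fgZ} bounds its $L^p(\Omega)$-norm by $2\,C_{U_0,\mathbf{f},\mathbf{g},X_{\delta_1}}$.

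Taking $\|\sup_{t\in[0,T]}(\cdot)\|_p$ and inserting the grid-error bounds then produces precisely $2\,C_{U_0,\mathbf{f},\mathbf{g},X_{\delta_1}}k^{\delta_1}$ plus the respective right-hand side, giving (i) and (ii). The main obstacle I anticipate is the rigorous justification of the differentiation identity $u(t)-u(s)=\int_s^t v(r)\,\rmd r$ for the mild solution, namely differentiating through the smoothed stochastic convolution and verifying that no It\^o-type correction or endpoint term survives (the role of $\sin(0\cdot\Lambda^{1/2})=0$); the remaining steps, in particular the spectral interpolation in the fractional regime $\delta<1$, are routine.
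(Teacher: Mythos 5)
Your proposal follows the paper's proof almost exactly: the same splitting via Lemma \ref{lem:splitContErrorPhi} with $\Phi(r)=r^{\delta_1}$, the same identity $u(t)-u(s)=\int_s^t v(r)\,\rmd r$, the same interpolation $\|x\|_{V}\leq \|x\|_{V_{\delta_1-1}}^{\delta_1}\|x\|_{V_{\delta_1}}^{1-\delta_1}$ yielding $[u]_{L^p(\Omega;C^{\delta_1}([0,T];V))}\leq 2C_{U_0,\mathbf{f},\mathbf{g},X_{\delta_1}}$ (the paper handles both regimes at once through $\delta_1$ and an $L^p$-H\"older step in $\Omega$, rather than your case split $\delta\ge1$ versus $\delta<1$), and the same grid-point bounds from Theorem \ref{thm:convergenceRateWave} and Corollary \ref{cor:expEulerWave}. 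The one obstacle you flag — rigorously differentiating the smoothed stochastic convolution — the paper avoids entirely by noting that the mild solution is also a weak solution, so $(u(t),\varphi)-(u_0,\varphi)=\int_0^t(v(s),\varphi)\,\rmd s$ for all $\varphi\in V_{-1}$, whence $u$ is continuously differentiable as a $V_{-1}$-valued function with $u'=v$; no pathwise differentiation through the stochastic integral (nor any It\^o correction analysis) is required.
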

\begin{proof}

Since the mild solution is also a weak solution to \eqref{eq:StEvolEqnWave}, writing $U  = (u,v)\in L^p(\Omega;C([0,T];V\times V_{-1}))$ we see that $(u(t), \varphi) - (u_0, \varphi) = \int_0^t (v(s), \varphi) \;\rmd s$ for all $\varphi\in V_{-1}$. Therefore, $u$ is continuously differentiable as a $V_{-1}$-valued function.

By \eqref{eq:WPboundCu0fgZ},
\begin{align}\label{eq:maxuuprime}
\max\{\|u\|_{L^p(\Omega;C([0,T];V_{\delta_1}))}, \|u'\|_{L^p(\Omega;C([0,T];V_{\delta_1-1}))}\} \leq \|U\|_{L^p(\Omega;C([0,T];X_{\delta_1})}\leq C_{U_0,\textbf{f},\textbf{g},X_{\delta_1}}.
\end{align}
Using the above and the interpolation estimate $\|x\|_{V}\leq \|x\|^{\delta_1}_{V_{\delta_1-1}}\|x\|^{1-\delta_1}_{V_{\delta_1}}$ we find that
\begin{align*}
\|u(t) - u(s)\|_{V} = \|u(t) - u(s)\|_{V_{\delta_1-1}}^{\delta_1} \|u(t) - u(s)\|_{V_{\delta_1}}^{1-\delta_1} \leq 2|t-s|^{\delta_1} \|u'\|_{C([0,T];V_{\delta_1-1})}^{\delta_1} \|u\|_{C([0,T];V_{\delta_1})}^{1-\delta_1}.
\end{align*}
Therefore, by  H\"older's inequality and \eqref{eq:maxuuprime} we find that
\begin{align*}
[u]_{L^p(\Omega;C^{\delta_1}([0,T];V))}\leq \|u'\|_{L^p(\Omega;C([0,T];V_{\delta_1-1}))}^{\delta_1} \|u\|_{L^p(\Omega;C([0,T];V_{\delta_1}))}^{1-\delta_1}
\leq 2 C_{U_0,\textbf{f},\textbf{g},X_{\delta_1}}.
\end{align*}
By Lemma \ref{lem:splitContErrorPhi}, we find that for $U^j=(u^j,v^j)$,
\begin{align*}
\sup_{t \in [0,T]} \|u(t)-\tilde{u}(t)\|_V &\le k^{\delta_1} \|u\|_{C^{\delta_1}([0,T];V)} + \max_{j=0,\ldots,N_k}  \|u(t_j)-u^{j}\|_V.
\end{align*}
Therefore, taking $L^p$-norms and using the error estimate of Theorem \ref{thm:convergenceRateWave} we find that
\begin{align*}
\bigg\|\sup_{t \in [0,T]} \|u(t)-\tilde{u}(t)\|_V\bigg\|_{p} &\le 2 C_{U_0,\textbf{f},\textbf{g},X_{\delta_1}} k^{\delta_1} + \bigg\|\max_{j=0,\ldots,N_k}  \|u(t_j)-u^{j}\|_V \bigg\|_{p}
\\ & \leq 2 C_{U_0,\textbf{f},\textbf{g},X_{\delta_1}} k^{\delta_1} + C_\ee\big(C_1 +C_2\sqrt{\log (\max\{T/k,p\})}\big)k^{\alpha}.
\end{align*}
The second estimate is obtained from Corollary \ref{cor:expEulerWave} in place of Theorem \ref{thm:convergenceRateWave} in the last step.
\end{proof}

\subsection{Application to the stochastic wave equation with trace class noise}
\label{subsec:traceclassnoisewave}
 As an example, we consider the classical stochastic wave equation on an open and bounded subset $\calO\subseteq \R^d$:
 \begin{align}
 \label{eq:waveEqnEx}
     \bigg\{\begin{split}
         \rmd \dot{u} &= (\Delta u + F(u)) \;\rmd t + G(u) \;\rmd W(t)\quad \text{on }[0,T],\\
         u(0)&=u_0,~\dot{u}(0)=v_0,
     \end{split}
 \end{align}
 with Dirichlet boundary conditions. In the current subsection, we consider trace class noise in $L^2$ for any $d \in \N$, and in Subsection \ref{subsec:exampleWavewhitenoise} space-time white noise in case $d=1$.

It is well-known that $\Lambda = -\Delta$ is a positive and self-adjoint operator on $L^2(\calO)$, which is invertible. Let $\{W(t)\}_{t \in [0,T]}$ be a $Q$-Wiener process with $Q\in \calL(L^2(\calO))$ so that $Q$ is positive and self-adjoint. Finite-dimensional noise is included, since $Q$ need not be strictly positive. Assume
\begin{equation}\label{eq:condQtraceClass}
Q^{1/2}\in \calL(L^2(\calO), L^\infty(\calO)).
\end{equation}
In particular, this implies $Q^{1/2}\in \calL_2(L^2(\calO), L^2(\calO))$ and that $Q$ is trace class (see \cite[Corollary 9.3.3]{AnalysisBanachSpacesII}).

We consider the stochastic wave equation \eqref{eq:waveEqnEx} on $V \ce L^2(\calO)$ and set $H \ce L^2(\calO)$. For the nonlinearity and the multiplicative noise, we choose Nemytskij operators
$F: V \to V$ and $G: V \to \calL_2(H,V) = \calL_2(L^2(\calO),L^2(\calO))$ determined by
\begin{equation}
\label{eq:defFGNemytskij}
    F(u)(\xi)=\phi(\xi,u(\xi)),\quad (G(u)(h))(\xi)=\psi(\xi,u(\xi))Q^{1/2}h(\xi),\quad \xi \in \calO.
\end{equation}
Here, the measurable functions $\phi,\psi: \calO \times \R \to \R$ are Lipschitz and of linear growth in the second coordinate, i.e., there is a constant $L \ge 0$ such that for all $u,u_1,u_2 \in \R$, $\xi \in \calO$ it holds that
\begin{equation}
\label{eq:assumptionshWave}
    |\phi(\xi,u)| + |\psi(\xi,u)| \le L(1+|u|),\quad |\phi(\xi,u_1)-\phi(\xi,u_2)| + |\psi(\xi,u_1)-\psi(\xi,u_2)| \le L|u_1-u_2|.
\end{equation}
It is clear that $F$ is Lipschitz from $V$ to $V$. To see that the same holds for $G$, note that by \eqref{eq:condQtraceClass}
\begin{align*}
|G(u) h(\xi)| = |\psi(\xi,u(\xi))| |Q^{1/2} h(\xi)|\leq C_{\psi,Q}(1+|u(\xi)|)\|h\|_{H},
\end{align*}
where $C_{\psi,Q}\ce L \|Q^{1/2}\|_{\calL(L^2(\calO), L^\infty(\calO))}$. Therefore, arguing as in \cite[Theorem 9.3.6 (3)$\Rightarrow$(4)]{AnalysisBanachSpacesII} by Riesz' theorem we can find $k_u:\calO\to H$ such that for a.e.\ $\xi\in \calO$ for all $h\in H$, $(k_u(\xi), h)_H = (G(u)h)(\xi)$, and $\|k_u(\xi)\|_H\leq C_{\psi,Q}(1+|u(\xi)|)$. Therefore, for an orthonormal basis $(h_n)_{n\geq 1}$ of $H$, we find that
\begin{align*}
\|G(u)\|_{\calL_2(H,V)}^2 &=
\sum_{n\geq 1}\|G(u)h_n\|^2_{V} =  \int_{\calO} \sum_{n\geq 1}|(k_u(\xi), h_n)|^2 \mathrm{d}\xi = \int_{\calO} \|k_u(\xi)\|_{H}^2 \mathrm{d}\xi \\ & \leq C_{\psi,Q}^2\|1+|u|\|_{V}^2\leq C_{\psi,Q}^2(|\calO|^{1/2} + \|u\|_{V})^2.
\end{align*}
with $|\calO|$ denoting the Lebesgue measure of the set $\calO$. Likewise, we obtain Lipschitz continuity of $G$.
In particular, $F$ and $G$ satisfy the required mapping properties of Assumption \ref{ass:FG_wave} for any $\delta \in (0,1]$.

The semigroup associated with \eqref{eq:waveEqnEx} is the wave semigroup $(S(t))_{t \ge 0}$.

As an immediate consequence of Theorem \ref{thm:convergenceRateWave} and Corollary \ref{cor:expEulerWave}, this yields the following convergence estimate generalising \cite[Cor.~4.2]{Wang15} to arbitrary contractive schemes and slightly more general $Q$-Wiener processes $W$.
\begin{theorem}[Wave equation with trace class noise in $L^2$]\label{thm:tracewave}
    Let $\calO \seq \R^d$, $d \in \N$, be a bounded and open set, $V \ce L^2(\calO)$, $X \ce V \times V_{-1}$, $p \in [2,\infty)$, and $0<\alpha\leq \delta\leq 1$. Suppose that $(u_0,v_0) \in L_{\calF_0}^p(\Omega;X_\delta)$. Let $F$ and $G$ be the Nemytskij operators as in \eqref{eq:defFGNemytskij} with $\phi$ and $\psi$ satisfying \eqref{eq:assumptionshWave}. Suppose the covariance operator $Q\in \calL(L^2(\calO))$ satisfies \eqref{eq:condQtraceClass}. Let $Y \ce X_\delta$ be as defined in \eqref{eq:defXbeta}. Let $(R_{k})_{k>0}$ be a time discretisation scheme which is contractive on both $X$ and $Y$. Suppose that $R$ approximates $S$ to order $\alpha$ on $Y$.
    Denote by $U$ the mild solution of \eqref{eq:StEvolEqnWave} with trace class noise and by $(U^j)_{j=0,\ldots,N_k}$ the temporal approximations as defined in \eqref{eq:defUjWave}.
    Then there exists a constant $C \ge 0$ depending on $(u_0,v_0,\phi,\psi,T,p,\alpha,\calO,d,V,\delta)$ such that for $N_k \ge 2$
    \begin{equation*}
        \left\| \max_{0 \le j \le N_k} \|U(t_j)-U^j\|_{X} \right\|_p \le C \big(1+\|Q^{1/2}\|_{\calL(L^2(\calO),L^\infty(\calO))}\big)\sqrt{\log(T/k)} k^\alpha.
    \end{equation*}
    In particular, the approximations $(U^j)_j$ converge at rate $1$ if $(u_0,v_0) \in L_{\calF_0}^p(\Omega;X_1)$ and the exponential Euler method $R=S$ is used. The logarithmic factor can be omitted in this case.
\end{theorem}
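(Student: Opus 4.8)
The plan is to cast the stochastic wave equation \eqref{eq:waveEqnEx} into the abstract framework of Section~\ref{sec:rateOfConvergenceWave} and then invoke Theorem~\ref{thm:convergenceRateWave} and Corollary~\ref{cor:expEulerWave} essentially verbatim. First I would record the abstract data: on the bounded open set $\calO$ the Dirichlet Laplacian $\Lambda \ce -\Delta$ is positive, self-adjoint and invertible on $V \ce L^2(\calO)$ (invertibility from the Poincaré inequality, i.e.\ positivity of the first Dirichlet eigenvalue), so the scale $(V_\beta)_{\beta\in\R}$ and the product spaces $X_\beta$ of \eqref{eq:defXbeta} are well defined, $X = X_0$, and $Y \ce X_\delta$ is a separable Hilbert space with $Y \hra X$ and $Y \hra D_A(\beta,\infty)$ for $\beta < \delta$. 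The associated wave semigroup $S$ is contractive on every $X_\beta$ by \eqref{eq:ScontractiveWave}, and since $(R_k)_{k>0}$ is assumed contractive on $X$ and $Y$ and to approximate $S$ to order $\alpha$ on $Y$, the hypotheses on the scheme are in place (for the concrete rational schemes, contractivity can be checked via Proposition~\ref{prop:functionalcalculus}).

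The substance is verifying Assumption~\ref{ass:FG_wave} for the Nemytskij data \eqref{eq:defFGNemytskij}. Since $\phi,\psi$ act only in the state variable, $F$ and $G$ are time-independent, so the Hölder seminorms in part~(b) vanish and the additive parts $f,g$ are zero, trivialising part~(c). The growth and Lipschitz bounds \eqref{eq:assumptionshWave} give directly that $F\colon V \to V$ is Lipschitz and of linear growth; composing with the continuous embedding $V = V_0 \hra V_{\delta-1}$ (valid for $\delta \le 1$) yields the required Lipschitz estimate from $V$ to $V_{-1}$ and the linear growth from $V_\delta$ to $V_{\delta-1}$, and the $V$-invariance upgrades to the $V_\delta$-invariance of part~(d) since $V_\delta \hra V$. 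The only genuine computation is the multiplicative noise term: using \eqref{eq:condQtraceClass} one represents $G(u)$ through a Riesz kernel $k_u$ with $\|k_u(\xi)\|_H \le C_{\psi,Q}(1+|u(\xi)|)$, where $C_{\psi,Q} \ce L\,\|Q^{1/2}\|_{\calL(L^2(\calO),L^\infty(\calO))}$, and integration over $\calO$ gives $\|G(u)\|_{\calL_2(H,V)} \le C_{\psi,Q}(|\calO|^{1/2}+\|u\|_V)$, with the Lipschitz bound following identically from $\psi(u_1)-\psi(u_2)$. This is exactly the passage carried out before the statement, and it exhibits the Hilbert--Schmidt Lipschitz and linear-growth constants of $G$ as proportional to $\|Q^{1/2}\|_{\calL(L^2,L^\infty)}$.

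With Assumption~\ref{ass:FG_wave} verified for any $\alpha \le \delta \le 1$, Theorem~\ref{thm:convergenceRateWave} applies and returns the bound $C_\ee(C_1 + C_2\sqrt{\max\{\log(T/k),p\}})\,k^\alpha$; here the higher Hölder regularity of the first component (Lemma~\ref{lem:mildsolestWave}) is precisely what promotes the exponent from $\tfrac12$ to $\alpha$. Bounding $\max\{\log(T/k),p\}$ by a constant multiple of $\log(T/k)$ for $N_k \ge 2$ produces the claimed $\sqrt{\log(T/k)}\,k^\alpha$. For the sharp case I would instead take $\delta = \alpha = 1$ (so $Y = X_1 = D(A)$) and $R = S$, for which Corollary~\ref{cor:expEulerWave} gives the order $\min\{\alpha,1\} = 1$ and, crucially, removes the logarithmic factor because the term $M_{3,5}$ vanishes under the identity $S(t_j)=R_k^j$.

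The main obstacle is cosmetic rather than structural: it is tracking how $\|Q^{1/2}\|_{\calL(L^2,L^\infty)}$ enters the final constant. The displayed linear factor $(1+\|Q^{1/2}\|_{\calL(L^2,L^\infty)})$ comes from the leading growth/Lipschitz constants of $G$ inside $C_1,C_2$; however $C_\ee$, and the stability constant hidden in $\KUffggY$, depend on the same quantity through the Gronwall exponent $C \ce C_F\sqrt{T}+B_pC_G$. I would therefore let the outer constant $C$ absorb all remaining ($u_0,v_0,\phi,\psi,T,p,\alpha,\calO,d,V,\delta$)-dependence, understanding the displayed factor as capturing the dependence on the \emph{size} of the noise in the leading-order terms, while the qualitative rate $k^\alpha$ (respectively $k$) and the $\sqrt{\log(T/k)}$ penalty follow immediately from the two cited results.
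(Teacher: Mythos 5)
Your proposal is correct and takes essentially the same route as the paper: there, too, the theorem is obtained as an immediate consequence of Theorem \ref{thm:convergenceRateWave} and Corollary \ref{cor:expEulerWave} (with $\delta=\alpha=1$ for the rate-one exponential Euler case), after verifying Assumption \ref{ass:FG_wave} exactly as you do --- time-independence trivialising the H\"older and $f,g$ conditions, the embeddings $V_\delta \hra V \hra V_{\delta-1}$ for $\delta\le 1$ reducing everything to the $V\to V$ bounds, and the Riesz-kernel argument under \eqref{eq:condQtraceClass} giving $\|G(u)\|_{\calL_2(H,V)}\le C_{\psi,Q}(|\calO|^{1/2}+\|u\|_V)$ with $C_{\psi,Q}$ proportional to $\|Q^{1/2}\|_{\calL(L^2(\calO),L^\infty(\calO))}$. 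Your closing caveat that $\|Q^{1/2}\|_{\calL(L^2,L^\infty)}$ also enters the Gronwall exponent through $C_G$ and the stability constants, so that the displayed factor $\big(1+\|Q^{1/2}\|_{\calL(L^2(\calO),L^\infty(\calO))}\big)$ records only the leading-order dependence, is accurate and consistent with how the paper phrases the constant.
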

In case $\delta=1$, for the implicit Euler and the Crank--Nicolson method, we can take $\alpha = 1/2$ and $\alpha = 2/3$, respectively. This is due to convergence at rate $\alpha$ on $D((-A)^{2\alpha})$ and $D((-A)^{3\alpha/2})$, respectively. Using higher-order schemes, we can come as close to rate $1$ as we want. In Theorem \ref{thm:smoothwave} we show that for smoother noise $\alpha=1$ can be reached even for the implicit Euler method.

\subsection{Application to the stochastic wave equation with space-time white noise}
\label{subsec:exampleWavewhitenoise}
We use the same notation as in Subsection \ref{subsec:traceclassnoisewave}, but this time with $\calO=(0,1)$ and $Q=I$, so that \eqref{eq:waveEqnEx} is the classical wave equation with space-time white noise. The required mapping properties can be checked as in \cite[Cor.~4.3]{Wang15}. For convenience of the reader, we include the details. The functions $F$ and $G$ are defined via \eqref{eq:defFGNemytskij}, but this time we have to consider $G$ as a mapping $G: V \to \calL_2(H,V_{-1})$.

The eigenvalues of the negative Dirichlet Laplacian $\Lambda=-\Delta$ are $\lambda_i=\pi^2i^2$, $i \in \N$, with the corresponding orthonormal basis $\{e_i=\sqrt{2}\sin(i\pi \cdot)\,:\,i \in \N\}$ of $V$ consisting of eigenfunctions of $\Lambda$. Clearly,
\begin{equation*}
    \sup_{i \in \N} \sup_{\xi \in [0,1]} |e_i(\xi)| \le \sqrt{2},\quad\text{and}~\|\Lambda^{-\frac{\varepsilon+1}{4}}\|_{\calL(V)}^2 = \pi^{-(\ve+1)}\sum_{i=1}^\infty i^{-(\ve+1)} \ec c_\ve < \infty
\end{equation*}
then hold for every $\vareps>0$. Now let $\ve \in (0,1]$. Using the properties above, we conclude that
\begin{align*}
    \|\Lambda^{-\frac{\vareps+1}{4}}G(u)\|_{\calL_2(H,V)}^2 &= \sum_{i=1}^\infty \sum_{j=1}^\infty |\langle G(u)e_i,\Lambda^{-\frac{\vareps+1}{4}}e_j \rangle_V|^2 = \sum_{i=1}^\infty \sum_{j=1}^\infty \lambda_j^{-\frac{\vareps+1}{2}}\left|\int_\calO g(\xi,u(\xi))e_i(\xi)e_j(\xi)\;\rmd \xi \right|^2 \\
    &\le 2 \left( \sum_{j=1}^\infty \lambda_j^{-\frac{\vareps+1}{2}} \right)\|g(\cdot,u(\cdot))\|_V^2 \le 2L^2c_\vareps (|\calO|^{1/2}+\|u\|_V)^2.
\end{align*}
Hence, $G$ satisfies the linear growth condition of Assumption \ref{ass:FG_wave} with $\delta=\frac{1-\vareps}{2}$. Repeating the arguments for $\Lambda^{-1/2}[G(u_1)-G(u_2)]$ and using $c_1 = \pi^2/6$ results in
\begin{align*}
    \|\Lambda^{-1/2}[G(u_1)-G(u_2)]\|_{\calL_2(V)}^2 &\le 2 \left( \sum_{j=1}^\infty \frac{1}{\pi^2j^2}\right)\|g(\cdot,u_1(\cdot))-g(\cdot,u_2(\cdot))\|_V^2 \le \frac{L^2}{3}\|u_1-u_2\|_V^2.
\end{align*}
The nonlinearity $F$ was already considered in Subsection \ref{subsec:traceclassnoisewave}. In conclusion, we obtain the following generalisation of \cite[Cor.~4.3]{Wang15} to contractive time discretisation schemes.

\begin{theorem}[Wave equation with white noise]
     Let $\calO =(0,1)$, $V \ce L^2(\calO)$, $X \ce V \times V_{-1}$, $p \in [2,\infty)$, and $0<\alpha\leq \delta<1/2$.  Suppose that $(u_0,v_0) \in L_{\calF_0}^p(\Omega;X_\delta)$.
    Let $F$ and $G$ be Nemytskij operators as above with $\phi$ and $\psi$ satisfying \eqref{eq:assumptionshWave}. Suppose the covariance operator $Q = I$ on $L^2(\calO)$. Let $Y = X_{\delta}$. Let $(R_{k})_{k>0}$ be a time discretisation scheme which is contractive on $X$ and $Y$. Assume that $R$ approximates $S$ on $Y$ to order $\alpha$. Denote by $U$ the mild solution of \eqref{eq:StEvolEqnWave} with space-time white noise and by $(U^j)_{j=0,\ldots,N_k}$ the temporal approximations as defined in \eqref{eq:defUjWave}.
    Then there exists a constant $C \ge 0$ depending on $(u_0,v_0,\phi,\psi,T,p,\alpha,\calO,d,V,\delta)$ such that for $N_k \ge 2$
    \begin{equation*}
        \left\| \max_{0 \le j \le N_k} \|U(t_j)-U^j\|_{X} \right\|_p \le C \sqrt{\log(T/k)} k^{\alpha}.
    \end{equation*}
    In particular, the approximations $(U^j)_j$ converge at rate arbitrarily close to $\frac{1}{2}$ if $(u_0,v_0) \in L_{\calF_0}^p(\Omega;X_1)$ and the exponential Euler method $R=S$ is used. The logarithmic factor can be omitted in this case.
\end{theorem}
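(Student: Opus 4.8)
The plan is to obtain the estimate as a direct application of the abstract convergence result Theorem~\ref{thm:convergenceRateWave}, so that the entire task reduces to verifying that the Nemytskij operators $F$ and $G$ defined in \eqref{eq:defFGNemytskij} satisfy Assumption~\ref{ass:FG_wave} for the prescribed pair $0<\alpha\le\delta<1/2$, and then to invoking Corollary~\ref{cor:expEulerWave} in place of the theorem for the exponential Euler statement. Most of the verification is already contained in the computations preceding the theorem: the Lipschitz bound for $\Lambda^{-1/2}[G(u_1)-G(u_2)]$ gives Lipschitz continuity of $\tilde G$ from $V$ to $\calL_2(H,V_{-1})$, while the nonlinearity $F$, treated in Subsection~\ref{subsec:traceclassnoisewave}, is Lipschitz and of linear growth as a map $V\to V\hookrightarrow V_{-1}$. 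Since $\phi$ and $\psi$ are time-independent, the Hölder seminorms $C_{\alpha,F}$ and $C_{\alpha,G}$ vanish, and the continuity requirement on the additive parts $f=F(0)$ and $g=G(0)$ (obtained by evaluating at the zero state, constant in time and bounded in $V$ and in $\calL_2(H,V_{\delta-1})$ respectively via the linear-growth bound \eqref{eq:assumptionshWave}) is trivially met.

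The one genuinely quantitative point I would stress is the linear-growth condition in the weak space $V_{\delta-1}$. Writing $\|G(u)\|_{\calL_2(H,V_{\delta-1})}=\|\Lambda^{(\delta-1)/2}G(u)\|_{\calL_2(H,V)}$ and setting $\varepsilon\ce 1-2\delta\in(0,1)$, so that $(\delta-1)/2=-(\varepsilon+1)/4$, the spectral computation already carried out yields $\|\Lambda^{-(\varepsilon+1)/4}G(u)\|_{\calL_2(H,V)}^2\le 2L^2 c_\varepsilon(|\calO|^{1/2}+\|u\|_V)^2$ with $c_\varepsilon=\pi^{-(\varepsilon+1)}\sum_{i\ge1}i^{-(\varepsilon+1)}<\infty$. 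Bounding $\|u\|_V\le\|u\|_{V_\delta}$ gives linear growth of $G$ from $V_\delta$ to $\calL_2(H,V_{\delta-1})$; the analogous, easier bound for $F$ follows from $V_\delta\hookrightarrow V\hookrightarrow V_{\delta-1}$. Together with the self-adjointness, positivity and invertibility of $\Lambda=-\Delta$ and the contractivity \eqref{eq:ScontractiveWave} of the wave semigroup $(S(t))_{t\ge0}$, this shows that Assumption~\ref{ass:FG_wave} holds, and Theorem~\ref{thm:convergenceRateWave} delivers the stated bound $C\sqrt{\log(T/k)}\,k^\alpha$ once $R$ approximates $S$ to order $\alpha$ on $Y=X_\delta$.

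For the concluding special cases I would re-run the argument with the exponential Euler scheme $R_k=S(k)$ and appeal to Corollary~\ref{cor:expEulerWave}, in which the term $M_{3,5}$ vanishes and the logarithmic factor disappears. Since $(u_0,v_0)\in L_{\calF_0}^p(\Omega;X_1)$ embeds into $X_\delta$ for every $\delta<1$, one may take $\alpha=\delta$ as close to $1/2$ as desired, recovering convergence of rate arbitrarily close to $\tfrac12$ without a logarithmic correction.

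The main obstacle, and the reason the endpoint $\alpha=1/2$ is unattainable, is precisely the blow-up of $c_\varepsilon$ as $\varepsilon\downarrow0$: the series $\sum_i i^{-(\varepsilon+1)}$ diverges at $\varepsilon=0$, reflecting that for space-time white noise $G$ maps $V$ only into $\calL_2(H,V_{-1})$ and never into $\calL_2(H,V)$. This is exactly where the product structure of the phase space $X=V\times V_{-1}$ is indispensable, as it accommodates the weaker noise regularity through the space $V_{\delta-1}$ with $\delta<1/2$; no single-space formulation would suffice.
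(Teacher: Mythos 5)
Your proposal is correct and follows essentially the same route as the paper: the displayed computations before the theorem (the spectral estimate giving $\|\Lambda^{-(\varepsilon+1)/4}G(u)\|_{\calL_2(H,V)}\lesssim 1+\|u\|_V$ with $\delta=\frac{1-\varepsilon}{2}$, the $\Lambda^{-1/2}$-Lipschitz bound for $G$, and the treatment of $F$ from Subsection~\ref{subsec:traceclassnoisewave}) are exactly the paper's verification of Assumption~\ref{ass:FG_wave}, after which Theorem~\ref{thm:convergenceRateWave} and, for the exponential Euler case, Corollary~\ref{cor:expEulerWave} are invoked just as you do. Your extra bookkeeping (identifying $f=F(0)$, $g=G(0)$ as the time-constant additive parts, the vanishing H\"older seminorms, and the embedding $X_1\hra X_\delta$ permitting $\alpha=\delta$ arbitrarily close to $\frac12$) is a faithful, slightly more explicit rendering of what the paper leaves implicit.
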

For the implicit Euler and the Crank--Nicolson method, we can take $\alpha = \delta/2$ and $\alpha = 2\delta/3$, respectively. Since we can choose $\delta$ arbitrarily close to $1/2$ this leads to rates which are almost $1/4$ and $1/3$, respectively.

\subsection{Application to the stochastic wave equation with smooth noise}\label{subsec:smoothnoisewave}

We have already seen that the exponential Euler method leads to convergence rates of any order $\alpha\in (0,1]$ depending on the given data. In this section, we show that this can also be attained for other schemes such as the implicit Euler and the Crank--Nicolson method under some smoothness conditions on the noise. To avoid problems with boundary conditions we only consider periodic boundary conditions. Consider
 \begin{align}
 \label{eq:waveEqnExTorus}
     \bigg\{\begin{split}
         \rmd \dot{u} &= ((\Delta-1) u + F(u)) \;\rmd t + G(u) \;\rmd W(t)\quad \text{on }[0,T],\\
         u(0)&=u_0,~\dot{u}(0)=v_0,
     \end{split}
 \end{align}
 with $\Lambda = 1-\Delta$ and periodic boundary conditions on the $d$-dimensional torus $\T^d = [0,1]^d$. For notational convenience we will write $H^{\beta} = H^{\beta}(\T^d) = V_{\beta}$. Note that $\|\Lambda^{-\beta}\|_{\calL(L^2)}\leq 1$ for all $\beta>0$. The additional $+1$ in the definition of $\Lambda$ is in order to ensure invertibility. Of course, $F$ can be suitably redefined so that this is without loss of generality.

Let $\delta\in (1, 2]$ and write $s=\delta-1$. Let
\begin{equation*}
    F(u)(\xi)=\phi(u(\xi)),\quad (G(u)(h))(\xi)=\psi(u(\xi))Q^{1/2}h(\xi),\quad \xi \in \T^d.
\end{equation*}
Here, the measurable functions $\phi,\psi: \R \to \R$ are Lipschitz with Lipschitz constants $L_{\phi}$ and $L_{\psi}$, respectively. The Lipschitz estimates for $F$ and $G$ follow as in Subsection \ref{subsec:traceclassnoisewave} since we will assume even more restrictive conditions on $Q$. The growth estimates for $F$ and $G$ as in Assumption \ref{ass:FG_wave} \ref{item:linGrowthWaveAss} are more complicated. In case $\delta=2$ the paraproduct constructions from \cite{TaylorBook} can be avoided, but we will consider the general case.

By the torus version of \cite[Prop.~2.4.1]{TaylorBook}  for $u\in V_{\delta}$, there is a constant $C_{s,\phi} \ge 0$ such that
\[\|F(u)\|_{V_{\delta-1}} = \|\phi(u)\|_{H^{\delta-1}}\leq C_{s,\phi} (\|u\|_{H^{\delta-1}}+1)\leq C_{s,\phi} (\|u\|_{H^{\delta}}+1) = C_{s,\phi} (\|u\|_{V_{\delta}}+1).\]

For $G$ the estimate is still more complicated. In order to estimate the Hilbert--Schmidt norm of $G(u)$, paraproduct estimates are required, as, for instance, in \eqref{eq:exampleProductEstimateBanach}. These paraproduct estimates involve Bessel potential spaces $H^{s,q}$, which, in general, are not Hilbert spaces. Consequently, an extension of Hilbert--Schmidt operators to Banach spaces is needed; the so-called \emph{$\gamma$-radonifying operators} \cite[Section~9.1]{AnalysisBanachSpacesII}. For a Banach space $E$, let $\gamma(H,E)$ denote the space of $\gamma$-radonifying operators. Let $(\gamma_n)_{n\geq 1}$ be an i.i.d.\ sequence of standard Gaussian random variables taking values in $\R$. Suppose that
$\Lambda^{\frac{\delta-1}{2}} Q^{1/2}:L^2\to L^\infty$. Then by \cite[Corollary 9.3.3]{AnalysisBanachSpacesII}, $Q^{1/2}\in \gamma(H,H^{\beta,q})$ for all $q\in [1, \infty)$ and all $\beta\leq \delta-1$, and
\begin{align}\label{eq:gammaQsmoothwave}
C_{q,\beta}\ce\|Q^{1/2}\|_{\gamma(H,H^{\beta,q})} \leq \|Q^{1/2}\|_{\gamma(H,H^{\delta-1,q})}\leq c_q \|\Lambda^{\frac{\delta-1}{2}} Q^{1/2}\|_{\calL(L^2,L^\infty)},
\end{align}
where $c_q = \|\gamma_1\|_{L^q(\Omega)}$.
Let $(h_n)_{n\geq1}$ be an orthonormal basis for $H$ and fix $N\geq 1$. Let $\eta_N \ce \sum_{n=1}^N \gamma_n Q^{1/2} h_n\in L^2(\Omega;V_{\delta-1})$. Then $\|\eta_N\|_{L^2(\Omega;V_{\beta})}\leq \|Q^{1/2}\|_{\gamma(H,H^{\beta,q})}$ for all $\beta\leq \delta-1$. It follows that
\begin{align*}
\sum_{n=1}^N \|G(u) h_n\|^2_{V_{\delta-1}} = \|\psi(u) \eta_N \|_{L^2(\Omega;V_{\delta-1})}^2.
\end{align*}
Next, we estimate $\|\psi(u) \eta_N \|_{V_{\delta-1}}$ pointwise in $\Omega$.
By the torus version of
\cite[Proposition 2.1.1]{TaylorBook} (see \cite[Proposition 4.1(1)]{AV21_SMR_torus}) and \cite[Prop.~2.4.1]{TaylorBook}, there is a constant $C_{\delta,d,1} \ge 0$ such that
\begin{align}
\label{eq:exampleProductEstimateBanach}
\|\psi(u) \eta_N\|_{V_{\delta-1}} &=  \|\psi(u) \eta_N\|_{H^{\delta-1}}  \leq \|\psi(u)\|_{L^{q_1}} \|\eta_N\|_{H^{\delta-1,q_2}} + \|\psi(u)\|_{H^{\delta-1,r_2}} \|\eta_N\|_{L^{r_1}}
\\ & \leq L_{\psi} (\|u\|_{L^{q_1}} +1) \|\eta_N\|_{H^{\delta-1,q_2}} + L_{\psi} C_{\delta,d,1} (\|u\|_{H^{\delta-1,r_2}}+1) \|\eta_N\|_{H^{\delta-1,r_1}}, \nonumber
\end{align}
where $\frac{1}{q_1} + \frac{1}{q_2}= \frac{1}{r_1} + \frac{1}{r_2} =\frac12$ and $q_1, r_1\in (2, \infty]$ and $q_2,r_2\in [2, \infty)$. Taking $r_1<\infty$ and using \eqref{eq:gammaQsmoothwave}, we find that
\begin{align*}
\|\psi(u) \eta_N\|_{L^2(\Omega;V_{\delta-1})}\leq L_{\psi} C_{q_2, \delta-1}(\|u\|_{L^{q_1}} +1) + L_{\psi} C_{\delta,d,1} C_{r_1, \delta-1} (\|u\|_{H^{\delta-1,r_2}}+1)
\end{align*}
for suitable constants $C_{q_2,\delta-1}, C_{r_1,\delta-1}\ge 0$. It remains to estimate $\|u\|_{L^{q_1}}$ and $\|u\|_{H^{\delta-1,r_2}}$ by $\|u\|_{H^{\delta}} = \|u\|_{V_{\delta}}$ using suitable Sobolev embeddings and choosing $q_1\in (2, \infty]$ and $r_2\in (2, \infty)$ suitably.
As soon as we have done that we can let $N\to\infty$ and conclude the required estimate
\begin{align*}
\|G(u)\|_{\calL_2(H,V_{\delta-1})}\leq K(1+\|u\|_{V_{\delta}}).
\end{align*}

To obtain $H^{\delta}\hra L^{q_1}$ we consider two cases. If $\delta\leq d/2$ (e.g.\ $d\in \{1, 2\}$) we can take $q_1<\infty$ arbitrary. If $\delta>d/2$, then we take $q_1 = \frac{2d}{d-2\delta}$, and thus $q_2 = \frac{d}{\delta}$.

To obtain $H^{\delta}\hra H^{\delta-1,r_2}$ we consider two cases. If $d\in \{1, 2\}$, then we can take $r_2\in (2, \infty)$ arbitrary. If $d\geq 3$, then we set $r_2 =  \frac{2d}{d-2}$, and thus $r_1 = d$.
\begin{theorem}[Wave equation with smooth noise]\label{thm:smoothwave}
     Let $V \ce L^2(\T^d)$, $X \ce V \times V_{-1}$, $p \in [2,\infty)$, and $0<\alpha\leq 1<\delta\leq 2$.  Suppose that $(u_0,v_0) \in L_{\calF_0}^p(\Omega;X_\delta)$.
    Let $F$ and $G$ be Nemytskij operators as above with Lipschitz functions $\phi$ and $\psi$. Suppose the covariance operator $Q$ on $L^2(\calO)$ satisfies $\Lambda^{\frac{\delta-1}{2}} Q^{1/2}\in \calL(L^2(\T^d), L^\infty(\T^d))$.
    Let $Y \ce X_{\delta}$ be as defined in \eqref{eq:defXbeta}. Let $(R_{k})_{k>0}$ be a time discretisation scheme which is contractive on both $X$ and $Y$. Assume that $R$ approximates $S$ to order $\alpha$ on $Y$. Denote by $U$ the mild solution of \eqref{eq:waveEqnExTorus} driven by a $Q$-Wiener process $W$ and by $(U^j)_{j=0,\ldots,N_k}$ the temporal approximations as defined in \eqref{eq:defUjWave}.
    Then there exists a constant $C \ge 0$ depending on $(u_0,v_0,\phi,\psi,T,p,\alpha,d,V,\delta)$ such that for $N_k \ge 2$
    \begin{equation*}
        \left\| \max_{0 \le j \le N_k} \|U(t_j)-U^j\|_{X} \right\|_p \le C \big(1+\|\Lambda^{(\delta-1)/2}Q^{1/2}\|_{\calL(L^2(\T^d),L^\infty(\T^d))}\big) \sqrt{\log(T/k)} k^{\alpha}.
    \end{equation*}
\end{theorem}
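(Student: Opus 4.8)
The plan is to deduce the estimate directly from the general convergence result Theorem \ref{thm:convergenceRateWave}, so that the whole argument reduces to checking that the Nemytskij operators $F$ and $G$ from \eqref{eq:defFGNemytskij} satisfy Assumption \ref{ass:FG_wave} for the prescribed $\alpha\in(0,1]$ and $\delta\in(1,2]$, and then to tracking the dependence of the resulting constant on $\|\Lambda^{(\delta-1)/2}Q^{1/2}\|_{\calL(L^2(\T^d),L^\infty(\T^d))}$. Since $\phi,\psi$ do not depend on $(\omega,t)$, I would first put the operators in the required affine form by setting $\tilde F(u)\ce \phi(u)-\phi(0)$, $f\ce\phi(0)$ and $\tilde G(u)\ce G(u)-\psi(0)Q^{1/2}$, $g\ce\psi(0)Q^{1/2}$, so that $\tilde F(\cdot,\cdot,0)=0$ and $\tilde G(\cdot,\cdot,0)=0$ while $f,g$ are constant in time. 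The latter gives condition (c) of Assumption \ref{ass:FG_wave} at once, with $\nn g\nn_{p,\infty,V_{\delta-1}}\lesssim\|\Lambda^{(\delta-1)/2}Q^{1/2}\|_{\calL(L^2,L^\infty)}$ by \eqref{eq:gammaQsmoothwave} (here $V_{\delta-1}=H^{\delta-1}$ is Hilbert, so the $\gamma$-norm and the Hilbert--Schmidt norm coincide).

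The autonomy of $F,G$ also trivialises the temporal Hölder condition (b): for every $\alpha\in(0,1]$ the seminorms $[\Lambda^{-1/2}F(\cdot,x)]_\alpha$ and $[\Lambda^{-1/2}G(\cdot,x)]_\alpha$ vanish, so $C_{\alpha,F}=C_{\alpha,G}=0$. This is exactly what makes any $\alpha\le 1$ admissible, the rate being then dictated solely by the order of approximation of $R$ on $Y$; in particular it is what lets one reach $\alpha=1$ for the implicit Euler and Crank--Nicolson schemes, which approximate $S$ to order $\alpha$ only on $X_{2\alpha}$, respectively $X_{3\alpha/2}$, forcing $\delta>1$. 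Condition (a), Lipschitz continuity from $V$ to $V_{-1}$, is obtained exactly as in Subsection \ref{subsec:traceclassnoisewave}, the hypothesis on $Q$ here being strictly stronger than \eqref{eq:condQtraceClass} so that those computations apply verbatim. Conditions (d) and (e), $Y$-invariance and linear growth from $V_\delta$ to $V_{\delta-1}$, are the substantive part and are precisely the content of the estimates carried out immediately before the theorem: for $F$ the composition estimate \cite[Prop.~2.4.1]{TaylorBook} gives $\|F(u)\|_{V_{\delta-1}}\le C_{s,\phi}(1+\|u\|_{V_\delta})$, while for $G$ one reduces to estimating $\psi(u)\,\eta_N$ in $H^{\delta-1}$ through the paraproduct splitting \eqref{eq:exampleProductEstimateBanach}, yielding $\|G(u)\|_{\calL_2(H,V_{\delta-1})}\le K(1+\|u\|_{V_\delta})$ with $K\lesssim\|\Lambda^{(\delta-1)/2}Q^{1/2}\|_{\calL(L^2,L^\infty)}$.

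With Assumption \ref{ass:FG_wave} in force, and noting $Y=X_\delta\hra X$, $Y\hra D_A(\beta,\infty)$ for every $\beta<\delta$, and that $R$ is contractive on $X$ and $Y$ and commutes with the resolvent of $A$, all hypotheses of Theorem \ref{thm:convergenceRateWave} are met. Its conclusion yields a bound of the form $C_\ee(C_1+C_2\sqrt{\max\{\log(T/k),p\}})k^\alpha$. Since $N_k=T/k\ge 2$ gives $\log(T/k)\ge\log 2>0$ and $p$ is fixed, both the $\log$-free term $C_1$ and $\sqrt{\max\{\log(T/k),p\}}$ are $\lesssim_p\sqrt{\log(T/k)}$, so after enlarging the constant one arrives at $C\sqrt{\log(T/k)}\,k^\alpha$. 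The final step is to trace the $Q$-dependence: $\|\Lambda^{(\delta-1)/2}Q^{1/2}\|_{\calL(L^2,L^\infty)}$ enters only through $C_G$, $L_G$ and $\nn g\nn_{p,\infty,V_{\delta-1}}$, each a fixed multiple of it, and factoring this first-order scaling out of $C_1,C_2$ produces the displayed prefactor $1+\|\Lambda^{(\delta-1)/2}Q^{1/2}\|_{\calL(L^2,L^\infty)}$, with all structural, embedding, stability and Gronwall constants collected into $C$.

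I expect the genuine difficulty to lie entirely in conditions (d)--(e) for $G$, which is why that computation is placed ahead of the theorem. The obstruction is that for $\delta-1\in(0,1]$ the product $\psi(u)\,Q^{1/2}h$ must be controlled in the fractional space $H^{\delta-1}$, where the naive Hilbert--Schmidt sum over an orthonormal basis of $H$ cannot be handled directly, because the governing product estimates live in the non-Hilbertian Bessel potential spaces $H^{\delta-1,q}$; the remedy is to pass to $\gamma$-radonifying norms via \cite[Corollary 9.3.3]{AnalysisBanachSpacesII} and then apply the paraproduct estimate, which in turn dictates the case distinctions on $d$ needed to secure the embeddings $H^\delta\hra L^{q_1}$ and $H^\delta\hra H^{\delta-1,r_2}$. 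A secondary, purely bookkeeping, subtlety is that the stability and Gronwall constants $C_\ee$ and $K_{U_0,f,g,Y}$ in Theorem \ref{thm:convergenceRateWave} themselves depend on the noise through $L_G$; to present the clean linear prefactor one treats $\|\Lambda^{(\delta-1)/2}Q^{1/2}\|_{\calL(L^2,L^\infty)}$ as fixed while extracting only its explicit leading-order scaling.
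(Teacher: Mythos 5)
Your proposal is correct and follows essentially the same route as the paper: the paper's proof of this theorem consists precisely of the verifications you describe --- Lipschitz continuity from $V$ to $V_{-1}$ inherited from Subsection \ref{subsec:traceclassnoisewave}, the composition estimate for $F$, and the $\gamma$-radonifying/paraproduct argument with the embeddings $H^\delta\hookrightarrow L^{q_1}$ and $H^\delta \hookrightarrow H^{\delta-1,r_2}$ for $G$ (the computations placed immediately before the theorem) --- followed by an application of Theorem \ref{thm:convergenceRateWave}. Your additional remarks (vanishing temporal H\"older seminorms for the autonomous nonlinearities, and the bookkeeping point that the noise norm also enters the Gronwall constants so that only the leading-order linear scaling is displayed in the prefactor) are accurate refinements of what the paper leaves implicit.
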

The above result is not useful for the exponential Euler method, since Theorem \ref{thm:tracewave} is better in that case. However, if we specialize to the implicit Euler and the Crank--Nicolson method, then we obtain rates $\alpha = \frac{\delta}{2}$ and $\alpha = \min\{\frac{2}{3} \delta,1\}$, respectively. In particular, this leads to convergence of order one if $\delta=2$ for many numerical schemes. Note that $\delta=2$ more or less corresponds to a noise $W$ which is in $H^{1,q}(\T^d)$ for all $q<\infty$.

\begin{remark}
    Theorem \ref{thm:smoothwave} gives an explanation for the numerical convergence rates obtained in \cite[Fig.~6.1, right figure]{Wang15}. There, trace class noise determined by $\psi(u)=u$ and $Q$ with eigenvalues $q_j =j^{-\beta}$, $j \in \N$, $\beta =1.1$ has been investigated. Denote by $(e_j)_{j \in \N}$ the orthonormal basis of $V$ and by $\lambda_j=Cj^2$ the eigenvalues of $\Lambda$ as in Subsection \ref{subsec:exampleWavewhitenoise} for some constant $C > 0$. We calculate that
    \begin{equation*}
        \Lambda^{\frac{\delta-1}{2}} Q^{\frac12} e_j = q_j^{\frac12} \Lambda^{\frac{\delta-1}{2}} e_j = j^{-\frac{\beta}{2}} \lambda_j^{\frac{\delta-1}{2}} e_j = C^{\frac{\delta-1}{2}} j^{\delta-1-\frac{\beta}{2}} e_j
    \end{equation*}
    for $j \in \N$. Thus, $\Lambda^{\frac{\delta-1}{2}}Q^{\frac12}$ maps $L^2$ into $L^\infty$ if $\delta \leq 1+\frac{\beta}{2}$. Setting $\delta \ce \min\{1+\frac{\beta}{2},2\}=1+\frac{1.1}{2}=1.55$, we derive convergence of rate $\frac{\delta}{2}=0.775$ for the implicit Euler method and $\min\{\frac{2}{3}\delta,1\}=1$ for the Crank--Nicolson method. Taking numerical errors into account, this corresponds exactly to the numerical convergence rates obtained in \cite[Fig.~6.1, right figure]{Wang15}.
\end{remark}

\section*{Data Availability Statement}

The data underlying this article are available at a public github repository, cf. \cite{schroedingerCode}.

\end{document}